\definecolor{bandteal}{RGB}{104,170,160}
\definecolor{gridred}{RGB}{204,80,60}
\numberwithin{equation}{section}
\DeclareMathOperator{\diam}{diam}
\DeclareMathOperator{\proj}{proj}
\DeclareMathOperator{\Err}{Err}
\newcommand{\Fav}{\operatorname{Fav}}
\newcommand{\Var}{\operatorname{Var}}
\newcommand{\E}{\mathbb{E}}
\newcommand{\N}{\mathbb{N}}
\renewcommand{\P}{\mathbb{P}}
\newcommand{\R}{\mathbb{R}}
\newcommand{\vv}{{\bf v}}
\newcommand{\cA}{\mathcal{A}}
\newcommand{\cB}{\mathcal{B}}
\newcommand{\cD}{\mathcal{D}}
\newcommand{\cG}{\mathcal{G}}
\newcommand{\cH}{\mathcal{H}}
\newcommand{\cI}{\mathcal{I}}
\newcommand{\cL}{\mathcal{L}}
\newcommand{\cP}{\mathcal{P}}
\newcommand{\cS}{\mathcal{S}}
\newcommand{\X}{\mathfrak X}
\newcommand{\LEI}{\mathsf{LEI}}
\newcommand{\rvL}{\mathbf L}
\newcommand{\Count}{\mathsf{Count}}
\newcommand{\emp}{\varnothing}
\renewcommand\epsilon{{\varepsilon}}
\newcommand\eps{{\varepsilon}}
\newcommand{\bbone}{\mathds{1}}
\def\XXint#1#2#3{{\setbox0=\hbox{$#1{#2#3}{\int}$ }
\vcenter{\hbox{$#2#3$ }}\kern-.6\wd0}}
\declaretheorem[numberwithin=section,name=Theorem]{theorem}
\declaretheorem[sibling=theorem,name=Lemma]{lemma}
\declaretheorem[sibling=theorem,name=Proposition]{proposition}
\declaretheorem[sibling=theorem,name=Corollary]{corollary}
\declaretheorem[sibling=theorem,name=Definition,style=definition]{definition}
\declaretheorem[sibling=theorem,name=Remark,style=definition]{remark}
\declaretheorem[sibling=theorem,name=Example,style=remark]{example}
\definecolor{alanorange}{RGB}{204, 112, 0} %
\title{Sharp Favard length of random Cantor sets}
\author{Alan Chang}
\address{AC: Department of Mathematics, Washington University in St.\ Louis. 1 Brookings Drive, St.\ Louis, MO 63105, USA}
\email{alanchang@math.wustl.edu}
\author{Pablo Shmerkin}
\address{PS: Department of Mathematics, the University of British Columbia. 1984 Mathematics Road, Vancouver BC V6T 1Z2, Canada}
\email{pshmerkin@math.ubc.ca}
\author{Ville Suomala}
\address{VS: Mathematical Sciences, P.O.Box 8000, FI-90014, University of
Oulu, Finland}
\email{ville.suomala@oulu.fi}
\thanks{A.C. was supported by NSF grant DMS-2247233. P.S. was partially supported by an NSERC Discovery Grant and by a Simons Fellowship. V.S. was supported by the Research Council of Finland via the project “\emph{Fractals and randomness}”, grant no. 368817. This project began in June 2024 at the Banff International Research Station while we participated the workshop \emph{On the Interface of Geometric Measure Theory and Harmonic Analysis}.}
\begin{document}

\begin{abstract}
     We show that for a large class of planar $1$-dimensional random fractals $S$, the Favard length $\Fav(S(r))$ of the neighborhood $S(r)$ is comparable to $\log^{-1}(1/r)$, matching a universal lower bound; up to now, this was only known in expectation for a few concrete models. In particular, we show that there exist $1$-Ahlfors regular sets with the fastest possible Favard length decay.  For a wide class of planar one-dimensional ``grid random fractals'', including fractal percolation and its Ahlfors-regular variants, we further show that $\Fav(S(r))/\log(1/r)$ converges almost surely, and we identify the limit explicitly. Furthermore, we prove that for some $1$-dimensional Ahlfors-regular random fractals $S$, the Favard length of $S(r)$ decays instead like $\log\log(1/r)/\log(1/r)$, showing that the $1/\log(1/r)$ decay is not universal among random fractals, as might be expected from previous results.
\end{abstract}

\maketitle

\setcounter{tocdepth}{3} %
\tableofcontents

\section{Introduction}

\subsection{The minimal Favard length problem}

If $S$ is a Borel subset of $\R^2$, the \emph{Favard length} of $S$ is defined as
\[
    \Fav(S) = \int_{0}^{\pi} |\proj_{\theta} S| \,d\theta,
\]
where $\proj_{\theta}$ is orthogonal projection in direction $\theta$, and $|\cdot|$ denotes one-dimensional Lebesgue measure.

The Favard length is a key quantity in geometric measure theory. This is exemplified by the classical Besicovitch projection theorem (see \cite[Chapter 18]{Mattila95} for a modern exposition): a $1$-set $S\subset\R^2$ (that is, a Borel set with $0<\mathcal{H}^1(S)<\infty$) has zero Favard length if and only if it is purely unrectifiable, that is, if and only if its intersection with all Lipschitz curves has zero length. Motivated by this, there is an ongoing program of quantifying the Besicovitch projection theorem in various ways. In a recent breakthrough, D.~D\k{a}browski \cite{Dabrowski24} proved that if a 1-Ahlfors-regular set has ``large'' Favard length, then it contains a ``large'' piece of a Lipschitz graph. The Ahlfors regularity assumption is natural and often made in this setting as there are counterexamples otherwise.

Let $S(r)$ denote the $r$-neighborhood of the set $S$.
A related line of research concerns studying the decay of $\Fav(S(r))$ as $r\downarrow 0$ for \emph{specific} purely unrectifiable sets. As shown by P.~Mattila \cite[p.86]{Mat90} (combined with Frostman's Lemma), one has
\begin{equation}\label{Fav_lb_deterministic}
\Fav(S(r)) \gtrsim \frac{\mathcal{H}_\infty^1(S)}{\log(1/r)}\,, 
\end{equation}
where $\mathcal{H}_\infty^1$ denotes $1$-dimensional Hausdorff content, and the implicit constant is universal. This suggests the problem of whether there exist $1$-sets $S$ such that 
\begin{align} \label{eq:minimal-fav}
\Fav(S(r)) \sim \frac{1}{\log(1/r)} \quad\text{as } r\downarrow 0.
\end{align}
Y.~Peres and B.~Solomyak \cite[Theorem 2.2]{PeresSolomyak02} proved that for a specific class of $1$-Ahlfors-regular \emph{random} grid fractals $S$ the above holds in expectation, that is, 
\[
    \E[\Fav(S(r))]\sim \frac{1}{\log(1/r)}\,.
\]
The same estimate was obtained by S.~Zhang \cite{Zhang20} and by D.~Vardakis and A.~Volberg \cite{VardakisVolberg24}
for two other random models of $1$-Ahlfors-regular Cantor sets, suggesting some kind of universality. However, none of these works address the a.s.\ behavior of $\Fav(S(r))$ as $r\downarrow 0$. In fact, this is left as an open problem in \cite[Question 7.4]{PeresSolomyak02} for the model considered in that paper and, more generally, it remained unknown until this day whether there are $1$-sets satisfying \eqref{eq:minimal-fav}. 

In this paper, we solve this problem by confirming that \eqref{eq:minimal-fav} holds for a wide class of stochastically self-similar Cantor sets, including those considered in \cite{PeresSolomyak02,VardakisVolberg24}. Moreover, we will derive a formula for $\lim_{r\downarrow 0}\log(1/r)\Fav(S(r))$ showing, in particular, that the limit exists almost surely.

\begin{figure}[h]
\centering
\includegraphics[page=1,width=0.12\textwidth]{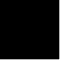}
\qquad
\includegraphics[page=2,width=0.12\textwidth]{figures_four_corner.pdf}
\qquad
\includegraphics[page=3,width=0.12\textwidth]{figures_four_corner.pdf}
\caption{$K_0, K_1, K_2$ for the four-corner Cantor set.}
\label{fig:four corner Cantor}
\end{figure}

Peres and Solomyak \cite{PeresSolomyak02} also initiated the study of the decay of Favard length for (deterministic) self-similar sets. A particularly striking example is the four-corner Cantor set $K$ (see the beginning of \cite{PeresSolomyak02} for the definition, or see \Cref{fig:four corner Cantor}). Let $K_n$ denote the level $n$ approximation of $K$ (which roughly corresponds to $K(4^{-n})$. Peres and Solomyak gave explicit, very slowly decaying upper bounds for $\Fav(K_n)$. In a significant breakthrough, F.~Nazarov, Y.~Peres and A.~Volberg \cite{NPS2010} proved the power upper bound $\Fav(K_n)\lesssim n^{-1/C}$ for some explicit $C$. (See, e.g., \cite{BondLabaVolberg14, BondVolberg10, BondVolberg12, LabaMarshall22, Tao09} for some additional results on upper bounds on the Favard length of self-similar sets. The recent work of C.~Marshall \cite{Marshall25} provides further extensions, including higher-dimensional generalizations, and additional references. See also the survey articles \cite{Laba15, Taylor24}.) 

On the other hand, M.~Bateman and A.~Volberg \cite[Theorem 1]{BatemanVolberg10} proved that \eqref{eq:minimal-fav} fails for $K$; more precisely, they showed that
\[
\Fav(K_n) \gtrsim \frac{\log n}{n}.
\]
This may appear surprising in light of the results described previously on random Cantor sets. However, we are able to show that the key feature of $K$ that produces this behavior is not its determinism, but a certain column alignment: we introduce a class of random Cantor sets which have a similar column alignment and, in \Cref{sec:bateman-volberg}, show that the Bateman--Volberg argument extends to them with small modifications.

\subsection{Main results}\label{sec:stat}

For simplicity, we only consider \emph{1-Ahlfors-regular grid models} in this introductory section. (See Sections \ref{sec:expected_to_almost_sure}, \ref{sec:expected_ub}, and \ref{sec:sharp_asymptotic} for more general statements.) Fix $L \geq 2$, and let $\cD_n$ be the set of squares obtained by dividing the unit square into an $L^n \times L^n$ grid.

By a 1-Ahlfors-regular grid model, we mean the following.

\begin{enumerate}
\item Subdivide $S_0 := [0,1]^2$ into a $L \times L$ grid of squares. Choose $L$ of them at random according to a probability distribution $\X$ on $\binom{\cD_1}{L}$. The union of the chosen squares yields a subset $S_1 \subset S_0$.
\item Repeat the selection step above \emph{independently} in each of the $L$ squares in $S_1$, using the same distribution $\X$, to get a set $S_2 \subset S_1$, which is made of up $L^2$ squares from $\cD_2$.
\item Repeat this indefinitely to get a nested sequence of random sets $S_0 \supset S_1 \supset S_2 \supset \cdots$ and a limiting set $S = \bigcap_n S_n$. 
\end{enumerate}

We introduce the following terminology to describe a random grid model $\X$:

\begin{itemize}
\item We say $\X$ is \emph{uniform} 
if each $Q \in \cD_1$ satisfies $\P[Q \subset S_1] = \frac{1}{L}$.
\item We say $\X$ is \emph{vertically degenerate} if $\X$ selects one square in each column almost surely. (See \Cref{fig:deg} for an example.)
\item We say $\X$ is \emph{horizontally degenerate} if $\X$ selects one square in each row almost surely.
\item  We say $\X$ is \emph{non-degenerate} if it is neither vertically nor horizontally degenerate. (See \Cref{fig:Ahlfors__grid_non_deg} for an example.)
\end{itemize}

If $\X$ is vertically degenerate, then  $\proj_0(S_n) = [0,1]$ almost surely for all $n$, i.e., there is no decay when projecting onto the $x$-axis. In this situation, we prove that we have an expected decay of $\frac{\log n}{n}$ instead of $\frac{1}{n}$. Furthermore, we show that vertical degeneracy and horizontal degeneracy are the \emph{only} causes of $\frac{\log n}{n}$ decay. In \Cref{sec:expected_ub} we prove the following.

\begin{theorem}
\label{theorem:random-favard}
Let $\X$ be an Ahlfors-regular uniform grid model. 
\begin{enumerate}
    \item If $\X$ is non-degenerate, then $\E \Fav(S_n) \sim \frac{1}{n}$.
    \item If $\X$ is degenerate, then $\E \Fav(S_n) \sim \frac{\log n}{n}$.
\end{enumerate} 
(The implied constants can depend on the model $\X$.)
\end{theorem}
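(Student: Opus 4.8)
The lower bound $\E\Fav(S_n)\gtrsim 1/n$ (even $1/\log(1/r)$ in the pointwise sense) is the universal estimate \eqref{Fav_lb_deterministic} applied with $r\sim L^{-n}$ and $\mathcal H^1_\infty(S)\gtrsim 1$, which holds since the model is $1$-Ahlfors-regular; this requires no randomness. For part (1), the matching upper bound $\E\Fav(S_n)\lesssim 1/n$ in the non-degenerate case: the plan is to exploit independence across the $L$ first-generation descendants together with a self-improvement (renewal) argument. Write $\Fav(S_n)\le\int_0^\pi|\proj_\theta S_n|\,d\theta$ and condition on the first generation. Each of the $L$ chosen level-one squares carries an independent copy of $S_{n-1}$ (rescaled by $L^{-1}$). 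The key is that for a \emph{fixed} direction $\theta$ bounded away from the coordinate axes, non-degeneracy forces, with probability bounded below, at least two of the chosen level-one squares to project onto overlapping intervals, so that $|\proj_\theta S_n|$ is strictly smaller than the sum of the pieces — this is the mechanism by which the measure contracts. Iterating this over scales gives that $|\proj_\theta S_n|$ decays like $1/n$ in expectation for each such $\theta$, and integrating in $\theta$ (the bad directions near the axes contribute a lower-order term, handled by a direct union bound since Ahlfors regularity controls $|\proj_\theta S_n|\le 1$) yields $\E\Fav(S_n)\lesssim 1/n$. Making the ``overlap happens with positive probability'' step quantitative and uniform in $\theta$ is the heart of the matter; I expect this is done by setting up a supermartingale or a recursion of the form $\E[f_n(\theta)]\le(1-c)\E[f_{n-1}(\theta)]+(\text{error})$ for an appropriate functional $f_n$ measuring the ``number of distinct projected tubes,'' and then summing. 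This should be where the bulk of Section~\ref{sec:expected_ub} goes.

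For part (2), the degenerate case: the lower bound $\E\Fav(S_n)\gtrsim\log n/n$ is where one adapts Bateman--Volberg. Suppose $\X$ is vertically degenerate, so $\proj_0(S_n)=[0,1]$ a.s. The Bateman--Volberg argument for the four-corner set shows that the set of directions $\theta$ for which $|\proj_\theta S_n|$ is \emph{not} small — roughly, directions within $\sim 2^{-k}$ of vertical for $k\lesssim\log n$ — each contributes $\gtrsim 1/n$ to the Favard integral, and there are $\gtrsim\log n$ dyadic scales of such directions, giving $\gtrsim\log n/n$ after summing $\sum_{k\lesssim\log n}2^{-k}\cdot(2^k/n)$. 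The column alignment (one square per column a.s.) is exactly what is needed: it guarantees that at every scale the relevant columns stay ``full'' so projections in near-vertical directions cannot decay faster than the trivial $1/n$ coming from the height. One must check that the combinatorial input Bateman--Volberg extract from the deterministic four-corner structure survives under the randomness — this is presumably a routine but careful modification, and the paper explicitly says (Section~\ref{sec:bateman-volberg}) it does.

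The matching upper bound $\E\Fav(S_n)\lesssim\log n/n$ in the degenerate case presumably comes from the more general machinery of Section~\ref{sec:expected_ub}: even in the degenerate case, for directions bounded away from the degenerate axis the part-(1) contraction still applies, giving $\lesssim 1/n$ there; near the degenerate direction one has the trivial bound $|\proj_\theta S_n|\lesssim\min(1,\,\dots)$ combined with an appropriate geometric sum over the $\log n$ near-axis dyadic scales, which caps the total contribution at $\log n/n$ rather than letting it blow up. The main obstacle overall is the uniformity in $\theta$ of the contraction estimate in part (1): naive conditioning loses a factor at each scale that depends badly on $\theta$ as $\theta$ approaches an axis, and the whole point is to track precisely how close to the axis one can get before the $1/n$ rate degrades to $\log n/n$ — balancing the number of near-axis scales against the decay rate in each is the delicate quantitative bookkeeping that distinguishes the non-degenerate from the degenerate regime.
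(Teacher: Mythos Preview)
Your overall architecture is right: lower bound from \eqref{Fav_lb_deterministic} in part (1), Bateman--Volberg for the lower bound in part (2), and in both upper bounds the decomposition into directions far from and near to the degenerate axis, with a sum over $\sim\log n$ dyadic scales of $\theta$ near the axis picking up the extra $\log n$ in the degenerate case. This matches the paper's structure.

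The gap is in your mechanism for the upper bound $\E|\proj_\theta S_n|\lesssim 1/n$. A recursion of the shape $\E[f_n(\theta)]\le (1-c)\,\E[f_{n-1}(\theta)]+(\text{error})$ gives \emph{exponential} decay to the error level, not $1/n$. The ``overlap happens with positive probability'' intuition is correct but is the source of \emph{variance}, not of a contraction factor: the process of squares along a line is (morally) a \emph{critical} Galton--Watson process, with expected offspring exactly $1$, and it is the positive variance that drives the $1/n$ survival probability. The right recursion is quadratic, $p_n \approx p_{n-1} - c\,p_{n-1}^2$, equivalently $1/p_n \ge 1/p_{n-1} + c$. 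Your sketch does not articulate this, and as written would prove a statement that is too strong.

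The paper does not run a recursion at all. Instead it uses the identity
\[
|\ell\cap[0,1]^2| = \E\bigl[L^n|\ell\cap S_n|\bigr] = \E\bigl[L^n|\ell\cap S_n|\bigm| \ell\cap S_n\neq\varnothing\bigr]\cdot \P[\ell\cap S_n\neq\varnothing],
\]
so that $\P[\ell\cap S_n\neq\varnothing]\lesssim 1/n$ follows once one shows the conditional expectation on the right is $\gtrsim n$. This is achieved by a multiscale accumulation argument: for a fixed $\theta$, one identifies a finite configuration of sub-squares (the set $\LEI_k(\theta,c)$ in \Cref{definition:set V}) such that whenever this configuration appears in the address of the minimal surviving square at some scale $i$, the conditional expected length picks up a contribution $\ge c$ from scale $i$ (\Cref{lemma:expect multi scale}). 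Chernoff then shows that for all but an exponentially small fraction of level-$n$ squares, the configuration appears $\gtrsim n$ times in the address, and Markov handles the exceptional squares (\Cref{lemma:markov}). Non-degeneracy is used precisely to guarantee that $\LEI_k(\theta,c)\neq\varnothing$ with $k,c$ uniform in $\theta$ (Lemmas \ref{lemma:nonempty V local} and \ref{lem:non-deg-structure}); in the degenerate case $k$ must grow like $\log_L(1/|\theta-\theta_{\mathrm{deg}}|)$, which is exactly what produces the $L^k/n$ bound in \Cref{prop:expected-line-survival deg} and hence the $\log n/n$ after summing over scales of $\theta$.
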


Since the random model considered in \cite{PeresSolomyak02} is a non-degenerate Ahlfors-regular uniform grid model, \Cref{theorem:random-favard} is an extension of their result.

While \Cref{theorem:random-favard} concerns the expected value of the Favard length, it does not immediately imply any almost-sure behavior for a particular realization of the random set $S = \bigcap_n S_n$. By using the stochastic self-similarity of $S$ and concentration inequalities, we prove the following in \Cref{sec:expected_to_almost_sure}.

\begin{theorem}
\label{theorem:random-favard-pointwise}
If $\X$ is an Ahlfors-regular uniform grid model, then $\displaystyle\lim_{n\to\infty}  \tfrac{\Fav(S_n)}{\E \Fav(S_n)} = 1$ almost surely.
\end{theorem}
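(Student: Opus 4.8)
Write $F_n:=\Fav(S_n)$ and $a_n:=\E F_n$; by \Cref{theorem:random-favard}, $a_n\asymp 1/n$ in the non-degenerate case and $a_n\asymp \log n/n$ in the degenerate case. Two structural facts drive the argument. First, $S_n\subset S_{n-1}$, so $(F_n)$ and $(a_n)$ are non-increasing. Second, \emph{stochastic self-similarity}: for $1\le k\le n$, decomposing along the $L^k$ level-$k$ squares $Q_1,\dots,Q_{L^k}$ of $S_k$ gives $S_n=\bigcup_v \varphi_v(S^{(v)}_{n-k})$, where $\varphi_v$ is the homothety of ratio $L^{-k}$ onto $Q_v$ and the $S^{(v)}_{n-k}$ are i.i.d.\ copies of $S_{n-k}$, independent of $S_k$. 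Since Favard length is subadditive and scales linearly,
\[
  F_n \;\le\; \Sigma_n^{(k)} \;:=\; L^{-k}\sum_{v=1}^{L^k} F^{(v)}_{n-k},
\]
a normalized sum of $L^k$ i.i.d.\ copies of $F_{n-k}$, each in $[0,\pi L^{-k}]$, with $\E\Sigma_n^{(k)}=a_{n-k}$. The plan is to prove that for every $\delta>0$ one has $\sum_n \P[\,|F_n-a_n|>\delta a_n\,]<\infty$; Borel--Cantelli and intersecting over $\delta=1/m$ then give $F_n/a_n\to 1$ a.s. (If the tail bounds below come out only polynomially small, one instead runs Borel--Cantelli along a geometric subsequence $n_j=\lfloor(1+\eta)^j\rfloor$ and interpolates using the monotonicity of $F_n$, $a_n$ and $a_{n_j}/a_{n_{j-1}}\to 1$.)

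\emph{Upper bound.} Hoeffding's inequality applied to $\Sigma_n^{(k)}$ gives $\P[\Sigma_n^{(k)}>a_{n-k}+t]\le\exp(-2L^k t^2/\pi^2)$. Choose $k=k(n)$ with $L^{k(n)}$ of order $n^2\log n$, i.e.\ $k(n)\sim 2\log_L n=o(n)$; then $a_{n-k(n)}/a_n\to 1$, and with $t=\tfrac\delta2 a_n$ the bound $\P[F_n>(1+\delta)a_n]\le \P[\Sigma_n^{(k(n))}>(1+\tfrac\delta2)a_{n-k(n)}]$ is summable in $n$.

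\emph{Lower bound -- the crux.} Favard length is only subadditive, so the matching lower bound requires controlling the overlap. Write $F_n=\Sigma_n^{(k)}-\Err_n^{(k)}$ with $\Err_n^{(k)}:=\Sigma_n^{(k)}-F_n\ge 0$, which equals the integral over directions $\theta$ of ($|\proj_\theta S_n|$ counted with multiplicity minus counted once); note $\E\Err_n^{(k)}=a_{n-k}-a_n$. One then wants: (i) $\Sigma_n^{(k)}\ge(1-\delta)a_{n-k}\ge(1-\delta)a_n$ off a small event, again by Hoeffding; and (ii) $\Err_n^{(k)}\le\delta a_n$ off a small event. For (ii), the mean $a_{n-k}-a_n$ is $o(a_n)$ once $k=o(n)$ (this uses that $a_n$ is slowly varying, which I would derive from the self-similar recursion together with the asymptotics of \Cref{theorem:random-favard}); and, conditionally on $S_k$, $\Err_n^{(k)}$ is a function of the i.i.d.\ family $(S^{(v)}_{n-k})_v$ with bounded differences of order $L^{-k}$ (changing one copy alters $|\proj_\theta S_n|$ only inside an interval of length $\lesssim L^{-k}$), so McDiarmid's inequality gives Gaussian concentration of $\Err_n^{(k)}$ about its conditional mean at scale $L^{-k/2}$, which $k(n)$ is chosen to beat against $\delta a_n$; what remains is to bound the fluctuation of $\E[\Err_n^{(k)}\mid S_k]$ over realizations of $S_k$, which is exactly where the expectation estimates and the self-similar recursion get used to show this overlap is of strictly smaller order than $a_n$ for typical $S_k$. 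Combining (i), (ii) and the upper bound yields summability of $\P[|F_n-a_n|>\delta a_n]$.

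I expect the genuine obstacle to be precisely this overlap control: quantitatively establishing that $a_{n-k}-a_n=o(a_n)$ (equivalently, slow variation of $a_n$) and that $\Err_n^{(k)}$ self-averages — that is, that the ``collisions'' between the $L^k$ rescaled copies, which individually become ever more frequent as $k$ grows (since $|\proj_\theta S|=0$ a.s.\ for a.e.\ $\theta$), still contribute only a lower-order correction to $\Fav(S_n)$ on the relevant scales. The subadditive/Hoeffding half of the argument, by contrast, is routine.
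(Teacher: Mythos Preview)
Your upper-bound half matches the paper's: subadditivity plus Hoeffding, conditioning on $S_m$ with $m\sim 4\log_L n$. The two substantive gaps are slow variation of $(a_n)$ and the lower tail, and the paper handles both differently from your sketch.

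For slow variation, you assert $a_{n-k(n)}/a_n\to 1$ follows from the recursion together with \Cref{theorem:random-favard}, but $a_n\asymp 1/n$ (or $\asymp\log n/n$) alone does not give this: the implied constants could oscillate. The paper proves instead that $(a_n)$ is a \emph{convex} sequence, adapting Bongers's argument for the four-corner set: in law $\proj_\theta S_n\setminus\proj_\theta S_{n+1}\subset\bigcup_i\proj_\theta h_i(S_{n-1}^{(i)})\setminus\proj_\theta h_i(S_n^{(i)})$, and taking expected lengths with $\E[\rho\rvL]=1$ gives $a_n-a_{n+1}\le a_{n-1}-a_n$. Convexity plus $\sup_n a_n/a_{2n}<\infty$ (which \emph{does} follow from \Cref{theorem:random-favard}) then yields $a_{n+k}\ge(1-O(k/n))a_n$, which is exactly the slow variation you need.

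For the lower tail, you correctly flag the unresolved step: McDiarmid concentrates $\Err_n^{(k)}$ around $\E[\Err_n^{(k)}\mid S_k]$, but you give no argument controlling the latter across realizations of $S_k$; invoking ``the self-similar recursion'' here is circular, since $\E[\Err_n^{(k)}\mid S_k]=a_{n-k}-\E[F_n\mid S_k]$ and you are trying to prove concentration of $F_n$. The paper sidesteps the overlap entirely with a Markov-type argument that recycles the upper tail bound. With $m\sim 4\log_L n$, $A=\{F_n<(1-t)a_{n-m}\}$ and $p=\P[A\mid S_m]$, split
\[
\E[F_n\mid S_m]\;\le\;(1-t)\,a_{n-m}\,p\;+\;(1-p)\,a_{n-m}\;+\;\int_0^\infty\P[F_n>a_{n-m}+s\mid S_m]\,ds,
\]
the integral being $O(L^{-m/2})$ by the Hoeffding upper tail. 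Rearranging and taking expectations,
\[
t\,\P[A]\;\le\;\frac{a_{n-m}-a_n}{a_{n-m}}+O\!\left(\frac{L^{-m/2}}{a_{n-m}}\right)\;\lesssim\;\frac{\log n}{n},
\]
using the convexity estimate for the first term and $L^{-m/2}\sim n^{-2}$, $a_{n-m}\gtrsim 1/n$ for the second. This is not summable in $n$, so Borel--Cantelli is run along $n=j^2$, with interpolation by monotonicity and slow variation---precisely your parenthetical fallback. No McDiarmid on $\Err_n^{(k)}$, and no control of its conditional mean, is ever needed.
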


Combining Theorems \ref{theorem:random-favard} and \ref{theorem:random-favard-pointwise} immediately implies \eqref{eq:minimal-fav} for the said models.

The next theorem concerns the limiting values. We prove the following in \Cref{sec:sharp_asymptotic}. We note that \Cref{theorem:random-favard-pointwise} holds very generally and without the uniform grid assumption (See \Cref{sec:expected_to_almost_sure}), while in Theorems \ref{theorem:random-favard} and \Cref{theorem:lim-n-EFav} this is a key property.

\begin{theorem}
\label{theorem:lim-n-EFav}
Let $\X$ be a non-degenerate Ahlfors-regular uniform grid model. Then
\[\lim_{n \to \infty} n\E\Fav(S_n)=\int_0^\pi \frac{2}{V(\theta)}\,d\theta\,,\]
where
\[
V(\theta)
=
\int_\R \Var \left(L |\proj_\theta^{-1}(t) \cap S_1|\right) \, dt
\,.
\]
Combining this with \Cref{theorem:random-favard-pointwise} gives
\[\lim_{n \to \infty} n\Fav(S_n)=\int_0^\pi \frac{2}{V(\theta)}\,d\theta \qquad\text{almost surely}.\]
Furthermore,
\[\lim_{r \downarrow 0}\log_L(1/r)\Fav(S(r))=\int_0^\pi \frac{2}{V(\theta)}\,d\theta \qquad\text{almost surely}.\]
\end{theorem}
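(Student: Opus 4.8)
The plan is to reduce the whole statement to a one‑parameter family (indexed by the direction $\theta$) of \emph{critical} nonlinear recursions, analyze each one through a transfer operator, and then transport the resulting almost‑sure asymptotics for $\Fav(S_n)$ to $\Fav(S(r))$ by a geometric sandwich.

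\textbf{Step 1 (reduction to a fixed direction and the recursion).} Since $\E\Fav(S_n)=\int_0^\pi\E|\proj_\theta S_n|\,d\theta$, the core statement is that $n\,\E|\proj_\theta S_n|\to 2/V(\theta)$ for every $\theta$, with enough uniformity in $\theta$ to move the limit through $\int_0^\pi d\theta$. The integrability of $2/V(\theta)$ is then automatic (\Cref{theorem:random-favard} bounds $n\E\Fav(S_n)$, and Fatou forces $\int 2/V<\infty$), and the domination for the interchange comes from the quantitative error bounds below together with monotonicity of $n\mapsto|\proj_\theta S_n|$. Fix $\theta$ and set $u_n(t)=\P[\proj_\theta^{-1}(t)\cap S_n\neq\emptyset]$, so $\E|\proj_\theta S_n|=\int_\R u_n=:\epsilon_n$. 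Conditioning on $S_1$ and using that the pieces $Q\cap S_n$ ($Q\subset S_1$ a selected level‑one square with corner $b_Q$) are independent rescaled copies of $S_{n-1}$ gives
\[
1-u_n(t)=\E_{S_1}\prod_{Q\subset S_1}\bigl(1-u_{n-1}(L(t-\proj_\theta b_Q))\bigr).
\]

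\textbf{Step 2 (transfer operator and the critical recursion).} Expanding the product, $u_n=\mathcal{T}u_{n-1}-\tfrac12\mathcal{Q}(u_{n-1})+(\text{cubic and higher})$, where, by uniformity $\P[Q_v\subset S_1]=1/L$,
\[
\mathcal{T}f(t)=\frac1L\sum_v f\bigl(L(t-\proj_\theta b_v)\bigr),\qquad
\mathcal{Q}(f)(t)=\sum_{v\ne v'}\P[Q_v,Q_{v'}\subset S_1]\,f\bigl(L(t-\proj_\theta b_v)\bigr)f\bigl(L(t-\proj_\theta b_{v'})\bigr),
\]
the sums over all $L^2$ level‑one squares. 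Here $\mathcal{T}$ is the mass‑preserving transfer operator of an expanding piecewise‑linear map; its leading eigenvalue is $1$ with eigenfunction $T_\theta(s):=|\proj_\theta^{-1}(s)\cap[0,1]^2|$ (the identity $\mathcal{T}T_\theta=T_\theta$ is precisely ``the $L^2$ squares tile the unit square''), and $\mathcal{T}$ has a spectral gap, so iterates align with $T_\theta$: $u_n=\epsilon_nT_\theta+o(\epsilon_n^2)$ in a suitable norm. Integrating the recursion and using $\int T_\theta=1$ gives $\epsilon_n=\epsilon_{n-1}-\tfrac12\bigl(\int\mathcal{Q}(T_\theta)\bigr)\epsilon_{n-1}^2+o(\epsilon_{n-1}^2)$. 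A short computation—in which the diagonal and the $-L^{-2}$ terms cancel thanks to the tiling identity and the self‑similarity relation $\tfrac1L\sum_v\int T_\theta(L(\cdot-\proj_\theta b_v))^2=\int T_\theta^2$—identifies $\int\mathcal{Q}(T_\theta)=\int_\R\Var\bigl(L|\proj_\theta^{-1}(t)\cap S_1|\bigr)\,dt=V(\theta)$, which is strictly positive because $\X$ is uniform and non‑degenerate. Hence $1/\epsilon_n=1/\epsilon_{n-1}+V(\theta)/2+o(1)$, so $n\epsilon_n\to 2/V(\theta)$; integrating in $\theta$ yields $\lim_n n\E\Fav(S_n)=\int_0^\pi 2/V(\theta)\,d\theta$. (As a consistency check, a direct recursion gives $\E\int\bigl(L^n|\proj_\theta^{-1}(\cdot)\cap S_n|\bigr)^2=V(\theta)n+O(1)$, which via Cauchy--Schwarz and Jensen already gives the weaker $\liminf_n n\E|\proj_\theta S_n|\ge 1/V(\theta)$; the factor $2$ and the matching upper bound are exactly what the branching/transfer‑operator analysis supplies.)

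\textbf{Step 3 (almost‑sure statements).} By \Cref{theorem:random-favard-pointwise}, $\Fav(S_n)/\E\Fav(S_n)\to1$ a.s., so $n\Fav(S_n)=\tfrac{\Fav(S_n)}{\E\Fav(S_n)}\cdot n\E\Fav(S_n)\to\int_0^\pi 2/V(\theta)\,d\theta$ a.s. For the neighborhoods, note $\proj_\theta S(r)=(\proj_\theta S)(r)$ (projection maps $r$‑balls to $r$‑intervals), and from $S\subseteq S_n\subseteq S(\sqrt2\,L^{-n})$ one gets, for $\sqrt2\,L^{-n}\le r<\sqrt2\,L^{-n+1}$, the sandwich $\Fav(S_n)\le\Fav(S(r))\le\Fav(S_{n-2}^{(3)})$, where $A^{(3)}$ is the union of the $3\times3$ blocks of grid squares at the relevant scale around the squares of $A$ and $n=\log_L(1/r)+O(1)$. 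Multiplying by $\log_L(1/r)$, the left inequality gives $\liminf_{r\downarrow0}\log_L(1/r)\Fav(S(r))\ge\int_0^\pi 2/V(\theta)\,d\theta$ a.s. For the reverse bound it suffices that $m\Fav(S_m^{(3)})\to\int_0^\pi 2/V(\theta)\,d\theta$ a.s.; since $\proj_\theta S_m^{(3)}=(\proj_\theta S_m)\bigl(L^{-m}w(\theta)\bigr)$ with $w(\theta)=|\cos\theta|+|\sin\theta|$, this reduces to $\Fav(S_m^{(3)})=(1+o(1))\Fav(S_m)$, i.e.\ to the estimate $\#\{\text{components of }\proj_\theta S_m\}=o(L^m/m)$ (so fattening by $L^{-m}$ adds only $o(1/m)$ to the length). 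That estimate is a by‑product of the second‑order information from Step 2: the components of $\proj_\theta S_m$ are the complementary gaps of $\proj_\theta S$ of size $\gtrsim L^{-m}$, of which there are $\asymp L^m/m^2$. (Alternatively, rerun Steps 1--2 directly for the enlarged approximations $S_n^{(3)}$, which are again stochastically, approximately self‑similar approximations of $S$, and verify the limiting constant is unchanged.)

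\textbf{Main obstacle.} The crux is Step 2: establishing the alignment $u_n\approx\epsilon_nT_\theta$ with quantitative error control via a spectral gap for the operators $\mathcal{T}_\theta$ that is \emph{uniform in $\theta$}—the underlying expanding map degenerates as $\tan\theta$ approaches rationals with small denominator—and then propagating this through the nonlinear recursion to get $n\epsilon_n^\theta\to 2/V(\theta)$ with uniformity strong enough to integrate in $\theta$ (which also forces one to prove $\inf_\theta V(\theta)>0$ from uniformity and non‑degeneracy). The component count in Step 3 is a secondary technical point of the same flavor.
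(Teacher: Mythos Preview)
Your approach is genuinely different from the paper's, and the formal computations (the recursion for $u_n$, the identification $\int\mathcal{Q}(T_\theta)=V(\theta)$ via the off-diagonal formula for $V(\theta)$, and the resulting relation $1/\epsilon_n=1/\epsilon_{n-1}+V(\theta)/2+o(1)$) are correct. However, the step you yourself flag as the ``main obstacle'' is a real gap, not a technicality: you assert a spectral gap for $\mathcal{T}_\theta$ and, more importantly, the Yaglom-type alignment $u_n=\epsilon_n T_\theta+o(\epsilon_n^2)$, uniformly in $\theta$, without proof. A spectral gap alone gives exponential decay of the component of $u_n$ transverse to $T_\theta$, but $\epsilon_n\sim 1/n$ is polynomial, and the nonlinearity $\mathcal{Q}$ continually feeds the transverse modes; controlling this interaction to order $o(\epsilon_n^2)$ is exactly the hard analysis. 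Moreover, your $\mathcal{T}_\theta$ is a sum over all $L^2$ grid squares, so the structure of the ``expanding map'' genuinely changes with $\theta$ (e.g.\ at $\theta=0$ the $L^2$ translations collapse to $L$ with multiplicity), and a uniform functional-analytic statement is not available off the shelf.

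The paper avoids spectral theory entirely. It starts from the identity $|\ell\cap[0,1]^2|=\E[L^n|\ell\cap S_n|\mid \ell\cap S_n\neq\varnothing]\cdot\P[\ell\cap S_n\neq\varnothing]$ and computes $\tfrac1n\E[L^n|\ell\cap S_n|\mid Q_{n,t}=Q(\bm\lambda)]$ by a telescoping decomposition over scales $i=0,\dots,n-1$, conditioning on the \emph{first surviving square} $Q_{n,t}$ at level $n$. The two approximation lemmas (one probabilistic: conditioning on $Q_{n,t}=Q(\bm\eta)$ is close to conditioning on $Q(\eta_1)\in\cS_1$; one geometric: averaging $|\ell\cap Q(\kappa)|$ over $Q(\bm\eta)$) reduce the sum to a weighted count of length-$k$ substrings of $\bm\lambda$, which for $(k,\varepsilon)$-approximately-uniform $\bm\lambda$ equals $V(\theta)/2+O(\varepsilon+L^{-k})$. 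The exceptional $\bm\lambda$ (non-uniform, or landing in a small bad set of lines) are handled by cardinality bounds and the uniform $O(n)$ bound on the conditional expectation. This replaces your spectral gap by a purely combinatorial equidistribution of substrings, and the uniformity in $\theta$ falls out because the error terms are estimated pointwise.

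For Step~3, your component-count estimate $o(L^m/m)$ is stronger than needed and is itself nontrivial. The paper's argument is simpler: since $\proj_\theta S_n$ is a union of intervals of length $\ge L^{-n}$, there are at most $L^n|\proj_\theta S_n|$ of them, so $\Fav(S(r))\le(1+O(rL^n))\Fav(S_n)$. Taking $r=L^{-m}$ and $n=m-\lceil\log_L m\rceil$ gives $\Fav(S(L^{-m}))\le(1+O(1/m))\Fav(S_{m-O(\log m)})$, and then $m\Fav(S(L^{-m}))\to\int 2/V$ follows from the discrete limit since $(m-O(\log m))/m\to 1$. No enlarged sets $S_m^{(3)}$ or gap-counting are required.
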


\subsection{Ideas of proof and connections to branching processes}
\label{section:intro ideas of proof}

Our proof of \Cref{theorem:random-favard} is an extension of the proof of \cite{PeresSolomyak02}, which in turn is inspired by the theory of branching processes, in particular, \emph{critical Galton--Watson processes}.  A Galton--Watson process is a probabilistic model of population growth: it begins with a single ancestor (the root), and each individual independently produces a random number of offspring according to a fixed probability distribution on $\mathbb{Z}_{\geq 0}$. The process is \emph{critical} if the expected number of offspring for each individual is $1$ and if the offspring distribution itself is not almost surely equal to 1. The following are key properties of critical Galton--Watson processes:
\begin{enumerate}
    \item $\P[\text{the tree survives to generation $n$}] \sim \frac{1}{n}$. (By ``survives to generation $n$'', we mean that the tree has at least one node in the $n$th generation.)
    \item Kolmogorov's estimate (e.g., \cite[Theorem 12.7]{LyonsPeres16}) states that 
    \begin{align*}
        \lim_{n \to \infty} n\, \P[\text{the tree survives to generation $n$}] = \sigma^2 / 2
        \,,
    \end{align*} 
    where $\sigma^2$ is the variance of the offspring distribution.
\end{enumerate}
Of course, the second fact implies the first. These two facts can be seen as analogs of \Cref{theorem:random-favard} and \Cref{theorem:lim-n-EFav}. Let us analyze this more closely. Observe that 
$
\proj_\theta(S_n) = \{t \in \R: \proj_\theta^{-1}(t) \cap S_n \neq \emp\}.
$ 
As we vary $\theta$ and $t$, the set $\ell(\theta,t) := \proj_\theta^{-1}(t)$ varies over all lines in $\R^2$. Thus, $\Fav(S_n)$ measures ``how many'' lines intersect $S_n$. By Fubini's theorem,
\begin{align}
\label{eq:expected-Fav-prob}
\E \Fav(S_n)
&=
\int_{0}^\pi
\int_\R
\P[
\ell(\theta,t) \cap S_n \neq \emp
]
\, dt 
\, d\theta.
\end{align}
For a fixed line $\ell$, the set of $L$-adic squares intersecting it naturally forms a tree structure:
\[
\{Q \in \cD_n : Q \cap \ell \neq \varnothing\}\,.
\]
The random process of generating $S_0 \supset S_1 \supset S_2 \supset \cdots$ can be viewed as a stochastic pruning of this tree:
\[
\{Q \in \cD_n : Q \cap \ell \neq \varnothing, Q \subset S_n\}\,.
\]
The integrand in \eqref{eq:expected-Fav-prob} is then the probability that this pruned tree survives to generation $n$. 

A key challenge is that unlike in Galton--Watson processes, the branching behavior in our situation depends on the geometry of line-square intersections (for each line, it is a different multi-type Galton-Watson process with uncountably many types). Despite this, we show that for most lines $\ell$, the corresponding tree behaves closely enough to a critical Galton--Watson process to obtain the upper bound $\P[\ell \cap S_n \neq \emp] \lesssim_{\theta} \frac{1}{n}$. In the non-degenerate case, the implied constant is uniform in $\theta$, which leads to the first part of \Cref{theorem:random-favard}. In the vertically degenerate case, the implied constant blows up as $\theta \to 0$; a careful analysis of this blow-up yields the second part of \Cref{theorem:random-favard}.

To analyze the limiting behavior of $\P[\ell\cap S_n\neq\varnothing]$ more closely, we use the identity 
\[
    |\ell\cap[0,1]^2|=L^n\E\bigl[|\ell\cap S_n| \bigm| \ell\cap S_n \neq \emp \bigr] \cdot\P[\ell\cap S_n \neq \emp]\,,
\]
thus reducing the problem to studying the asymptotic behavior of $nL^n\E[|\ell\cap S_n|\mid \ell\cap S_n\neq\varnothing]$. While for a fixed line this limit may fail to exist, we are able to extract a nested sequence of squares $[0,1]^2=Q^{(0)}\supset Q^{(1)}\supset Q^{(2)}\supset\cdots$ such that, for most lines in a given direction, ``the average of the sceneries'' $\ell\cap S_i\cap Q^{(i)}\setminus Q^{(i+1)}$ normalized back to the unit square is, up to a negligible error, the sum of independent predefined random variables of finitely many types. This allows us to apply a concentration inequality, resulting in an analog of Kolmogorov's estimate for critical Galton--Watson processes.

To prove  \Cref{theorem:random-favard-pointwise}, we use a simple but seemingly yet-unnoticed idea: let $m=\lceil 4 \log_L n\rceil$. Condition on $S_m$. Then $S_n$ is a union of $L^{m-n}$ IID translated copies of random sets with the same law as $L^{-m}S$. Favard length is subadditive, and so (always conditioned on $S_m$) we have that $S_n$ is bounded below by the sum of $L^{n-m}$ IID random variables with the same law as $L^{-m}\Fav(S)$. We can then use standard concentration inequalities together with regularity of the sequence $\E(\Fav(S_n))$ (observed by Bongers \cite[Theorem 3.1]{Bongers19} in the case of the four corner Cantor set) to obtain $\limsup_{n\to\infty} \Fav(S_n)/\E(\Fav(S_n))\le 1$ and, with some additional work, also the matching lower bound.

\subsection{Structure of the paper and further results}

In Section \ref{sec:model}, we introduce the class of models we investigate in the paper and provide several examples. 

In Section \ref{sec:expected_to_almost_sure}, we state and prove Theorem \ref{thm:almost-sure-convergence}, which relates almost-sure decay of Favard length for a wide class of models to expected decay. Together with \Cref{thm:expected-decay non-deg}, it yields \Cref{theorem:random-favard-pointwise}.

Section \ref{sec:expected_ub} is devoted to the proofs of Theorems \ref{thm:expected-decay non-deg} and \ref{thm:expected-decay deg}, which together generalize \Cref{theorem:random-favard} to a wider class of grid models.

In Section \ref{sec:sharp_asymptotic}, we establish the sharp asymptotic value of $n\E(\Fav(S_n))$ (and therefore of $n\Fav(S_n)$ a.s.) for non-degenerate grid models: see \Cref{thm:limit}.

In Appendix \ref{sec:bateman-volberg}, we adapt the Bateman-Volberg lower bound for $\Fav(K_n)$ to degenerate grid models, and finally in Appendix \ref{sec:elementaryprob} we have collected some well-known concentration inequalities which we use in our proofs.

\subsection{Acknowledgments}

We thank Benjamin Foster and Jason Hou for pointing out some small errors in an earlier version and for other helpful feedback. We thank the referee for many helpful comments and suggestions that improved the exposition.

\section{Preliminaries}
\label{sec:model}

\subsection{General notation}
\label{subsec:notation}

We use the standard $O(\cdot)$ notation, using subscripts to indicate any dependencies of the constants. \textbf{We always allow the implicit constants to depend on the model under consideration}. The following are all synonymous: $A = O(B), B = \Omega(A), A \lesssim B$, and $B \gtrsim A$. We write $A\sim B$ to denote $A=O(B)=O(A)$.  

Given $\theta\in [0,\pi]$ and $t\in\R$, we denote  
\[
    \ell(\theta,t) = \proj_\theta^{-1}(t)\,.
\]
That is, $\ell(\theta,t)$ is the line with direction  $(\cos \theta, \sin \theta)$ and containing $t (-\sin\theta, \cos\theta)$.

We denote $[j]=\{1,\ldots,j\}$ and we use $|\cdot|$ to denote the length (Lebesgue measure) of a linear set. 

\subsection{Stochastically self-similar fractals}
\label{subsec:GW}

We recall the construction of a special case of a class of random fractals going back to Falconer \cite{Falconer86}; see also \cite[Section 15.3]{LyonsPeres16}. We refer to these as \emph{stochastically self-similar fractals}, but note that this term is often used in the literature for more general constructions.

Let $J_0\subset\R^2$ be a closed bounded domain (the closure of its interior) with finitely many connected components. Let $\X=(\rvL; Q_1,\ldots, Q_\rvL)$ be a random variable with the following features:

\begin{enumerate}[label=\textup{(\alph*)}]
    \item \label{it:aa} $\rvL\in\{0,1,2,\ldots\}$ is a bounded random variable. 
    \item \label{it:bb} The sets $Q_i=h_i(J_0)$ are homothetic copies of $J_0$ that are contained in $J_0$.
    \item \label{it:cc} There is $\rho\in (0,1)$ such that $\E[\rvL]=\rho^{-1}$  and the contraction ratio of $h_i$ is $\rho$ for all $1\le i\le \rvL$.
\end{enumerate}

\begin{remark}
 Pick $M$ so that $\rvL\le M$ a.s. We may then identify the range of $\X$ with $\{0,\ldots,M\}\times\mathcal{M}^{M}$, where $\mathcal{M}$ is the three dimensional real manifold of homotheties on $\R^2$. To avoid measurability issues, we assume $\X$ is Borel-measurable. Most of the time, we will work with concrete naturally defined models (see \Cref{sec:stat}), where the marginals of $\X$ are either discrete or absolutely continuous with respect to the Lebesgue measure.   
\end{remark}
 
Given $J_0$ and $\X$, we define a nested sequence of random compact sets $S_0 \supset S_1 \supset S_2 \supset \cdots$ as follows. Let $\cS_0=\{J_0\}$ and $S_0 = J_0$. Once $\cS_n$ and $S_n=\cup\cS_n$ are defined, where $\cS_n$ is a family of homothetic copies of $J_0$, we let
\[
    \cS_{n+1} = \bigcup_{R\in\cS_n} \{ h_i^R(R) : i=1, \ldots, \rvL_R \}
    \qquad\text{and}\qquad S_{n+1} = \cup \cS_{n+1} \, ,
\]
where $(\rvL_R; h_1^R(J_0),\ldots, h_{\rvL_R}^R(J_0))$ are independent realizations of $\X$, also independent of $\cS_n$. Let 
\begin{align}
    \label{eq:def S}
    S = \bigcap_{n=0}^\infty S_n
\end{align}
be the random limit set. If $\P[\rvL=0]>0$, then $\P[S=\varnothing]>0$. Falconer \cite{Falconer86} showed that, conditioned on $S\neq\varnothing$, the Hausdorff and box-counting dimension of $S$ both equal $1$ almost surely (this is due to the assumption $\E[\rvL]=\rho^{-1}$ in \ref{it:cc}).

Recall that $\cS_n$ is the collection of homothetic copies of $J_0$ making up $S_n$. Let 
\[
    Z_n = \rho^n \# \mathcal{S}_n. 
\]
Then $Z_n$ is a martingale. By the martingale convergence theorem, it has an a.s.\ finite limit $Z$. Furthermore, a.s.\ $Z=0$ if and only if $S=\varnothing$. We refer to \cite[Section 5.1]{LyonsPeres16} for this and other basic facts about Galton-Watson processes.

We now introduce some sub-classes of this construction, see Figures \ref{fig:Ahlfors__grid_non_deg}---\ref{fig:deg}:
\begin{itemize}
    \item If $\rvL$ is a.s.\ constant and the interiors of the sets $Q_i$ in \Cref{it:bb} are pairwise disjoint, then we say that the model is \emph{Ahlfors-regular}, since it is indeed simple to check that in this case the set $S$ is $1$-Ahlfors regular. Note that, in this case, $Z\equiv 1$ and $\rvL=\rho^{-1}$.
    \item If $J_0=[0,1]^2$ and there is $L \geq 2$ such that the sets $Q_i$ are distinct elements of the grid $\{ [\frac{i-1}{L},\frac{i}{L}]\times [\frac{j-1}{L},\frac{j}{L}]: i,j \in [L]\}$, we say that the model is a \emph{grid model}. 
\end{itemize}

\subsection{Grid models}
\label{subsec:notation-gridmodels}

Fix $L \geq 2$. Let 
\begin{align}
\label{eq:def D_n}
\cD_n = \left\{ [\tfrac{i-1}{L^n},\tfrac{i}{L^n})\times [\tfrac{j-1}{L^n},\tfrac{j}{L^n}): i,j \in [L^n]\right\};
\end{align}
i.e., $\cD_n$ is the set of half-open squares obtained by dividing the unit square into an $L^n \times L^n$ grid. (See \Cref{remark:half-open} below about the use of half-open squares.) It will be helpful to give an ``address'' to each of these squares. Let $\Lambda= [L]^2$. Given
\[
    \bm\lambda = (\lambda_1, \ldots, \lambda_n) = \bigl( (i_1,j_1),\ldots,  (i_n,j_n)\bigr)\in\Lambda^n\,,
\]
we let $Q(\bm\lambda)$ denote the square in $\cD_n$ with top-right corner at $\sum_{m=1}^n  (i_m,j_m) L^{-m}$. This gives a bijection $\Lambda^n \to \cD_n$. We will therefore sometimes abuse notation slightly and identify $\Lambda^n$ with $\cD_n$. Given $\bm\lambda \in \Lambda^n$ and $0 \leq i \leq n$, we let $\bm\lambda^{(i)} = (\lambda_1, \ldots, \lambda_i)$, so that $Q(\bm\lambda) \subset Q(\bm\lambda^{(i)})$.

\begin{definition}\label{def:sq-uniform}
    Let $\X$ be a grid model. We say $\X$ is \emph{uniform} if 
    \begin{align}
    \label{eq:squarewise-uniform}
        \P[Q \in \cS_1] = \frac{1}{L} \qquad\text{for all $Q \in \cD_1$} \,.
    \end{align}
\end{definition}

\begin{definition}[Degeneracy]\label{def:degenerate}
    Let $\X$ be a grid model.
    \begin{itemize}
        \item We say $\X$ is \emph{vertically degenerate} if $\X$ selects one square in each column almost surely. (See \Cref{fig:deg} for an example.) Equivalently, for every instance of $\cS_1$,
        \[
            \P\bigl[\text{there exist } i,j_1,j_2 \in [L] \text{ such that } j_1\neq j_2 \text{ and } Q(i,j_1), Q(i,j_2)\in \cS_1\bigr] = 0 \,.
        \]
        \item We say $\X$ is \emph{horizontally degenerate} if $\X$ selects one square in each row almost surely.
        \item  We say $\X$ is \emph{non-degenerate} if it is neither vertically nor horizontally degenerate. (See \Cref{fig:Ahlfors__grid_non_deg} and \Cref{fig:perco} for examples.)
    \end{itemize}
\end{definition}

\begin{remark}%
\label{remark:half-open}
    In the general setting (\Cref{subsec:GW}), the set $J_0$ is by definition closed. However, in Sections \ref{sec:expected_ub} and \ref{sec:sharp_asymptotic}, it will be more convenient to use half-open squares (see \Cref{remark:reminder half open}). This is why we defined $\cD_n$ using half-open squares in \eqref{eq:def D_n}. Accordingly, in this situation, $\cS_n$ is a set of half-open squares, and $S_n = \cup \cS_n$ is a union of half-open squares. Note that the quantity $|\proj_\theta (S_n)|$ does not depend on whether we use closed squares or half-open squares. The only modification required when using half-open squares is to replace \eqref{eq:def S} by $S = \bigcap_{n=0}^\infty \overline{S_n}$.
\end{remark}

\subsection{Examples and figures}

\begin{example}
The Peres--Solomyak model \cite{PeresSolomyak02} is a non-degenerate Ahlfors-regular grid model with $L = 4$. We view the $4 \times 4$ grid as consisting of four $2 \times 2$ sub-grids. In each $2 \times 2$ sub-grid, we choose one square uniformly at random and independently of the others.
\end{example}

\begin{example}
\label{example:perco}
Fractal percolation is a non-degenerate uniform grid model that is not Ahlfors-regular. Fix $L \geq 2$. Each square in the $L \times L$ grid is independently chosen with probability $1/L$. Thus, $\rvL$ is $\mathsf{Binomial}(L^2,L^{-1})$. See \Cref{fig:perco}.
\end{example}

\begin{example}
\label{example:VV-discs}
The Vardakis--Volberg disc model \cite{VardakisVolberg24} is an Ahlfors-regular model. $J_0$ is the unit disc. Fix $L \geq 3$. We let $\rvL = L$ and $\rho = 1/L$. We let $Q_1$ be a disc of radius $1/L$ internally tangent to $J_0$ and placed uniformly at random. Then we let $Q_2, \ldots, Q_L$ be such that $Q_1, \ldots, Q_L$ are equally spaced and all internally tangent to $J_0$. See \Cref{fig:VV-discs}.
\end{example}

\begin{figure}[h!]
\centering
\includegraphics[page=1,width=0.12\textwidth]{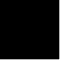}
\hfill
\includegraphics[page=2,width=0.12\textwidth]{figures_L2_nondeg.pdf}
\hfill
\includegraphics[page=3,width=0.12\textwidth]{figures_L2_nondeg.pdf}
\hfill
\includegraphics[page=4,width=0.12\textwidth]{figures_L2_nondeg.pdf}
\hfill
\includegraphics[page=5,width=0.12\textwidth]{figures_L2_nondeg.pdf}
\hfill
\includegraphics[page=6,width=0.12\textwidth]{figures_L2_nondeg.pdf}
\caption{$S_0$ through $S_5$ for a grid model: $L=2$, Ahlfors-regular, $\X$ is defined as a uniform choice among all pairs of disjoint $Q_1$ and $Q_2$.}
\label{fig:Ahlfors__grid_non_deg}
\end{figure}

\begin{figure}[h]
\centering
\includegraphics[page=1,width=0.12\textwidth]{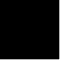}
\hfill
\includegraphics[page=2,width=0.12\textwidth]{figures_L2_non_Ahlfors.pdf}
\hfill
\includegraphics[page=3,width=0.12\textwidth]{figures_L2_non_Ahlfors.pdf}
\hfill
\includegraphics[page=4,width=0.12\textwidth]{figures_L2_non_Ahlfors.pdf}
\hfill
\includegraphics[page=5,width=0.12\textwidth]{figures_L2_non_Ahlfors.pdf}
\hfill
\includegraphics[page=6,width=0.12\textwidth]{figures_L2_non_Ahlfors.pdf}
\caption{$S_0$ through $S_5$ for a grid model (fractal percolation) with $L=2$. Each square is chosen independently with probability $1/2$. See \Cref{example:perco}.}
\label{fig:perco}
\end{figure}

\begin{figure}[h]
\centering
\includegraphics[page=1,width=0.12\textwidth]{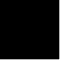}
\hfill
\includegraphics[page=2,width=0.12\textwidth]{figures_L2_deg.pdf}
\hfill
\includegraphics[page=3,width=0.12\textwidth]{figures_L2_deg.pdf}
\hfill
\includegraphics[page=4,width=0.12\textwidth]{figures_L2_deg.pdf}
\hfill
\includegraphics[page=5,width=0.12\textwidth]{figures_L2_deg.pdf}
\hfill
\includegraphics[page=6,width=0.12\textwidth]{figures_L2_deg.pdf}
\caption{$S_0$ through $S_5$ for a grid model with $L=2$. $\X$ selects one square uniformly and independently in each column, resulting in a vertically degenerate model. See \Cref{def:degenerate}.}
\label{fig:deg}
\end{figure}

\begin{figure}[h]
\centering
\includegraphics[page=1,width=0.15\textwidth]{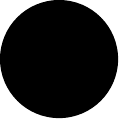}
\hfill
\includegraphics[page=2,width=0.15\textwidth]{figures_random_discs.pdf}
\hfill
\includegraphics[page=3,width=0.15\textwidth]{figures_random_discs.pdf}
\hfill
\includegraphics[page=4,width=0.15\textwidth]{figures_random_discs.pdf}
\caption{$S_0$ through $S_3$ for the model considered by Vardakis and Volberg, with $L=3$. See \Cref{example:VV-discs}.}
\label{fig:VV-discs}
\end{figure}

\begin{figure}[h]
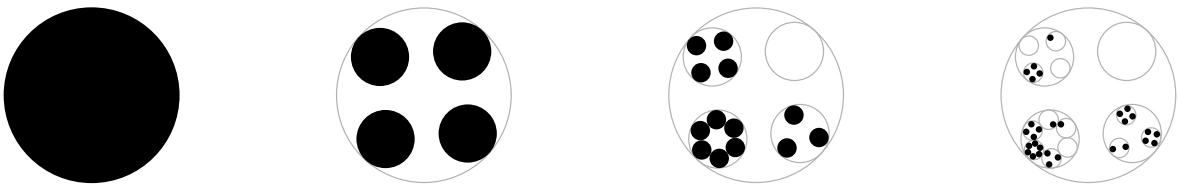

\centering
\includegraphics[page=5,width=0.15\textwidth]{figures_random_discs.pdf}
\hfill
\includegraphics[page=6,width=0.15\textwidth]{figures_random_discs.pdf}
\hfill
\includegraphics[page=7,width=0.15\textwidth]{figures_random_discs.pdf}
\hfill
\includegraphics[page=8,width=0.15\textwidth]{figures_random_discs.pdf}
\caption{$S_0$ through $S_3$ for a model that is the same as \Cref{example:VV-discs}, except that the number of discs $\rvL$ is uniform on $\{0,\ldots, 6\}$.}
\label{fig:discs-varying-number}
\end{figure}

\section{From expected to almost-sure Favard length}\label{sec:expected_to_almost_sure}

\subsection{The asymptotic limit}
 
 Throughout this section, we work with a fixed stochastically self-similar fractal model. The goal is to show that, under mild conditions, estimates on the expected decay of the Favard length of $S_n$ can be turned into almost-sure estimates. This is our main theorem in this direction:
\begin{theorem}[almost-sure convergence]
    \label{thm:almost-sure-convergence}
    Almost surely,
    \begin{align} \label{eq:as-limit-upper-bound}
        \limsup_{n\to\infty} \frac{\Fav(S_n)}{\E[\Fav(S_{n-m(n)})]} \le Z, \qquad\text{where } m(n) = \lfloor 4 \log_{\rho^{-1}} n\rfloor.
    \end{align}
    Furthermore, if 
    \[
        \sup_n \frac{\E[\Fav(S_{n})]}{\E[\Fav(S_{2n})]} <\infty,
    \]
    then 
    \begin{align*}
        \lim_{n\to\infty} \frac{\Fav(S_n)}{\E[\Fav(S_n)]} = \lim_{r\downarrow 0} \frac{\Fav(S(r))}{\E[\Fav(S(r))]}  = Z \qquad\text{almost surely}.
    \end{align*}
\end{theorem}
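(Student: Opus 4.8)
The plan is to exploit the stochastic self-similarity of the GW fractal together with subadditivity of Favard length and a concentration argument. Fix $n$ and set $m = m(n) = \lfloor 4\log_{\rho^{-1}} n\rfloor$. Condition on $\mathcal{S}_m$, so that $S_n$ is the union of $|\mathcal{S}_m|$ independent translated-and-scaled copies of $S_{n-m}$, each copy living in one $R \in \mathcal{S}_m$ and having the law of a $\rho^m$-scaled version of the random set $S_{n-m}$. Since $\Fav$ is subadditive under unions and scales by the contraction factor, conditionally on $\mathcal{S}_m$ we get
\begin{equation*}
\Fav(S_n) \le \rho^{m}\sum_{R\in\mathcal{S}_m} \Fav\big(S^{R}_{n-m}\big),
\end{equation*}
where the $\Fav(S^R_{n-m})$ are i.i.d.\ copies of $\Fav(S_{n-m})$. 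Here $\rho^m|\mathcal{S}_m| = Z_m \to Z$ a.s.\ by the martingale convergence theorem (recalled before the theorem statement), and the number of summands $|\mathcal{S}_m|$ is of polynomial size in $n$ on the event $Z>0$ (roughly $\rho^{-m} \asymp n^{4}$). Taking conditional expectations gives $\mathbb{E}[\Fav(S_n)\mid \mathcal{S}_m] \le Z_m\, \mathbb{E}[\Fav(S_{n-m})]$, and a concentration inequality for sums of i.i.d.\ bounded nonnegative random variables (Appendix~\ref{sec:elementaryprob}; note $\Fav(S_{n-m}) \le \Fav(J_0) = O(1)$ deterministically) shows that, conditionally, $\sum_R \Fav(S^R_{n-m})$ does not exceed its mean by more than a $(1+o(1))$ factor except with probability summable in $n$ — the number of terms $\asymp n^4$ beats the $1/n$-type decay of $\mathbb{E}[\Fav(S_{n-m})]$ in the exponent of the Chernoff bound, which is exactly why the choice $m \asymp \log n$ (rather than a constant) is made. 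A Borel--Cantelli argument along the subsequence of all $n$ then yields $\limsup_n \Fav(S_n)/\mathbb{E}[\Fav(S_{n-m(n)})] \le \limsup_m Z_m = Z$ almost surely, which is~\eqref{eq:as-limit-upper-bound}.

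For the second part, first assume the regularity hypothesis $\sup_n \mathbb{E}[\Fav(S_n)]/\mathbb{E}[\Fav(S_{2n})] < \infty$. Since $m(n) = O(\log n) = o(n)$ and $n\mapsto\mathbb{E}[\Fav(S_n)]$ is (up to the assumed doubling-type control) slowly varying — more precisely, iterating the doubling bound shows $\mathbb{E}[\Fav(S_{n-m(n)})] = (1+o(1))\mathbb{E}[\Fav(S_n)]$, because $n - m(n) = n(1+o(1))$ and the ratio over any window of multiplicative size $1+o(1)$ tends to $1$ — we can replace $\mathbb{E}[\Fav(S_{n-m(n)})]$ by $\mathbb{E}[\Fav(S_n)]$ in~\eqref{eq:as-limit-upper-bound}, giving the $\limsup \le Z$ half of the first displayed limit. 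For the matching lower bound $\liminf_n \Fav(S_n)/\mathbb{E}[\Fav(S_n)] \ge Z$ one argues in the reverse direction: condition on $\mathcal{S}_m$ and use that $\Fav(S_n)$ is at least the Favard length of the projection coming from a single cleverly chosen descendant cylinder, or more robustly, combine a reverse/superadditivity-type estimate with the fact that with high probability $|\mathcal{S}_m| \ge (1-o(1))\rho^{-m}Z$ and that the contributions of the i.i.d.\ pieces concentrate; the lower bound on $\Fav$ of a union in terms of a single piece, together with $\mathbb{E}[\Fav(S_{n-m})] \ge c\,\mathbb{E}[\Fav(S_n)]$ from regularity, closes the gap after another Borel--Cantelli. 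Finally, to pass from the sequence $S_n$ to the continuous neighborhoods $S(r)$: for $\rho^{-n} \le 1/r < \rho^{-(n+1)}$ one sandwiches $S(r)$ between $S_n$ (up to an $O(r)$-neighborhood, which changes each projection length by $O(r)$ and hence $\Fav$ by $O(r) = o(1/\log(1/r))$) and $S_{n'}$ for $n' = n + O(1)$; since $\mathbb{E}[\Fav(S_n)] \asymp \mathbb{E}[\Fav(S_{n+O(1)})]$ by regularity again, the almost-sure limit transfers from the sequence to $r\downarrow 0$.

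The main obstacle I expect is the \emph{lower bound} $\liminf_n \Fav(S_n)/\mathbb{E}[\Fav(S_n)]\ge Z$: subadditivity gives the upper bound essentially for free, but there is no matching superadditivity for Favard length (projections of different cylinders overlap, and overlaps can only decrease the total projected length). The fix is to not sum over all descendants but to use that the projections in ``most'' directions of disjoint, well-separated scaled copies are essentially disjoint — or, following the paper's own strategy, to invoke the sharp per-line estimates of later sections only for the upper half and derive the lower half from~\eqref{Fav_lb_deterministic}-type universality plus the a.s.\ lower bound on $Z_m$. One must be careful that the concentration step requires $\mathbb{E}[\Fav(S_{n-m})]$ to decay no faster than polynomially (which is guaranteed by~\eqref{Fav_lb_deterministic}: $\mathbb{E}[\Fav(S_{n-m})] \gtrsim 1/n$ on the survival event), so that the $\asymp n^4$ independent summands genuinely suppress fluctuations; this is where the explicit exponent $4$ in $m(n)$ earns its keep, and the whole argument is robust enough not to need the uniform-grid hypothesis, which is why Theorem~\ref{theorem:random-favard-pointwise} holds in the stated generality.
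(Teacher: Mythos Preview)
Your upper-bound argument for~\eqref{eq:as-limit-upper-bound} is correct and matches the paper: condition on $\mathcal{S}_m$, use subadditivity of $\Fav$ to dominate $\Fav(S_n)$ by a sum of $|\mathcal{S}_m|$ i.i.d.\ bounded terms, apply Hoeffding, then Borel--Cantelli.

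There are, however, two genuine gaps in the second half.

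First, the claim that the doubling hypothesis alone gives $\mu_{n-m(n)}=(1+o(1))\mu_n$ is false. Doubling says only $\mu_{n/2}\le M\mu_n$, so for $n-m(n)\in[n/2,n]$ you get $\mu_{n-m}/\mu_n\le M$, not $\to 1$; a step sequence that halves exactly at powers of $2$ is a counterexample. What the paper actually uses is \emph{convexity} of $n\mapsto\mu_n$, proved by a Bongers-type self-similarity argument (Lemma~\ref{lem:fav-conv}): from $\mu_n-\mu_{n+1}$ nonincreasing and the doubling bound one deduces $\mu_n-\mu_{n+k}\le\frac{2k}{n}\mu_{n/2}\le\frac{2Mk}{n}\mu_n$, hence $\mu_{n+f(n)}/\mu_n\to1$ whenever $f(n)=o(n)$ (Lemma~\ref{lem:conv}). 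This is needed both to upgrade~\eqref{eq:as-limit-upper-bound} to $\limsup_n\Fav(S_n)/\mu_n\le Z$ and, later, to interpolate the lower bound from a subsequence back to all $n$.

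Second, none of your proposed mechanisms for the lower bound work, as you half-acknowledge: there is no superadditivity, and ``projections in most directions are essentially disjoint'' is not available at this generality. The paper does not try to bound $\Fav(S_n)$ from below by a sum at all. Instead it extracts a \emph{lower} tail bound from the \emph{upper} tail bound already established: with $A=\{\Fav(S_n)/\mu_{n-m}<Z_m-t\}$, decompose $\E[\Fav(S_n)\mid S_m]$ over $A$ and $A^c$, control $\E[\Fav(S_n)\bbone_{A^c}\mid S_m]$ by $(1-\P[A\mid S_m])Z_m\mu_{n-m}$ plus the integrated upper-tail overshoot $O((Z_m\rho^m)^{1/2})$, take expectations, and use convexity to estimate $(\mu_{n-m}-\mu_n)/\mu_{n-m}=O((\log n)/n)$. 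This yields
\[
\P\!\left[\tfrac{\Fav S_n}{\mu_{n-m}}<Z_m-t\right]\lesssim\frac{1}{t}\cdot\frac{\log n}{n}\,.
\]
This is not summable in $n$, so Borel--Cantelli is run along the subsequence $n=k^2$ (where $\sum_k(\log k^2)/k^2<\infty$), and convexity is invoked once more to fill in the gaps via the sandwich $\Fav(S_n)/\mu_n\ge(\Fav(S_{k^2})/\mu_{k^2-m(k^2)})\cdot(\mu_{k^2-m(k^2)}/\mu_{(k-1)^2})$ for $(k-1)^2\le n\le k^2$.
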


\subsection{Construction steps versus neighbourhoods of $S$}

We begin by showing that we can pass between $\Fav(S_n)$ and $\Fav(S(r))$ under a non-restrictive survival assumption.

We say that a model is \emph{surviving} if $\rvL\ge 1$ almost surely. This ensures that $S\cap Q\neq\varnothing$ for all $Q\in\cS_n$. We note that a model with $\P[\rvL=0]>0$ may always be reduced to a surviving model, in the sense that the law of the surviving model is the law of the original model conditioned on survival; see \cite[\S 5.7]{LyonsPeres16}. According to the following lemma, $\Fav(S_n)$ and $\Fav(S(\rho^n))$ are roughly comparable for surviving models. 

\begin{lemma} \label{lem:step-vs-neighbourhood}
Consider a surviving model. If $r \ge \diam(J_0)\rho^n$, then
\[
     \Fav(S(r))\ge \Fav(S_n)\,.
\]
If $r\le \varepsilon\rho^n$, then
\[
     \Fav(S(r)) \le (1+O(\varepsilon)) \Fav(S_n)\,.
\]
\end{lemma}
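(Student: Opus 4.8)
The plan is to prove the two inequalities separately, in each case reducing everything to two elementary facts about orthogonal projections: that $\proj_\theta$ is order-preserving (so $A\subseteq B\Rightarrow\Fav A\le\Fav B$), and that $\proj_\theta(A(r))=(\proj_\theta A)(r)$ for every $A\subseteq\R^2$, where on the right $(\cdot)(r)$ denotes the $r$-neighbourhood in $\R$.

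For the lower bound, assume $r\ge\diam(J_0)\rho^n$. Each $R\in\cS_n$ is a homothetic copy of $J_0$ of ratio $\rho^n$, hence has diameter $\rho^n\diam(J_0)\le r$. Since the model is surviving, $\rvL\ge1$ a.s., so the subtree of the construction rooted at $R$ never dies and $S\cap R\neq\varnothing$; fixing $x_R\in S\cap R$ I get $R\subseteq\bar B(x_R,\diam R)\subseteq\bar B(x_R,r)\subseteq S(r)$, and unioning over $R\in\cS_n$ gives $S_n\subseteq S(r)$. Monotonicity of $\Fav$ then yields $\Fav(S_n)\le\Fav(S(r))$. This direction is routine.

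For the upper bound, assume $r\le\varepsilon\rho^n$. From $S\subseteq S_n$ I get $S(r)\subseteq S_n(r)$, so $\proj_\theta(S(r))\subseteq(\proj_\theta S_n)(r)$ for every $\theta$, and it suffices to establish the pointwise estimate $|(\proj_\theta S_n)(r)|\le(1+O(\varepsilon))|\proj_\theta S_n|$ and integrate in $\theta\in[0,\pi]$. Writing $E=\proj_\theta S_n$, a compact subset of $\R$ with, say, $N$ connected components, the $r$-neighbourhood adds at most an interval of length $r$ at each of the $\le 2N$ endpoints of the components of $E$, so $|E(r)|\le|E|+2rN$. The heart of the matter is the bound $N\le|E|/(2\rho_0\rho^n)$ for a model-dependent constant $\rho_0>0$: every component $C$ of $E=\bigcup_{R\in\cS_n}\proj_\theta R$ meets some $\proj_\theta R$, and being a maximal connected subset of $E$ it must contain a whole connected component-interval of $\proj_\theta R$; since $\proj_\theta R$ is a homothetic copy (ratio $\rho^n$) of $\proj_\theta J_0$, and each connected component of $J_0$ contains a disc of radius $\ge\rho_0$ and so projects, in every direction, onto an interval of length $\ge2\rho_0$, each such component-interval has length $\ge2\rho_0\rho^n$; hence $|E|\ge2\rho_0\rho^n N$. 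Combining, $2rN\le(r/\rho_0\rho^n)|E|\le(\varepsilon/\rho_0)|E|$, so $|E(r)|\le(1+\varepsilon/\rho_0)|E|$; integrating gives $\Fav(S(r))\le\Fav(S_n(r))\le(1+\varepsilon/\rho_0)\Fav(S_n)$.

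The main obstacle I anticipate is conceptual rather than computational: the inclusion $S(r)\subseteq S_n(r)$ looks lossy, and the content of the lemma is precisely that replacing $S$ by $S_n$ and then fattening by $r\ll\rho^n$ (a scale far below that of the pieces of $S_n$) costs only a factor $1+O(\varepsilon)$. Making this precise amounts exactly to the estimate $N\lesssim|\proj_\theta S_n|/\rho^n$ for the number of components of the projection, i.e.\ to checking that each component of $\proj_\theta S_n$ already has length $\gtrsim\rho^n$; for non-convex $J_0$ this relies on each of its finitely many components having a definite inradius, while for convex $J_0$ (in particular the grid models, $J_0=[0,1]^2$) it is immediate since $\proj_\theta R$ is then an interval of length $\ge\rho^n$.
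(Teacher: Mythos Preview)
Your proof is correct and follows essentially the same approach as the paper: for the first inequality, use survival to get $S_n\subseteq S(r)$; for the second, pass to $S_n(r)$, bound the number of connected components of $\proj_\theta S_n$ by $|\proj_\theta S_n|/(\text{const}\cdot\rho^n)$, and use that the $r$-neighbourhood adds at most $2r$ per component. Your use of the minimal inradius $\rho_0$ of the components of $J_0$ is in fact more careful than the paper's own phrasing, which invokes ``the smallest diameter of a connected component of $J_0$''---a quantity that does not literally lower-bound projections in every direction.
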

\begin{proof}
The model being surviving implies that $S_n\subset S(\diam(J_0)\rho^{n})$. This yields the first inequality.

For the second inequality, let $\tau > 0$ be such that $J_0$ contains a ball of radius $\tau$, and note that $\proj_{\theta}(S_n)$ is a union of disjoint intervals of length $\ge \tau\rho^{n}$.  It follows that there are $\le \tau^{-1}\rho^{-n}|\proj_{\theta}(S_n)|$ such intervals. For any set $A\subset\R$ which is a union of $K$ intervals and any $r>0$, we have $|A(r)\setminus A|\le 2 K r$. Hence, for any $r > 0$,
\[
|(\proj_\theta S)(r) \setminus \proj_\theta S_n|
\leq
|(\proj_\theta S_n)(r) \setminus \proj_\theta S_n|
\lesssim r \tau^{-1} \rho^{-n}|\proj_\theta S_n|
\,.
\]
Rearranging and integrating in $\theta$,
\[
\Fav(S(r))
\leq
(1+O(r \rho^{-n}))\Fav(S_n) \qquad\text{for any $r > 0$ and $n \in \N$}.
\]
This yields the claim.
\end{proof}

\subsection{Convexity of Favard length}

Throughout this section, we use the notation
\begin{align*}
    \mu_n(\theta) &= \E\bigl[|\proj_\theta(S_n)|\bigr]\,,\\
    \mu_n &= \E[\Fav(S_n)] = \int_{0}^{\pi} \mu_n(\theta)\,d\theta.
\end{align*}

\begin{lemma}[Convexity] \label{lem:fav-conv}
    For each $\theta$, the sequence $\mu_n(\theta)$ is convex (that is, $\mu_n(\theta) - \mu_{n+1}(\theta) \leq \mu_{n-1}(\theta) - \mu_n(\theta)$ for all $n$). Consequently, the sequence $\mu_n$ is convex.
\end{lemma}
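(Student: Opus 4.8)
The plan is to prove, for each fixed $\theta$, that the ``loss'' $\delta_n(\theta) := \mu_{n-1}(\theta) - \mu_n(\theta)$ is nonincreasing in $n \ge 1$ — which is precisely the asserted inequality $\mu_n(\theta) - \mu_{n+1}(\theta) \le \mu_{n-1}(\theta) - \mu_n(\theta)$ — and then to deduce convexity of $\mu_n$ by integrating in $\theta$. Note first that $S_n \subseteq S_{n-1}$ gives $\proj_\theta S_n \subseteq \proj_\theta S_{n-1}$, so $\delta_n(\theta) = \E\big|\proj_\theta S_{n-1} \setminus \proj_\theta S_n\big| \ge 0$, and all quantities are bounded by $\diam J_0$.

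First I would unfold one level of the construction. Recall $\X = (\rvL; Q_1,\dots,Q_\rvL)$ with $Q_i = h_i(J_0)$, and let $\tilde S^1, \tilde S^2, \dots$ be the independent Galton--Watson processes grown inside $Q_1, Q_2, \dots$, so that the $\tilde S^i$ are i.i.d.\ copies of $S$, independent of $\X$, and $S_n = \bigcup_{i=1}^{\rvL} h_i\bigl(\tilde S^i_{n-1}\bigr)$ for every $n \ge 1$. Since each $h_i$ is a homothety of ratio $\rho$ (no rotation), $\proj_\theta$ being linear gives $\proj_\theta \circ h_i = \phi_i \circ \proj_\theta$ for the affine map $\phi_i(t) = \rho t + \proj_\theta(v_i)$ of $\R$; in particular $\bigl|\proj_\theta(h_i(A)) \setminus \proj_\theta(h_i(B))\bigr| = \rho\,\bigl|\proj_\theta A \setminus \proj_\theta B\bigr|$ for all Borel $A,B$.

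The one genuine step is a subadditivity observation for the loss. Although $\proj_\theta S_n = \bigcup_i \proj_\theta\bigl(h_i(\tilde S^i_{n-1})\bigr)$ is a union with overlaps — so its length is \emph{not} the sum of the lengths of the pieces — the portion removed in passing from $S_n$ to $S_{n+1}$ is covered piecewise:
\[
\proj_\theta S_n \setminus \proj_\theta S_{n+1} \ \subseteq\ \bigcup_{i=1}^{\rvL}\Bigl(\proj_\theta\bigl(h_i(\tilde S^i_{n-1})\bigr)\setminus \proj_\theta\bigl(h_i(\tilde S^i_{n})\bigr)\Bigr),
\]
because a point lying in $\proj_\theta(h_i(\tilde S^i_{n-1}))$ for some $i$ but in no $\proj_\theta(h_j(\tilde S^j_n))$ lies, in particular, in the $i$-th difference set. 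Taking lengths and then conditional expectations given $\X$, each of the $\rvL$ terms contributes $\rho\,\delta_n(\theta)$ (using that $\tilde S^i$ is independent of $\X$ and distributed as $S$), so $\E\bigl[\,|\proj_\theta S_n| - |\proj_\theta S_{n+1}|\,\bigm|\,\X\bigr] \le \rvL\,\rho\,\delta_n(\theta)$. Taking expectations and using $\E[\rvL] = \rho^{-1}$ gives $\delta_{n+1}(\theta) \le \delta_n(\theta)$. Finally, integrating the pointwise inequality $\mu_{n-1}(\theta) - 2\mu_n(\theta) + \mu_{n+1}(\theta) \ge 0$ over $\theta \in [0,\pi]$ (legitimate since $0 \le \mu_k(\theta) \le \diam J_0$) yields convexity of $\mu_n$.

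I do not anticipate a real obstacle; the only pitfall to avoid is treating $|\proj_\theta S_n|$ as $\sum_i \bigl|\proj_\theta(h_i(\tilde S^i_{n-1}))\bigr|$, which is false because the projected first-generation pieces overlap. The resolution is exactly the remark above: it is the \emph{decrement} of the projected measure, not the projected measure itself, that is subadditive over the first-generation pieces, and this is all that is needed since $\E[\rvL]=\rho^{-1}$ makes the per-piece scaling factor $\rho$ cancel against the expected number of pieces.
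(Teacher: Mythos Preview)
Your proof is correct and follows essentially the same route as the paper: both unfold one level of the construction, use the set-theoretic inclusion $\proj_\theta S_n \setminus \proj_\theta S_{n+1} \subseteq \bigcup_i \bigl(\proj_\theta h_i(\tilde S^i_{n-1}) \setminus \proj_\theta h_i(\tilde S^i_n)\bigr)$, and then take expectations with $\E[\rvL]\cdot\rho = 1$. Your explicit conditioning on $\X$ and your remark about the pitfall of treating $|\proj_\theta S_n|$ as a sum are nice expository touches, but the argument is the same as the paper's adaptation of Bongers.
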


\begin{proof}
    We adapt Bongers's proof for the standard four-corner Cantor set \cite[Theorem 3.1]{Bongers19} to our situation. Let $(\rvL;h_1(J_0),\ldots,h_\rvL(J_0))$ be a realization of $\X$, so that $\mathcal{S}_1=\{h_1(J_0),\ldots,h_\rvL(J_0)\}$. For each $i$, let $(S_n^{(i)})_n$ be an independent copy of $S_0\supset S_1\supset\cdots$, also independent of $(\rvL;h_1(J_0),\ldots,h_\rvL(J_0))$.  Then
    \[
        (S_n)_{n=1}^\infty \stackrel{d}{=} \left(\bigcup_{i=1}^\rvL  h_{i}(S_{n-1}^{(i)})\right)_{n=1}^\infty\,.
    \]
    Thus,
    \begin{align*}
        \proj_\theta S_n \setminus \proj_\theta S_{n+1}
        &\stackrel{d}{=} 
        \bigcup_{i=1}^\rvL  \proj_\theta h_{i}(S_{n-1}^{(i)})
        \setminus
        \bigcup_{i=1}^\rvL \proj_\theta h_{i}(S_{n}^{(i)})
        \\
        &\subset
        \bigcup_{i=1}^\rvL \proj_\theta h_{i}(S_{n-1}^{(i)}) \setminus \proj_\theta h_{i}(S_{n}^{(i)})\,.
    \end{align*}
    Taking expectations,
    \begin{align*}
        \mu_n(\theta) - \mu_{n+1}(\theta)
        &= 
        \E|\proj_\theta S_n \setminus \proj_\theta S_{n+1}|
        \\
        &\leq
        \E\left|\bigcup_{i=1}^\rvL  \proj_\theta h_i(S_{n-1}^{(i)}) \setminus \proj_\theta h_i(S_{n}^{(i)})\right|
        \\
        &\leq 
        \E \left[\sum_{i=1}^\rvL \left|\proj_\theta h_i(S_{n-1}^{(i)}) \setminus \proj_\theta h_i(S_{n}^{(i)}) \right|   \right]  
        \\
        &= \E \left[\rho \rvL   \left| \proj_\theta (S_{n-1}^{(i)}) \setminus \proj_\theta(S_{n}^{(i)})  \right|\right]
        \\
        &\stackrel{\ref{it:cc}}{=}
        \mu_{n-1}(\theta) - \mu_n(\theta)\,,
    \end{align*}
    using the independence of $\rvL$ and $S_j^{(i)}$ in the last equality. This shows that $\mu_n(\theta)$ is a convex sequence. To deduce the convexity of $\mu_n$, just integrate in $\theta$.
\end{proof}

\begin{lemma}\label{lem:conv}
    Suppose that the model has the property that
    \[
        M:=\sup_n \frac{\mu_n}{\mu_{2n}}<\infty\,.
    \]
    Then
    \begin{align}\label{eq:convex a_n+k}
    \mu_{n+k} \geq \left(1-\frac{2 M k}{n}\right) \mu_n \qquad  \text{ for all $n,k$}.
    \end{align}
    Consequently,
    \begin{align}\label{eq:convex limit}
    \lim_{n \to \infty} \frac{\mu_{n+f(n)}}{\mu_n} = 1
    \end{align}
    for all sequences $f(n)$ such that $f(n)/n\to 0$.
\end{lemma}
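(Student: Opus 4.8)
The plan is to phrase everything in terms of the one-step decrements $d_n := \mu_n - \mu_{n+1}$. Since $S_{n+1}\subseteq S_n$ we have $|\proj_\theta S_{n+1}|\le|\proj_\theta S_n|$ pointwise, so $(\mu_n)_n$ is nonincreasing and $d_n\ge 0$ (and $M\ge 1$); and by \Cref{lem:fav-conv} the sequence $(\mu_n)_n$ is convex, so $(d_n)_n$ is in addition nonincreasing. Telescoping and using the monotonicity of $(d_j)_j$ gives, for all $n,k\ge 0$,
\[
 \mu_n-\mu_{n+k}=\sum_{j=n}^{n+k-1}d_j\le k\,d_n ,
\]
so \eqref{eq:convex a_n+k} reduces to the single-step bound $d_n\le \frac{2M}{n}\,\mu_n$: from it, $\mu_{n+k}\ge\mu_n-k\,d_n\ge(1-\frac{2Mk}{n})\mu_n$.

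To prove the decrement bound, fix $n\ge 2$ (for $n\le 1$ the inequality \eqref{eq:convex a_n+k} is trivial, its right-hand side being nonpositive unless $k=0$) and set $m=\lceil n/2\rceil$, so that $1\le m<n$ and $2m\ge n$. The monotonicity of $(d_j)_j$ gives $\mu_m-\mu_n=\sum_{j=m}^{n-1}d_j\ge(n-m)\,d_n$, while the hypothesis applied at $m$ together with the monotonicity of $(\mu_j)_j$ gives $\mu_m\le M\mu_{2m}\le M\mu_n$. Hence
\[
 d_n\le\frac{\mu_m-\mu_n}{\,n-m\,}\le\frac{\mu_m}{\,n-m\,}\le\frac{M\,\mu_n}{\lfloor n/2\rfloor},
\]
which equals $\frac{2M}{n}\mu_n$ when $n$ is even and is at most $\frac{2M}{n-1}\mu_n\le\frac{4M}{n}\mu_n$ for odd $n\ge 3$. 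This establishes \eqref{eq:convex a_n+k} (exactly for even $n$, and for odd $n$ with the constant $2$ enlarged to $4$, which is immaterial for what follows).

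Finally, \eqref{eq:convex limit} follows by squeezing, using $\mu_{n+k}\ge(1-\tfrac{4Mk}{n})\mu_n$. Fix $f$ with $f(n)/n\to0$; then $n+f(n)\sim n\to\infty$. If $f(n)\ge0$, monotonicity gives $\mu_{n+f(n)}\le\mu_n$ while the bound gives $\mu_{n+f(n)}\ge(1-\tfrac{4Mf(n)}{n})\mu_n$, so the ratio tends to $1$. If $f(n)<0$, apply the bound with $n'=n+f(n)$ and $k'=-f(n)>0$ to get $\mu_n=\mu_{n'+k'}\ge(1-\tfrac{4Mk'}{n'})\mu_{n'}$, whence $1\le\mu_{n'}/\mu_n\le\bigl(1-\tfrac{4M\,|f(n)|}{n+f(n)}\bigr)^{-1}\to1$ since $|f(n)|/(n+f(n))\to0$. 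In both cases $\mu_{n+f(n)}/\mu_n\to1$.

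The only nonroutine point is the decrement estimate $d_n\lesssim M\mu_n/n$: the content is to convert the two-scale hypothesis $\mu_m\le M\mu_{2m}$ into control of a single step, which the choice $m\approx n/2$ accomplishes (large enough that $2m\ge n$ so the hypothesis applies, small enough that the gap $n-m$ is still comparable to $n$). The slight deterioration of the constant for odd $n$ coming from the rounding is harmless, since \eqref{eq:convex limit} — and all uses of this lemma in the paper — only require $\mu_{n+o(n)}/\mu_n\to1$.
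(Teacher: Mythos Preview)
Your proof is correct and follows essentially the same approach as the paper's: both bound the one-step decrement $d_n$ by the average decrement over $[\lceil n/2\rceil,n]$ (you via monotonicity of $(d_j)$, the paper via pigeonhole---equivalent here), then invoke the hypothesis to get $\mu_{\lceil n/2\rceil}\le M\mu_n$. Your explicit treatment of the odd-$n$ rounding (constant $4$ instead of $2$) is more careful than the paper, which writes $\mu_{n/2}$ informally; and your separate handling of $f(n)<0$ in \eqref{eq:convex limit} is a detail the paper leaves implicit.
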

\begin{proof}
    This is a consequence of Lemma \ref{lem:fav-conv} and general facts about convex sequences. Indeed, $\mu_n$ is a positive, non-increasing, convex sequence, such that $\mu_{2n} \ge M^{-1} \mu_n$ for all $n$. These are all the properties of $(\mu_n)$ that we need:
    Let $n \in \N$. Since $\mu_n\ge 0$, $\mu_{n/2} - \mu_n \leq \mu_{n/2}$ so, writing $\mu_{n/2} - \mu_n$ as a telescoping sum and using the pigeonhole principle, there exists $i \in [n/2, n]$ such that $\mu_i - \mu_{i+1} \leq \frac{2\mu_{n/2}}{n}$. By convexity, $\mu_n - \mu_{n+k} \leq \frac{2k\mu_{n/2}}{n}$, and we deduce
    \[
        \frac{\mu_n - \mu_{n+k}}{\mu_n} \leq \frac{2k}{n} \cdot\frac{\mu_{n/2}}{\mu_n} \leq  \frac{2Mk}{n} \,.
    \]
    If we apply the inequality above with $k=f(n)$, we get the lemma.
\end{proof}

\subsection{Proof of Theorem \ref{thm:almost-sure-convergence}}

Recall the following from \Cref{subsec:GW}: 
\begin{enumerate}
    \item The random fractal is defined in terms of countably many independent copies of $\mathfrak{X}$. (See \Cref{subsec:GW}.) In the following lemmas, when we condition on $\cS_m$, this means that we condition on all the copies of $\mathfrak{X}$ which determine $\cS_m$. 
    \item $Z_m=\rho^m \#\mathcal{S}_m$ is the martingale associated to the process $\mathcal{S}_m$. 
\end{enumerate}

\begin{lemma}[Conditional upper tail bound]
For all $s > 0$ and all $m < n$,
\begin{align}
\label{eq:conditional-upper-tail}
    \P\bigl[\Fav(S_n)>Z_m \mu_{n-m} + s \bigm| \cS_m\bigr]
    \leq
    \exp\left(-\frac{\Omega(1)s^2}{Z_m\rho^m}\right)\,.
\end{align}
\end{lemma}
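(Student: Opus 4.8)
The plan is to exploit the branching (stochastic self-similarity) structure of the construction, together with the subadditivity and scaling behaviour of Favard length, to reduce the claim to a Hoeffding bound for a sum of bounded i.i.d.\ random variables.

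First I would condition on the $\sigma$-algebra $\cF_m$ generated by the first $m$ generations of the construction; then $\cS_m$, and hence $S_m$ and $Z_m=\rho^m\#\cS_m$, are $\cF_m$-measurable. Write $N:=\#\cS_m=Z_m\rho^{-m}$. The structural input is that, grouping the copies making up $S_n$ according to which element of $\cS_m$ they descend from, one can write $S_n=\bigcup_{R\in\cS_m}h_R(T^{(R)})$, where $h_R$ is the homothety of ratio $\rho^m$ carrying $J_0$ onto $R$ and, conditionally on $\cF_m$, the sets $T^{(R)}$ are i.i.d.\ with the law of $S_{n-m}$ (and independent of $\cF_m$).

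Now combine two elementary properties of $\Fav$: $|\proj_\theta(\cdot)|$ is countably subadditive, hence so is $\Fav$; and a homothety of ratio $\rho^m$ translates each projection by a constant and scales it by $\rho^m$, hence multiplies $\Fav$ by $\rho^m$. Therefore
\[
\Fav(S_n)\;\le\;\sum_{R\in\cS_m}\Fav\bigl(h_R(T^{(R)})\bigr)\;=\;\rho^m\sum_{R\in\cS_m}\Fav\bigl(T^{(R)}\bigr),
\]
where each summand lies in $[0,C_0]$ with $C_0:=\Fav(J_0)<\infty$ a model constant, and $\E\bigl[\rho^m\sum_{R}\Fav(T^{(R)})\bigm|\cF_m\bigr]=\rho^m N\mu_{n-m}=Z_m\mu_{n-m}$. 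Consequently the event $\{\Fav(S_n)>Z_m\mu_{n-m}+s\}$ is contained in the event that a sum of $N$ independent random variables, each valued in $[0,\rho^m C_0]$, exceeds its conditional mean by at least $s$.

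Finally I would apply Hoeffding's inequality (as recorded in Appendix \ref{sec:elementaryprob}): the sum of squared interval lengths is $N(\rho^m C_0)^2=Z_m\rho^m C_0^2$, giving the bound $\exp\bigl(-2s^2/(Z_m\rho^m C_0^2)\bigr)$, which is of the claimed form since $C_0$ depends only on the model. (If $\#\cS_m=0$ then $Z_m=0$, $\Fav(S_n)=0$, and the inequality is vacuous.) The only points requiring care are making the decomposition of $S_n$ and the conditional independence of the $T^{(R)}$ precise, and verifying the scaling/subadditivity identities for $\Fav$; none of these is a serious obstacle, and the argument is exactly the one sketched for \Cref{thm:almost-sure-convergence} in \Cref{section:intro ideas of proof}.
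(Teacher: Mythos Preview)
Your proposal is correct and follows essentially the same argument as the paper: decompose $S_n$ according to $\cS_m$, use subadditivity and the scaling $\Fav(h_R(T^{(R)}))=\rho^m\Fav(T^{(R)})$ to bound $\Fav(S_n)$ by a sum of $\#\cS_m$ i.i.d.\ bounded variables with conditional mean $Z_m\mu_{n-m}$, then apply Hoeffding. If anything, you give more detail than the paper (the explicit constant $C_0=\Fav(J_0)$ and the degenerate case $\#\cS_m=0$).
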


\begin{proof}
Note that Favard length is sub-additive: if $\{ A_i\}$ are finitely many (or countably  many, but we only need the finite case) Borel sets in $\R^2$, then
\[
    \Fav\left(\bigcup_i A_i\right) \le \sum_i \Fav(A_i).
\]
It follows that $\Fav(S_n)$ conditioned on $\cS_{m}$ is bounded above by the sum $Y$ of $\#\cS_m$ independent copies of $\rho^m \Fav(S_{n-m})$ (also independent of $\cS_{m})$. Note that $\E[Y] = Z_m \mu_{n-m}$. Since $0\le \rho^m\Fav(S_{n-m})\lesssim \rho^m$, Hoeffding's inequality (\Cref{lemma:hoeffding}) yields (for all $s > 0$)
\begin{align*}
    \P\bigl[\Fav(S_n)>Z_m \mu_{n-m} + s \bigm| \cS_m \bigr]
    \leq
    \P\bigl[Y>\E[Y] + s \bigr]
    \leq
    \exp\left(-\frac{\Omega(s^2)}{\#\cS_m (\rho^m)^2}\right)
    \,,
\end{align*}    
as desired.
\end{proof}

\begin{lemma}[Unconditional upper tail bound] \label{lem:upper-tail-bound}
For all $t > 0$ and all $n$, writing $m=\lfloor 4\log_{\rho^{-1}}(n)\rfloor$, we have
\begin{align} 
    \label{eq:Fav upper tail}
    \P\left[ \frac{\Fav(S_n)}{\mu_{n-m}} > Z_m + Z_{m}^{1/2} t   \right]
    &\le
    \exp(-\Omega(t^2)  n^2)\,.
\end{align}
\end{lemma}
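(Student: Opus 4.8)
The goal is to upgrade the conditional tail bound \eqref{eq:conditional-upper-tail} into an unconditional one by integrating out the randomness of $S_m$, using what we know about the martingale $Z_m = \rho^m \#\cS_m$. The plan is as follows.

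\textbf{Step 1: Split according to the size of $Z_m$.} Fix $n$ and set $m = \lfloor 4\log_{\rho^{-1}} n\rfloor$, so that $\rho^m \asymp n^{-4}$. On the event $\{\Fav S_n / \mu_{n-m} > Z_m + Z_m^{1/2} t\}$, apply the conditional bound \eqref{eq:conditional-upper-tail} with $s = Z_m^{1/2} t \,\mu_{n-m}$. This gives, conditionally on $S_m$,
\[
    \P\bigl[\Fav S_n > Z_m \mu_{n-m} + Z_m^{1/2} t\,\mu_{n-m}\bigm| S_m\bigr] \le \exp\left(-\frac{\Omega(1)\, Z_m\, t^2\, \mu_{n-m}^2}{Z_m \rho^m}\right) = \exp\left(-\Omega(1)\,\frac{\mu_{n-m}^2}{\rho^m}\, t^2\right).
\]
So the whole point reduces to showing $\mu_{n-m}^2/\rho^m \gtrsim n^2$, i.e.\ $\mu_{n-m} \gtrsim \rho^{m/2} n = \rho^{m/2} n$. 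Since $\rho^{m/2} \asymp n^{-2}$, this is $\mu_{n-m} \gtrsim n^{-1}$, which is exactly the kind of lower bound we expect for the expected Favard length (and in any case $\mu_{n-m} \ge \mathcal H^1_\infty(S)\big/\log(\cdot) $-type bounds give at least a polynomial-in-$n$ lower bound, which is all that is needed here since the exponent only needs to beat $n^2$ after squaring against $\rho^{-m} \asymp n^4$).

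\textbf{Step 2: Handle the conditioning cleanly.} The subtlety is that \eqref{eq:conditional-upper-tail} has $Z_m \rho^m = \rho^{2m}\#\cS_m$ in the denominator, and $Z_m$ is itself random; but note that the bound we want, \eqref{eq:Fav upper tail}, is stated in terms of $Z_m + Z_m^{1/2}t$, so the threshold also scales with $Z_m$. Concretely: the event in \eqref{eq:Fav upper tail} is $\{\Fav S_n / \mu_{n-m} - Z_m > Z_m^{1/2} t\}$. Conditionally on $S_m$ (hence on $Z_m$), set $s = Z_m^{1/2} t\, \mu_{n-m}$ in \eqref{eq:conditional-upper-tail}; the $Z_m$'s cancel to leave $\exp(-\Omega(1) t^2 \mu_{n-m}^2/\rho^m)$ as computed above, a deterministic bound independent of the value of $Z_m$. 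Taking expectations over $S_m$ therefore yields the same bound unconditionally, and combining with $\mu_{n-m}^2/\rho^m \gtrsim n^2$ gives \eqref{eq:Fav upper tail}. Note this step requires no control on moments or tails of $Z_m$ at all — the cancellation is exact — which is the clean reason the lemma holds in this generality.

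\textbf{Main obstacle.} The only real content is the deterministic lower bound $\mu_{n-m}\gtrsim n^{-1}$ (or any polynomial lower bound strong enough, after squaring, to dominate $\rho^{-m}\asymp n^4$ and still leave an $n^2$ factor). For a surviving model one has $\mathcal H^1_\infty(S) > 0$ deterministically-in-expectation-sense, and \eqref{Fav_lb_deterministic} applied to $S(\rho^{n-m})$ together with \Cref{lem:step-vs-neighbourhood} gives $\mu_{n-m} = \E \Fav S_{n-m} \gtrsim \E[\mathcal H^1_\infty(S)]/\log(\rho^{-(n-m)}) \gtrsim 1/n$; since $m = O(\log n)$ this is $\gtrsim 1/n$, comfortably enough. (If the model is not surviving one first conditions on survival as in \S5.7 of \cite{LyonsPeres16}.) Everything else is the bookkeeping of Steps 1--2, which is routine given \eqref{eq:conditional-upper-tail}.
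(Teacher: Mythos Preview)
Your argument is correct and is essentially identical to the paper's: apply \eqref{eq:conditional-upper-tail} with $s = Z_m^{1/2}\,t\,\mu_{n-m}$, observe that the $Z_m$'s cancel in the exponent to leave the deterministic bound $\exp\bigl(-\Omega(t^2)\,\mu_{n-m}^2/\rho^m\bigr)$, then use $\mu_{n-m}\gtrsim 1/n$ and $\rho^m\asymp n^{-4}$ and take expectations. (Your heading ``Split according to the size of $Z_m$'' is a misnomer---as you yourself note in Step~2, no splitting is needed because the cancellation is exact---but the content is right.)
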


\begin{proof}
We apply the previous lemma with $s = Z_m^{1/2} \mu_{n-m} t$ to get
\begin{align*}
    \P\left[ \frac{\Fav(S_n)}{\mu_{n-m}} > Z_m + Z_{m}^{1/2} t ~\middle|~ \cS_m \right]
    \stackrel{\eqref{eq:conditional-upper-tail}}{\leq}    
    \exp\left(-\frac{\Omega(t^2) \mu_{n-m}^2}{\rho^m}\right)
    \leq    
    \exp\left(-\Omega(t^2) n^2\right)
    \,,
\end{align*}
where in the second inequality we used $m = \lfloor 4\log_{\rho^{-1}}(n)\rfloor$ and $\mu_{n-m} \gtrsim \frac{1}{n-m} \sim \frac{1}{n}$. Since the bound is independent of $\mathcal{S}_m$, we obtain \eqref{eq:Fav upper tail}.
\end{proof}

\begin{lemma}[Unconditional lower tail bound] \label{lem:lower-tail-bound}
Suppose that the model has the property that
\[
    \sup_n \frac{\mu_n}{\mu_{2n}}<\infty\,.
\]
Then for all $t > 0$ and all $n$, writing $m=\lfloor 4\log_{\rho^{-1}}(n)\rfloor$, we have
\begin{align} 
    \label{eq:Fav lower tail}
    \P\left[ \frac{\Fav(S_n)}{\mu_{n-m}} < Z_m - t  \right]
    &\lesssim
    \frac{1}{t} \frac{\log n}{n} \,.
\end{align}
\end{lemma}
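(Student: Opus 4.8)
The plan is to condition on $S_m$ exactly as in the previous two lemmas, but to use this to produce a \emph{lower} bound on $\Fav(S_n)$. Condition on $S_m$ and write $S_n=\bigcup_{Q\in\cS_m}S_n^Q$, where $S_n^Q:=S_n\cap Q$ are, given $S_m$, independent copies of the $\rho^m$-rescaled set $h_Q(S_{n-m})$ (one for each $Q\in\cS_m$), independent of $S_m$. Set $W:=\sum_{Q\in\cS_m}\Fav(S_n^Q)=\sum_{Q\in\cS_m}\rho^m\Fav(S_{n-m}^{(Q)})$; by subadditivity of Favard length (used already in \Cref{lem:upper-tail-bound}), $\Fav(S_n)=W-D$ for some \emph{nonnegative} ``projection overlap defect'' $D\ge 0$. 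I would record two facts about this decomposition: (i) $\E[W\mid S_m]=\#\cS_m\,\rho^m\,\mu_{n-m}=Z_m\mu_{n-m}$, and hence, taking a further expectation (using $\E Z_m=1$, since $Z_m$ is a martingale) and subtracting $\E\Fav(S_n)=\mu_n$, that $\E[D]=\mu_{n-m}-\mu_n$; and (ii) the conditional variance bound $\Var(W\mid S_m)\le \tfrac14\#\cS_m(\rho^m\Fav(J_0))^2=\tfrac14 Z_m\rho^m\Fav(J_0)^2$, coming from the elementary fact that a $[0,b]$-valued random variable has variance at most $b^2/4$, applied to each summand $\rho^m\Fav(S_{n-m}^{(Q)})\in[0,\rho^m\Fav(J_0)]$ (here $\Fav(S_{n-m})\le\Fav(J_0)$ by monotonicity of Favard length under inclusion, since $S_{n-m}\subset J_0$).

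The key point is that $\E[D]$ is small \emph{because of convexity}. Since $m=\lfloor 4\log_{\rho^{-1}}n\rfloor\lesssim\log n$ and $n-m\ge n/2$ for large $n$, applying \Cref{lem:conv} (specifically \eqref{eq:convex a_n+k} with parameters $n-m$ and $m$) gives $\mu_n\ge\bigl(1-\tfrac{2Mm}{n-m}\bigr)\mu_{n-m}$, whence
\[
\E[D]\;=\;\mu_{n-m}-\mu_n\;\le\;\frac{2Mm}{n-m}\,\mu_{n-m}\;\lesssim\;\frac{\log n}{n}\,\mu_{n-m}\,.
\]
This is precisely where the $\log n$ in \eqref{eq:Fav lower tail} originates: it is the convexity defect of the sequence $(\mu_k)$ accrued over the $m\sim\log n$ extra construction steps, and it is the only reason the lower tail is not exponentially small.

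The tail estimate now follows from two applications of Markov's inequality. I would argue that if $\Fav(S_n)<(Z_m-t)\mu_{n-m}$, then either $D>\tfrac{t}{2}\mu_{n-m}$, or else $W=\Fav(S_n)+D<(Z_m-\tfrac{t}{2})\mu_{n-m}$, i.e.\ $Z_m\mu_{n-m}-W>\tfrac{t}{2}\mu_{n-m}$; hence
\[
\P\bigl[\Fav(S_n)<(Z_m-t)\mu_{n-m}\bigr]\;\le\;\P\bigl[D>\tfrac{t}{2}\mu_{n-m}\bigr]+\P\bigl[Z_m\mu_{n-m}-W>\tfrac{t}{2}\mu_{n-m}\bigr]\,.
\]
For the first term, Markov and the bound on $\E[D]$ give $\le\frac{2\E[D]}{t\mu_{n-m}}\lesssim\frac1t\frac{\log n}{n}$. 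For the second term, note $\E[Z_m\mu_{n-m}-W]=0$, so $\E[(Z_m\mu_{n-m}-W)_+]=\tfrac12\E\lvert Z_m\mu_{n-m}-W\rvert\le\tfrac12\sqrt{\Var(Z_m\mu_{n-m}-W)}$; by the law of total variance and (ii) (using that $Z_m$ is $S_m$-measurable and $\E Z_m=1$), $\Var(Z_m\mu_{n-m}-W)=\E[\Var(W\mid S_m)]\le\tfrac14\Fav(J_0)^2\rho^m$. Markov then yields $\P[Z_m\mu_{n-m}-W>\tfrac{t}{2}\mu_{n-m}]\lesssim\frac{\sqrt{\rho^m}}{t\,\mu_{n-m}}$; since $\rho^m\lesssim n^{-4}$ and $\mu_{n-m}\ge\mu_n\gtrsim 1/n$ (the universal lower bound \eqref{Fav_lb_deterministic}, recalling that we may assume the model is surviving), this is $\lesssim\frac1t n^{-1}\le\frac1t\frac{\log n}{n}$. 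Adding the two bounds gives \eqref{eq:Fav lower tail}.

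The main obstacle — and the thing this argument is engineered to circumvent — is that subadditivity controls $\Fav(S_n)$ only from \emph{above} by $W$, so the concentration of $W$ cannot be transferred directly to a lower bound for $\Fav(S_n)$; one must dominate the overlap defect $D$. A direct, geometric estimate of $D$ (summing overlaps of $\proj_\theta S_n^Q$ and $\proj_\theta S_n^{Q'}$ over pairs) is hopeless, since there are $\sim(\#\cS_m)^2$ pairs and even a negligible per-pair overlap accumulates to something far larger than $\mu_{n-m}$. The resolution is to avoid estimating $D$ geometrically altogether and instead use the soft identity $\E[D]=\mu_{n-m}-\mu_n$ together with the convexity of $(\mu_k)$ from \Cref{lem:fav-conv}: the entire argument trades the geometry of line–square incidences for the martingale identity $\E[Z_m]=1$ and a one-line consequence of convexity.
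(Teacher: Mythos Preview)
Your proof is correct, and its core is the same as the paper's: both hinge on the identity $\E[Z_m\mu_{n-m}]-\mu_n=\mu_{n-m}-\mu_n$ together with the convexity bound $\mu_{n-m}-\mu_n\lesssim\frac{\log n}{n}\,\mu_{n-m}$ from \Cref{lem:conv}, and both control the residual fluctuation by a term of order $(\rho^m)^{1/2}/\mu_{n-m}\lesssim 1/n$. The organization, however, is genuinely different. You introduce the auxiliary sum $W=\sum_{Q\in\cS_m}\Fav(S_n^Q)$ and the overlap defect $D=W-\Fav(S_n)\ge 0$, then split the bad event into $\{D\text{ large}\}$ and $\{W\text{ small}\}$; Markov on $\E[D]=\mu_{n-m}-\mu_n$ handles the first, and a Chebyshev-type bound on $(Z_m\mu_{n-m}-W)_+$ via the conditional variance of $W$ handles the second. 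The paper instead never introduces $W$ or $D$: it decomposes $\E[\Fav(S_n)\mid S_m]$ according to the indicator of the bad event $A$, and bounds $\E[\Fav(S_n)\bbone_{A^c}\mid S_m]$ by integrating the conditional upper tail bound \eqref{eq:conditional-upper-tail} (Hoeffding), obtaining after rearrangement $t\,\P[A\mid S_m]\le \frac{Z_m\mu_{n-m}-\E[\Fav(S_n)\mid S_m]}{\mu_{n-m}}+O\bigl((Z_m\rho^m)^{1/2}/\mu_{n-m}\bigr)$; taking expectations recovers exactly $\frac{\mu_{n-m}-\mu_n}{\mu_{n-m}}+O(1/n)$. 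Your route is self-contained (it does not invoke the earlier upper tail lemma) and makes the geometric meaning of the $\log n/n$ term transparent as the expected projection overlap; the paper's route is shorter because it recycles the Hoeffding tail bound already established.
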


\begin{proof}
Fix $t > 0$ and $n$. Let $A$ denote the event $\frac{\Fav(S_n)}{\mu_{n-m}} < Z_m - t$, and let $p=\P\left[A \mid \cS_m\right]$. We decompose
\begin{align*}
    \E[\Fav(S_n) \mid \cS_m]
    =
    \E[\Fav(S_n)\bbone_A \mid \cS_m]
    +
    \E[\Fav(S_n)\bbone_{A^c} \mid \cS_m]
\end{align*}
For the two terms, we have
\begin{align*}
    \E[\Fav(S_n)\bbone_A \mid \cS_m] \leq (Z_m-t) \mu_{n-m} \cdot p
\end{align*}
and
\begin{align*}
    &\E[\Fav(S_n)\bbone_{A^c} \mid \cS_m]
    \\
    &=
    \int_0^\infty
    \P\bigl[\Fav(S_n)\bbone_{A^c} > s \bigm| \cS_m\bigr] \, ds
    \\
    &\le 
    (1-p)Z_m \mu_{n-m}
    +
    \int_{0}^\infty\P\bigl[\Fav(S_n)>Z_m \mu_{n-m} + s \bigm| \cS_m\bigr] \, ds 
    \\
    &\stackrel{\eqref{eq:conditional-upper-tail}}{\leq}
    (1-p)Z_m \mu_{n-m}
    +
    O((Z_m \rho^m)^{1/2})
    \,.
\end{align*}
Combining the three lines above and rearranging, we obtain
\begin{align*}
    t\,\P\left[ A ~\middle|~ \cS_m\right]
    \leq
    \frac{Z_m  \mu_{n-m} - \E[\Fav(S_n) \mid \cS_m]}{\mu_{n-m}} + O\left(\frac{(Z_m \rho^m)^{1/2}}{\mu_{n-m}}\right)  
\end{align*}
Taking expectations on both sides and using $\E[Z_m^{1/2}] \leq \E[Z_m]^{1/2} = 1$, $m=\lfloor 4\log_{\rho^{-1}}(n)\rfloor$, and $\mu_{n-m} \gtrsim \frac{1}{n}$, we have
\begin{align*}
    t\,
    \P\left[ A \right]
    \leq
    \frac{\mu_{n-m} - \mu_n}{\mu_{n-m}} + O\left(\frac{1}{n}\right)  
    \stackrel{\eqref{eq:convex a_n+k}}{\lesssim}
    \frac{\log n}{n}
\end{align*}
as desired. Note that we use the assumption $\sup_n \frac{\mu_n}{\mu_{2n}}<\infty$ in the final inequality above and note also that the reference is to the inequality \eqref{eq:convex a_n+k}, not Lemma \ref{lem:step-vs-neighbourhood}.
\end{proof}

\begin{proof}[Proof of \Cref{thm:almost-sure-convergence}]
 It follows from the upper tail bound (\Cref{lem:upper-tail-bound}) and Borel--Cantelli that, for any $t>0$, 
\[
    \limsup_{n\to\infty} \frac{\Fav(S_n)}{\mu_{n-m}} \le \limsup_{n\to\infty} (Z_m + Z_m^{1/2}t).
\]
Since $Z_m\to Z>0$ as $m\to\infty$ almost surely on non-extinction, we get the first claim \eqref{eq:as-limit-upper-bound} by letting $t\downarrow 0$.

Now suppose $\sup_n \frac{\mu_n}{\mu_{2n}} <\infty$. Then the first claim \eqref{eq:as-limit-upper-bound} and convexity (Lemma \ref{lem:conv}) imply that
\[
\limsup_{n\to\infty} \frac{\Fav(S_n)}{\mu_n}\le Z \qquad\text{almost surely}.
\]
Thus, it remains to show
\[
\liminf_{n\to\infty} \frac{\Fav(S_n)}{\mu_n}\ge Z \qquad\text{almost surely}.
\]
Let $m(n) = \lfloor 4\log_{\rho^{-1}}(n)\rfloor$. For $(k-1)^2 \leq n \leq k^2$, we have
\[
\frac{\Fav(S_n)}{\mu_n}
\geq
\frac{\Fav(S_{k^2})}{\mu_{k^2 - m(k^2)}}
\cdot
\frac{\mu_{k^2-m(k^2)}}{\mu_{(k-1)^2}}
\,.
\]
\Cref{lem:conv} implies $\frac{\mu_{k^2-m(k^2)}}{\mu_{(k-1)^2}} \to 1$, so
\[
\liminf_{n\to\infty} \frac{\Fav(S_n)}{\mu_n}
\geq
\liminf_{k \to \infty} \frac{\Fav(S_{k^2})}{\mu_{k^2-m(k^2)}}
\,.
\]
Note that $\sum_k \frac{\log (k^2)}{k^2} < \infty$. By the lower tail bound (\Cref{lem:lower-tail-bound}) and Borel--Cantelli, the right-hand side above is at least $Z$ almost surely. We have therefore shown that $\Fav(S_n)/\mu_n\to Z$ a.s. 

To pass from this to $\Fav(S_r)/\E[\Fav(S_r)]$, we use \Cref{lem:step-vs-neighbourhood} and the assumption $\mu_{n}=O(\mu_{2n})$. (Since $Z=0$ if and only if $S=\varnothing$ a.s., there is no loss of generality in passing to the surviving version of the model. This completes the proof.)
\end{proof}

\section{Expected decay of Favard length for grid models}\label{sec:expected_ub}

\subsection{The decay rate for uniform grid models}

The goal of this section is to prove the following results.
\begin{theorem} \label{thm:expected-decay non-deg}
    For a non-degenerate uniform grid model, 
    \[
        \E[\Fav(S_n)] \sim \frac{1}{n}\,.
    \]
\end{theorem}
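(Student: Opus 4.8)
The plan is to prove the two bounds $\E[\Fav S_n] \lesssim 1/n$ and $\E[\Fav S_n] \gtrsim 1/n$ separately, with the lower bound being essentially immediate and the upper bound carrying all the weight.

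\textbf{Lower bound.} By Mattila's universal estimate \eqref{Fav_lb_deterministic} applied to $S_n$ (or to $S$ together with \Cref{lem:step-vs-neighbourhood}), and using that in the Ahlfors-regular/surviving setting $\mathcal{H}^1_\infty(S)\gtrsim 1$ on a set of positive probability --- or more robustly, that $\mathcal{H}^1_\infty(S_n)\gtrsim 1$ deterministically since $S_n$ contains $L^n$ essentially disjoint homothetic copies of $J_0$ --- one gets $\Fav(S_n)\gtrsim 1/\log(L^n) \sim 1/n$. Taking expectations gives $\E[\Fav S_n]\gtrsim 1/n$. (One could alternatively lower-bound by a single fixed line through a surviving branch, but the Hausdorff content route is cleanest.)

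\textbf{Upper bound.} This is the main obstacle and follows the branching-process strategy sketched in \Cref{section:intro ideas of proof}. By Fubini, $\E[\Fav S_n] = \int_0^\pi \int_\R \P[\ell(\theta,t)\cap S_n\neq\emp]\,dt\,d\theta$, so it suffices to show $\int_\R \P[\ell(\theta,t)\cap S_n\neq\emp]\,dt \lesssim 1/n$ with a constant uniform in $\theta$ (non-degeneracy is what makes the constant uniform). Fix $\theta$. For a fixed line $\ell$, the squares in $\cD_k$ meeting $\ell$ that survive to $S_k$ form a stochastically pruned tree; I would track the random variable counting, roughly, the ``$\ell$-mass'' surviving at level $k$, namely something like $W_k(\theta,t) = L^k |\ell\cap S_k|$ restricted to the relevant scales, and show that $(W_k)$ behaves like a critical Galton--Watson / supermartingale-type process: conditioned on level $k$, the expected mass at level $k+1$ equals the mass at level $k$ (this is exactly the uniformity assumption, giving $\E[L|\proj_\theta^{-1}(s)\cap S_1|] = |\proj_\theta^{-1}(s)\cap[0,1]^2|$), and there is a uniform lower bound on the conditional variance of the increment --- here non-degeneracy guarantees that for \emph{every} direction $\theta$, on a definite proportion of the relevant lines the one-step offspring distribution is genuinely non-deterministic, so the conditional variance is $\gtrsim 1$ uniformly in $\theta$. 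Then the standard critical-branching argument (compare $\P[\text{survival to generation }n]\lesssim 1/n$ via the differential-inequality/recursion $f_{n+1}\le f_n - c f_n^2$ for the survival probability, or equivalently the second-moment estimate $\E[W_n^2]\lesssim n$ combined with $\E[W_n]=|\ell\cap[0,1]^2|$ and Cauchy--Schwarz $\P[W_n>0]\ge \E[W_n]^2/\E[W_n^2]$ run in reverse) yields $\P[\ell\cap S_n\neq\emp]\lesssim 1/n$. Integrating the ``how many lines'' count $\int_\R \P[\cdot]\,dt$ requires care because a single line intersects $[0,1]^2$ in length $O(1)$ but there is a one-parameter family; the key point is that $\int_\R L^k|\ell(\theta,t)\cap S_k|\,dt$ has a controlled expectation (it equals $\int_\R L^k|\ell(\theta,t)\cap[0,1]^2|\,dt \cdot (\text{martingale normalization})$, which is $O(1)$ per unit $\theta$), so the survival-probability bound integrates to $O(1/n)$.

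\textbf{Main difficulty and how I would handle it.} The genuine obstacle is that, unlike a true Galton--Watson process, the offspring distribution of the pruned tree depends on the fine geometry of how $\ell$ crosses the subdivided square, so it is a multi-type branching process with a continuum of types, and the branching can momentarily degenerate (e.g. when $\ell$ is nearly axis-parallel, or passes very close to a grid corner, mimicking the degenerate case locally). The resolution, following Peres--Solomyak, is a renewal/coarse-graining argument: one does not insist the process looks critical at every single level, but shows that over blocks of boundedly many levels the accumulated conditional variance is $\gtrsim 1$ uniformly in $\theta$ --- precisely because non-degeneracy rules out the only configurations where this could fail persistently --- and then runs the critical-branching estimate on the coarse-grained process. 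Handling the uniformity of all implied constants in $\theta$ (and the measure-zero set of ``bad'' $t$ where $\ell$ hits grid corners) is the bookkeeping-heavy part; the conceptual content is entirely the critical-branching heuristic of \Cref{section:intro ideas of proof}.
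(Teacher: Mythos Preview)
Your high-level strategy---Fubini, the critical-branching heuristic on $W_k=L^k|\ell\cap S_k|$, and non-degeneracy for uniformity in $\theta$---is exactly the paper's. The lower bound via Hausdorff content is also what the paper does (though note that for non-Ahlfors-regular uniform models such as fractal percolation, $S_n$ does \emph{not} deterministically consist of $L^n$ squares; the paper covers that case via \Cref{lem:unif_ub_exp} instead).

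There is, however, a concrete gap in your upper-bound mechanism. Your option (b) does not work: Cauchy--Schwarz on $\E[W_n]=\E[W_n\bbone_{W_n>0}]$ gives $\P[W_n>0]\ge\E[W_n]^2/\E[W_n^2]$, a \emph{lower} bound, and there is no way to ``run it in reverse''. Your option (a), the recursion $f_{n+1}\le f_n-cf_n^2$, is the Peres--Solomyak route, but with a continuum of types there is no scalar $f_n$ to recur on, and your coarse-graining remark is not yet an argument. The paper uses neither: it writes
\[
|\ell\cap[0,1]^2|\ge\E\bigl[L^n|\ell\cap S_n|\bigm|\ell\cap S_n\neq\emp\bigr]\cdot\P[\ell\cap S_n\neq\emp]
\]
and lower-bounds the conditional expectation by $\gtrsim n$ via a first-survivor decomposition (\Cref{lemma:expect multi scale}): conditioning on the $\prec$-minimal surviving $Q(\bm\lambda)\in\cS_n$, the expected mass picks up $\ge c$ at every scale $i$ where the substring $(\lambda_{i+1},\dots,\lambda_{i+k})$ lies in a fixed configuration set $\LEI_k(\theta,c)$. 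Non-degeneracy enters precisely in showing that a \emph{single} pair $(k,c)$ works for all $\theta$ at once (\Cref{lemma:nonempty V local}, \Cref{lem:non-deg-structure}). Chernoff then confines the $\bm\lambda$ with few such substrings to an exponentially small family $\cB$, and Markov's inequality (\Cref{lemma:markov}) converts this into an exceptional set $E_{\theta,n}\subset\R$ of \emph{positive} but exponentially small measure---not the measure-zero corner-hitting set you anticipate; the paper exhibits a model for which this exceptional set is genuinely unavoidable.
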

\begin{theorem} \label{thm:expected-decay deg}
    For a degenerate uniform grid model, 
    \[
        \E[\Fav(S_n)] \sim \frac{\log n}{n}\,.
    \]
\end{theorem}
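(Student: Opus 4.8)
The plan is to establish the matching bounds $\E[\Fav S_n]\lesssim \log n/n$ and $\E[\Fav S_n]\gtrsim \log n/n$. After transposing coordinates we may assume $\X$ is vertically degenerate; note that this forces $\rvL\equiv L$ with pairwise disjoint interiors, so the model is automatically Ahlfors-regular with $Z\equiv 1$ and $\rho=1/L$. Let $\theta_*$ be the direction whose fibres $\ell(\theta_*,t)$ are the vertical lines, so that $\proj_{\theta_*}(S_n)=[0,1]$ almost surely for every $n$, and for $\theta\in[0,\pi]$ write $d(\theta)$ for the angular distance from $\theta$ to $\theta_*$. Starting from $\E[\Fav S_n]=\int_0^\pi\mu_n(\theta)\,d\theta=\int_0^\pi\int_{\R}\P[\ell(\theta,t)\cap S_n\neq\emp]\,dt\,d\theta$, where $\mu_n(\theta)=\E|\proj_\theta(S_n)|$, the whole problem reduces to controlling the survival probability of the pruned incidence tree $T_k(\ell)=\{Q\in\cD_k:Q\cap\ell\neq\emp,\ Q\subset S_k\}$, the branching-process picture of Section \ref{section:intro ideas of proof}.

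For the \emph{upper bound} the heart of the matter is a $\theta$-localized sharpening of the estimate behind \Cref{thm:expected-decay non-deg}: there is a constant $C=C(\X)$ such that every line $\ell=\ell(\theta,t)$ meeting $[0,1]^2$ satisfies
\[
\P[\ell\cap S_n\neq\emp]\ \le\ \min\!\Bigl(1,\ \frac{C}{n\,d(\theta)}\Bigr).
\]
Granting this, since the set of $t$ for which $\ell(\theta,t)$ meets $[0,1]^2$ has length $O(1)$, we get $\mu_n(\theta)\le\min(1,C/(n\,d(\theta)))$, and an elementary computation — the bound equals $1$ for $d(\theta)\le C/n$, contributing $O(1/n)$, and equals $C/(n\,d(\theta))$ for larger $d(\theta)$, contributing $\int_{C/n}^{\pi}\frac{C}{n\,d}\,d\theta=O(\log n/n)$ — gives $\E[\Fav S_n]\lesssim\log n/n$. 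To prove the displayed line estimate I would run the multi-type branching analysis of \Cref{thm:expected-decay non-deg} while tracking all constants as functions of $\theta$: a line at angle $d=d(\theta)$ to $\theta_*$ is, for roughly the first $\log_L(1/d)$ levels, trapped inside a single grid column, where vertical degeneracy makes it survive almost surely while producing essentially no branching, and only afterwards does $T_k(\ell)$ begin to resemble a critical Galton--Watson process. To make this quantitative I would use the uniformity identity $\E[|\ell\cap S_n|]=L^{-n}|\ell\cap[0,1]^2|$, which as in Section \ref{section:intro ideas of proof} rearranges to
\[
\P[\ell\cap S_n\neq\emp]=\frac{L^{-n}\,|\ell\cap[0,1]^2|}{\E\bigl[\,|\ell\cap S_n|\ \big|\ \ell\cap S_n\neq\emp\,\bigr]},
\]
and then bound the conditional scenery from below; the point is that near $\theta_*$ the scenery of a surviving line grows proportionally more slowly, and tracking this dependence is precisely what turns the $1/n$ of \Cref{thm:expected-decay non-deg} into $C/(n\,d(\theta))$.

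For the \emph{lower bound}, the universal inequality \eqref{Fav_lb_deterministic} together with $\mathcal{H}^1_\infty(S)\gtrsim 1$ only gives $\E[\Fav S_n]\gtrsim 1/n$, so the extra $\log n$ must come from the column alignment intrinsic to any vertically degenerate model: for $\theta$ near $\theta_*$ the intervals making up $\proj_\theta(S_n)$ overlap in a structured way and the measure cannot decay as fast as $1/n$. This is exactly the mechanism of Bateman--Volberg's bound $\Fav(K_n)\gtrsim\log n/n$ for the four-corner set, and the plan is to invoke the adaptation carried out in Appendix \ref{sec:bateman-volberg} to obtain $\mu_n(\theta)\gtrsim 1/(n\,d(\theta))$ for $d(\theta)$ in a range of the form $[c/n,\,c]$; integrating over $\theta$ then gives $\E[\Fav S_n]\gtrsim\log n/n$.

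The main obstacle is the quantitative branching analysis near $\theta_*$. Even for a single line the incidence tree is a multi-type Galton--Watson process with a continuum of types (the square--line incidence configurations), so Kolmogorov's survival estimate is not available off the shelf; one has to extract from it a coupling with a genuine finitely-many-types near-critical process and then control precisely how its survival and variance quantities degenerate as $\theta\to\theta_*$ — sharply enough that the exponent of $d(\theta)$ is exactly $1$, since any loss would produce $\log^2 n/n$ (too large) or collapse back to $1/n$ (too small). A secondary difficulty is that Bateman--Volberg's lower-bound argument was tailored to one explicit self-similar set, so transplanting it to a general random vertically degenerate model requires handling the randomness of the column heights; this is the content of Appendix \ref{sec:bateman-volberg}.
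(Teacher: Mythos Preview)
Your high-level shape is right --- the paper also proves the upper bound by showing $\mu_n(\theta)\lesssim 1/(n\,d(\theta))$ on dyadic annuli $\Theta_k=\{\theta:|\tan\theta|\in[L^k,L^{k+1}]\}$ around the degenerate direction, getting $\int_{\Theta_k}\mu_n(\theta)\,d\theta\lesssim 1/n$ for each of the $\sim\log n$ annuli, and the lower bound is indeed the Bateman--Volberg adaptation in Appendix~\ref{sec:bateman-volberg}, which applies \emph{deterministically} to any realization (so no ``handling the randomness of the column heights'' is required).

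There are, however, two genuine gaps in your upper-bound plan.

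\textbf{The pointwise line bound is too strong.} You assert $\P[\ell(\theta,t)\cap S_n\neq\emp]\le C/(n\,d(\theta))$ for \emph{every} line. The paper does not claim this and in general cannot: its \Cref{prop:expected-line-survival deg} proves the bound only for $t$ outside an exceptional set $E_{\theta,k,n}$ of measure $\lesssim e^{-\Omega(n/L^k)}$. The exceptional set arises because the argument goes through a \emph{counting} condition on the address $\bm\lambda$ of a surviving square (how often a certain pattern appears as a substring), and there are always a few $\bm\lambda$ failing it; Markov's inequality (\Cref{lemma:markov}) then confines those bad squares to a small set of $t$. Your proposed route --- ``bound the conditional scenery from below'' via the identity $\P[\ell\cap S_n\neq\emp]=L^{-n}|\ell\cap[0,1]^2|/\E[|\ell\cap S_n|\mid\ell\cap S_n\neq\emp]$ --- is exactly the paper's reduction (\Cref{lemma:expect multi scale}), but the actual lower bound on the conditional scenery is not obtained by any ``coupling with a finitely-many-types near-critical process.'' Instead the paper exhibits an explicit family $W_{k+2}\subset\Lambda^{k+2}$ of address-patterns (\Cref{lem:deg-structure}) such that whenever one of these patterns occurs in the address of a surviving square, the scenery picks up a definite contribution; the size $\#W_{k+2}=L^k$ is what produces the factor $L^k\sim 1/d(\theta)$. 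A separate negative-correlation lemma (\Cref{lemma:substring event neg dep}) is needed to get a Chernoff bound with the right exponent on the exceptional set. None of this machinery is visible in your sketch, and ``tracking constants as functions of $\theta$'' in the non-degenerate argument does not by itself produce the correct $1/d(\theta)$ dependence.

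\textbf{Double degeneracy.} ``After transposing we may assume vertically degenerate'' is fine, but then your bound $C/(n\,d(\theta))$ with $d(\theta)=$ distance to the \emph{vertical} direction gives $O(1/n)$ near the horizontal direction. If the model is \emph{also} horizontally degenerate, that is false: there is a second singular direction, and the paper handles this case separately by introducing a mirrored family of annuli $\widetilde\Theta_k$ around $\theta=0$. Relatedly, in the singly degenerate case you still need to verify the $O(1/n)$ bound on the ``non-degenerate'' arc $\{\theta:|\tan\theta|\le L\}$; the paper does this by invoking the non-degenerate machinery (\Cref{lemma:nonempty V local}\ref{item:nonempty V horizontal} and \Cref{lem:non-deg-structure}) on that compact arc, which requires precisely the assumption that the model is \emph{not} horizontally degenerate.
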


For both theorems, it is the upper bound that requires new ideas, while the lower bound follows from the bound \eqref{Fav_lb_deterministic} (if the model is Ahlfors regular) in Theorem \ref{thm:expected-decay non-deg} and using an adaptation of the the argument of Bateman--Volberg \cite{BatemanVolberg10} in Theorem \ref{thm:expected-decay deg}.

\begin{remark}%
\label{remark:reminder half open}
Recall that squares in this section are half-open, ensuring that $\cD_n$ consists of pairwise disjoint squares   (see \Cref{remark:half-open}). If we used closed squares instead, we would have to exclude an exceptional set of points or lines in several statements.
For example, equation \eqref{eq:P x in S_n} below would not hold for $x$ on the grid lines (i.e., $\bigcup_{Q \in \cD_n} \partial Q$). Consequently, in \eqref{eq:length-w-square-v1}, we would need to exclude the countably many vertical and horizontal lines that contain segments of the grid lines.
\end{remark}

\subsection{Configurations of subsquares}
\label{section:configuration of subsquares}

\subsubsection{Overview of definition}

We start the proofs of \Cref{thm:expected-decay non-deg} and \Cref{thm:expected-decay deg} with the central notion of ``large expected intersection'' introduced in \Cref{definition:set LEI} below. This definition captures the idea of the line-square intersections which ensure that the line has a large expected intersection with the random set $S_n$. The definition is somewhat technical, but its role will become clear in the course of the proof.

Before stating \Cref{definition:set LEI}, we sketch some of the relevant ideas and provide some motivation. We start by observing that the uniformity assumption \eqref{eq:squarewise-uniform} implies that
\begin{align}\label{eq:P x in S_n}
\P[x \in S_n] = \frac{1}{L^n}\qquad\text{for all }x \in [0,1)^2.
\end{align}
Thus, for all $n,\theta,t$,
\begin{align}\label{eq:length-w-square-v1}
\begin{split}
|\ell(\theta,t) \cap [0,1)^2|
&=
\E\bigl[ L^n |\ell(\theta,t) \cap S_n|\bigr]
\\
&=
\E\bigl[ L^n |\ell(\theta,t) \cap S_n| \bigm| \ell(\theta,t) \cap S_n \neq \varnothing\bigr] \cdot \P[\ell(\theta,t) \cap S_n \neq \varnothing]\,.
\end{split}
\end{align}
\Cref{thm:expected-decay non-deg} will be proved by showing that $\P[\ell \cap S_n]\lesssim \tfrac{1}{n}$ for `most' lines, which by \eqref{eq:length-w-square-v1} follows from the bound 
\begin{align}
\label{eq:motivation E condition ell cap S_n}
\E\bigl[L^n |\ell \cap S_n|\bigm| \ell \cap S_n\neq\varnothing\bigr]\gtrsim n.
\end{align}
For a simpler version of the argument, let us suppose that we would instead like 
\begin{align}
\label{eq:motivation E condition Q in S_n}
\E\bigl[L^n |\ell \cap S_n| \bigm| Q \in \cS_n \bigr] \gtrsim n\qquad\text{for all }Q \in \cD_n \text{ such that } \ell \cap Q \neq \varnothing.
\end{align}
Note that \eqref{eq:motivation E condition Q in S_n} does not actually imply \eqref{eq:motivation E condition ell cap S_n}. (The issue is that it is not true that $\E[X \mid A \lor B] \geq \min \{ \E[X \mid A], \E[X \mid B] \}$ for a random variable $X$ and events $A,B$.)

For a fixed $Q \in \cD_n$ with $\ell \cap Q \neq \varnothing$, write $Q = Q^{(n)} \subset Q^{(n-1)} \subset Q^{(n-2)} \subset \cdots \subset Q^{(1)} \subset Q^{(0)} = [0,1)^2$, where $Q^{(i)} \in \cD_i$ is the square at the $i$th level containing $Q$. Then we can decompose $[0,1)^2 \setminus Q$ into ``rings'' $Q^{(i)} \setminus Q^{(i+1)}$, leading to
\begin{align}
\label{eq:motivation decomposition into scales}
\E\bigl[L^n |\ell \cap S_n|\bigm| Q \in \cS_n\bigr] \geq \sum_{i=0}^{n-1}
\E\bigl[L^n |\ell \cap S_n \cap Q^{(i)} \setminus Q^{(i+1)})| \bigm| Q \in \cS_n \bigr].
\end{align}
We would like to estimate from below the right-hand side of \eqref{eq:motivation decomposition into scales}. \Cref{definition:set LEI} is the first step in identifying for which $Q$ the right-hand side is large. It only deals with a single term of the sum at a time. Later, in \Cref{lemma:expect multi scale}, we define an iterated form of this condition (see \eqref{eq:def cG large count}) which ensures that many terms in the sum in \eqref{eq:motivation decomposition into scales} are large.

\subsubsection{Definition and proofs} 

\begin{definition}[subsquares with large expected intersection]
    \label{definition:set LEI}
    
    Fix $\theta \in [0,\pi]$. For $\alpha \in \Lambda$, $k \in \N$, $c > 0$, we define $\LEI_k(\theta,\alpha,c) \subset \Lambda^k$ to be the set of $\bm\eta \in \{\alpha\} \times \Lambda^{k-1}$ such that
    \begin{align}
        \label{eq:def set V}
        \forall t \in \R,
        \qquad
        \ell(\theta,t) \cap Q(\bm\eta) \neq \varnothing
        \implies
        \E\bigl[ |\ell(\theta,t) \cap S_1 \cap Q(\alpha)^c| \bigm| Q(\alpha) \in \cS_1 \bigr] \geq \frac{c}{L}
        \,.
    \end{align}   
    We also define
    \[
        \LEI_k(\theta,c) = \bigcup_{\alpha \in \Lambda} \LEI_k(\theta,\alpha,c) 
        \subset \Lambda^k
        \,.
    \]
\end{definition}

Note that $\LEI_k(\theta,\alpha,c)$ depends on the model under consideration. 
The following lemma gives a sufficient condition for $\bm \eta \in \LEI_k(\theta,\alpha,c)$.

\begin{lemma}
\label{lemma:two squares V}
    Fix a direction $\theta$. Suppose $\alpha\neq \kappa\in\Lambda$ are such that $p := \P\bigl[Q(\kappa) \in \cS_1 \bigm| Q(\alpha) \in \cS_1 \bigr] > 0$, and suppose $\bm\eta \in \{\alpha\} \times \Lambda^{k-1}$ satisfies
    \begin{align}
        \label{eq:Q(eta) inside strip}
        Q(\bm\eta) \subset \bigcup_{t : |\ell(\theta,t) \cap Q(\kappa)| \geq c'/L} \ell(\theta,t) 
        \,.
    \end{align}
    (See \Cref{figure:Q(eta) inside strip}.) Then $\bm\eta \in \LEI_k(\theta,\alpha,c'p)$.
\end{lemma}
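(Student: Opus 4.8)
\textbf{Proof proposal for Lemma~\ref{lemma:two squares V}.}

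The plan is to unfold \Cref{definition:set V} and reduce the bound to a single deterministic geometric observation: if a line $\ell(\theta,t)$ meets $Q(\bm\eta)$, then it passes all the way through the ``long-chord strip'' of $Q(\kappa)$, hence $|\ell(\theta,t)\cap Q(\kappa)|\ge c'/L$. First I would fix $t\in\R$ with $\ell(\theta,t)\cap Q(\bm\eta)\neq\varnothing$; since $\bm\eta\in\{\alpha\}\times\Lambda^{k-1}$ we have $Q(\bm\eta)\subset Q(\alpha)$, and the hypothesis \eqref{eq:Q(eta) inside strip} says $Q(\bm\eta)$ is contained in the union of those lines $\ell(\theta,s)$ with $|\ell(\theta,s)\cap Q(\kappa)|\ge c'/L$. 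Because this union is exactly $\{\,t' : |\ell(\theta,t')\cap Q(\kappa)|\ge c'/L\,\}$ pulled back under $\proj_\theta$ (all lines in the family $\ell(\theta,\cdot)$ are parallel), the point of $Q(\bm\eta)\cap\ell(\theta,t)$ being in that union forces $|\ell(\theta,t)\cap Q(\kappa)|\ge c'/L$.

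Next I would bound the conditional expectation in \eqref{eq:def set V} from below by restricting attention to the single square $Q(\kappa)$. Since $\alpha\neq\kappa$, we have $Q(\kappa)\subset Q(\alpha)^c$ (interiors disjoint in a grid model, so the chord lengths inside $Q(\alpha)^c$ and inside $Q(\kappa)$ agree up to the measure-zero boundary), and therefore
\[
\E\bigl[\,|\ell(\theta,t)\cap S_1\cap Q(\alpha)^c|\;\bigm|\;Q(\alpha)\in\cS_1\,\bigr]
\;\ge\;
\E\bigl[\,|\ell(\theta,t)\cap Q(\kappa)|\,\bbone_{\{Q(\kappa)\in\cS_1\}}\;\bigm|\;Q(\alpha)\in\cS_1\,\bigr]
\;=\;
|\ell(\theta,t)\cap Q(\kappa)|\cdot\P\bigl[Q(\kappa)\in\cS_1\mid Q(\alpha)\in\cS_1\bigr].
\]
The last equality uses that $|\ell(\theta,t)\cap Q(\kappa)|$ is a deterministic quantity (it does not depend on the randomness of $\cS_1$), so it pulls out of the conditional expectation. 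Plugging in the geometric bound from the first paragraph and the definition $p=\P[Q(\kappa)\in\cS_1\mid Q(\alpha)\in\cS_1]$, the right-hand side is at least $(c'/L)\cdot p=c'p/L$. This is precisely the inequality required in \eqref{eq:def set V} with $c=c'p$, and since $t$ was arbitrary, $\bm\eta\in\LEI_k(\theta,\alpha,c'p)$.

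I do not anticipate a serious obstacle here; the only points that need a little care are (i) confirming that the union in \eqref{eq:Q(eta) inside strip} is literally the $\proj_\theta$-preimage of a subset of $\R$, so that ``$Q(\bm\eta)$ lies in the strip'' transfers to ``every line meeting $Q(\bm\eta)$ has a long chord in $Q(\kappa)$'' — this is immediate because the lines $\ell(\theta,t)$ foliate the plane — and (ii) the interchange of $Q(\alpha)^c$ with $Q(\kappa)$, which is harmless since in a grid model distinct selected squares have disjoint interiors and the chord through the boundary contributes zero length. Everything else is a direct substitution into \Cref{definition:set V}.
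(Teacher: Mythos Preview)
Your proposal is correct and follows essentially the same approach as the paper's proof: both use the strip condition \eqref{eq:Q(eta) inside strip} to deduce $|\ell(\theta,t)\cap Q(\kappa)|\ge c'/L$ whenever $\ell(\theta,t)$ meets $Q(\bm\eta)$, then bound the conditional expectation from below by $|\ell(\theta,t)\cap Q(\kappa)|\cdot\P[Q(\kappa)\in\cS_1\mid Q(\alpha)\in\cS_1]$. The paper's version is just more terse, and your extra remarks about the foliation and the boundary are correct but not strictly needed.
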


\begin{proof}
Suppose $\ell(\theta,t) \cap Q(\bm\eta) \neq \varnothing$. Since $|\ell(\theta,t) \cap S_1 \cap Q(\alpha)^c| \geq |\ell(\theta,t) \cap S_1 \cap Q(\kappa)| =  |\ell(\theta,t) \cap Q(\kappa)| \bbone_{Q(\kappa) \in \cS_1}$, we have
\begin{align*}
    &\E\bigl[ |\ell(\theta,t) \cap S_1 \cap Q(\alpha)^c| \bigm| Q(\alpha) \in \cS_1 \bigr]
    \\
    &\geq
    |\ell(\theta,t) \cap Q(\kappa)|  \, \P\bigl[ Q(\kappa) \in \cS_1 \bigm| Q(\alpha) \in \cS_1 \bigr]
    \stackrel{\eqref{eq:Q(eta) inside strip}}{\geq}
    \frac{c'p}{L}
\end{align*}
as desired.
\end{proof}

\begin{figure}[h]
\centering
\includegraphics[width=0.5\textwidth]{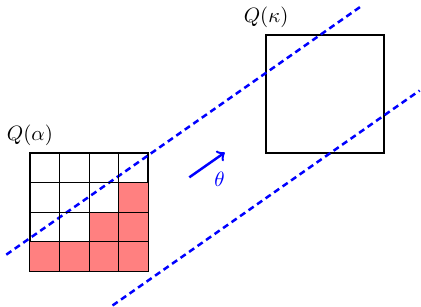}
\caption{The strip between the two dashed lines is the right-hand side of \eqref{eq:Q(eta) inside strip}. The shaded subsquares are the squares $Q(\bm\eta)$ which satisfy \eqref{eq:Q(eta) inside strip}.}
\label{figure:Q(eta) inside strip}
\end{figure}

The main utility of \Cref{definition:set LEI} is the following.

\begin{proposition}\label{prop:expected-line-survival kc}
    Suppose $\LEI_k(\theta,c) \neq \varnothing$ for some $k,\theta,c$. Then for all $n\in\N$, there is a set $E\subset\R$ of Lebesgue measure $\lesssim e^{-\Omega_{k,c}(n)}$, such that
    \begin{align}
        \label{eq:expected-line-survival-gen}
        \P\bigl[\ell(\theta,t)\cap S_n \neq\varnothing\bigr]\lesssim_{k,c} \frac{1}{n} \qquad\text{for all }t\in \R\setminus E\,.
    \end{align}
    Consequently,
    \begin{align}
        \label{eq:expected-projection-gen}
        \E|\proj_\theta S_n| \lesssim_{k,c} \frac{1}{n}
    \end{align}
\end{proposition}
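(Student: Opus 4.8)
The plan is to compare the pruned tree $\{Q \in \cD_n : Q \cap \ell(\theta,t) \neq \varnothing,\ Q \subset S_n\}$ to a critical Galton--Watson process, using the $\LEI$ condition to guarantee that along a positive-density subsequence of scales the relevant one-step offspring variance is bounded below. Concretely, fix $\theta$, $k$, $c$ with $\LEI_k(\theta,c) \neq \varnothing$, and fix $\alpha$ with $\LEI_k(\theta,\alpha,c) \neq \varnothing$. Given a line $\ell = \ell(\theta,t)$, let $X_n(\ell)$ denote the number of squares $Q \in \cD_n$ with $Q \cap \ell \neq \varnothing$ and $Q \subset S_n$, and let $f_n(t) = \P[\ell(\theta,t) \cap S_n \neq \varnothing]$. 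The key mechanism is the identity, valid by stochastic self-similarity,
\[
    |\ell \cap [0,1]^2| = L^n\, \E\bigl[\,|\ell \cap S_n|\,\bigm|\, \ell \cap S_n \neq \varnothing\bigr]\cdot \P[\ell \cap S_n \neq \varnothing],
\]
so it suffices to bound $n L^n \E[\,|\ell \cap S_n| \mid \ell \cap S_n \neq \varnothing\,]$ from below by a positive constant for all $t$ outside an exponentially small set $E$. Equivalently, writing $g_n(t) = L^n \E[|\ell(\theta,t) \cap S_n|] = \E[\#\{Q \in \cD_n : Q \cap \ell \neq \varnothing\} \text{ weighted by } |\ell \cap S_n|/|\ell \cap Q|]$ — more cleanly, $\E[|\ell \cap S_n|] = L^{-n}|\ell \cap [0,1]^2|$ exactly by the uniformity of the model — the task is to show that the normalized conditional scenery does not concentrate too much, which is what the bounded-variance hypothesis buys us.

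Here is the order of steps I would carry out. First, I would set up the renewal/branching structure: for a line $\ell$ passing through $Q(\bm\lambda^{(i)})$, the conditional law of $\ell \cap S_{i+1} \cap Q(\bm\lambda^{(i)})$ given $Q(\bm\lambda^{(i)}) \in \cS_i$ depends only on which of the $L^2$ children of $Q(\bm\lambda^{(i)})$ lie in $\cS_{i+1}$ and on the geometry of $\ell$ inside that square; rescaling $Q(\bm\lambda^{(i)})$ back to the unit square, this is one step of a multi-type branching process. Second, I would use \Cref{lemma:two squares V} together with the hypothesis $\LEI_k(\theta,\alpha,c)\neq\varnothing$ to identify, for each square at a scale that is a multiple of $k$ whose address begins with $\alpha$ and whose position relative to $\ell$ lies in a suitable sub-collection, a uniform lower bound $c/L$ on the conditional expected extra length contributed by $S_1 \cap Q(\alpha)^c$; the point is that along the line, a positive proportion of the encountered squares fall into this good class, except for a set of $t$ of exponentially small measure (these exceptional $t$ are those whose line stays ``near the boundary'' of the grid at every scale, which happens with probability decaying geometrically in the number of scales, hence the $e^{-\Omega_{k,c}(n)}$ bound on $|E|$). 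Third, with a definite positive lower bound on the expected offspring and a definite positive lower bound on its variance (coming from the fact that the model is non-constant along $\ell$ once we are in the good class), I would run the standard critical-Galton--Watson upper-bound argument — estimating $\P[X_n > 0]$ via the second-moment/Kolmogorov-type recursion $\P[X_{n}>0] \lesssim 1/n$ — exactly as in \cite{PeresSolomyak02}, adapted to the multi-type setting by replacing "generation" with "block of $k$ scales" and absorbing the dependence on $\theta$ into the constants $\Omega_{k,c}$. Fourth, \eqref{eq:expected-projection-gen} then follows immediately by integrating \eqref{eq:expected-line-survival-gen} over $t \in \R \setminus E$ and using $|E| \lesssim e^{-\Omega_{k,c}(n)} = o(1/n)$, together with $\proj_\theta S_n \subset \proj_\theta([0,1]^2)$ which has length $O(1)$.

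The main obstacle I expect is making the comparison with a genuine critical Galton--Watson process rigorous despite the uncountably many "types" (the geometry of $\ell \cap Q$ for a square $Q$ can be any chord-configuration). The resolution — which the introduction hints at — is not to control every line but to pass to a single well-chosen nested sequence of squares $[0,1]^2 = Q^{(0)} \supset Q^{(1)} \supset \cdots$ along which, for most lines in direction $\theta$, the rescaled sceneries $\ell \cap S_i \cap (Q^{(i)}\setminus Q^{(i+1)})$ become, up to negligible error, a sum of independent random variables drawn from a \emph{finite} menu of types; then a concentration inequality (Hoeffding/Bernstein, as collected in Appendix \ref{sec:elementaryprob}) replaces the exact second-moment computation. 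For the present proposition, which only asks for the $1/n$ upper bound rather than the sharp constant, one can get away with a cruder version: it is enough to find, at a positive density of scales $i$, a uniformly positive lower bound on $\Var(\text{rescaled one-step scenery} \mid \text{survival to scale } i)$, and this is precisely what $\LEI_k(\theta,c)\neq\varnothing$ provides via \Cref{lemma:two squares V}. The bookkeeping of the exceptional set $E$ — showing that the $t$ for which the line fails to meet enough ``good'' squares has exponentially small measure — is the other technical point, handled by a union bound over the scales at which the line's position relative to the grid is atypical.
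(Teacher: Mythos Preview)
Your starting identity and the target --- lower-bounding $L^n \E[|\ell \cap S_n| \mid \ell \cap S_n \neq \varnothing]$ linearly in $n$ --- are correct, as is the final integration step. But the mechanism you propose, a variance lower bound fed into a second-moment/Kolmogorov recursion, is not what makes this work, and the proposal is missing the key device.

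The $\LEI$ condition is a lower bound on \emph{expected extra length}, not on variance, and the argument is purely first-moment. The missing idea is the \emph{first-survivor} trick (Lemma~\ref{lemma:expect multi scale}): fix a total order $\prec$ on $\Lambda$ with $\alpha$ minimal, and condition on the event that a specific $Q(\bm\lambda)$ is the $\prec$-minimal square in $\cS_n$ hit by $\ell$. At each scale $i$ where $(\lambda_{i+1},\dots,\lambda_{i+k}) \in \LEI_k(\theta,\alpha,c)$, minimality of $\alpha$ forces $Q(\bm\lambda^{(i+1)})$ to be the $\prec$-least child of $Q(\bm\lambda^{(i)})$, so this conditioning reveals nothing about its siblings, and the $\LEI$ bound (via Lemma~\ref{lemma:expect single scale}) contributes a fresh $c$ to $L^n\E[|\ell \cap S_n|]$. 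If the address $\bm\lambda$ has $M$ such occurrences the conditional expectation is $\ge cM$; letting $\cG$ be the squares with $M \gtrsim_k n$ gives $\P[\ell \cap (\cup\cG) \cap S_n \neq \varnothing] \lesssim_{k,c} 1/n$ for \emph{every} $t$. The exceptional set $E$ then handles only the complementary bad squares $\cB = \cD_n \setminus \cG$: these are addresses failing a substring-frequency test, so Chernoff on disjoint $k$-blocks gives $\#\cB \le e^{-\Omega_k(n)}\#\cD_n$, and Lemma~\ref{lemma:markov} (a Markov argument on $\sum_{Q\in\cB}\bbone_{\proj_\theta Q}$, not a union bound over scales or any ``near-the-boundary'' geometry) converts this into $|E| \lesssim e^{-\Omega_k(n)}$. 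The nested-squares averaging machinery you invoke from the introduction belongs to Section~\ref{sec:sharp_asymptotic} (the sharp constant) and is not needed for this proposition.
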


The remainder of \Cref{section:configuration of subsquares} is the proof of \Cref{prop:expected-line-survival kc}. First, the following lemma states that \eqref{eq:def set V} automatically implies a similar property for $S_n$:

\begin{lemma}
    \label{lemma:expect single scale}
    Let $\bm\eta \in \LEI_k(\theta,\alpha,c)$. Then for all $n \geq 1$,
    \begin{align*}
        \forall t \in \R,
        \qquad
        \ell(\theta,t) \cap Q(\bm\eta) \neq \varnothing
        \implies
        \E\bigl[ |\ell(\theta,t) \cap S_n \cap Q(\alpha)^c| \bigm| Q(\alpha) \in \cS_1 \bigr] \geq \frac{c}{L^n}
        \,.
    \end{align*}    
\end{lemma}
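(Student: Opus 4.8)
The plan is to induct on $n$, with the base case $n=1$ being the definition \eqref{eq:def set V} of $\LEI_k(\theta,\alpha,c)$ (since $Q(\alpha)^c$ excludes the square $Q(\alpha)$, the quantity $|\ell(\theta,t)\cap S_1\cap Q(\alpha)^c|$ is precisely the one appearing in the definition). For the inductive step, assume the statement holds for $n$; we want it for $n+1$. Fix $t$ with $\ell(\theta,t)\cap Q(\bm\eta)\neq\varnothing$, and condition on $Q(\alpha)\in\cS_1$. We use the stochastic self-similarity: conditioned on $\cS_1$, each square $Q(\beta)\in\cS_1$ with $\beta\neq\alpha$ contains an independent copy of the construction, so that $S_{n+1}\cap Q(\beta)$ has the same law as $h_\beta(S_n)$ where $h_\beta$ is the homothety of ratio $1/L$ mapping $[0,1]^2$ onto $Q(\beta)$. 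Hence
\[
    \E\bigl[|\ell(\theta,t)\cap S_{n+1}\cap Q(\alpha)^c| \bigm| \cS_1\bigr]
    =
    \sum_{\beta\in\cS_1,\ \beta\neq\alpha}
    \E\bigl[|\ell(\theta,t)\cap h_\beta(S_n)|\bigr].
\]
Now $|\ell(\theta,t)\cap h_\beta(S_n)| = \tfrac1L\,|\ell(\theta,t_\beta)\cap S_n|$ for the appropriate reparametrized line $\ell(\theta,t_\beta) = h_\beta^{-1}(\ell(\theta,t))$ (same direction, since $h_\beta$ is an orientation-preserving homothety), so
\[
    \E\bigl[|\ell(\theta,t)\cap S_{n+1}\cap Q(\alpha)^c| \bigm| \cS_1\bigr]
    =
    \frac1L\sum_{\beta\in\cS_1,\ \beta\neq\alpha}
    \E\bigl[|\ell(\theta,t_\beta)\cap S_n|\bigr]
    \geq
    \frac1L\sum_{\beta\in\cS_1,\ \beta\neq\alpha}
    \E\bigl[|\ell(\theta,t)\cap S_1\cap Q(\beta)| \bigm| \cS_1\bigr],
\]
where the last inequality is the trivial bound $|\ell(\theta,t_\beta)\cap S_n| \ge |\ell(\theta,t_\beta)\cap S_1| = L\cdot|\ell(\theta,t)\cap Q(\beta)|$ together with $S_1\cap Q(\beta)=Q(\beta)$ when $\beta\in\cS_1$; actually a cleaner route avoids this clumsy step — see below. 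Summing over $\beta\neq\alpha$ and taking expectations over $\cS_1$ (conditioned on $Q(\alpha)\in\cS_1$) gives back exactly the $n=1$ quantity, which is $\ge c/L$ by hypothesis — but this only recovers the $n=1$ bound, not the $c/L^{n+1}$ we want, so we must be more careful: we should keep the $S_n$ inside one of the sibling squares.

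The correct inductive step: pick one specific sibling. By the base case applied to $\bm\eta$, there exists some $\kappa\neq\alpha$ with $\P[Q(\kappa)\in\cS_1\mid Q(\alpha)\in\cS_1]>0$ and with $\ell(\theta,t)$ meeting $Q(\kappa)$ in length $\gtrsim c/L$ (this is essentially how one verifies the $\LEI$ condition in practice, cf.\ \Cref{lemma:two squares V}); more robustly, note that the $n=1$ definition guarantees $\E[|\ell(\theta,t)\cap S_1\cap Q(\alpha)^c| \mid Q(\alpha)\in\cS_1]\ge c/L$, and this expectation is $\sum_{\kappa\neq\alpha}|\ell(\theta,t)\cap Q(\kappa)|\,\P[Q(\kappa)\in\cS_1\mid Q(\alpha)\in\cS_1]$. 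Fix any $\kappa$ contributing a positive amount; then conditioned further on $Q(\kappa)\in\cS_1$, the set $S_n\cap Q(\kappa)$ contains $h_\kappa(S_{n-1})$ in law, but we want $S_{n+1}$, so inside $Q(\kappa)$ the set $S_{n+1}$ has the law of $h_\kappa(S_n)$, and $|\ell(\theta,t)\cap h_\kappa(S_n)| = \tfrac1L|\ell(\theta,t_\kappa)\cap S_n|$, whose expectation is $\tfrac1L\mu_n(\theta)'$-type quantity — still not obviously $\ge c/L^{n+1}$ unless we induct on the $\LEI$ property itself. This suggests the clean statement to induct on is: \emph{for $\bm\eta\in\LEI_k(\theta,\alpha,c)$, $\E[|\ell(\theta,t)\cap S_n\cap Q(\alpha)^c|\mid Q(\alpha)\in\cS_1]\ge c/L^n$}, and in the step we bound below by the contribution from a single sibling $Q(\kappa)$: conditioned on $Q(\alpha),Q(\kappa)\in\cS_1$, $S_n\cap Q(\kappa) \stackrel{d}{=} h_\kappa(S_{n-1})$, and by stochastic self-similarity there is a square $\bm\eta'\in\Lambda^{k}$ with $Q(\bm\eta')\subset Q(\kappa)$, $\ell(\theta,t)\cap Q(\bm\eta')\neq\varnothing$, such that $h_\kappa^{-1}(\bm\eta')\in\LEI_k(\theta,\kappa',c)$ for suitable $\kappa'$ — this is getting circular.

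\textbf{The real plan, stated cleanly.} Induct on $n$. Base case $n=1$ is the definition. Inductive step from $n$ to $n+1$: fix $t$ with $\ell(\theta,t)\cap Q(\bm\eta)\neq\varnothing$ and condition on $Q(\alpha)\in\cS_1$. Then
\[
    \E\bigl[|\ell(\theta,t)\cap S_{n+1}\cap Q(\alpha)^c|\bigm| Q(\alpha)\in\cS_1\bigr]
    \ge
    \E\bigl[\E\bigl[|\ell(\theta,t)\cap S_{n+1}\cap Q(\alpha)^c|\bigm|\cS_1\bigr]\Bigm| Q(\alpha)\in\cS_1\bigr],
\]
and conditioned on $\cS_1$, writing $S_{n+1}\cap Q(\beta)\stackrel{d}{=}h_\beta(S_n)$ independently over $\beta\in\cS_1$, each term contributes $\E[|\ell(\theta,t)\cap h_\beta(S_n)|] = \tfrac1L\,\E[|\ell(\theta,t)\cap S_n|]$ evaluated along the rescaled line through $Q(\beta)$; bounding $S_n\supset S_1$ crudely replaces this by $\tfrac1L|\ell(\theta,t)\cap Q(\beta)|\cdot L = |\ell(\theta,t)\cap Q(\beta)|$ — no. Instead: discard all siblings but keep the full sum, use $|\ell(\theta,t)\cap h_\beta(S_n)|\ge \tfrac1L|\ell(\theta,t)\cap h_\beta(S_{n-1})'|$... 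The honest resolution is that one must apply the inductive hypothesis \emph{at scale $n$} to the configuration seen inside $S_1$: conditioned on $Q(\alpha)\in\cS_1$, for each realization of $\cS_1$ the line $\ell(\theta,t)$ meets the siblings $Q(\beta)$, $\beta\neq\alpha$, and by the structure of $\LEI$ the set of $t$ with $\ell(\theta,t)\cap Q(\bm\eta)\neq\varnothing$ is such that $\ell(\theta,t)$ passes through a sibling $Q(\kappa)$ that plays the role of ``$Q(\alpha)$ at the next level''; one then applies the $n=1$ base case together with the inductive hypothesis for $S_n$ inside the scaled copy, giving the factor $c/L\cdot 1/L^n = c/L^{n+1}$.

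I expect the main obstacle to be precisely this bookkeeping: correctly identifying which sub-square inside $S_1$ inherits an $\LEI$-type property and invoking stochastic self-similarity ($S_{n+1}\cap Q(\beta)\stackrel{d}{=}h_\beta(S_n)$ with $h_\beta$ a homothety of ratio $1/L$ that preserves directions, so that line–square incidences and the direction $\theta$ are unchanged under rescaling) to reduce the scale-$(n+1)$ statement to the scale-$n$ statement with one extra factor of $1/L$ coming from the contraction ratio. Once the self-similar decomposition is set up, the inequality $\E[\,\cdot\,|\cS_1]\ge (\text{single sibling term})$ and linearity of expectation make the induction go through; the length scaling $|\ell(\theta,t)\cap h_\beta(A)| = \tfrac1L|\ell(\theta,h_\beta^{-1}\!\cdot\! t)\cap A|$ is the only computation, and it is routine.
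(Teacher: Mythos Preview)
Your proposal has a genuine gap: the inductive scheme never closes. After the (correct) self-similar decomposition
\[
    \E\bigl[|\ell(\theta,t)\cap S_{n+1}\cap Q(\alpha)^c| \bigm| \cS_1\bigr]
    =
    \sum_{\substack{\beta\in\cS_1\\ \beta\neq\alpha}}
    \tfrac{1}{L}\,\E\bigl[|\ell(\theta,t_\beta)\cap S_n|\bigr],
\]
you try two things, and neither works. First, you attempt to bound $|\ell(\theta,t_\beta)\cap S_n|$ from below by $|\ell(\theta,t_\beta)\cap S_1|$ --- but $S_n\subset S_1$, so this inequality points the wrong way (you catch this yourself). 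Second, you try to invoke the inductive hypothesis inside the sibling $Q(\beta)$, which would require the rescaled line $\ell(\theta,t_\beta)$ to meet some $Q(\bm\eta')$ with $\bm\eta'\in\LEI_k(\theta,\alpha',c)$; there is no reason whatsoever for such an $\bm\eta'$ to exist, and you correctly diagnose this as circular. The ``real plan'' at the end is not a plan: it restates the same dead ends.

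The missing idea is that no induction on the $\LEI$ property is needed. The uniform grid assumption \eqref{eq:squarewise-uniform} gives, for every $x\in Q(\alpha)^c$, the \emph{exact equality}
\[
    \P\bigl[x\in S_n \bigm| Q(\alpha)\in\cS_1\bigr]
    =
    \frac{1}{L^{n-1}}\,\P\bigl[x\in S_1 \bigm| Q(\alpha)\in\cS_1\bigr],
\]
because once you condition on $Q(\kappa)\in\cS_1$ (with $x\in Q(\kappa)$, $\kappa\neq\alpha$), the process inside $Q(\kappa)$ is independent of the conditioning on $Q(\alpha)$ and is a fresh rescaled copy, so $\P[x\in S_n\mid Q(\kappa)\in\cS_1]=L^{-(n-1)}$. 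Integrating this pointwise identity over $x\in\ell(\theta,t)\cap Q(\alpha)^c$ gives
\[
    \E\bigl[|\ell(\theta,t)\cap S_n\cap Q(\alpha)^c|\bigm| Q(\alpha)\in\cS_1\bigr]
    =
    \frac{1}{L^{n-1}}\,\E\bigl[|\ell(\theta,t)\cap S_1\cap Q(\alpha)^c|\bigm| Q(\alpha)\in\cS_1\bigr],
\]
and the $n=1$ $\LEI$ bound finishes the job. In your decomposition language: the term $\E[|\ell(\theta,t_\beta)\cap S_n|]$ is not to be bounded via the inductive hypothesis but computed exactly as $L^{-n}|\ell(\theta,t_\beta)\cap[0,1]^2|=L^{-(n-1)}|\ell(\theta,t)\cap Q(\beta)|$, after which summing over $\beta$ and averaging over $\cS_1$ returns you to the $n=1$ expression with the correct factor $L^{-(n-1)}$.
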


\begin{proof}
To prove this lemma, it suffices to show that for all $n \geq 1$,
\begin{align}
\label{eq:prob x in S_n}
    \P(x \in S_n \mid Q(\alpha) \in \cS_1)
    =
    \frac{1}{L^{n-1}}
    \P(x \in S_1 \mid Q(\alpha) \in \cS_1)
    \qquad\text{for all $x \in Q(\alpha)^c$}.
\end{align}
Indeed, by integrating both sides of \eqref{eq:prob x in S_n} over $x \in \ell(\theta,t) \cap Q(\alpha)^c$, we obtain
\begin{align*}
    \E\bigl[ |\ell(\theta,t) \cap S_{n} \cap Q(\alpha)^c| \bigm| Q(\alpha) \in \cS_1 \bigr]
    =
    \frac{1}{L^{n-1}}\E\bigl[ |\ell(\theta,t) \cap S_{1} \cap Q(\alpha)^c| \bigm| Q(\alpha) \in \cS_1 \bigr]
    \,.
\end{align*}
Combining this with \eqref{eq:def set V}, we obtain the lemma.

To prove \eqref{eq:prob x in S_n}, fix $x \in [0,1)^2 \setminus Q(\alpha)$ and $n \geq 1$. Then there exists $\kappa \in \Lambda \setminus \{\alpha\}$ such that $x \in Q(\kappa)$. Since $x \in Q(\kappa)$, we have the following relations between events:
\begin{align*}
    x \in S_n \implies x\in S_1 \iff Q(\kappa) \in \cS_1.
\end{align*}
Hence,
\begin{align*}
    &\P(x \in S_n \mid Q(\alpha) \in \cS_1)
    \\
    &=
    \P(x \in S_n  \land Q(\kappa) \in \cS_1 \mid Q(\alpha) \in \cS_1)
    \\
    &=
    \P(x \in S_n \mid Q(\alpha) \in \cS_1 \land Q(\kappa) \in \cS_1)
    \P(Q(\kappa) \in \cS_1 \mid Q(\alpha) \in \cS_1)
    \\
    &=
    \P(x \in S_n \mid Q(\alpha) \in \cS_1 \land Q(\kappa) \in \cS_1)
    \P(x \in S_1 \mid Q(\alpha) \in \cS_1)
    \,.
\end{align*}
Note that conditioned on $Q(\kappa) \in \cS_1$, the random process inside $Q(\kappa)$ is independent of the event $Q(\alpha) \in \cS_1$. Thus, conditioned on $Q(\kappa) \in \cS_1$, the events $x \in S_n$ and $Q(\alpha) \in \cS_1$ are independent, which implies
\begin{align*}
    \P(x \in S_n \mid Q(\alpha) \in \cS_1 \land Q(\kappa) \in \cS_1)
    =
    \P(x \in S_n \mid Q(\kappa) \in \cS_1)
    \, .
\end{align*}
Furthermore, the random set $S_n \cap Q(\kappa)$ conditioned on $Q(\kappa) \in \cS_1$ has the same distribution as a homothetic copy of $S_{n-1}$, so
\begin{align*}
    \P(x \in S_n \mid Q(\kappa) \in \cS_1)
    =
    \P(h(x) \in S_{n-1})
    =
    \frac{1}{L^{n-1}}
    \, ,
\end{align*}
where $h$ is the homothety renormalizing $Q(\kappa)$ to $[0,1)^2$. This completes the proof of \eqref{eq:prob x in S_n}.
\end{proof}

The next lemma is a multiscale version of \Cref{lemma:expect single scale}. To state it, we first make the following definition, which counts the number of scales in which a particular configuration occurs.

\begin{definition}
\label{def:Count}
Given $\bm\lambda \in \Lambda^n$ and $V \subset \Lambda^k$ with $k \leq n$, we define $\Count(\bm\lambda, V)$ to be the number of substrings of $\bm\lambda$ of length $k$ that are in $V$, i.e.,
\begin{align}
\label{eq:def Count}
\Count(\bm\lambda, V)
=
\#\{i \in \{0, \ldots, n-k\} : (\lambda_{i+1}, \ldots, \lambda_{i+k}) \in V\}
\end{align}
Given $\bm\lambda \in \Lambda^n$ and $\bm\eta \in \Lambda^k$ with $k \leq n$, we also write $\Count(\bm\lambda, \bm\eta)$ for $\Count(\bm\lambda, \{\bm\eta\})$.
\end{definition}

Before stating the lemma, a word on terminology: In our applications, $\cG \subset \Lambda^n$ will be considered `a good set' and its complement $\cB$ will be the exceptional set of `bad squares'. In practise, this means that the parameter $M$ will be chosen so that most squares in $\Lambda_n$ will be good. 

\begin{lemma}
    \label{lemma:expect multi scale}
    Fix $k,\theta,\alpha,c$ and $n \geq 1$ and $M \geq 1$. Let
    \begin{align}
        \label{eq:def cG large count}
        \mathcal{G} = \left\{ \bm\lambda\in\Lambda^n: \Count(\bm\lambda, \LEI_k(\theta,\alpha,c)) \ge M \right\}\,.
    \end{align}
    Then for any $t \in \R$,
    \begin{align}
        \label{eq:expect cpM lemma}
        \E\bigl[ L^n |\ell(\theta,t) \cap S_n| \bigm|  \ell(\theta,t) \cap (\cup \cG) \cap S_n \neq \varnothing \bigr] \geq c M \, ,
    \end{align}
    and consequently,
    \begin{align}
        \label{eq:expect cpM lemma consequently}
        \P\left[\ell\cap (\cup\mathcal{G}) \cap S_n\neq\varnothing\right]\lesssim \frac{1}{cM} \,.
    \end{align}
    (Here, we identify $\Lambda^n$ with $\cD_n$, so that $\cup \cG$ is defined to be $\bigcup_{\bm \lambda \in \Lambda^n} Q(\bm\lambda)$.)
\end{lemma}

\begin{proof}
In this proof, we fix a total order $\prec$ on $\Lambda$ so that $\alpha$ is minimal. For each $n$, this yields a lexicographic order, also denoted by $\prec$, on $\Lambda^n$. In turn, the identification $\lambda\leftrightarrow Q(\lambda)$ yields an order on $\cD_n$, which we again denote by $\prec$.

Fix a line $\ell = \ell(\theta,t)$. For $\bm\lambda \in \cG$, let $\mathcal{E}(\bm\lambda)$ denote the event that $\bm\lambda = \min \{\bm\lambda' \in \cG : \ell \cap Q(\bm\lambda') \cap S_n \neq \varnothing\}$, where the minimum is defined in terms of $\prec$. Since the event $\ell \cap (\cup\cG) \cap S_n \neq \varnothing$ is the disjoint union of the events $\{ \mathcal{E}(\bm\lambda) : \bm\lambda \in \cG, \ell \cap Q(\bm\lambda) \neq \varnothing\}$, to prove \eqref{eq:expect cpM lemma}, it suffices to show
\begin{align}
    \label{eq:expect cpM step}
    \E\bigl[ L^n |\ell \cap S_n| \bigm|  \mathcal{E}(\bm\lambda) \bigr]
    \geq cM
    \qquad\text{for all $\bm\lambda \in \cG$ such that $\ell \cap Q(\bm\lambda) \neq \varnothing$}.
\end{align}

Fix $\bm\lambda \in \cG$ with $\ell \cap Q(\bm\lambda) \neq \varnothing$. Let 
\begin{align*}
    \cI = \{i\in \{0,\ldots,n-k\} : (\lambda_{i+1},\ldots,\lambda_{i+k}) \in \LEI_k(\theta,\alpha,c)\} 
    \,,
\end{align*} 
so that $\# \cI = \Count(\bm\lambda, \LEI_k(\theta,\alpha,c)) \geq M$. For all $0\le i\le n$, let $Q^{(i)}\in\cD_{i}$ be the square containing $Q(\bm\lambda)$. Since $S_0  = Q(\bm\lambda) \sqcup \bigsqcup_{i=0}^{n-1} (Q^{(i)} \setminus Q^{(i+1)})$,
\begin{align*}
    \E\bigl[ L^n |\ell \cap S_n| \bigm|  \mathcal{E}(\bm\lambda) \bigr]
    \geq
    \sum_{i=1}^n
    \E\bigl[ L^n |\ell \cap S_n \cap (Q^{(i)} \setminus Q^{(i+1)})| \bigm|  \mathcal{E}(\bm\lambda) \bigr]
    \,.
\end{align*}
Thus, to prove \eqref{eq:expect cpM step}, it suffices to show
\begin{align}
    \label{eq:expect cp I}
    \E\bigl[ L^n |\ell \cap S_n \cap (Q^{(i)} \setminus Q^{(i+1)})| \bigm|  \mathcal{E}(\bm\lambda) \bigr]
    \geq c
    \qquad\text{for all } i \in \cI.
\end{align}

Fix $i\in\mathcal{I}$ and let $h$ be the homothety renormalizing $Q^{(i)}$ to $[0,1)^2$. Since $\bm\eta := (\lambda_{i+1},\ldots,\lambda_{i+k}) \in  \LEI_k(\theta,\alpha,c)$ we have that $h(Q^{(i+1)})=Q(\alpha) $ and $h(Q(\bm\lambda)) \subset Q(\bm\eta)$.  Condition on $\mathcal{E}(\bm\lambda)$ and a realization of $\cS_{i}$. Clearly, $Q^{(i)}\in \cS_{i}$. Given this, by the independence in the model, the expectation we want to estimate in \eqref{eq:expect cp I} depends only on the process inside $Q^{(i)}$. Since $Q(\bm\lambda) \subset Q^{(i+1)}$, we know that $h(Q(\bm\lambda)) \subset Q(\alpha)$, and therefore $Q^{(i+1)}$ is the $\prec$-minimal square in $Q^{(i)}$ such that $Q^{(i+1)}\in \cS_{i+1}$.  As far as $S_{i+1}$ given the realization of $S_{i}$ is concerned, the fact that $\mathcal{E}(\bm\lambda)$ occurs gives no other information. Thus,
\begin{align*}
    &\E\bigl[ L^n |\ell \cap S_n \cap (Q^{(i)} \setminus Q^{(i+1)})| \bigm|  \mathcal{E}(\bm\lambda) \bigr]
    \\
    &=
    \E\bigl[ L^n |\ell \cap S_n \cap (Q^{(i)} \setminus Q^{(i+1)})| \bigm| Q^{(i+1)} \in \cS_{i+1} \bigr]
    \\
    &=
    \E\bigl[ L^{n-i} |h(\ell) \cap S_{n-i} \cap Q(\alpha)^c| \bigm| Q(\alpha) \in \cS_1 \bigr]
    \,,
\end{align*}
where in the second equality, we applied the homothety $h$ and used self-similarity of the model. Furthermore, since $h(Q(\bm\lambda)) \subset Q(\bm\eta)$ and $\ell \cap Q(\bm\lambda) \neq \varnothing$, it follows that $h(\ell) \cap Q(\bm\eta) \neq \varnothing$. Thus, we can apply \Cref{lemma:expect single scale} to obtain 
\begin{align*}
    \E\bigl[ L^{n-i} |h(\ell) \cap S_{n-i} \cap Q(\alpha)^c| \bigm| Q(\alpha) \in \cS_1 \bigr]
    \geq
    c,
\end{align*}
which proves \eqref{eq:expect cp I} and completes the proof of \eqref{eq:expect cpM lemma}.

Finally, 
\begin{align*}
1 \gtrsim |\ell \cap S_0|
&=
\E\bigl[ L^n |\ell \cap S_n|\bigr]
\\
&\geq
\E\bigl[ L^n |\ell \cap S_n| \bigm| \ell \cap (\cup \cG) \cap S_n \neq \varnothing\bigr] \cdot \P[\ell \cap (\cup \cG) \cap S_n \neq \varnothing]\,
\end{align*}
which proves \eqref{eq:expect cpM lemma consequently}.
\end{proof}

For the remaining squares $\cB = \cD_n \setminus \cG$, we will use the following lemma, which only takes into account the cardinality of $\cB$. Note that the conclusion of \Cref{lemma:markov} is nontrivial whenever $\#\cB = o(\#\cD_n)$. 

\begin{lemma}[Bound using Markov's inequality]
    \label{lemma:markov}
    Let $\cB \subset \cD_n$ and $q \in (0,1)$. There is a set $E\subset\R$ of Lebesgue measure $\lesssim \#\cB \cdot q^{-1} L^{-2n}$, such that
    \[ 
        \P\bigl[\ell(\theta,t)\cap (\cup \cB) \cap S_n \neq\varnothing\bigr] \leq q \,,\quad\text{for all }t\in \R\setminus E\,.
    \]
    Moreover, $E$ can be taken to be a union of intervals of length $\ge L^{-n}$.
\end{lemma}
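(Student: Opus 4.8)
The plan is to prove \Cref{lemma:markov} by a direct first-moment (Markov's inequality) computation, integrating the survival probability over all relevant lines in direction $\theta$ and exploiting that each square in $\cB$ meets only a controlled set of lines. First I would fix the direction $\theta$ and note that the quantity $\P[\ell(\theta,t)\cap(\cup\cB)\cap S_n\neq\varnothing]$ is, by a union bound, at most $\sum_{Q\in\cB}\P[Q\subset S_n]\,\bbone_{\ell(\theta,t)\cap Q\neq\varnothing}$. By the uniform-in-scale structure of the GW grid model, $\P[Q\subset S_n]=L^{-n}$ for every $Q\in\cD_n$ (this is exactly the computation done in \Cref{lemma:expect single scale}, or simply the fact that $Z_n=\rho^n\#\cS_n$ has mean $1$ together with exchangeability). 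Hence $\P[\ell(\theta,t)\cap(\cup\cB)\cap S_n\neq\varnothing]\le L^{-n}\cdot\#\{Q\in\cB:\ell(\theta,t)\cap Q\neq\varnothing\}$.

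Next I would estimate the Lebesgue measure of the ``bad'' set $E:=\{t\in\R:\P[\ell(\theta,t)\cap(\cup\cB)\cap S_n\neq\varnothing]>q\}$. By the bound just obtained, $t\in E$ forces $\#\{Q\in\cB:\ell(\theta,t)\cap Q\neq\varnothing\}> q L^{n}$, so
\[
    q L^{n}\,|E|
    \le
    \int_{\R}\#\{Q\in\cB:\ell(\theta,t)\cap Q\neq\varnothing\}\,dt
    =
    \sum_{Q\in\cB}\,|\{t\in\R:\ell(\theta,t)\cap Q\neq\varnothing\}|
    =
    \sum_{Q\in\cB} |\proj_\theta Q|
    \le
    \#\cB\cdot\sqrt2\,L^{-n},
\]
since each $Q\in\cD_n$ has side length $L^{-n}$ and hence $|\proj_\theta Q|\le\sqrt2\,L^{-n}$. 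Rearranging gives $|E|\lesssim \#\cB\cdot q^{-1}L^{-2n}$, which is the claimed bound. To get the ``moreover'' statement that $E$ may be taken as a union of intervals of length $\ge L^{-n}$, I would replace $E$ by its ``$L^{-n}$-adic thickening'': the function $t\mapsto\#\{Q\in\cB:\ell(\theta,t)\cap Q\neq\varnothing\}$ only jumps at $O(L^n)$-spaced values of $t$ and in any case one can cover $E$ by the union of those $L^{-n}$-adic intervals (in the parametrization $t$) that meet $E$; this at most doubles the measure and makes $E$ a union of intervals of length $\ge L^{-n}$, absorbing the extra factor into the implicit constant.

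I do not expect a serious obstacle here: the lemma is a soft first-moment bound and the only genuinely model-dependent input is the identity $\P[Q\subset S_n]=L^{-n}$ for $Q\in\cD_n$, which holds for any uniform grid model (and more generally $\E[\#\{Q\in\cD_n:Q\subset S_n\}]=\rho^{-n}$ together with the symmetry that every $Q$ has the same inclusion probability — in the non-uniform case one would instead carry $\max_Q\P[Q\subset S_n]$). The mildest care needed is the bookkeeping for the ``union of intervals of length $\ge L^{-n}$'' clause, which is purely combinatorial and costs only a constant factor; everything else is Fubini plus Markov.
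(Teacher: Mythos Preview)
Your main argument (union bound followed by Markov's inequality) is exactly the paper's approach, and the bound $|E|\lesssim\#\cB\cdot q^{-1}L^{-2n}$ for the superlevel set $E'=\{t:\sum_{Q\in\cB}\bbone_{\proj_\theta Q}(t)\ge qL^n\}$ is correct.

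The gap is in the ``moreover'' clause. Your claim that thickening $E'$ to a union of $L^{-n}$-intervals ``at most doubles the measure'' is not justified and is false in general. The jump points of $t\mapsto\#\{Q\in\cB:\ell(\theta,t)\cap Q\neq\varnothing\}$ are the endpoints of the intervals $\proj_\theta Q$; for generic $\theta$ these are \emph{not} $L^{-n}$-spaced, and $E'$ can consist of up to $O(\#\cB)$ very short components. Taking the $L^{-n}$-neighborhood (or an $L^{-n}$-adic cover) then adds measure of order $\#\cB\cdot L^{-n}$, which dominates $\#\cB\cdot q^{-1}L^{-2n}$ whenever $q\gg L^{-n}$ --- and in the applications $q\sim 1/n$, so this is the typical regime. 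The paper fixes this by enlarging each interval \emph{before} applying Markov: set $E''=\{t:\sum_{Q\in\cB}\bbone_{3\proj_\theta Q}(t)\ge qL^n\}$ and take $E:=E'(L^{-n})$. Since $|\proj_\theta Q|\ge L^{-n}$, one has $E\subset E''$, and Markov applied to the tripled sum still gives $|E''|\lesssim\#\cB\cdot q^{-1}L^{-2n}$. This is the missing idea.
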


\begin{proof}
By the union bound, 
\begin{align}
    \label{eq:union bound}
    \P\bigl[\ell(\theta,t)\cap (\cup \cB) \cap S_n \neq\varnothing\bigr]
    \leq
    \sum_{Q \in \cB}
    \P\bigl[\ell(\theta,t)\cap Q \cap S_n \neq\varnothing\bigr]
    =
    L^{-n} \sum_{Q \in \cB} \bbone_{\proj_\theta (Q)} (t)
    .
\end{align}
Define
\begin{align*}
    E' &= \{t \in \R : \sum_{Q \in \cB} \bbone_{\proj_\theta (Q)} (t) \geq q L^{n}\}
    \\
    E'' &= \{t \in \R : \sum_{Q \in \cB} \bbone_{3(\proj_\theta (Q))} (t) \geq q L^{n}\}
\end{align*}
where $3(\proj_\theta (Q))$ is the interval with the same center as $\proj_\theta (Q)$ and with three times the length. Since $|\proj_\theta(Q)| \geq L^{-n}$, we have $E := E'(L^{-n}) \subset E''$. We claim $E$ satisfies the desired properties. By definition, $E$ is a union of intervals of length $\geq L^{-n}$. Next, by Markov's inequality,
\[
    |E|
    \leq
    |E''|
    \leq
    q^{-1} L^{-n} \int_\R \sum_{Q \in \cB} \bbone_{3(\proj_\theta (Q))}(t) \, dt
    \lesssim
    \#\cB \cdot q^{-1} \cdot L^{-2n} 
    \,.
\]
Finally, if $t \in \R \setminus E \subset \R \setminus E'$, then by \eqref{eq:union bound}, we have
$
    \P\bigl[\ell(\theta,t)\cap (\cup \cB) \cap S_n \neq\varnothing\bigr] \leq q \,.
$
\end{proof}

\begin{proof}[Proof of \Cref{prop:expected-line-survival kc}]
    Since $\LEI_k(\theta,c) \neq \varnothing$, there exists $\alpha \in \Lambda$ such that $\LEI_k(\theta,\alpha,c) \neq \varnothing$. Fix $n$. Let $n_0 = \lfloor n/k \rfloor$, and define
    \begin{align*}
        \cG &= \left\{ \bm\lambda\in\Lambda^n: \Count(\bm\lambda, \LEI_k(\theta,\alpha,c)) \ge n_0 L^{-2k}/2 \right\}
        \\
        \cB &= \cD_n \setminus \cG.
    \end{align*}
    By \Cref{lemma:expect multi scale}, for all $t \in \R$,
    \[
        \P[\ell(\theta,t) \cap (\cup \cG) \cap S_n\neq\varnothing]\lesssim \frac{1}{c} \frac{2L^{2k}}{n_0} \sim_{k,c} \frac{1}{n}.
    \]
    
    Next, we will apply Markov's inequality (\Cref{lemma:markov}) with $\cB$; for that, we first need an upper bound on $\# \cB$. Fix an arbitrary $\bm\eta \in \LEI_k(\theta,\alpha,c)$. Observe that
    \[
        \cB 
        \subset
        \left\{ \bm\lambda\in\Lambda^n: \#\{i\in [n_0]:(\lambda_{(i-1)k+1},\ldots,\lambda_{ik}) = \bm\eta \} \le n_0 L^{-2k}/2 \right\}
    \]
    Consider $\lambda_1, \ldots, \lambda_n$ as IID uniform random variables taking values in $\Lambda$. Then the events $(\lambda_{(i-1)k+1},\ldots,\lambda_{ik})=\bm\eta$ for $i\in [n_0]$ are IID $\textup{Bernoulli}(L^{-2k})$ random variables (this is why we only consider multiples of $k$), so we can apply Chernoff's inequality (\Cref{lemma:chernoff}) to get
    \[
        \#\mathcal{B} \le 2 \exp\left(-\frac{n_0}{16 L^{2k}}\right)\cdot  \# \Lambda^n.
    \]
    By \Cref{lemma:markov}, there is a set $E \subset\R$ of Lebesgue measure $\lesssim e^{-\Omega_{k,c}(n)}$, such that
    \[ 
        \P\bigl[\ell(\theta,t)\cap (\cup \cB) \cap S_n \neq\varnothing\bigr] \lesssim \frac{1}{n} \quad\text{for all }t\in \R\setminus E.
    \]
    Since $\cD_n = \cG \cup \cB$, we have proved \eqref{eq:expected-line-survival-gen}.
    
    To deduce \eqref{eq:expected-projection-gen}, we simply observe
    \[
        \proj_{\theta} S_n = \{ t\in\R: \ell(\theta,t)\cap S_n\neq\varnothing\}
    \]
    so
    \begin{align*}
        \E |\proj_\theta S_n| 
        &= 
        \int_{\R} \P[\ell(\theta,t) \cap S_n\neq\varnothing] \, dt
        \leq
        \int_{\R \setminus E} \P[\ell(\theta,t) \cap S_n\neq\varnothing] \, dt
        +
        |E|
        \\
        &\lesssim_{k,c}
        \frac{1}{n}
        +
        e^{-\Omega_{k,c}(n)}
        \lesssim_{k,c}
        \frac{1}{n}\,.        
    \end{align*}
\end{proof}

\subsection{The non-degenerate case}
\label{sec:non-deg case}

Note that the implied constants in \Cref{prop:expected-line-survival kc} depend on the parameters $k$ and $c$ of the proposition. In the non-degenerate case, the following two lemmas allow us to find $k$ and $c$ depending only on the model such that the hypothesis of \Cref{prop:expected-line-survival kc} (i.e., $\LEI_k(\theta,c) \neq \varnothing$) holds for \emph{all} directions $\theta$. We state these lemmas so that they are also applicable in the degenerate case. 

We note that the role of the parameter $k$ is crucial here. In the degenerate case, it will be essential to consider all values of $k$. Also, in the nondegenerate case, it is possible that there exists $\theta$ such that  $\LEI_1(\theta,c)=\varnothing$ for all $c > 0$, but as it turns out, if $k\in\N$ and $c>0$ are chosen appropriately, then $\LEI_k(\theta,c)\neq\varnothing$ for all $\theta$.

\begin{lemma}[Existence of parameters for which $\LEI_k(\theta,c) \neq \varnothing$]
    \label{lemma:nonempty V local}
    Consider a uniform grid model.
    \begin{enumerate}[label=\textup{(\roman*)}]
        \item
        \label{item:nonempty V transversal}
        For every transversal direction $\theta \in (0,\pi/2) \cup (\pi/2,\pi)$,
        \begin{align}
            \label{eq:nonempty V nbhd}
            \text{there exist $k,c$ such that $\LEI_k(\theta,c) \neq \varnothing$.}
        \end{align}
        (The parameters $k,c$ can depend on the model.)
        \item 
        \label{item:nonempty V vertical}
        If the model is not vertically degenerate, then \eqref{eq:nonempty V nbhd} holds for the vertical direction $\theta = \pi/2$.
        \item 
        \label{item:nonempty V horizontal}
        If the model is not horizontally degenerate, then \eqref{eq:nonempty V nbhd} holds for the horizontal direction $\theta = 0$.
    \end{enumerate}
\end{lemma}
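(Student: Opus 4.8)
The plan is to exhibit, for each relevant direction $\theta$, an explicit square $\kappa \in \Lambda$ and a length parameter $c' > 0$ so that \Cref{lemma:two squares V} applies, i.e., so that the "strip" $\bigcup_{t:|\ell(\theta,t)\cap Q(\kappa)|\ge c'/L}\ell(\theta,t)$ contains \emph{some} subsquare $Q(\bm\eta)$ of the form $\{\alpha\}\times\Lambda^{k-1}$ for a suitable $k$. The point is that the strip in \eqref{eq:Q(eta) inside strip} is a nonempty open strip (when $\theta$ is not parallel to the sides of $Q(\kappa)$, the set of $t$ with $|\ell(\theta,t)\cap Q(\kappa)|\ge c'/L$ is a nonempty interval once $c'$ is small enough), and any open strip of positive width contains a dyadic-type subsquare $Q(\bm\eta)\in\Lambda^k$ once $k=k(\theta)$ is large enough, with first coordinate $\alpha$ equal to whatever $\Lambda$-symbol locates $Q(\bm\eta)$ at scale $1$. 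To get $\kappa$ and $\alpha$ with $\P[Q(\kappa)\in\cS_1\mid Q(\alpha)\in\cS_1] > 0$ and $\alpha\ne\kappa$, I use the non-degeneracy / uniformity hypotheses as below.

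First I would handle the \textbf{transversal case} \ref{item:nonempty V transversal}. Here $\theta\in(0,\pi/2)\cup(\pi/2,\pi)$, so $\theta$ is parallel to neither the horizontal nor vertical sides of any grid square. Since the model is uniform, every $Q\in\cD_1$ has $\P[Q\in\cS_1]=1/L > 0$; pick any $\alpha$, and then (using that $\rvL\ge 1$ and $\E[\rvL]=L$, or more simply $\sum_Q\P[Q\in\cS_1]=L\ge 2$) there is some $\kappa\ne\alpha$ with $\P[Q(\kappa)\in\cS_1\mid Q(\alpha)\in\cS_1] > 0$ — in fact I would first choose $\kappa$ freely among the $L$ squares that can co-occur, then choose $\alpha$ as a co-occurring partner, and finally relocate: the strip depends only on $\kappa$ and $\theta$, and $\alpha$ can be taken to be the scale-$1$ address of whichever $Q(\bm\eta)$ I find inside the strip. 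Concretely: fix $\kappa$ so that $Q(\kappa)$ meets a line in direction $\theta$ in a segment of length $\ge c'/L$ for a nonempty interval of $t$'s; choose $c'$ small enough that this $t$-interval is nonempty and open; let $k$ be large enough that some $Q(\bm\eta)\in\cD_k$ lies entirely inside the corresponding open strip; set $\alpha=\eta_1$. One must still check $\P[Q(\kappa)\in\cS_1\mid Q(\alpha)\in\cS_1] > 0$ — this needs a short argument that for a uniform model \emph{any} two distinct squares can co-occur, or alternatively one picks $\kappa$ among the partners of the found $\alpha$; I would arrange the logic so the strip construction is done for \emph{every} candidate $\kappa$ and then observe that for the resulting $\alpha$ at least one admissible $\kappa$ works by uniformity. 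Then \Cref{lemma:two squares V} gives $\bm\eta\in\LEI_k(\theta,\alpha,c'p)\subset\LEI_k(\theta,c'p)$, which is \eqref{eq:nonempty V nbhd} with $c=c'p$.

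For the \textbf{vertical case} \ref{item:nonempty V vertical}, $\theta=\pi/2$, so lines $\ell(\theta,t)$ are horizontal, and $|\ell(\theta,t)\cap Q(\kappa)|$ is either $1/L$ (if $t$ is the height-coordinate of $Q(\kappa)$'s row, up to the interior) or $0$. The strip in \eqref{eq:Q(eta) inside strip} with $c'=1$ is then exactly the horizontal band occupied by the \emph{row} of $\kappa$. Non-vertical-degeneracy means that with positive probability $\cS_1$ contains two squares $Q(i_1,j),Q(i_2,j)$ with $i_1\ne i_2$ in a common \emph{row} $j$ — wait, one must be careful with the row/column convention, but the upshot is: non-vertical-degeneracy produces, with positive probability, two selected squares sharing a horizontal level, so there exist $\alpha\ne\kappa$ in the same row with $\P[Q(\kappa)\in\cS_1\mid Q(\alpha)\in\cS_1] > 0$. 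Taking $k=1$, $\bm\eta=(\alpha)$, condition \eqref{eq:Q(eta) inside strip} holds since $Q(\alpha)$ sits in the same horizontal band as $Q(\kappa)$, so $\bm\eta\in\LEI_1(\theta,\alpha,p)$. Case \ref{item:nonempty V horizontal} ($\theta=0$, vertical lines) is identical with rows and columns swapped, using non-horizontal-degeneracy.

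The \textbf{main obstacle} I anticipate is purely bookkeeping rather than conceptual: making the quantifier order in the transversal case airtight — one has freedom in choosing $\kappa$ (hence the strip) but $\alpha$ is then forced by where a subsquare happens to land inside the strip, and one needs $\alpha$ and $\kappa$ to be a co-occurring pair. I would resolve this by noting that a uniform grid model with $L\ge 2$ has the property that every pair of distinct squares in $\cD_1$ co-occurs with positive probability — or, if that is false for some exotic uniform model, by the safer route of: (a) fixing a direction-dependent thin strip around \emph{any} line through the center, (b) choosing $k$ so a subsquare $Q(\bm\eta)$ lands in it, (c) then among the $\ge 2$ partners of $\alpha=\eta_1$ that co-occur, at least one, call it $\kappa\ne\alpha$, has its row/column band containing that subsquare after possibly enlarging $k$ slightly — and invoking \Cref{lemma:two squares V}. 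The rest (nonemptiness and openness of the $t$-interval, existence of a grid square inside an open strip for large $k$, positivity of the conditional probability) is elementary.
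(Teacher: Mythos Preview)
Your treatment of parts \ref{item:nonempty V vertical} and \ref{item:nonempty V horizontal} is essentially the paper's argument: not vertically degenerate means some pair in a common \emph{column} co-occurs, $\theta=\pi/2$ gives \emph{vertical} lines in the paper's convention, and then $k=1$, $\bm\eta=(\alpha)$, $c'=1$ in \Cref{lemma:two squares V} does it. You flipped both conventions (row $\leftrightarrow$ column and horizontal $\leftrightarrow$ vertical), but the two flips cancel and you flagged the issue yourself, so no harm done.

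Part \ref{item:nonempty V transversal}, however, has a real gap. Your worry about the quantifier order is exactly the problem, and neither of your proposed fixes resolves it. The claim that in a uniform model any two distinct squares co-occur with positive probability is false: e.g.\ for $L=2$ take $\cS_1$ uniform on $\{\,\{Q(1,1),Q(2,2)\},\ \{Q(1,2),Q(2,1)\}\,\}$. Your ``safer route'' also fails: after landing a subsquare $Q(\bm\eta)$ with $\alpha=\eta_1$, there is no reason any partner $\kappa$ of $\alpha$ has its $\theta$-strip covering $Q(\bm\eta)$. (Take $L=3$, $\cS_1$ uniform on the three diagonals $\{(1,1),(2,2),(3,3)\}$, $\{(1,2),(2,3),(3,1)\}$, $\{(1,3),(2,1),(3,2)\}$, and $\theta$ close to $\pi/2$; for $Q(\bm\eta)$ near the centre of $Q(1,1)$ the partners $(2,2),(3,3)$ have nearly-vertical strips that miss it.)

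The missing idea is a single-line pigeonhole that forces $\alpha$ and $\kappa$ to lie on a common line in direction $\theta$. Since $\theta$ is transversal, there is a line $\ell_0=\ell(\theta,t_0)$ (avoiding $\cD_1$-vertices) that meets at least $L+1$ squares of $\cD_1$. By uniformity each is in $\cS_1$ with probability $1/L$, so $\E[\#\{Q\in\cS_1:Q\cap\ell_0\neq\varnothing\}]\ge 1+1/L>1$, and hence $\P[\#\{Q\in\cS_1:Q\cap\ell_0\neq\varnothing\}\ge 2]>0$. This produces a co-occurring pair $\alpha\neq\kappa$ \emph{both intersected by $\ell_0$}. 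Now the geometry is automatic: for small $c'$ the $\kappa$-strip in \eqref{eq:Q(eta) inside strip} is an open strip containing $\ell_0$, and $\ell_0$ meets the interior of $Q(\alpha)$, so a sufficiently deep subsquare $Q(\bm\eta)\subset Q(\alpha)$ near $\ell_0$ lies in that strip. Then \Cref{lemma:two squares V} applies. This is how the paper proceeds.
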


\begin{proof}
    To prove \ref{item:nonempty V transversal}, fix a transversal direction $\theta \in (0,\pi/2) \cup (\pi/2,\pi)$. By elementary geometric considerations, there exists $t_0$ such that $\ell_0:=\ell(\theta,t_0)$ does not hit any vertices of squares in $\mathcal{D}_1$, and 
    \[
        \# \{ Q \in \cD_1 : Q \cap \ell_0 \neq \varnothing \} \ge L+1.
    \]
    (This is where we use the assumption that $\theta$ is a transversal direction.) Since each square in $\cD_1$ is in $\mathcal{S}_1$ with probability $L^{-1}$, we deduce that
    \[
        \E\left[\# \{ Q \in \cS_1 : Q \cap \ell_0 \neq \varnothing \} \right]\ge 1+1/L\,.
    \]
    It follows that
    \[
        \P\bigl[\# \{ Q \in \cS_1 : Q \cap \ell_0 \neq \varnothing \}  \ge 2\bigr] > 0 \,.
    \]
    (In fact, it is easy to see the probability is at least $1/L^2$.) We can therefore pick two distinct $\alpha, \kappa \in \Lambda$ such that $\ell_0$ intersects both $Q(\alpha)$ and $Q(\kappa)$ and such that $\P[Q(\alpha) \in \cS_1 \land Q(\kappa) \in \cS_1] > 0$. By this fact, \Cref{lemma:two squares V}, and elementary geometry, there exist $k\in\N$, $c > 0$ such that $\LEI_k(\theta,\alpha,c) \neq \varnothing$. This proves \ref{item:nonempty V transversal}.

    To prove \ref{item:nonempty V vertical}, suppose the model is not vertically degenerate. Then there exist two distinct squares $Q(\alpha), Q(\kappa) \in \cD_1$ in the same column such that $\P[Q(\alpha) \in \cS_1 \land Q(\kappa) \in \cS_1] > 0$. Then by \Cref{lemma:two squares V}, we have
    \[
    \LEI_1(\pi/2,\alpha,c) = \{\alpha\} \qquad \text{where } c := \P[Q(\kappa) \in \cS_1 \mid Q(\alpha) \in \cS_1] > 0\,.
    \] 
    This proves \ref{item:nonempty V vertical}. The proof of \ref{item:nonempty V horizontal} is similar.
\end{proof}

\begin{lemma}[Upgrade to uniform parameters for which $\LEI_k(\theta,c) \neq \varnothing$]
\label{lem:non-deg-structure}
Let $\Theta \subset [0,\pi]$ be a compact set. Suppose that for all $\theta \in \Theta$, $\LEI_{k}(\theta,c) \neq \varnothing$ for some $k,c$ (possibly depending on $\theta$). Then there exist $k,c$ depending only on the model and on $\Theta$ such that for all $\theta \in \Theta$, $\LEI_k(\theta,c) \neq \varnothing$.
\end{lemma}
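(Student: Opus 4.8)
The goal is to upgrade the pointwise-in-$\theta$ statement "$\LEI_k(\theta,c)\neq\varnothing$ for some $k,c$" to a uniform one over the compact set $\Theta$. The natural approach is a compactness argument combined with a semicontinuity (really, openness) property: I would show that the condition $\LEI_k(\theta,c)\neq\varnothing$ is \emph{stable under small perturbations of $\theta$}, so that it holds on a neighbourhood of each $\theta_0\in\Theta$ with fixed parameters, and then extract a finite subcover. Concretely, for each $\theta_0\in\Theta$ fix $k_0,c_0$ with $\LEI_{k_0}(\theta_0,c_0)\neq\varnothing$, witnessed by some $\alpha_0\in\Lambda$ and $\bm\eta_0\in\{\alpha_0\}\times\Lambda^{k_0-1}$. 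I would then argue that there is an open neighbourhood $U_{\theta_0}\ni\theta_0$ and a constant $c_0'>0$ (e.g.\ $c_0'=c_0/2$) such that $\bm\eta_0\in\LEI_{k_0}(\theta,\alpha_0,c_0')$ for every $\theta\in U_{\theta_0}$. Covering $\Theta$ by finitely many such $U_{\theta_0}$ and taking $k=\max k_0$, $c=\min c_0'$ — after observing that enlarging $k$ only helps, since any $\bm\eta\in\LEI_{k_0}(\theta,\alpha,c')$ can be extended to a string in $\LEI_{k}(\theta,\alpha,c')$ by appending arbitrary coordinates (the defining condition \eqref{eq:def set V} only becomes easier as $Q(\bm\eta)$ shrinks) — gives the uniform $k,c$.

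\textbf{Why the perturbation step works.} The defining inequality \eqref{eq:def set V} for $\bm\eta\in\LEI_k(\theta,\alpha,c)$ reads: for every $t$ with $\ell(\theta,t)\cap Q(\bm\eta)\neq\varnothing$, one has $\E[|\ell(\theta,t)\cap S_1\cap Q(\alpha)^c|\mid Q(\alpha)\in\cS_1]\ge c/L$. Both the family of lines meeting a fixed closed square $Q(\bm\eta)$ and the quantity $\E[|\ell(\theta,t)\cap S_1\cap Q(\alpha)^c|\mid Q(\alpha)\in\cS_1]$ vary continuously — in fact the latter is, for a grid model, a continuous (indeed piecewise-smooth) function of $(\theta,t)$ on the compact set of line-parameters that meet $[0,1]^2$, since $S_1$ conditioned on $Q(\alpha)\in\cS_1$ is a finite union of grid squares with fixed probabilities and a chord length through a square depends continuously on the line. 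The key robustness comes from working with the \emph{strictly smaller} square $Q(\bm\eta_0)$: by \Cref{lemma:two squares V} one may in fact choose $\bm\eta_0$ so that $Q(\bm\eta_0)$ lies in the \emph{open} strip of lines making a definite-length chord with some $Q(\kappa)$, $\kappa\neq\alpha_0$, with $\P[Q(\kappa)\in\cS_1\mid Q(\alpha_0)\in\cS_1]>0$; this open condition persists for $\theta$ near $\theta_0$, so $\ell(\theta,t)\cap Q(\bm\eta_0)\neq\varnothing$ still forces $|\ell(\theta,t)\cap Q(\kappa)|\ge c''/L$ for a slightly smaller $c''$, and hence the conditional expectation stays $\ge (c_0'/L)$. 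One should double-check that the claim of \Cref{lemma:nonempty V local} can always be realized through the two-square mechanism of \Cref{lemma:two squares V} (this is how that lemma is proved in cases \ref{item:nonempty V transversal} and \ref{item:nonempty V vertical}), so that the open-strip description is available; this lets me avoid any delicate analysis of \eqref{eq:def set V} near boundary-of-strip lines.

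\textbf{The main obstacle.} The one place needing care is potential discontinuity of $t\mapsto\E[|\ell(\theta,t)\cap S_1\cap Q(\alpha)^c|\mid\cdot]$ at parameters where $\ell(\theta,t)$ passes exactly through a grid vertex or runs along a grid edge — there the chord-length through individual subsquares can jump. The clean way around this is precisely to phrase everything through \Cref{lemma:two squares V}: I only need a \emph{single} pair $(\alpha_0,\kappa_0)$ and the open geometric condition "$Q(\bm\eta_0)$ is contained in the open strip $\bigcup_{t:\,|\ell(\theta,t)\cap Q(\kappa_0)|>c''/L}\ell(\theta,t)$", which is a statement about closed squares and lines and is manifestly stable under small changes of $\theta$ (the strip moves continuously and $Q(\bm\eta_0)$ is a fixed compact set in its open interior). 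So I would not prove continuity of the full expectation at all; I would just transport the two-square certificate along the perturbation. With that in hand the argument is: (1) produce the two-square certificate at $\theta_0$ via \Cref{lemma:nonempty V local}/\Cref{lemma:two squares V}; (2) observe the defining strip-inclusion is open in $\theta$; (3) deduce $\bm\eta_0\in\LEI_{k_0}(\theta,\alpha_0,c_0')$ on a neighbourhood; (4) finite subcover of $\Theta$; (5) pad strings to a common length $k$ and take the min of the $c$'s. The only genuinely model-dependent input is the choice of certificates, which is exactly why the resulting $k,c$ depend on the model and on $\Theta$ but on nothing else.
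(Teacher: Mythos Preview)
Your overall strategy---prove that $\LEI_k(\theta,c)\neq\varnothing$ is stable under small perturbations of $\theta$, extract a finite subcover of $\Theta$, then take $k=\max$ and $c=\min$ using monotonicity---is exactly the paper's approach. The difference lies in how you justify the stability step, and there your detour is both unnecessary and, for the lemma as stated, incomplete.

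Your worry about discontinuity is unfounded: for a closed square $Q$, the chord length $(\theta,t)\mapsto |\ell(\theta,t)\cap Q|$ is continuous everywhere, including when the line passes through a vertex or along an edge. Hence $F(\theta,t):=\E[|\ell(\theta,t)\cap S_1\cap Q(\alpha)^c|\mid Q(\alpha)\in\cS_1]=\sum_{\kappa\neq\alpha}p_\kappa\,|\ell(\theta,t)\cap Q(\kappa)|$ is genuinely continuous on the compact set of relevant line parameters. Your first instinct---use continuity of $F$ directly---was the right one, and this is what the paper does. Its one extra device is to pass from $\bm\eta\in\LEI_{k}(\theta_0,\alpha,c)$ to some $\bm\lambda\in\{\bm\eta\}\times\Lambda^2$ with $\overline{Q(\bm\lambda)}\subset\operatorname{int}Q(\bm\eta)$; then any line $\ell(\theta',t)$ meeting $Q(\bm\lambda)$ with $\theta'$ near $\theta_0$ is uniformly close to a line $\ell(\theta_0,t_0)$ meeting $Q(\bm\eta)$, and continuity of $F$ gives $\bm\lambda\in\LEI_{k+2}(\theta',\alpha,c/2)$. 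So the paper increases $k$ already in the continuity step, not only at the final padding stage.

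Your pivot to the two-square certificate of \Cref{lemma:two squares V} does \emph{not} prove the lemma as stated. The hypothesis is only $\LEI_k(\theta,c)\neq\varnothing$; a general witness $\bm\eta\in\LEI_k(\theta,\alpha,c)$ need not arise from a single pair $(\alpha,\kappa)$, since the square $\kappa$ realizing the lower bound in \eqref{eq:def set V} may vary with $t$. Thus the step ``one may in fact choose $\bm\eta_0$ so that $Q(\bm\eta_0)$ lies in the open strip'' is not justified by the hypotheses. (It would suffice for the paper's applications, where $\LEI$ is always produced via \Cref{lemma:two squares V}, but that is a weaker statement.) Once you drop the unfounded discontinuity concern, the direct continuity argument goes through and the detour is unnecessary.
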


\begin{proof}[Proof of \Cref{lem:non-deg-structure}]
    Before proving the lemma, we first observe the following two properties of $\LEI_k(\theta,\alpha,c)$: 
    \begin{enumerate}
        \item Monotonicity in $k$ and $c$: If $\LEI_k(\theta,\alpha,c) \neq \varnothing$, $k' \geq k$, and $c' \leq c$, then also $\LEI_{k'}(\theta,\alpha,c')\neq\varnothing$.
        \item Continuity in $\theta$: If $\LEI_k(\theta,\alpha,c) \neq \varnothing$, then there exists an open interval $I$ containing $\theta$ and there exist $k' \geq k$ and $c' \leq c$ such that for all $\theta' \in I$, $\LEI_{k'}(\theta',\alpha,c') \neq \varnothing$.
    \end{enumerate}
    Monotonicity is an immediate consequence of \Cref{definition:set LEI}. To prove continuity, let $\bm\eta \in \LEI_k(\theta,\alpha,c)$. Let $\bm\lambda \in \{\bm\eta\} \times \Lambda^2$ be such that the closure of $Q(\bm\lambda)$ is contained in the interior of $Q(\bm\eta)$. By elementary geometry, $\bm\lambda \in \LEI_{k+2}(\theta',\alpha,\frac{c}{2})$ if $\theta'$ is sufficiently close to $\theta$. This proves continuity.
    
    Now we prove the lemma. By compactness of $\Theta$ and continuity, $\Theta$ can be covered by finitely many open intervals $I^{(j)}$ such that for each $j$, there exists $k^{(j)}, c^{(j)}$ such that for all $\theta \in I^{(j)}$, $\LEI_{k^{(j)}}(\theta,c^{(j)})$. By choosing $k = \max_j k^{(j)}$ and $c = \min_j c^{(j)}$ and using monotonicity, we have the lemma.
\end{proof}

\begin{proposition}\label{prop:expected-line-survival}
    Consider a non-degenerate grid model. For all angles $\theta\in [0,\pi]$ and all $n\in\N$, there is a set $E=E_{\theta,n}\subset\R$ of Lebesgue measure $\lesssim e^{-\Omega(n)}$, such that
    \begin{align}
        \label{eq:expected-line-survival}
        \P\bigl[\ell(\theta,t)\cap S_n \neq\varnothing\bigr]\lesssim \frac{1}{n} \qquad\text{for all }t\in \R\setminus E\,.
    \end{align}
    Consequently,
    \begin{align}
        \label{eq:expected-projection}
        \E|\proj_\theta S_n| \lesssim \frac{1}{n}\,.
    \end{align}
\end{proposition}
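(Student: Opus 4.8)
The plan is to combine the two existence lemmas just proved with the uniform-parameter upgrade and then invoke Proposition~\ref{prop:expected-line-survival kc}. Concretely: since the model is non-degenerate, it is neither vertically nor horizontally degenerate, so parts~\ref{item:nonempty V vertical} and~\ref{item:nonempty V horizontal} of Lemma~\ref{lemma:nonempty V local} give us $k,c$ (depending on the model) for which $\LEI_k(\pi/2,c)\neq\varnothing$ and $\LEI_k(0,c)\neq\varnothing$; part~\ref{item:nonempty V transversal} gives, for every transversal $\theta\in(0,\pi/2)\cup(\pi/2,\pi)$, some $k_\theta,c_\theta$ with $\LEI_{k_\theta}(\theta,c_\theta)\neq\varnothing$. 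Thus the hypothesis ``$\LEI_k(\theta,c)\neq\varnothing$ for some $k,c$'' of Lemma~\ref{lem:non-deg-structure} holds for \emph{every} $\theta$ in the compact set $\Theta=[0,\pi]$ (noting that $\theta=0$ and $\theta=\pi$ represent the horizontal direction, and $\theta=\pi/2$ the vertical).

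First I would therefore apply Lemma~\ref{lem:non-deg-structure} with $\Theta=[0,\pi]$ to extract a single pair $(k,c)$, depending only on the model, such that $\LEI_k(\theta,c)\neq\varnothing$ for all $\theta\in[0,\pi]$. Then I would apply Proposition~\ref{prop:expected-line-survival kc} for each $\theta$: for every $n$ there is a set $E=E_{\theta,n}\subset\R$ of Lebesgue measure $\lesssim_{k,c} e^{-\Omega_{k,c}(n)}$ with $\P[\ell(\theta,t)\cap S_n\neq\varnothing]\lesssim_{k,c}\frac1n$ for $t\notin E$. Since $k$ and $c$ are now fixed by the model, the subscripts $k,c$ on the implicit constants may be absorbed into ``implied constants depending on the model'', which gives exactly~\eqref{eq:expected-line-survival}. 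The consequence~\eqref{eq:expected-projection} then follows verbatim as in the proof of Proposition~\ref{prop:expected-line-survival kc}: write $\E|\proj_\theta S_n|=\int_\R\P[\ell(\theta,t)\cap S_n\neq\varnothing]\,dt\le\int_{\R\setminus E}\P[\ell(\theta,t)\cap S_n\neq\varnothing]\,dt+|E|\lesssim \tfrac1n+e^{-\Omega(n)}\lesssim\tfrac1n$, using that the relevant lines meet $[0,1]^2$ in a bounded interval so the integral is over a bounded range.

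The only genuinely delicate point is the uniformity over $\theta$ near the two special (axis) directions: for transversal $\theta$ the parameters from Lemma~\ref{lemma:nonempty V local}\ref{item:nonempty V transversal} could in principle blow up as $\theta\to0$ or $\theta\to\pi/2$, and this is exactly why we need the non-degeneracy hypothesis to handle the endpoints and the continuity property (property~(2) in the proof of Lemma~\ref{lem:non-deg-structure}) to bridge a neighborhood of each endpoint — the compactness of $[0,\pi]$ then reduces everything to a finite subcover. So the ``main obstacle'' is really already dispatched by Lemma~\ref{lem:non-deg-structure}; the proof of Proposition~\ref{prop:expected-line-survival} itself is just the bookkeeping of assembling these inputs, and one should simply be careful to state that non-degeneracy is used precisely to cover $\theta\in\{0,\pi/2,\pi\}$.
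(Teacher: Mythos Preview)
Your proposal is correct and follows essentially the same approach as the paper: combine Lemma~\ref{lemma:nonempty V local} (all three parts, using non-degeneracy for the axis directions) with Lemma~\ref{lem:non-deg-structure} on $\Theta=[0,\pi]$ to obtain a single model-dependent pair $(k,c)$, and then feed this into Proposition~\ref{prop:expected-line-survival kc}. The paper's proof is just a terser version of what you wrote; your added remark about why non-degeneracy is needed precisely at the axis directions (and how the compactness/continuity argument bridges neighborhoods of them) is a helpful clarification.
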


\begin{proof}
    Since the model is non-degenerate, by \Cref{lemma:nonempty V local} and \Cref{lem:non-deg-structure}, there exist $k,c$ such that for all $\theta \in [0,\pi]$, there exists $\alpha$ such that $\LEI_k(\theta,\alpha,c) \neq \varnothing$. This lemma then follows from \Cref{prop:expected-line-survival kc}, which now holds with estimates that are uniform in $\theta$.
\end{proof}

The proof of Theorem \ref{thm:expected-decay non-deg} is now immediate.
\begin{proof}[Proof of Theorem \ref{thm:expected-decay non-deg}]
    By Fubini,
    $
        \E[\Fav(S_n)]
        =
        \int_{0}^{\pi} \E |\proj_\theta S_n| \, d\theta
        \lesssim \frac{1}{n} \, .
    $

If the model is Ahlfors-regular, the lower bound $\E[\Fav(S_n)]\gtrsim\frac1n$ is a consequence of \eqref{Fav_lb_deterministic}. More generally, it is not hard to derive the lower bound from the fact $\P[ S_n\cap\ell\neq\varnothing]\gtrsim|\ell\cap[0,1)^2|/n$; see \Cref{lem:unif_ub_exp}.
\end{proof}

\begin{remark}
    The exceptional set in \Cref{prop:expected-line-survival} cannot be removed: Consider a random model which always picks exactly one of the squares in $\cD_1$ on the diagonal $\{Q(i,i) : i \in [L]\}$, uniformly at random. Now consider the line $\ell$ given by $y=x$. Then $\ell$ always intersects exactly one square of $S_n$, so  $\P[\ell \cap S_n \neq \varnothing] = 1$. The same is true for all lines that are ``$L^{-n}$-close'' to $\ell$. These lines must lie in the exceptional set of \Cref{prop:expected-line-survival}, and thus we do not have uniform decay over all lines. 

    On the other hand, there are many natural models, where the bound $\P[\ell\cap S_n\neq\varnothing]\lesssim\tfrac1n$ holds uniformly over all $n$ and over all lines. In particular, this is true if one selects the squares uniformly at random (i.e. $\P[S_1=S]=1/\binom{L^2}{L}$ for each $S$ that is a union of $L$ different squares in $\cD_1$). This also holds for the model used by Peres and Solomyak \cite{PeresSolomyak02}. The proof of this fact, which we omit here,  is based on an observation (a variant of the argument in \cite{PeresSolomyak02}), that in these models, the collection $\cB$ of the bad-squares  may be replaced by a sub-collection of $\cD_n$ that consists of only $o(L^{n})$ squares.
\end{remark}

\subsection{The degenerate case}

In \Cref{sec:non-deg case}, we only needed the set $\LEI_k(\theta,\alpha,c)$ to be nonempty. This is because there was a uniform choice of $k$ for all $\theta$. In the degenerate case, this is no longer true; $k$ must increase as $\theta$ approaches a degenerate direction. Thus, we need a more careful analysis of $\LEI_k(\theta,\alpha,c)$.

Define 
\begin{align}
    \label{eq:def W_k}
    W_{k+2} = \left\{ \mathbf{v}\mathbf{u}: \mathbf{u}\in\widetilde{\Lambda}^{k}\right\} \subset \Lambda^{k+2} \,,
    \qquad
    \text{where } \vv = \bigl((1,1),(L,1)\bigr), \widetilde{\Lambda}=\{L\} \times [L] \,.
\end{align}
(The notation $\mathbf{v}\mathbf{u}$ denotes the concatenation of $\mathbf{v}$ and $\mathbf{u}$.) The following is an analog of \Cref{lem:non-deg-structure} for degenerate models.

\begin{lemma} \label{lem:deg-structure}
    Suppose the grid model is vertically degenerate. If $k \in \N$ and $\tan\theta \in [L,L^{k}]$, then
    then \[W_{k+2} \subset \LEI_{k+2}(\theta,(1,1),\tfrac{1}{2}).\]
\end{lemma}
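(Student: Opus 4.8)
The plan is to apply \Cref{lemma:two squares V} with the vertically degenerate structure. Recall $\vv = \bigl((1,1),(L,1)\bigr)$ encodes two specific subsquares at the second level: $Q((1,1))$ is the bottom-left square of $\cD_1$ and, after renormalizing, $Q((1,1),(L,1))$ sits in the bottom-right corner of $Q((1,1))$. More generally, a string $\vv\uu$ with $\uu \in \widetilde\Lambda^k = (\{L\}\times[L])^k$ describes a nested sequence of squares that, after the first two levels, always stays in the \emph{rightmost} column at each scale; geometrically $Q(\vv\uu)$ is a square of side $L^{-(k+2)}$ glued to the right edge of $Q((1,1))$, living in the thin vertical strip $[1/L - L^{-2}, 1/L]\times[?,?]$ near the right side of the bottom-left unit subsquare. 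First I would set $\alpha = (1,1)$ and choose the ``partner'' square $\kappa$ to be a square in the \emph{same column} as $Q(\alpha)$ (using vertical degeneracy: in each column exactly one square is chosen a.s., so if $Q(\alpha)\in\cS_1$ then with positive conditional probability — in fact we must be careful here, see below — another square in column $1$ could be selected; actually since $\X$ selects exactly one square per column, $Q(\alpha)\in\cS_1$ and $Q(\kappa)\in\cS_1$ with $\kappa$ in the same column is impossible). So instead I would take $\kappa$ to lie in the \emph{rightmost} column, i.e. $\kappa = (L, j)$ for a suitable $j$ with $\P[Q(\alpha),Q(\kappa)\in\cS_1] > 0$; such a $j$ exists because conditioning on $Q((1,1))\in\cS_1$ still leaves a free choice in column $L$. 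Then \eqref{eq:Q(eta) inside strip} of \Cref{lemma:two squares V} requires $Q(\vv\uu)$ to lie inside the strip $\bigcup\{\ell(\theta,t) : |\ell(\theta,t)\cap Q(\kappa)| \geq c'/L\}$ of lines of direction $\theta$ that cross $Q(\kappa)$ with a definite chord length.

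The key geometric computation is to verify this containment for the stated range $\tan\theta\in[L,L^k]$. The square $Q(\kappa)$ has side $1/L$ and sits in the rightmost column; the square $Q(\vv\uu)$ has side $L^{-(k+2)}$ and sits near $x = 1/L$ inside the bottom-left unit subsquare — so the two squares are roughly vertically ``aligned'' in $x$-coordinate, separated horizontally by a distance at most $\sim 1/L$ and with a vertical offset that is $O(1)$. A line with $\tan\theta \geq L$ is steep; as it passes through $Q(\vv\uu)$ and continues, its $x$-coordinate changes by only $L^{-(k+2)}\cdot \cot\theta \le L^{-(k+2)}\cdot L^{-1}$ across that tiny square, so the line is essentially vertical at that scale, and it will pass through the vertical strip of $x$-coordinates occupied by $Q(\kappa)$. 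The upper bound $\tan\theta\le L^k$ guarantees the line is not \emph{too} steep: the horizontal drift across the full unit square, $\cot\theta \ge L^{-k}$, is comparable to the scale $L^{-(k+2)}$ of $Q(\vv\uu)$ (up to the factor $L^2$), which is what lets us place $Q(\vv\uu)$ — it is chosen in the \emph{rightmost column at each of the last $k$ levels}, precisely so that its horizontal position can be tuned within $Q((1,1))$ to line up with $Q(\kappa)$ for the whole range of $\theta$. Concretely I would: (i) compute the $x$-interval swept by lines of slope $\tan\theta$ through $Q(\kappa)$ with chord length $\ge c'/L$, getting an interval of length $\gtrsim 1/L$ when $|\cot\theta|\lesssim 1$; (ii) check the $x$-extent of $Q(\vv\uu)$, which is an interval of length $L^{-(k+2)}$ lying within $L^{-2}$ of $x = 1/L$, and argue it falls inside the strip from (i) using $\cot\theta \in [L^{-k}, L^{-1}]$; (iii) likewise bound the $y$-direction, which is the easy direction since $\theta$ is nearly vertical. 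This gives \eqref{eq:Q(eta) inside strip}, and \Cref{lemma:two squares V} yields $\vv\uu \in \LEI_{k+2}(\theta, (1,1), c'p)$ with $p = \P[Q(\kappa)\in\cS_1 \mid Q(\alpha)\in\cS_1]$; tracking constants one checks $c'p \ge \tfrac12$ (or absorbs the model-dependent constant — since the statement has $\tfrac12$, I would choose $\kappa$ and $c'$ so that $c' p \ge \tfrac12$, using that $|\ell\cap Q(\kappa)|$ can be taken $\ge (1-o(1))/L$ for nearly vertical lines and $p$ close to $1$, or more honestly just note the $\tfrac12$ should be read as ``some positive constant, WLOG $\tfrac12$ after relabeling'').

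The main obstacle I expect is the careful bookkeeping in step (ii): one must confirm that the specific nested square $Q(\vv\uu)$ — forced to stay in the rightmost column after the second level — actually has enough positional freedom to meet the strip for \emph{every} $\theta$ with $\tan\theta \in [L, L^k]$, rather than just for a single $\theta$. The point is that varying $\uu \in \widetilde\Lambda^k$ moves $Q(\vv\uu)$ through a $1/L$-net of positions in the $x$-direction within $Q((1,1))$ at resolution down to $L^{-(k+2)}$, and the strip has width $\gtrsim 1/L$, so \emph{for each} $\theta$ \emph{some} choice of $\uu$ works — but the lemma claims \emph{all} of $W_{k+2}$ lies in $\LEI$, so in fact I must show the containment \eqref{eq:Q(eta) inside strip} holds for \emph{every} $\uu\in\widetilde\Lambda^k$ simultaneously. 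That forces the strip to actually contain the entire right edge of $Q((1,1))$, i.e. the whole set $\{x \in [1/L - L^{-2}, 1/L]\}$ intersected with the relevant $y$-range — which is exactly why the hypothesis is $\tan\theta \le L^k$ (so the lines are steep enough that a single strip through $Q(\kappa)$ covers a full $1/L$-worth of $x$) and $\tan\theta \ge L$ (so the lines don't wobble out of $Q(\vv\uu)$ horizontally). Getting these two inequalities to pin down the range $[L, L^k]$ cleanly, with the right powers of $L$, is the delicate part; the rest is elementary planar geometry of strips and squares.
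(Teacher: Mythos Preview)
Your geometric setup has a critical error: you place $\kappa$ in the \emph{rightmost} column, $\kappa=(L,j)$, and assert that $Q(\vv\uu)$ and $Q(\kappa)$ are ``roughly vertically aligned in $x$-coordinate, separated horizontally by a distance at most $\sim 1/L$''. But $Q(\vv\uu)$ abuts the line $x=1/L$ while $Q(\kappa)$ lies in $x\in[(L-1)/L,1]$, so for $L\ge 3$ the horizontal separation is $(L-2)/L\sim 1$, not $\sim 1/L$. A line with slope $\tan\theta\ge L$ starting in $R_k$ at height $y\le 2L^{-2}$ rises by at least $L\cdot(L-2)/L=L-2\ge 1$ before its $x$-coordinate reaches column $L$, hence exits the unit square through the top. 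So your candidate $Q(\kappa)$ is never hit, and the hypothesis \eqref{eq:Q(eta) inside strip} of \Cref{lemma:two squares V} fails outright.

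The paper's proof uses the \emph{adjacent} column $2$ instead, and moreover does not go through \Cref{lemma:two squares V} at all. It first observes that $\bigcup_{\ww\in W_{k+2}}Q(\ww)$ is the thin rectangle $R_k:=[L^{-1}-L^{-k-2},L^{-1}]\times[0,L^{-2}]$. The bound $\tan\theta\le L^k$ pins any line through $R_k$ at height $\le 2L^{-2}$ when it crosses $x=1/L$ (this is precisely where the width $L^{-k-2}$ of $R_k$ is used); the bound $\tan\theta\ge L$ then ensures the line sweeps through \emph{every} square $Q(2,i)$, $i\in[L]$, each in a segment of length $\ge\tfrac{1}{2L}$. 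Here is the point you missed: vertical degeneracy says \emph{exactly one} $Q(2,i)$ lies in $\cS_1$, so whichever one it is, $|\ell\cap S_1\cap Q(1,1)^c|\ge\tfrac{1}{2L}$ almost surely given $Q(1,1)\in\cS_1$. This verifies the defining inequality of $\LEI$ directly with $c=\tfrac12$, with no model-dependent conditional probability $p$ entering. Even after correcting the column, your route via \Cref{lemma:two squares V} with a single fixed $\kappa=(2,j)$ would only yield a constant $c'p_j$ depending on the model's distribution in column $2$; so your closing remark that ``$\tfrac12$ should be read as some positive constant'' is not cosmetic but reflects a genuinely weaker use of the degeneracy hypothesis.
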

\begin{proof}
    Note that $\bigcup_{\mathbf{w}\in W_{k+2}} \overline{Q}(\mathbf{w})$ equals the rectangle
    \[  
        R_k := [L^{-1}-L^{-k-2},L^{-1}]\times [0,L^{-2}] \subset Q(1,1)\,.
    \]
    Any line with slope in $[0,L^{k}]$ that hits $R_k$ will hit the line $x=L^{-1}$ at a point with $y$-coordinate in $[0, 2L^{-2}]$. If the line additionally has slope $\ge L$, it will intersect all squares $\{Q(2,i) : i \in [L]\}$ in a segment of length at least $\frac{1}{2}L^{-1}$. Since the model is vertically degenerate, exactly one of these squares will be in $\cS_1$. This implies the lemma.
\end{proof}

The following combinatorial lemma is needed to obtain more precise bounds on the cardinality of the bad set $\cB$ used in the proof of \Cref{prop:expected-line-survival deg}, below. (The argument used to bound the cardinality of the bad set in the proof of \Cref{prop:expected-line-survival kc} is not enough.)

\begin{lemma}
    \label{lemma:substring event neg dep}
    Let $\lambda_1, \lambda_2,\ldots$ be independent random variables taking values in $\Lambda$. Let $A_m$ be the event $(\lambda_{m}, \ldots, \lambda_{m+k-1}) \in W_k$.
    Then for any finite $I \subset \N$,
    \[
        \P\left[\bigcap_{i\in I} A_i^c\right]
        \leq
        \prod_{i\in I}\P[A_i^c].
    \]
\end{lemma}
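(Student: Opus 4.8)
The plan is to prove the inequality by induction on $|I|$, using two elementary structural properties of the events $A_m$. The first is an independence property coming from the product structure: $A_m\in\sigma(\lambda_m,\dots,\lambda_{m+k-1})$, so a family of the $A_m$'s whose index windows $\{m,\dots,m+k-1\}$ all lie strictly to the left of some index $i_0$ is independent of anything measurable with respect to $\sigma(\lambda_{i_0},\lambda_{i_0+1},\dots)$. The second is a disjointness property: \emph{if $1\le m'-m\le k-1$ then $A_m\cap A_{m'}=\varnothing$}. This is the only place the shape of $W_k$ enters, and it is immediate from the definition: every word of $W_k$ has first letter $(1,1)$ and all of its remaining letters in $\widetilde\Lambda=\{L\}\times[L]$, hence with first coordinate $L$. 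Thus if $A_m$ holds and $1\le m'-m\le k-1$, then $\lambda_{m'}$ is a non-initial letter of a $W_k$-word based at $m$, so its first coordinate is $L$; whereas if $A_{m'}$ holds then $\lambda_{m'}=(1,1)$ has first coordinate $1\ne L$. These are incompatible.

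For the inductive step, let $i_{\max}=\max I$ and $J=I\setminus\{i_{\max}\}$, and split $J=J_{\mathrm{far}}\sqcup J_{\mathrm{close}}$ according to whether $i\le i_{\max}-k$ or $i_{\max}-k+1\le i\le i_{\max}-1$. For $i\in J_{\mathrm{close}}$ the disjointness property gives $A_{i_{\max}}\subset A_i^c$, so on the event $A_{i_{\max}}$ the constraints $A_i^c$ with $i\in J_{\mathrm{close}}$ are vacuous; for $i\in J_{\mathrm{far}}$ the event $A_i$ is measurable with respect to $\sigma(\lambda_1,\dots,\lambda_{i_{\max}-1})$, which is independent of $A_{i_{\max}}$. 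Writing $\bigcap_{i\in I}A_i^c=\bigcap_{i\in J}A_i^c\setminus A_{i_{\max}}$ and discarding the vacuous constraints on the event $A_{i_{\max}}$, these two observations give
\[
\P\Bigl[\bigcap_{i\in I}A_i^c\Bigr]
=\P\Bigl[\bigcap_{i\in J}A_i^c\Bigr]-\P\Bigl[A_{i_{\max}}\cap\bigcap_{i\in J_{\mathrm{far}}}A_i^c\Bigr]
=\P\Bigl[\bigcap_{i\in J}A_i^c\Bigr]-\P[A_{i_{\max}}]\,\P\Bigl[\bigcap_{i\in J_{\mathrm{far}}}A_i^c\Bigr].
\]
Writing $a=\P[\bigcap_{i\in J}A_i^c]$, $b=\P[\bigcap_{i\in J_{\mathrm{far}}}A_i^c]$ and $p=\P[A_{i_{\max}}]\in[0,1]$, and using $a\le b$ (fewer constraints on the right), the right-hand side equals $a-pb\le a-pa=(1-p)a$. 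The inductive hypothesis applied to $J$ gives $a\le\prod_{i\in J}\P[A_i^c]$, so
\[
\P\Bigl[\bigcap_{i\in I}A_i^c\Bigr]\le(1-p)\prod_{i\in J}\P[A_i^c]=\P[A_{i_{\max}}^c]\prod_{i\in J}\P[A_i^c]=\prod_{i\in I}\P[A_i^c],
\]
completing the induction; the cases $|I|\le 1$ are trivial.

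I do not expect any serious obstacle: the entire content is the disjointness of overlapping-window events, a one-line check from the definition of $W_k$, after which the induction is purely formal. The only points needing a little care are the degenerate sub-cases $J_{\mathrm{far}}=\varnothing$ (then $b=1$) and $J_{\mathrm{close}}=\varnothing$ (then $a=b$), both of which are already covered by the single inequality $a-pb\le(1-p)a$. It is worth noting that neither uniformity nor identical distribution of the $\lambda_j$ is used — plain independence is enough — and the argument applies verbatim to any family of forbidden words whose first letter is, in the above pointwise sense, incompatible with all later letters.
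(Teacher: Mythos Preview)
Your proof is correct and follows essentially the same route as the paper: both arguments hinge on the observation that overlapping-window events are disjoint (equivalently, $A_m\subset\bigcap_{m-k+1\le i\le m-1}A_i^c$, since $(1,1)$ occurs only as the first letter of a $W_k$-word), then use independence of $A_m$ from the far events and induct on $|I|$. Your $J_{\mathrm{far}}$ is precisely the paper's $J'$, and your inequality $a-pb\le(1-p)a$ is the complementary form of the paper's $\P[A_m\cap\bigcap_{i\in J}A_i^c]\ge\P[A_m]\,\P[\bigcap_{i\in J}A_i^c]$.
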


\begin{proof}
    Consider an arbitrary $J \subset [m-1]$.
    Observe that if $A_m$ occurs, then $\lambda_m = (1,1)$. Since $(1,1)$ appears in the strings in $W_k$ only in the first position (see \eqref{eq:def W_k}), we have
    \begin{align}
        \label{eq:A_m in complement}
        A_m \subset \bigcap_{i \in \{m-k+1, \ldots, m-1\}} A_i^c.
    \end{align}
    Thus, letting $J' = J \setminus \{m-k+1, \ldots, m-1\}$, we have
    \begin{align*}
        \P\left[A_m \cap \bigcap_{i \in J} A_i^c\right]
        \stackrel{\eqref{eq:A_m in complement}}{=}
        \P\left[A_m \cap \bigcap_{i \in J'} A_i^c\right]
        =
        \P\left[A_m\right] \P\left[\bigcap_{i \in J'} A_i^c\right]
        \geq
        \P\left[A_m\right] \P\left[\bigcap_{i \in J} A_i^c\right]
    \end{align*}
    Consequently, 
    \begin{align*}
        \P\left[A_m^c \cap \bigcap_{i \in J} A_i^c\right]
        =
        \P\left[\bigcap_{i \in J} A_i^c\right] - \P\left[A_m \cap \bigcap_{i \in J} A_i^c\right]
        \leq
        \P\left[A_m^c\right] \P\left[\bigcap_{i \in J} A_i^c\right]
    \end{align*}
    The lemma follows by repeatedly applying this inequality.
\end{proof}

\begin{proposition}\label{prop:expected-line-survival deg}
Consider a vertically degenerate grid model. For all $k\geq 2$ and all angles $\theta$ with $|\tan\theta| \in [L, L^k]$ and all $n \geq L^k$, there is a set $E=E_{\theta,k,n}\subset\R$ of Lebesgue measure $\lesssim e^{-\Omega(n/L^k)}$, such that
\begin{align}
    \label{eq:expected-line-survival deg}
    \P\bigl[\ell(\theta,t)\cap S_n \neq\varnothing\bigr]\lesssim \frac{L^k}{n} \,,\quad\text{for all }t\in \R\setminus E\,.
\end{align}
Consequently,
\begin{align}
    \label{eq:expected-projection deg}
    \E|\proj_\theta S_n| \lesssim \frac{L^k}{n} \,.
\end{align}
\end{proposition}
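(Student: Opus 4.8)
The strategy is to combine the machinery already developed for the non-degenerate case (Lemmas~\ref{lemma:expect multi scale} and~\ref{lemma:markov}, which lead to Proposition~\ref{prop:expected-line-survival kc}) with the degenerate-specific structural input Lemma~\ref{lem:deg-structure} and the negative-dependence estimate Lemma~\ref{lemma:substring event neg dep}. Fix $k \geq 2$ and $\theta$ with $|\tan\theta|\in[L,L^k]$; by symmetry we may assume $\tan\theta>0$. By Lemma~\ref{lem:deg-structure}, $W_{k+2}\subset\LEI_{k+2}(\theta,(1,1),\tfrac12)$, so we may run the argument of Proposition~\ref{prop:expected-line-survival kc} with $\alpha=(1,1)$, the fixed set $W:=W_{k+2}$ in place of an arbitrary single configuration, exponent parameter $k+2$, and $c=\tfrac12$. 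The only difference from the non-degenerate proof is that now the length scale $k$ is not bounded in terms of the model, so I must keep track of the $k$-dependence explicitly and get the error in the form $L^k/n$ rather than $1/n$.

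\textbf{Key steps.} First, set $n_0 = \lfloor n/(k+2)\rfloor$ and choose the threshold $M$ so that a ``typical'' string in $\Lambda^n$ contains roughly $M$ disjoint occurrences of a word in $W$. Since $\#W = L^k$ (there are $L^k$ choices of $\mathbf u\in\widetilde\Lambda^{k}$) while $\#\Lambda^{k+2}=L^{2(k+2)}$, a uniformly random length-$(k+2)$ block lies in $W$ with probability $p_k := L^{k}/L^{2(k+2)} = L^{-k-4}$. Partitioning $[n]$ into $\sim n_0$ disjoint blocks of length $k+2$, the expected number of blocks landing in $W$ is $\sim n_0 p_k \sim n L^{-k}/L^{4}$; take $M = \tfrac12 n_0 p_k \sim n L^{-k}$. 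Define
\[
    \cG = \{\bm\lambda\in\Lambda^n : \Count(\bm\lambda, W) \geq M\},
    \qquad
    \cB = \cD_n\setminus\cG.
\]
By Lemma~\ref{lemma:expect multi scale} (applied with the set $W\subset\LEI_{k+2}(\theta,(1,1),\tfrac12)$, so each occurrence contributes $c=\tfrac12$ to the expected renormalized intersection length), for every $t$,
\[
    \P[\ell(\theta,t)\cap(\cup\cG)\cap S_n\neq\varnothing] \lesssim \frac{1}{M} \lesssim \frac{L^k}{n}.
\]

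\textbf{Bounding $\cB$ and finishing.} For the bad set I need $\#\cB = o(\#\cD_n)$ with a quantitative rate. Consider $\lambda_1,\dots,\lambda_n$ IID uniform on $\Lambda$ and let $A_i$ be the event that the $i$th disjoint block of length $k+2$ lies in $W$; these are not independent, but Lemma~\ref{lemma:substring event neg dep} gives $\P[\bigcap_{i\in I}A_i^c]\leq\prod_{i\in I}\P[A_i^c]$ for the \emph{shifted} events, and restricting to disjoint blocks only makes this easier, so the number of blocks in $W$ dominates (in the sense of the Chernoff bound for the lower tail) a sum of independent $\mathrm{Bernoulli}(p_k)$ variables. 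Applying Chernoff's inequality (Lemma~\ref{lemma:chernoff}) to the lower tail with mean $\sim n_0 p_k \sim 2M$ and deviation down to $M$ yields
\[
    \#\cB \leq 2\exp\bigl(-\Omega(n_0 p_k)\bigr)\cdot\#\Lambda^n = 2\exp\bigl(-\Omega(n/L^k)\bigr)\cdot\#\cD_n.
\]
Then Lemma~\ref{lemma:markov}, applied with this $\cB$ and $q\sim L^k/n$, produces an exceptional set $E\subset\R$ with $|E|\lesssim \#\cB\cdot q^{-1}L^{-2n} \lesssim e^{-\Omega(n/L^k)}$ such that $\P[\ell(\theta,t)\cap(\cup\cB)\cap S_n\neq\varnothing]\lesssim L^k/n$ off $E$. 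Combining the $\cG$ and $\cB$ estimates via $\cD_n = \cG\cup\cB$ gives \eqref{eq:expected-line-survival deg}, and integrating $\P[\ell(\theta,t)\cap S_n\neq\varnothing]$ over $t$, splitting into $\R\setminus E$ and $E$ as in the proof of Proposition~\ref{prop:expected-line-survival kc}, gives \eqref{eq:expected-projection deg}.

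\textbf{Main obstacle.} The substantive point is making sure that the $k$-dependence is tracked correctly and that the negative-dependence lemma is exactly what is needed: the naive union/independence bound used in Proposition~\ref{prop:expected-line-survival kc} would be adequate for a \emph{fixed} model but here, because $W$ has $L^k$ elements and the relevant Bernoulli probability $p_k$ decays like $L^{-k}$, I need the concentration to kick in already at $n \gtrsim L^k$ (hence the hypothesis $n\geq L^k$) and I need the bad-set bound to be genuinely exponentially small in $n/L^k$ rather than merely $o(1)$ — this is where Lemma~\ref{lemma:substring event neg dep}, rather than a crude first-moment bound, is essential. A secondary bookkeeping point is that Lemma~\ref{lem:deg-structure} only covers $\tan\theta\in[L,L^k]$, which is exactly the range in the hypothesis, so no extension in $\theta$ is needed here; the blow-up as $\theta\to\pi/2$ is handled later by summing the bounds $L^k/n$ over the dyadic ranges of $|\tan\theta|$.
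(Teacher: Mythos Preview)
Your overall strategy matches the paper's: split $\cD_n=\cG\cup\cB$ via a $\Count(\,\cdot\,,W_{k+2})$ threshold, use Lemma~\ref{lemma:expect multi scale} on $\cG$, and use a Chernoff bound plus Lemma~\ref{lemma:markov} on $\cB$. However, there is a genuine gap in the concentration step that costs exactly the factor you cannot afford. You set $n_0=\lfloor n/(k+2)\rfloor$ and then assert $n_0 p_k\sim nL^{-k-4}$; this silently drops the denominator $(k+2)$, since $n_0\sim n/(k+2)$ gives $n_0 p_k\sim nL^{-k-4}/(k+2)$. With the honest value $M=\tfrac12 n_0 p_k\sim n/((k+2)L^{k+4})$, Lemma~\ref{lemma:expect multi scale} only yields $\P[\ell\cap(\cup\cG)\cap S_n\neq\varnothing]\lesssim kL^k/n$, and Chernoff on the $n_0$ independent disjoint-block indicators only gives $\#\cB\le\exp(-\Omega(n/(kL^k)))\cdot\#\Lambda^n$. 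Both are off by a factor of $k$, which propagates to $\E[\Fav S_n]\lesssim(\log n)^2/n$ in the proof of Theorem~\ref{thm:expected-decay deg}. If instead you keep your asserted $M\sim n/L^k$, then $M$ exceeds the mean of the disjoint-block count and the Chernoff lower-tail bound says nothing, so $\cB$ is not small at all.

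The fix, and the actual role of Lemma~\ref{lemma:substring event neg dep}, is to count $W_{k+2}$-occurrences over \emph{all} $n-k-1$ overlapping windows rather than over $n_0\sim n/k$ disjoint blocks. Overlapping windows are not independent, but Lemma~\ref{lemma:substring event neg dep} supplies exactly the negative-correlation property \eqref{eq:neg dep}, and then Lemma~\ref{lemma:chernoff neg dep} (not Lemma~\ref{lemma:chernoff}) gives the lower-tail Chernoff bound. With $\sim n$ windows the expected count is $(n-k-1)p_k\sim n/L^{k}$ with no factor of $k$, so the threshold $M\sim n/L^k$ is now legitimate and both the $\cG$ and $\cB$ estimates come out as stated. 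Your remark that ``restricting to disjoint blocks only makes this easier'' is precisely the error: disjoint blocks make the \emph{dependence} easier but sacrifice a factor of $k$ in the \emph{count}, and the whole point of the negative-dependence lemma is to recover that factor.
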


\begin{proof}
    Fix $\theta$ with $|\tan\theta| \in [L,L^k]$. We may assume without loss of generality that $\tan\theta > 0$. (The $\tan\theta < 0$ case follows from symmetry.) Let
    \begin{align*}
        \cG &= \left\{ \bm \lambda\in\Lambda^n: \Count(\bm\lambda, \LEI_{k+2}(\theta,(1,1),\tfrac{1}{2})) \ge \tfrac{n-k-1}{2L^k} \right\}
        \\
        \cB &= \cD_n \setminus \cG.
    \end{align*}
    By \Cref{lemma:expect multi scale}, for all $t \in \R$,
    \[
        \P\bigl[\ell(\theta,t) \cap (\cup \cG) \cap S_n\neq\varnothing\bigr]\lesssim %
        \frac{L^{k}}{n-k-1} \sim \frac{L^{k}}{n}.
    \]

    Next, by \Cref{lem:deg-structure}, $\LEI_{k+2}(\theta,(1,1),\frac{1}{2}) \supset W_{k+2}$, so
    \[
        \cB 
        \subset
        \left\{ \lambda\in\Lambda^n: \Count(\bm\lambda, W_{k+2}) \le \tfrac{n-k-1}{2L^k}  \right\}
        \,.
    \]
    Recall that 
    \[
    \Count(\bm\lambda, W_{k+2}) = \#\{i\in \{0, \ldots,n-k-2\}:(\lambda_{i+1},\ldots,\lambda_{i+k+2}) \in W_{k+2} \} \,.
    \]
    Note that $\# W_{k+2} = L^{k}$.  By \Cref{lemma:substring event neg dep}, the indicators of the events $(\lambda_{i+1},\ldots,\lambda_{i+k+2}) \in W_{k+2}$, $i\in \{0, \ldots,n-k-2\}$ are $\textup{Bernoulli}(L^{-k})$ random variables which are jointly negatively correlated in the sense of \eqref{eq:neg dep}. Thus, we can apply the Chernoff bound (\Cref{lemma:chernoff neg dep}) to get
    \[
        \#\mathcal{B} \le \exp\left(-\frac{n-k-1}{16 L^{k}}\right)\cdot  \# \Lambda^n
        \le
        \exp\left(-\frac{n}{64 L^{k}}\right)\cdot  \# \Lambda^n.
    \]
    By \Cref{lemma:markov}, there is a set $E = E_{\theta,k,n} \subset\R$ of Lebesgue measure $\lesssim e^{-\Omega(n/L^k)}$, such that
    \[ 
        \P\bigl[\ell(\theta,t)\cap (\cup \cB) \cap S_n \neq\varnothing\bigr] \lesssim \frac{L^k}{n} \quad\text{for all }t\in \R\setminus E.
    \]
    This proves \eqref{eq:expected-line-survival deg}. To deduce \eqref{eq:expected-projection deg}, we observe
    \begin{align*}
        \E |\proj_\theta S_n| 
        &\leq
        \int_{\R \setminus E_{\theta,n}} \P[\ell(\theta,t) \cap S_n\neq\varnothing] \, dt
        +
        |E_{\theta,k,n}|
        \\
        &\lesssim
        \frac{L^k}{n}
        +
        e^{-\Omega(n/L^k)}
        \lesssim
        \frac{L^k}{n}\,.        
    \end{align*}
\end{proof}

\begin{proof}[Proof of Theorem \ref{thm:expected-decay deg}]
    The lower bound $\E[\Fav(S_n)]\gtrsim\frac{\log n}{n}$ is derived in \Cref{sec:bateman-volberg}. Towards the upper bound, let us first consider a grid model that is vertically degenerate but not horizontally degenerate. 
    
    Fix $n \geq 4$. We cover $[0,\pi]$ by the following intervals:
    \begin{align*}
        \Theta_k &= \{\theta \in [0,\pi] : |\tan \theta| \in [L^k, L^{k+1}]\}, \qquad k = 1, \ldots, \lfloor \log_L n \rfloor
        \\
        \Theta_{\textup{vert}} &= \{\theta \in [0,\pi] : |\tan\theta| \in [n, \infty]\}
        \\
        \Theta_{\textup{nondeg}} &= \{\theta \in [0,\pi] : |\tan\theta| \in [0, L]\}
    \end{align*}

    For $k\in[\lfloor \log_L n \rfloor]$, we use the  Proposition \ref{prop:expected-line-survival deg} and note that $|\Theta_k| \sim L^{-k}$ to conclude 
    \begin{align*}
        \int_{\Theta_k} \E |\proj_\theta S_n| \, d\theta
        \lesssim |\Theta_k| \frac{L^k}{n}
        \sim \frac{1}{n} 
        \,.
    \end{align*}
    For $\Theta_{\textup{vert}}$, we note that $|\Theta_{\textup{vert}}| \sim \frac{1}{n}$ and use the trivial bound
    \begin{align*}
        \int_{\Theta_{\textup{vert}}} \E |\proj_\theta S_n| \, d\theta
        \lesssim |\Theta_{\textup{vert}}|
        \sim \frac{1}{n}
        \,.
    \end{align*}
    For $\Theta_{\textup{nondeg}}$, we use \Cref{lemma:nonempty V local}, \Cref{lem:non-deg-structure}, and \Cref{prop:expected-line-survival kc} to obtain 
    \[
        \E|\proj_\theta S_n| \lesssim \frac{1}{n} \qquad\text{for all } \theta \in \Theta_{\textup{nondeg}} \,.
    \]
    By combining the three estimates together,
    \begin{align*}
        \E[\Fav(S_n)] = \int_{0}^{\pi} \E  |\proj_\theta S_n| \, d\theta
        &\leq
        \int_{\Theta_{\textup{vert}}} 
        +
        \int_{\Theta_{\textup{nondeg}}}
        +
        \sum_{k=1}^{\lfloor \log_L n \rfloor}
        \int_{\Theta_k}
        \\
        &\lesssim
        \frac{1}{n}
        +
        \frac{1}{n}
        +
        \sum_{k=1}^{\lfloor \log_L n \rfloor}
        \frac{1}{n}
        \sim \frac{\log n}{n}.
    \end{align*}

    This proves the theorem when the grid model is vertically but not horizontally degenerate. If the grid model is horizontally but not vertically degenerate, the argument is the same, by symmetry. If the grid model is both vertically and horizontally degenerate, we repeat the argument but with the following sets instead:
    \begin{align*}
        \Theta_k &= \{\theta \in [0,\pi] : |\tan \theta| \in [L^k, L^{k+1}]\}, &k = 1, \ldots, \lfloor \log_L n \rfloor
        \\
        \widetilde \Theta_{k} &= \{\theta \in [0,\pi] : |\tan\theta| \in [L^{-k-1}, L^{-k}]\}, &k = 1, \ldots, \lfloor \log_L n \rfloor
        \\
        \Theta_{\textup{vert}} &= \{\theta \in [0,\pi] : |\tan\theta| \in [n, \infty]\}
        \\
        \Theta_{\textup{horiz}} &= \{\theta \in [0,\pi] : |\tan\theta| \in [0, \tfrac{1}{n}]\}
        \\
        \Theta_{\textup{nondeg}} &= \{\theta \in [0,\pi] : |\tan\theta| \in [L^{-1}, L]\}
    \end{align*}
    The integral of $\E |\proj_\theta S_n|$ over each of the $2\lfloor \log_L n \rfloor + 3$ sets above is $\lesssim \frac{1}{n}$, so $\E[\Fav(S_n)] \lesssim \frac{\log n}{n}$.
\end{proof}

\section{The sharp asymptotic value of the Favard length}\label{sec:sharp_asymptotic}

Our goal in this section is to prove the following.

\begin{theorem}\label{thm:limit}
    For a non-degenerate uniform grid model, define
    \[
        V(\theta)=\int_\R \Var\bigl(L|\ell(\theta,t)\cap S_1|\bigr) \, dt\,.
    \]
    Then
    \[
        \lim_{n\to\infty} n\E|\proj_\theta S_n| = \frac{2}{V(\theta)} \qquad\text{uniformly in } \theta \, ,
    \]
    and, therefore,
    \[
        \lim_{n\to\infty} n\E[\Fav(S_n)] = \int_{0}^{\pi}\frac{2}{V(\theta)}\,d\theta\,.
    \]
    Consequently, by \Cref{thm:almost-sure-convergence}, almost surely
    \[
        \lim_{n \to \infty} n\Fav(S_n) =  Z\int_{0}^{\pi}\frac{2}{V(\theta)}\,d\theta\,.
    \]
\end{theorem}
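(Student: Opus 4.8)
The plan is to first establish the single-direction statement $\lim_{n\to\infty} n\,\E|\proj_\theta S_n| = 2/V(\theta)$, uniformly in $\theta$, and then read off the three displayed conclusions. Granting the single-direction limit, the formula for $\lim_n n\,\E[\Fav(S_n)]$ follows by integrating in $\theta$: the convergence is uniform and $\sup_\theta n\,\E|\proj_\theta S_n| \lesssim 1$ by \Cref{prop:expected-line-survival}, so in particular $0 < V(\theta) \lesssim 1$ (whence $\int_0^\pi 2/V(\theta)\,d\theta < \infty$) and bounded convergence applies. The almost-sure statement then follows from \Cref{thm:almost-sure-convergence}: since $\mu_n := \E[\Fav(S_n)] \sim c/n$ by \Cref{thm:expected-decay non-deg}, we have $\sup_n \mu_n/\mu_{2n} < \infty$, hence $\Fav(S_n)/\mu_n \to Z$ a.s., and therefore $n\,\Fav(S_n) \to Z\int_0^\pi 2/V(\theta)\,d\theta$ a.s.; the neighbourhood version $\log_L(1/r)\,\Fav(S(r))$ is obtained from \Cref{lem:step-vs-neighbourhood} after passing to the surviving version of the model (legitimate since $Z = 0$ iff $S = \varnothing$).

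For the single-direction statement, fix $\theta,t$, write $\ell = \ell(\theta,t)$ and $g_n(\theta,t) = \P[\ell\cap S_n \ne \varnothing]$. The key structural fact is that, for a uniform grid model, $W_n := L^n|\ell\cap S_n|$ is a nonnegative martingale: conditioning on $S_n$ and using $\P[Q\in\cS_1] = 1/L$ together with self-similarity gives $\E[|\ell\cap S_{n+1}|\mid S_n] = L^{-1}|\ell\cap S_n|$. Thus $\E[W_n] = |\ell\cap[0,1]^2|$ for all $n$, and since $W_n \ge 0$,
\[
    g_n(\theta,t) = \P[W_n > 0] = \frac{|\ell(\theta,t)\cap[0,1]^2|}{\E[W_n \mid W_n > 0]}\,.
\]
Integrating over $t$ and using $\int_\R |\ell(\theta,t)\cap[0,1]^2|\,dt = 1$ (Fubini), the desired limit $n\,\E|\proj_\theta S_n| \to 2/V(\theta)$ is equivalent to a Yaglom/Kolmogorov-type estimate, $\E[W_n \mid W_n > 0] \sim \tfrac12 V(\theta)\,n$ for most lines $\ell$ in direction $\theta$, with the exceptional directions and lines absorbed via the exponentially small exceptional sets of \Cref{prop:expected-line-survival}.

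To see where the constant $\tfrac12 V(\theta)$ comes from, I would pass through the branching recursion: conditioning on $\cS_1$ and renormalizing each $R\in\cS_1$ to the unit square by a homothety $h_R$ gives $g_{n+1}(\theta,t) = 1 - \E\prod_{R\in\cS_1}(1 - g_n(\theta, h_R t))$. Expanding, the first-order term $\tfrac1L\sum_{R\in\cD_1} g_n(\theta, h_R t)\,\bbone_{\ell\cap R\ne\varnothing}$ integrates in $t$ (under the substitution that turns $h_R$ into the identity, $dt$ scales by $L^{-1}$) to exactly $\int_\R g_n(\theta,t)\,dt = \E|\proj_\theta S_n|$; the second-order term is $-\tfrac12\sum_{R\ne R'}\P[R,R'\in\cS_1]\,g_n(\theta,h_R t)\,g_n(\theta,h_{R'}t)$; and the higher-order terms are $O((\sup_R g_n)^3) = O(n^{-3})$ off the exceptional set. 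Feeding in the asymptotic $g_n(\theta,h_R t) \approx \tfrac{2L|\ell\cap R|}{V(\theta)\,n}$ (with $\ell = \ell(\theta,t)$) and using the scaling identity
\[
    \int_\R \bigl( |\ell(\theta,t)\cap[0,1]^2|^2 - L\sum_{R\in\cD_1}|\ell(\theta,t)\cap R|^2 \bigr)\,dt = 0
\]
(both terms equal $\int_\R |\ell(\theta,t)\cap[0,1]^2|^2\,dt$ by scaling), one computes $\int_\R \sum_{R\ne R'}\P[R,R'\in\cS_1]L^2|\ell\cap R||\ell\cap R'|\,dt = \int_\R \Var(L|\ell(\theta,t)\cap S_1|)\,dt = V(\theta)$, whence $\E|\proj_\theta S_n| - \E|\proj_\theta S_{n+1}| \sim 2/(V(\theta)n^2)$ and $n\,\E|\proj_\theta S_n| \to 2/V(\theta)$ after summing. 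The rigorous implementation — the route indicated in \Cref{section:intro ideas of proof} — instead extracts a nested sequence of surviving squares $[0,1]^2 = Q^{(0)} \supset Q^{(1)} \supset \cdots$, writes $W_n = \sum_i L^n|\ell\cap S_n\cap(Q^{(i)}\setminus Q^{(i+1)})| + O(1)$, observes that conditionally on the chain each increment is, up to negligible error, a bounded random variable whose law depends only on the combinatorial type of $h_i\ell$ (of which there are finitely many for fixed $\theta$), and applies a concentration inequality to the resulting sum of independent variables, arriving at the same asymptotic with the same constant.

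I expect the main obstacle to be precisely that $W_n$ is \emph{not} a critical Galton--Watson process: its effective offspring mean fluctuates with the position of $\ell$ inside the grid, so Kolmogorov's estimate cannot be quoted off the shelf, and the heuristic above is a priori circular (it uses the very asymptotic it is proving). Removing the circularity calls for a two-sided bootstrap — controlling $\limsup$ and $\liminf$ of $n\,g_n(\theta,t)/|\ell(\theta,t)\cap[0,1]^2|$ off exceptional sets and iterating them back through the recursion — which in turn forces one to show that the sequence of line--grid configurations along a generic descending chain equidistributes with respect to the natural stationary measure $|\ell(\theta,t)\cap[0,1]^2|\,dt$, so that the variance accumulated over $n$ scales is genuinely $V(\theta)$ rather than some other weighted average. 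Obtaining all of this uniformly over $\theta\in[0,\pi]$ is where the non-degeneracy hypothesis is essential, entering through the uniform choice of the parameters $k,c$ as in \Cref{lemma:nonempty V local} and \Cref{lem:non-deg-structure}; the remainder is the bookkeeping of verifying that the exceptional sets never contribute more than $O(e^{-\Omega(n)})$ to any of the integrals in $t$.
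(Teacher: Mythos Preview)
Your reductions (uniform single-direction limit $\Rightarrow$ Favard limit $\Rightarrow$ a.s.\ limit via \Cref{thm:almost-sure-convergence}), the martingale identity $\E[W_n]=|\ell\cap[0,1]^2|$, and the computation identifying $V(\theta)=L^2\sum_{R\ne R'}\P[R,R'\in\cS_1]\int|\ell\cap R||\ell\cap R'|\,dt$ are all correct and match the paper (the last is \Cref{lem:V(theta) alternative}). The branching-recursion heuristic $g_{n+1}=1-\E\prod_{R\in\cS_1}(1-g_n(\cdot,t_R))$ is a genuinely different route and correctly locates the constant, but as you say it is circular, and you do not carry out the bootstrap that would close it; doing so would amount to proving Kolmogorov's estimate for a continuous-type branching process, which is plausible but not off the shelf.

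Your ``nested squares'' sketch is closer to what the paper actually does, but it misreads the mechanism in two places that matter. First, the paper's device for selecting the chain is to fix a total order on $\Lambda$ and condition on the \emph{lexicographically minimal} surviving square $Q_{n,t}=Q(\bm\lambda)$; minimality is what makes the increments in $Q^{(i)}\setminus Q^{(i+1)}$ tractable, since for $\kappa\succ\lambda_{i+1}$ the process inside $Q(\bm\lambda^{(i)}\kappa)$ is unconstrained by the conditioning (\Cref{lemma:decomp X_ni}, \Cref{lemma:probability given first survivor}). Second, the paper neither applies a concentration inequality to the sum of increments nor uses equidistribution with respect to $|\ell\cap[0,1]^2|\,dt$: it instead computes the conditional expectation $\tfrac1n\E[W_n\mid Q_{n,t}=Q(\bm\lambda)]$ \emph{directly}. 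The point is that for a uniform model the address $\bm\lambda\in\Lambda^n$ is, outside a set of density $e^{-\Omega_\epsilon(n)}$, $(k,\epsilon)$-approximately uniform in the purely combinatorial sense that each word $\bm\eta\in\Lambda^k$ occurs about $nL^{-2k}$ times as a substring; the continuous ``type'' of the renormalized line $h_i\ell$ (which is \emph{not} one of finitely many, contrary to your sketch) is then discretized via the $k$-scale approximation $|\ell(\theta,t_i)\cap Q(\kappa)|\approx V(\theta,(\lambda_{i+1},\ldots,\lambda_{i+k}),\kappa)$ of \Cref{lem:length_to_integral}, and summing over the approximately uniform substrings collapses the answer to $V(\theta)/2+O(\epsilon+L^{-k}+k'/n+(\log n)/n)$ (\Cref{prop:expectation_conditional_on_approx_distr}). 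In short, the role you assign to ``concentration of a sum of independent increments'' is played instead by the combinatorial uniformity of $\bm\lambda$ together with a geometric averaging lemma; Chernoff enters only to bound the bad set of non-uniform addresses.
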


\begin{corollary}\label{cor:nghbrhd_Favard_length}
    For a non-degenerate uniform surviving grid model,
    \begin{equation*}
\lim_{r\downarrow 0}\log_L(1/r)\Fav(S(r))=Z\int_{0}^{\pi}\frac{2}{V(\theta)}\,d\theta\qquad\text{almost surely}.
\end{equation*}
\end{corollary}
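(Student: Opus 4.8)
The plan is to reduce \Cref{cor:nghbrhd_Favard_length} to the single-scale asymptotics of \Cref{thm:limit} by sandwiching $\Fav(S(r))$ between two construction steps using \Cref{lem:step-vs-neighbourhood}. Recall that for a grid model $\rho = 1/L$ and $J_0 = [0,1]^2$. Fix $\eps \in (0,1)$. For small $r > 0$, set
\[
n(r) = \bigl\lceil \log_L(\diam(J_0)/r)\bigr\rceil, \qquad m(r) = \bigl\lfloor \log_L(\eps/r)\bigr\rfloor,
\]
so that $\diam(J_0) L^{-n(r)} \le r \le \eps L^{-m(r)}$. \Cref{lem:step-vs-neighbourhood} (which applies since the model is surviving) then gives
\[
\Fav(S_{n(r)}) \le \Fav(S(r)) \le (1 + O(\eps))\, \Fav(S_{m(r)}).
\]
Both $n(r)$ and $m(r)$ are integers tending to $\infty$ as $r \downarrow 0$, with $n(r) = \log_L(1/r) + O(1)$ and $m(r) = \log_L(1/r) + O_\eps(1)$; in particular $\log_L(1/r)/n(r) \to 1$ and $\log_L(1/r)/m(r) \to 1$.

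Next I would multiply the sandwich through by $\log_L(1/r)$ and rewrite it as
\[
\frac{\log_L(1/r)}{n(r)} \cdot \bigl(n(r)\Fav(S_{n(r)})\bigr) \le \log_L(1/r)\, \Fav(S(r)) \le (1 + O(\eps))\, \frac{\log_L(1/r)}{m(r)} \cdot \bigl(m(r)\Fav(S_{m(r)})\bigr).
\]
By \Cref{thm:limit} there is an almost-sure event on which the integer sequence $k \mapsto k\,\Fav(S_k)$ converges to $ZC$, where $C := \int_0^\pi 2/V(\theta)\,d\theta$. Since $n(r), m(r) \to \infty$, on this event $n(r)\Fav(S_{n(r)}) \to ZC$ and $m(r)\Fav(S_{m(r)}) \to ZC$, and combining with the asymptotics of $\log_L(1/r)/n(r)$ and $\log_L(1/r)/m(r)$ we obtain, on that same event,
\[
ZC \le \liminf_{r \downarrow 0} \log_L(1/r)\, \Fav(S(r)) \le \limsup_{r \downarrow 0} \log_L(1/r)\, \Fav(S(r)) \le (1 + O(\eps))\, ZC.
\]
Letting $\eps \downarrow 0$, both outer terms converge to $ZC$, which proves the corollary.

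The content here is carried entirely by \Cref{thm:limit}, and the remaining argument is bookkeeping with no serious obstacle. The only point needing a little care is that \Cref{lem:step-vs-neighbourhood} compares a neighbourhood to a construction step only up to a factor $1 + O(\eps)$, so the lower and upper sandwiching scales cannot be taken equal; treating $\eps$ as a free parameter and sending it to $0$ at the very end circumvents this. (A cosmetically different route would combine the last display of \Cref{thm:almost-sure-convergence}, namely $\Fav(S(r))/\E[\Fav(S(r))] \to Z$ almost surely, with $\log_L(1/r)\,\E[\Fav(S(r))] \to C$ --- the latter coming from $n\,\E[\Fav(S_n)] \to C$ in \Cref{thm:limit} together with \Cref{lem:step-vs-neighbourhood} applied in expectation and the same $\eps \downarrow 0$ step --- but the direct sandwiching above is shortest.)
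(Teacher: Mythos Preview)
Your proof is correct and follows essentially the same approach as the paper: sandwich $\Fav(S(r))$ between two construction steps via \Cref{lem:step-vs-neighbourhood} and invoke the almost-sure limit from \Cref{thm:limit}. The only cosmetic difference is that the paper first reduces to the discrete sequence $r=L^{-n}$ by monotonicity of $r\mapsto\Fav(S(r))$ and takes $\eps\sim 1/n$ (giving the upper scale $n-\log n$), whereas you work with continuous $r$ and a fixed $\eps$ sent to $0$ at the end; both are equivalent bookkeeping.
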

\begin{proof}
    By monotonicity of $\Fav(S(r))$ in $r$ and the fact $\tfrac{\log_L(L^{n+1})}{\log_L(L^n)}=\tfrac{n+1}{n}\longrightarrow1$, we may take the limit along $r=L^{-n}$. By Lemma \ref{lem:step-vs-neighbourhood}, there is a constant $C<\infty$ such that
    \[
         n\Fav(S_{n+C}) \le n\Fav(S(L^{-n})) \le (n+C)\Fav(S_{\lfloor n-\log n\rfloor})\,. 
    \]
  Noting that $\lim_{n\to \infty}\tfrac{n}{n+C}=\lim_{n\to \infty}\tfrac{\lfloor n-\log n\rfloor}{n}=1$ and applying \Cref{thm:limit}, it follows that
    \begin{align*}
       I= \lim_{n\to \infty}n\Fav(S_{n+C})\le\lim_{n\to \infty} n\Fav(S(L^{-n}))\le\lim_{n\to \infty} n\Fav(S_{\lfloor n-\log n\rfloor})=I\,,
    \end{align*}
    where $I=Z\int_{0}^{\pi}\frac{2}{V(\theta)}\,d\theta$.
\end{proof}

Recall \Cref{remark:half-open}: squares in this section are half-open.

\subsection{Preliminary steps and lemmas}

\subsubsection{Overview of proof}

Our proof of \Cref{thm:limit} involves a refinement of the arguments in Lemmas \ref{lemma:expect single scale} and \ref{lemma:expect multi scale}. We start by recalling \eqref{eq:length-w-square-v1}, i.e., for all $n$,
\begin{align}\label{eq:length-w-square}
|\ell(\theta,t) \cap [0,1)^2|
=
\E\bigl[ L^n |\ell(\theta,t) \cap S_n| \bigm| \ell(\theta,t) \cap S_n \neq \varnothing\bigr] \cdot \P[\ell(\theta,t) \cap S_n \neq \varnothing]\,.
\end{align}
(Recall \Cref{remark:half-open}: squares in this section are half-open.) We are therefore interested in estimating
\begin{align}
    \frac{1}{n} \E\bigl[ L^n |\ell(\theta,t) \cap S_n| \bigm| \ell(\theta,t) \cap S_n \neq \varnothing\bigr]\,.
\end{align}

We fix a total order on $\Lambda$. (Unlike in the proof of \Cref{lemma:expect multi scale}, this total order can be completely arbitrary.) For each $n$, this yields a lexicographic order on $\Lambda^n$ and hence on $\cD_n$ (all denoted by $\prec$). For fixed $\theta$, let $Q_{n,t}$ be the $\cD_n$-valued random variable defined by 
\begin{align}
    \label{eq:def Q_n,t}
    Q_{n,t}
    =
    \min\{Q \in \cS_n : \ell(\theta,t) \cap Q \neq \varnothing\}
    \,,
\end{align}
assuming that $\ell(\theta,t)\cap S_n\neq\varnothing$. For completeness, we define $Q_{n,t}=\varnothing$ if $\ell(\theta,t)\cap S_n=\varnothing$.

We observe that the event $\ell(\theta,t) \cap S_n\neq\varnothing$ is the disjoint union of the events $\{Q_{n,t} = Q(\bm\lambda) : \bm\lambda \in \Lambda^n\}$. Therefore, we would like to estimate
\begin{align}
\label{eq:1/n E conditioned on lambda}
    \frac{1}{n} \E\bigl[ L^n |\ell(\theta,t) \cap S_n| \bigm| Q_{n,t} = Q(\bm\lambda) \bigr]
\end{align}
for all $\bm\lambda \in \Lambda^n$, but it turns out there are two small exceptional subsets of $\Lambda^n$ (a ``geometric'' one and a ``combinatorial'' one; see \Cref{sec:exceptional sets} below) where we cannot get a good control. If $\bm\lambda$ is outside of these two exceptional sets, then we can use several approximation lemmas (proved in \Cref{section:approximation lemmas} below) to show that \eqref{eq:1/n E conditioned on lambda} is close to $\frac{V(\theta)}{2}$; this is done in \Cref{prop:expectation_conditional_on_approx_distr}.

\begin{remark}
Some parts of the proof of \Cref{thm:limit} are inspired by the proof of Kolmogorov's estimate for critical Galton--Watson processes, mentioned in \Cref{section:intro ideas of proof}. For example, \eqref{eq:length-w-square} and \Cref{lemma:decomp X_ni} combined can be viewed as an analogue of  \cite[equation (12.9)]{LyonsPeres16}, and \Cref{lemma:probability given first survivor} bears a similarity to  \cite[Exercise 12.8]{LyonsPeres16}.
\end{remark}

Next, we prove several lemmas that will be needed in the proof of Theorem \ref{thm:limit}.

\subsubsection{Properties of the variance $V(\theta)$}

\begin{lemma}[Non-degeneracy]\label{lem:V_lb}
  There is $c>0$ (depending on the model) such that  $V(\theta)>c$ for all $\theta$.
\end{lemma}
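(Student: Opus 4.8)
The plan is to show that for every direction $\theta$, there is a fixed-length subword configuration and a positive-measure set of intercepts $t$ for which the random variable $L|\ell(\theta,t)\cap S_1|$ genuinely fluctuates, and then to make this quantitative and uniform in $\theta$ by a compactness argument. First I would recall that the model is non-degenerate, so by \Cref{lemma:nonempty V local} (all three parts together) and \Cref{lem:non-deg-structure}, there exist parameters $k,c_0$, depending only on the model, such that for every $\theta\in[0,\pi]$ there is $\alpha=\alpha(\theta)\in\Lambda$ with $\LEI_k(\theta,\alpha,c_0)\neq\varnothing$. Rather than work at scale $k$, it is cleaner to go back to the geometric input behind this: for each $\theta$ there exist two \emph{distinct} squares $Q(\alpha),Q(\kappa)\in\cD_1$ with $\P[Q(\alpha),Q(\kappa)\in\cS_1]>0$ and a positive-length set of intercepts $t$ for which $\ell(\theta,t)$ crosses both $Q(\alpha)$ and $Q(\kappa)$ in segments of length $\gtrsim 1/L$ each (this is exactly the content of the proof of \Cref{lemma:nonempty V local}, combined across the transversal, vertical and horizontal cases).

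Next I would exploit this to lower bound the variance pointwise. Fix such a $\theta$ and such a $t$. Condition on the events $\{Q(\alpha)\in\cS_1\}$ and $\{Q(\kappa)\in\cS_1\}$ separately; since $\P[Q(\alpha)\in\cS_1]=\P[Q(\kappa)\in\cS_1]=1/L$ by uniformity and the two squares are distinct, the random variable $f(t):=L|\ell(\theta,t)\cap S_1|$ takes, with positive probability, a value that differs by at least $c_1/L\cdot L = \Omega(1)$ across the event $\{Q(\kappa)\in\cS_1\}$ versus its complement (here I use that $|\ell(\theta,t)\cap Q(\kappa)|\gtrsim 1/L$ on the relevant intercepts and that membership of the ``extra'' square $Q(\kappa)$ is not forced by membership of the squares that $\ell(\theta,t)$ already meets — this is where distinctness and the positive conditional probability enter). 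Concretely, writing $g(t)$ for the contribution to $|\ell(\theta,t)\cap S_1|$ coming from squares other than $Q(\kappa)$, we have $f(t)=L g(t)+L|\ell(\theta,t)\cap Q(\kappa)|\bbone_{Q(\kappa)\in\cS_1}$, and conditioning on $g(t)$ the second summand is a nontrivial Bernoulli-type variable (its conditional probability of being nonzero is in $(0,1)$, by non-degeneracy applied to the column/row or transversal structure), so $\Var(f(t)\mid g(t))\gtrsim 1$ on a set of intercepts $t$ of length $\gtrsim 1/L$. By the law of total variance $\Var(f(t))\ge \E[\Var(f(t)\mid g(t))]\gtrsim 1$ on that set, hence $V(\theta)=\int_\R\Var(f(t))\,dt\gtrsim 1/L^2$ with an implied constant depending only on the model — \emph{for each fixed $\theta$}.

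Finally I would upgrade pointwise positivity to a uniform lower bound. The map $\theta\mapsto V(\theta)$ is continuous on the compact set $[0,\pi]$: the function $(\theta,t)\mapsto |\ell(\theta,t)\cap S_1|$ is bounded and, for each realization of $S_1$ (a finite union of dyadic squares), Lipschitz in $(\theta,t)$ away from a measure-zero set of $(\theta,t)$, with supports contained in a fixed bounded $t$-window, so dominated convergence gives continuity of $t\mapsto\Var(L|\ell(\theta,t)\cap S_1|)$ and then of $V(\theta)$. A continuous strictly positive function on a compact set attains a positive minimum $c:=\min_\theta V(\theta)>0$, which is the claim. The main obstacle is the pointwise step: one must be careful that the ``extra crossing'' by $Q(\kappa)$ really is not deterministically tied to the rest of the configuration — i.e., that conditioning on the portion of the line already covered by other selected squares leaves the indicator $\bbone_{Q(\kappa)\in\cS_1}$ genuinely random — and handling the degenerate-direction cases $\theta\in\{0,\pi/2\}$ (where one square per row/column is forced) requires choosing $Q(\alpha),Q(\kappa)$ in the \emph{same} row (resp.\ column), exactly as in \Cref{lemma:nonempty V local}\ref{item:nonempty V vertical}; alternatively one can note that non-degeneracy is only used qualitatively here and the continuity/compactness step absorbs any direction-dependence of the construction.
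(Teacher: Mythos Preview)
Your continuity-plus-compactness reduction is exactly the paper's strategy, and your treatment of the axis directions $\theta\in\{0,\pi/2\}$ via non-degeneracy matches as well. The difference is in the pointwise step for transversal $\theta$: the paper simply observes that for any such $\theta$ there is a positive-measure set of $t$ for which $\ell(\theta,t)$ meets exactly \emph{one} square $Q\in\cD_1$ (lines grazing a corner of $[0,1]^2$); for these $t$ one has $|\ell(\theta,t)\cap S_1|=|\ell(\theta,t)\cap Q|\cdot\bbone_{Q\in\cS_1}$, a nontrivial scaled Bernoulli since $\P[Q\in\cS_1]=1/L\in(0,1)$, hence positive variance. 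This is a one-line argument and avoids any conditioning.

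Your two-square route, by contrast, has a real gap at precisely the place you flag as ``the main obstacle''. In a general uniform grid model the squares are \emph{not} independently selected, so conditioning on $g(t)$ (which encodes information about $\cS_1\setminus\{Q(\kappa)\}$ through the other squares the line crosses) can in principle determine $\bbone_{Q(\kappa)\in\cS_1}$; nothing in \Cref{lemma:nonempty V local} or the bare hypothesis $\P[Q(\alpha),Q(\kappa)\in\cS_1]>0$ rules this out, and ``non-degeneracy applied to the column/row or transversal structure'' does not supply the needed conditional nontriviality. If you want to keep the two-square geometry, a clean fix is to bypass the conditioning entirely and use the identity in \Cref{lem:V(theta) alternative}: every summand there is nonnegative, and your pair $(Q(\alpha),Q(\kappa))$ contributes a strictly positive term, so $V(\theta)>0$ follows immediately. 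But the paper's single-square observation is simpler still.
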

\begin{proof}
One can check that $V$ is continuous in $\theta$, and therefore by compactness it is enough to show that $V(\theta)>0$ for all $\theta$. For $\theta\in \{0,\pi/2\}$, this follows from the non-degeneracy of the model. For other values of $\theta$, note that there is a positive measure set of $t$ such that $\ell(\theta,t)$ intersects a single square in $\cD_1$, and so the random variable $|\ell(\theta,t) \cap S_1|$ is not almost surely constant, showing that $V(\theta)>0$ in this case also.
\end{proof}

\begin{lemma}[Alternative expression for $V(\theta)$]\label{lem:V(theta) alternative} 
For each fixed $\theta$,
\begin{align}
\label{eq:V(theta) alternative}
L^{-2} V(\theta)
=
\sum_{Q_1, Q_2 \in \cD_1, Q_1 \neq Q_2}
\P[Q_1 \in \cS_1 \land Q_2 \in \cS_1]
\int_\R
|\ell(\theta,t) \cap Q_1| |\ell(\theta,t) \cap Q_2| 
\, dt\,.
\end{align}
\end{lemma}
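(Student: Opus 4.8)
The plan is to expand the variance pointwise in $t$, invoke the uniformity hypothesis $\P[Q\in\cS_1]=1/L$, and then reduce \eqref{eq:V(theta) alternative} to a deterministic scaling identity. Fix $\theta$. For $Q\in\cD_1$ and $t\in\R$ set $a_Q(t)=|\ell(\theta,t)\cap Q|$ and $b(t)=|\ell(\theta,t)\cap[0,1]^2|$; these are deterministic, bounded, and supported on bounded intervals. Since the squares of $\cD_1$ tile $[0,1]^2$ with disjoint interiors and overlap only along edges, and for a.e.\ $t$ the line $\ell(\theta,t)$ meets those edges in a null set, we have
\[
|\ell(\theta,t)\cap S_1|=\sum_{Q\in\cD_1}a_Q(t)\,\bbone_{Q\in\cS_1}\,,\qquad \sum_{Q\in\cD_1}a_Q(t)=b(t)\,,\qquad\text{for a.e.\ }t\,.
\]

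First I would compute $\Var\bigl(|\ell(\theta,t)\cap S_1|\bigr)$ using $\Var(Y)=\E[Y^2]-\E[Y]^2$, the identity $\E[\bbone_{Q_1\in\cS_1}\bbone_{Q_2\in\cS_1}]=\P[Q_1,Q_2\in\cS_1]$, and $\E[\bbone_{Q\in\cS_1}]=1/L$. Expanding the square, separating the diagonal $Q_1=Q_2$ from the off-diagonal pairs, and using $\E[|\ell(\theta,t)\cap S_1|]=b(t)/L$, one obtains for a.e.\ $t$
\[
\Var\bigl(|\ell(\theta,t)\cap S_1|\bigr)=\frac{1}{L}\sum_{Q\in\cD_1}a_Q(t)^2-\frac{b(t)^2}{L^2}+\sum_{\substack{Q_1,Q_2\in\cD_1\\ Q_1\neq Q_2}}\P[Q_1,Q_2\in\cS_1]\,a_{Q_1}(t)\,a_{Q_2}(t)\,.
\]
Multiplying by $L^2$ and integrating in $t$, the claimed formula \eqref{eq:V(theta) alternative} becomes equivalent to the deterministic identity
\[
\int_\R\sum_{Q\in\cD_1}a_Q(t)^2\,dt=\frac{1}{L}\int_\R b(t)^2\,dt\,.
\]

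For this last identity I would exploit the self-similarity of the grid: each $Q\in\cD_1$ is the image of $[0,1]^2$ under a homothety of ratio $1/L$. A homothety of ratio $r$ sends the line $\ell(\theta,t)$ to a line of the form $\ell(\theta,r^{-1}t+\mathrm{const})$ and scales lengths by $r$, so the substitution $s=Lt+\mathrm{const}$ gives $\int_\R a_Q(t)^2\,dt=L^{-3}\int_\R b(s)^2\,ds$ for each $Q\in\cD_1$; summing over the $L^2$ squares yields the identity. This scaling computation is the only substantive point; everything else is bookkeeping, and the measure-zero issues (the line coinciding with a grid edge, the finitely many exceptional $t$ in the axis-parallel cases $\theta\in\{0,\pi/2\}$) are harmless inside the $t$-integrals. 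An essentially equivalent alternative is to rotate coordinates so that $\proj_\theta$ becomes a coordinate projection, in which case $a_Q$ and $b$ are ``vertical slice-length'' functions and the identity reads $\|a_Q\|_{L^2}^2=L^{-3}\|b\|_{L^2}^2$, which is immediate from the rescaling of their graphs.
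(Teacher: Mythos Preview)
Your proof is correct and follows essentially the same approach as the paper: both expand the second moment over pairs $(Q_1,Q_2)\in\cD_1^2$, identify the off-diagonal sum with the right-hand side of \eqref{eq:V(theta) alternative}, and handle the diagonal via the scaling identity $\int_\R |\ell(\theta,t)\cap Q|^2\,dt = L^{-3}\int_\R |\ell(\theta,t)\cap[0,1]^2|^2\,dt$. The only difference is cosmetic ordering---you compute the variance pointwise and then integrate, while the paper integrates $\E[Y^2]$ and $(\E Y)^2$ separately before subtracting.
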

\begin{proof}
Since $|\ell(\theta,t) \cap S_1| = \sum_{Q \in \cD_1} |\ell(\theta,t) \cap Q| \bbone[Q \in \cS_1]$, we have
\begin{align*}
\int_\R \E |\ell(\theta,t) \cap S_1|^2 \, dt
&=
\sum_{Q_1, Q_2 \in \cD_1}   \P[Q_1 \in \cS_1 \land Q_2 \in \cS_1] \int_\R |\ell(\theta,t) \cap Q_1| |\ell(\theta,t) \cap Q_2| \, dt \,.
\end{align*}
Note that the sum of the off-diagonal terms ($Q_1 \neq Q_2$) is precisely the right-hand side of \eqref{eq:V(theta) alternative}. 
To handle the diagonal terms, first notice that, by rescaling, for any $Q\in\cD_1$ we have
\[
\int_\R |\ell(\theta,t) \cap Q|^2 \, dt = \frac{1}{L^3} \int_\R |\ell(\theta,t) \cap [0,1)^2|^2 \, dt.
\]
Hence, the sum of the terms $Q_1 = Q_2$ is
\begin{align*}
\sum_{Q \in \cD_1}   \P[Q \in \cS_1] \int_\R |\ell(\theta,t) \cap Q|^2 \, dt
&=
\sum_{Q \in \cD_1}   \frac{1}{L} \cdot \frac{1}{L^3} \int_\R |\ell(\theta,t) \cap [0,1)^2|^2 \, dt
\\
&=
\frac{1}{L^2}
\int_\R |\ell(\theta, t) \cap [0,1)^2|^2 \, dt
\\
&=
\int_\R (\E |\ell(\theta,t) \cap S_1|)^2 \, dt \,.
\end{align*}
Combining the expressions for the diagonal and off-diagonal contributions, we get \eqref{eq:V(theta) alternative}.
\end{proof}

\subsubsection{Two approximation lemmas}
\label{section:approximation lemmas}

From now on, the direction $\theta$ will be fixed until nearly the end of the proof of Theorem \ref{thm:limit}. We therefore suppress the dependence on $\theta$ from much of the notation. We emphasize, however, that implicit constants do \emph{not} depend on $\theta$.

The first approximation lemma says that for certain events, conditioning on $Q_{n,t} = Q(\bm\eta)$ is not too different from conditioning on $Q(\eta_1) \in \cS_1$.

\begin{lemma}[Conditioning on $Q_{n,t} = Q(\bm\eta)$ versus on $Q(\eta_1) \in \cS_1$]
\label{lemma:probability given first survivor}
For every $n,\theta$, there exists a set $E_{n} \subset \R$ of size $O(\exp(-\Omega(n)))$, which is a union of intervals of length $\ge L^{-n}$, such that the following holds: for any $\bm\eta \in \Lambda^n$ and $\kappa \in \Lambda \setminus \{\eta_1\}$ such that 
\begin{align} \label{eq:non-trivial-Q1Q2}
\P\bigl[Q(\kappa) \in\cS_1 \land Q(\eta_1) \in \cS_1\bigr] > 0,
\end{align}
we have
\[
\left|\frac{\P\bigl[Q(\kappa) \in \cS_1 \bigm| Q_{n,t} = Q(\bm\eta) \bigr]}{\P\bigl[Q(\kappa) \in \cS_1 \bigm| Q(\eta_1) \in \cS_1\bigr]} -1\right| = O(1/n)\qquad\text{for all } t \in \R \setminus E_{n}\,.
\]
\end{lemma}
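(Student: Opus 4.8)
The plan is to analyze the conditioning event $\mathcal{A} := \{Q_{n,t} = Q(\bm\eta)\}$ by unfolding the subtree structure of the construction. Write $\alpha = \eta_1$ and $\ell := \ell(\theta,t)$. Since $Q(\bm\eta)$ is the $\prec$-minimal square of $\cS_n$ meeting $\ell$ and $\prec$ is lexicographic, $\mathcal{A}$ is the conjunction of three events: (i) $Q(\alpha)\in\cS_1$; (ii) inside $Q(\alpha)$, the square $Q(\bm\eta)$ is the $\prec$-minimal descendant lying in $\cS_n$ that meets $\ell$ --- call this $\mathcal{F}$, which, given $Q(\alpha)\in\cS_1$, depends only on the selections made inside $Q(\alpha)$; and (iii) for every $\beta\in\Lambda$ with $\beta\prec\alpha$ and $\ell\cap Q(\beta)\neq\varnothing$, either $Q(\beta)\notin\cS_1$ or $\ell\cap S_n\cap Q(\beta)=\varnothing$, where the latter event $\mathcal{G}_\beta$, given $Q(\beta)\in\cS_1$, depends only on the selections inside $Q(\beta)$. (We may assume $\P[\mathcal{A}]>0$, since otherwise the left-hand side is undefined and there is nothing to prove.)

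Next I would compute, for a level-$1$ configuration $\mathcal{C}$ with $Q(\alpha)\in\mathcal{C}$, using that the subtrees rooted at distinct squares of $\mathcal{C}$ are mutually independent given $\cS_1=\mathcal{C}$:
\[
\P[\mathcal{A}\mid\cS_1=\mathcal{C}]=p_{\mathcal{F}}\cdot g(\mathcal{C}),\qquad p_{\mathcal{F}}:=\P[\mathcal{F}\mid Q(\alpha)\in\cS_1],\quad g(\mathcal{C}):=\prod_{\substack{\beta\prec\alpha,\ \ell\cap Q(\beta)\neq\varnothing\\ Q(\beta)\in\mathcal{C}}}r_\beta,
\]
where $r_\beta:=\P[\ell\cap S_n\cap Q(\beta)=\varnothing\mid Q(\beta)\in\cS_1]$, and $\P[\mathcal{A}\mid\cS_1=\mathcal{C}]=0$ if $Q(\alpha)\notin\mathcal{C}$. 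The key point is that $p_{\mathcal{F}}$ depends on $\mathcal{C}$ only through the fact that $Q(\alpha)\in\mathcal{C}$, so it cancels when we form
\[
\P[Q(\kappa)\in\cS_1\mid\mathcal{A}]=\frac{\sum_{\mathcal{C}\ni Q(\alpha),Q(\kappa)}\P[\cS_1=\mathcal{C}]\,g(\mathcal{C})}{\sum_{\mathcal{C}\ni Q(\alpha)}\P[\cS_1=\mathcal{C}]\,g(\mathcal{C})},
\]
which is to be compared with $\P[Q(\kappa)\in\cS_1\mid Q(\alpha)\in\cS_1]$, the same ratio with $g\equiv 1$.

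It then remains to show $g(\mathcal{C})=1+O(1/n)$ uniformly in $\mathcal{C}$, for $t$ outside a small exceptional set. By self-similarity, $1-r_\beta=\P[\ell(\theta,t_\beta)\cap S_{n-1}\neq\varnothing]$, where $t_\beta$ is the image of $t$ under the affine (slope $L$) action on line parameters induced by the homothety $h_\beta\colon Q(\beta)\to[0,1]^2$. Defining $E_n:=\bigcup_{\beta\in\cD_1}h_\beta^{-1}(E_{\theta,n-1})$ with $E_{\theta,n-1}$ the exceptional set of \Cref{prop:expected-line-survival} (a union of intervals of length $\ge L^{-(n-1)}$ by \Cref{lemma:markov}), one sees that $E_n$ has measure $\lesssim e^{-\Omega(n)}$, is a union of intervals of length $\ge L^{-n}$, and --- crucially --- does not depend on $\bm\eta$ or $\kappa$. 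For $t\notin E_n$, every $\beta$ occurring in the product defining $g(\mathcal{C})$ has $\ell\cap Q(\beta)\neq\varnothing$ and $t_\beta\notin E_{\theta,n-1}$, hence $1-r_\beta\lesssim 1/(n-1)\lesssim 1/n$; as the product has at most $L^2$ factors, each in $[1-O(1/n),1]$, we get $g(\mathcal{C})\in[1-O(1/n),1]$ uniformly. Plugging this into the ratio above, and using $\P[Q(\alpha),Q(\kappa)\in\cS_1]>0$ from \eqref{eq:non-trivial-Q1Q2} and $\P[Q(\alpha)\in\cS_1]=1/L$, yields the claimed bound; bounded $n$ is handled trivially since then the ratio is $O(1)=O(1/n)$.

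The step I expect to be the main obstacle is the first paragraph: carefully verifying that $\mathcal{A}$ decomposes as (i)--(iii) and that, conditionally on $\cS_1=\mathcal{C}$, the event $\mathcal{F}$ and the events $\mathcal{G}_\beta$ live on pairwise disjoint subtrees, so that $\P[\mathcal{A}\mid\cS_1=\mathcal{C}]$ genuinely factorizes as $p_{\mathcal{F}}\,g(\mathcal{C})$ with $p_{\mathcal{F}}$ independent of $\mathcal{C}$. Once this bookkeeping is secure, everything else is a routine application of \Cref{prop:expected-line-survival} and \Cref{lemma:markov}.
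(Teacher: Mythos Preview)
Your proof is correct and follows essentially the same strategy as the paper: decompose $\{Q_{n,t}=Q(\bm\eta)\}$ into the three pieces (i)--(iii), observe that the $\mathcal{F}$-part cancels so that only $A=\bigcap_\beta\mathcal{G}_\beta$ matters, and then show $\P[A^c]=O(1/n)$ off an exponentially small set $E_n$. The only cosmetic differences are that the paper phrases the cancellation via a Bayes manipulation rather than summing over configurations $\mathcal{C}$, and it bounds $\P[A^c]$ in one stroke via the containment $A^c\subset\{\ell(\theta,t)\cap S_n\neq\varnothing\}$ together with \Cref{prop:expected-line-survival} at level $n$, avoiding your rescaling to level $n-1$.
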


\begin{proof}
Given $\eta\in\Lambda$, let $X(\eta)$ denote the event $Q(\eta)\in \cS_1$. Let 
\[\Gamma=\{(\kappa,\alpha)\in\Lambda^2 :  \P[X(\kappa)\land X(\alpha)]>0\}
\qquad\text{and}\qquad
c=\min_{(\kappa,\alpha)\in\Gamma\,}\P[X(\kappa)\land X(\alpha)]\,.\]
Note that $c>0$ is independent of $n$ and it only depends on the model.

By \Cref{prop:expected-line-survival} (see also Lemma \ref{lemma:markov}), there exists a set $E_{n} \subset \R$ of size $O(\exp(-\Omega(n)))$, which is a union of intervals of length $\ge L^{-n}$, such that
\begin{align}
\label{eq:E_n complement}
\R \setminus E_n
\subset 
\left\{t \in \R : \P\bigl[\ell(\theta,t) \cap S_n \neq \varnothing\bigr] \lesssim \frac{1}{n}\right\}\,,
\end{align}
provided the implicit constant in \eqref{eq:E_n complement} is chosen appropriately.

Fix $\bm\eta \in \Lambda^n$ and $\kappa \in \Lambda \setminus\{\eta_1\}$ such that \eqref{eq:non-trivial-Q1Q2} holds. By independence, the process inside $Q(\eta_1)$ is independent of the event $Q(\kappa) \in \cS_1$. Thus, we have
\begin{align*}
&\P\bigl[Q(\kappa) \in \cS_1 \bigm| Q_{n,t} = Q(\bm\eta) \bigr]
=
\P\bigl[ X(\kappa) \bigm| X(\eta_1) \land A\bigr],
\end{align*}
where $A$ is the event
\[
\ell(\theta,t) \cap Q \cap S_n = \varnothing \qquad \text{for all $Q\in \cD_1$ with $Q\prec Q(\eta_1)$}.
\]
Thus, to prove the lemma, it suffices to show
\begin{align}
\label{eq:prob given first survivor rewritten}
\left|\frac{\P\bigl[ X(\kappa) \bigm| X(\eta_1) \land A\bigr]}{\P\bigl[ X(\kappa) \bigm| X(\eta_1)\bigr]} -1\right| = O(1/n)\qquad\text{for all } t \in \R \setminus E_{n}\,.
\end{align}

If $t \in \R \setminus E_{n}$ then,
\begin{align}
\P[A^c]
\leq
\P\bigl[\ell(\theta,t) \cap S_n \neq \varnothing\bigr]
\stackrel{\eqref{eq:E_n complement}}{\lesssim}
\frac{1}{n}\,.
\end{align}
Since \eqref{eq:non-trivial-Q1Q2} implies $(\kappa,\eta_1)\in \Gamma$, we have \[\P[X(\kappa)\land X(\eta_1)]\ge c\,.\] 
Writing
\[
\frac{\P[X(\kappa) \mid X(\eta_1) \land A]}{\P[X(\kappa) \mid X(\eta_1)]} = \frac{\P[A \mid X(\kappa)\land X(\eta_1)]}{\P[A \mid X(\eta_1)]}\,
\]
and recalling that $\P[A^c]\lesssim 1/n\le 1 \lesssim \P[X(\kappa)\land X(\eta_1)]\le \P[X(\eta_1)]$, we get \eqref{eq:prob given first survivor rewritten} and hence the lemma.
\end{proof}

The second approximation lemma is a purely geometric observation. It allows us to approximate expressions of the form $|\ell(\theta,t) \cap Q(\kappa)|$ by ``pieces'' of the integrals appearing in \eqref{eq:V(theta) alternative}.

\begin{lemma}[Lengths of intersections of lines with squares]
\label{lem:length_to_integral}
    For $\theta\in [0,\pi]$, $\bm\eta \in \Lambda^k$, and $\kappa\in\Lambda$, define 
    \begin{align}
    \begin{split}
        \label{eq:def V(theta,eta,kappa) and E}
        V(\theta,\bm\eta,\kappa) &= 
        L^{2k} \int_{\R}  |\ell(\theta, s) \cap Q(\bm\eta)| |\ell(\theta, s) \cap Q(\kappa)| \, ds \\
        \operatorname{Err}(\theta,\bm\eta,\kappa)
        &=
        \sup_{\substack{t \in \R \\ \ell(\theta,t) \cap Q(\bm\eta) \neq \varnothing}}
        \bigl|
        \left|\ell(\theta,t) \cap Q(\kappa) \right|
        -
        V(\theta,\bm\eta,\kappa)
        \bigr| \,.
    \end{split}
    \end{align}
    Then for any $\theta\in [0,\pi]$ and $\kappa \in \Lambda$,
    \begin{align}
    \label{eq:length-to-integral-sum}
        \frac{1}{\#\Lambda^k}\sum_{\bm\eta \in \Lambda^k} 
        \operatorname{Err}(\theta,\bm\eta,\kappa)
        \lesssim
        \frac{1}{L^k}
    \end{align}    
\end{lemma}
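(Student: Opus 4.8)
The plan is to fix $\theta$ and $\kappa$ and think of $\operatorname{Err}(\theta,\bm\eta,\kappa)$ as measuring, uniformly over lines hitting the tiny square $Q(\bm\eta)$, how far the length $|\ell(\theta,t)\cap Q(\kappa)|$ is from the ``average'' value $V(\theta,\bm\eta,\kappa)$ obtained by integrating over all lines hitting $Q(\bm\eta)$. The key point is that $V(\theta,\bm\eta,\kappa)$ is an $L^{2k}$-weighted integral, and since $Q(\bm\eta)$ has sidelength $L^{-k}$, the quantity $L^{2k}\int |\ell(\theta,s)\cap Q(\bm\eta)|\,ds$ over the range of $s$ for which $\ell(\theta,s)$ meets $Q(\bm\eta)$ is comparable to $1$ (it equals $L^{2k}\cdot L^{-k}\cdot(\text{width of the shadow of }Q(\bm\eta))=O(1)$); so $V(\theta,\bm\eta,\kappa)$ is genuinely a weighted average of $|\ell(\theta,s)\cap Q(\kappa)|$ over lines $s$ within an $O(L^{-k})$-window. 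Thus $\operatorname{Err}(\theta,\bm\eta,\kappa)$ is bounded by the oscillation of the single-variable function $s\mapsto|\ell(\theta,s)\cap Q(\kappa)|$ over an interval of length $O(L^{-k})$ (after accounting for the small discrepancy coming from lines through $Q(\bm\eta)$ that miss the very center of $Q(\bm\eta)$'s shadow — but those all lie within an $O(L^{-k})$ window of $t$ as well).

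\textbf{Key steps.} First, I would record the elementary geometric fact that for a fixed direction $\theta$ and a fixed square $Q$ of sidelength $\ell$, the function $g_Q(s):=|\ell(\theta,s)\cap Q|$ is piecewise-linear, continuous, supported on an interval of length $\asymp \ell$, bounded by $\sqrt2\,\ell$, and $1$-Lipschitz in $s$ away from at most two ``corner'' breakpoints — in fact $g_Q$ is unimodal (increases to a plateau or peak, then decreases) with slopes $\pm 1/\cos\theta$ and $\pm 1/\sin\theta$ on its two ramps, so $|g_Q'|\le C$ is false near vertical/horizontal $\theta$ but the \emph{total variation} of $g_Q$ is always exactly $2\sqrt2\,\ell$ (up to the endpoint where the peak is flat). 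Second, applying this to $Q=Q(\kappa)$ (sidelength $L^{-1}$): $g_{Q(\kappa)}$ has total variation $O(L^{-1})$, independent of $\theta$. Third, for $Q(\bm\eta)$, all lines $\ell(\theta,t)$ meeting it have their parameter $t$ confined to an interval $I_{\bm\eta}$ of length $\asymp L^{-k}$; and $V(\theta,\bm\eta,\kappa)=\big(\int_{I_{\bm\eta}} g_{Q(\bm\eta)}(s)\,ds\big)^{-1}\int_{I_{\bm\eta}} g_{Q(\bm\eta)}(s)\,g_{Q(\kappa)}(s)\,ds$ is a weighted average of $g_{Q(\kappa)}$ over $s\in I_{\bm\eta}$, so $\operatorname{Err}(\theta,\bm\eta,\kappa)\le \operatorname{osc}_{I_{\bm\eta}} g_{Q(\kappa)}$. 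Fourth — the averaging step — sum over $\bm\eta\in\Lambda^k$: the shadows $I_{\bm\eta}$ of the $L^{2k}$ squares $Q(\bm\eta)$ cover the shadow of $[0,1]^2$ (an interval of length $O(1)$), and each point of $\R$ lies in $O(L^k)$ of the intervals $I_{\bm\eta}$ (since at each of the $k$ scales the line meets $O(1)$ squares; more precisely the squares $Q(\bm\eta)$ sharing a common column/row-shadow number $O(L^{k})$... let me be careful: fixing $\theta$, the number of $\bm\eta\in\Lambda^k$ with a given value of $\ell(\theta,t)\cap Q(\bm\eta)\ne\varnothing$ is $O(L^k)$ because the line meets a ``tube'' of squares). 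Hence
\[
\sum_{\bm\eta\in\Lambda^k}\operatorname{osc}_{I_{\bm\eta}}g_{Q(\kappa)}
\le
\sum_{\bm\eta\in\Lambda^k}\operatorname{Var}_{I_{\bm\eta}}(g_{Q(\kappa)})
\lesssim
L^k\cdot\operatorname{Var}_{\R}(g_{Q(\kappa)})
\lesssim
L^k\cdot L^{-1},
\]
where the middle inequality uses that the intervals $I_{\bm\eta}$ have bounded overlap $O(L^k)$ and total variation is superadditive over a cover with bounded overlap (each ``unit of variation'' of $g_{Q(\kappa)}$ is charged to at most $O(L^k)$ of the $\operatorname{Var}_{I_{\bm\eta}}$ terms). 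Dividing by $\#\Lambda^k=L^{2k}$ gives the claimed $O(L^{-k})$.

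\textbf{Main obstacle.} The delicate point is the overlap/covering count in the last step: I need the honest statement that for fixed $\theta$ the number of level-$k$ squares $Q(\bm\eta)$ whose $\theta$-shadow contains a given parameter $t$ is $O(L^k)$ uniformly in $\theta$ and $t$ — this is true (the shadow of the $L^k\times L^k$ grid onto the $t$-axis is covered with multiplicity $\asymp L^k$, since a fixed line at angle $\theta$ crosses $O(L^k)$ of the level-$k$ squares and, dually, a fixed $t$ is hit by the shadows of the $O(L^k)$ squares in one ``tube''), but near $\theta=0$ or $\theta=\pi/2$ one must check the constant does not degenerate; it does not, because the shadow of each square has length $\asymp L^{-k}$ with constants depending only on nothing (it's between $L^{-k}$ and $\sqrt2 L^{-k}$), and the total shadow of $[0,1]^2$ has length between $1$ and $\sqrt 2$. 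A secondary technical nuisance is that $g_{Q(\bm\eta)}$ may not be a positive multiple of the uniform density on $I_{\bm\eta}$ — it's a tent/trapezoid — so ``weighted average'' must be justified via the identity displayed above, and one checks $\int_{I_{\bm\eta}}g_{Q(\bm\eta)} \asymp L^{-k}$ from below so the normalization is harmless; the bound $\operatorname{Err}\le\operatorname{osc}_{I_{\bm\eta}}g_{Q(\kappa)}$ follows since a weighted average with nonnegative weights of a function lies between its inf and sup on the support. None of these is hard, but they are the places where uniformity in $\theta$ — which the lemma insists on — actually has to be verified.
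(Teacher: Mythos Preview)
Your approach is correct and takes a genuinely different route from the paper's. Both proofs start the same way: observe that $L^{2k}\int_\R |\ell(\theta,s)\cap Q(\bm\eta)|\,ds = L^{2k}\cdot\mathcal L^2(Q(\bm\eta)) = 1$, so $V(\theta,\bm\eta,\kappa)$ is a weighted average of $g(s):=|\ell(\theta,s)\cap Q(\kappa)|$ over $I_{\bm\eta}$, whence $\operatorname{Err}(\theta,\bm\eta,\kappa)\le\operatorname{osc}_{I_{\bm\eta}}g$. From there the arguments diverge. The paper uses that $g$ is $(\sin\theta\cos\theta)^{-1}$-Lipschitz and bounded, giving $\operatorname{Err}\lesssim\min\{(L^k\sin\theta\cos\theta)^{-1},1\}$ uniformly in $\bm\eta$; it then observes that $g$ is constant off a set $J$ of length $O(\sin\theta\cos\theta)$, so only $O(L^{2k}(\sin\theta\cos\theta+L^{-k}))$ of the $\bm\eta$ contribute at all, and multiplies. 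Your argument instead bounds $\operatorname{osc}_{I_{\bm\eta}}g\le\operatorname{Var}_{I_{\bm\eta}}(g)$ and sums via the identity $\sum_{\bm\eta}\operatorname{Var}_{I_{\bm\eta}}(g)=\int|g'|\cdot\#\{\bm\eta:s\in I_{\bm\eta}\}\,ds\le M\cdot\operatorname{Var}_\R(g)$ with $M=O(L^k)$ and $\operatorname{Var}_\R(g)=O(L^{-1})$. This is cleaner: the total variation of $g$ and the overlap multiplicity are both $\theta$-uniform from the outset, so you never need to track $\sin\theta\cos\theta$ or balance two regimes. You even get $O(L^{-k-1})$ rather than $O(L^{-k})$.

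One small slip: you write ``$\int_{I_{\bm\eta}}g_{Q(\bm\eta)}\asymp L^{-k}$,'' but the integral is exactly $L^{-2k}$ (it is the area of $Q(\bm\eta)$). This is harmless, since your displayed identity for $V$ already has the correct normalization and the only property you use is that the weights are nonnegative and integrate to $1$. Also, the middle inequality in your display is justified precisely because $g$ is piecewise linear (hence absolutely continuous), so $\operatorname{Var}_I(g)=\int_I|g'|$ and Fubini applies; you should say this explicitly in the final write-up.
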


\begin{proof}
    Fix $\kappa \in \Lambda$ and let $g(t)=|\ell(\theta,t)\cap Q(\kappa)|$. For $\bm\eta \in \Lambda^k$, note that $L^{2k} \int_{\R}  |\ell(\theta, t) \cap Q(\bm\eta)|  \, dt = L^{2k} \cL^2(Q(\bm\eta)) = 1$, so we can view $V(\theta,\bm\eta,\kappa)$ as a weighted average of $g$ on $I_{\bm\eta} := \proj_\theta(Q(\bm\eta)) = \{t \in \R : \ell(\theta,t) \cap Q(\bm\eta) \neq \emp\}$. Thus,
    \begin{align*}
        \Err(\theta,\bm\eta,\kappa)
        &=
        \sup_{t \in I_{\bm\eta}}
        \left|
        g(t)
        -
        V(\theta,\bm\eta,\kappa)
        \right|
        \leq
        \sup_{s,t \in I_{\bm\eta}}  |g(s) - g(t)|
        \,.
    \end{align*}
    We make the following elementary geometry observations: $I_{\bm\eta}$ is an interval of length $\lesssim L^{-k}$, $g$ is $|\sin\theta\cos\theta|^{-1}$-Lipschitz, and $\|g\|_\infty \lesssim 1$. These give us 
    \begin{align*}
        \Err(\theta,\bm\eta,\kappa)
        \lesssim
        \min\left\{
        \frac{1}{L^k|\sin\theta \cos\theta|}, 1
        \right\}
        \, .
    \end{align*}
    Summing over $\bm\eta \in \Lambda^k$,
    \begin{align}
    \label{eq:sum Err}
        \sum_{\bm\eta \in \Lambda^k}\Err(\theta,\bm\eta,\kappa)
        \lesssim
        \min\left\{
        \frac{1}{L^k|\sin\theta \cos\theta|}, 1
        \right\}
        \cdot
        \#\{ \bm\eta \in \Lambda^k : \Err(\theta,\bm\eta,\kappa) > 0 \}
        \, .
    \end{align}
    To bound the cardinality above, we make another elementary geometry observation: $g$ is a piecewise linear function that is constant except on a set $J \subset \R$ which is the union of at most two intervals of length $O(|\sin\theta\cos\theta|)$. If $\Err(\theta,\bm\eta,\kappa) > 0$, then $I_{\bm\eta} \cap J \neq \emp$, so $Q(\bm\eta) \subset \proj_\theta^{-1}(J(\sqrt{2} L^{-k}))$. (See \Cref{figure:approx intersection}.) An area-counting argument then yields
    \begin{align*}
        \#\{ \bm\eta \in \Lambda^k : \Err(\theta,\bm\eta,\kappa) > 0\} 
        &\leq
        \#\{ \bm\eta \in \Lambda^k : Q(\bm\eta) \subset \proj_\theta^{-1}(J(\sqrt{2} L^{-k})) \cap [0,1)^2\} 
        \\
        &\lesssim 
        L^{2k}(|\sin\theta\cos\theta| + L^{-k}) \,.
    \end{align*}
    Combining this with \eqref{eq:sum Err} yields the lemma.
\end{proof}

\begin{figure}[h]
\centering
\includegraphics[width=0.5\textwidth]{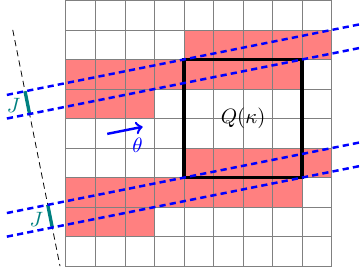}
\caption{This illustrates the last part of the proof of \Cref{lem:length_to_integral}. In the displayed grid, the shaded squares are precisely those $Q(\bm\eta)$ for which $\Err(\theta,\bm\eta,\kappa) > 0$.}
\label{figure:approx intersection}
\end{figure}

\subsubsection{Exceptional sets of squares}
\label{sec:exceptional sets}

In the hypothesis of \Cref{prop:expectation_conditional_on_approx_distr} below, there will be two exceptional sets of squares. The first one is a ``combinatorial'' one, related to the uniformity of substrings of $\bm\lambda \in \Lambda^n$. Recall (from \Cref{def:Count}) that for $\bm\lambda \in \Lambda^n$ and $\bm\eta \in \Lambda^k$,
\[
    \Count(\bm\lambda, \bm\eta)
    =
    \#\{i\in \{0,\ldots, n-k\} \,:\,(\lambda_{i+1},\ldots,\lambda_{i+k})= \bm\eta\}\,.
\]
We define the following.
\begin{definition}
Given $1 \leq k < n$ and $\epsilon>0$, we say that $\bm\lambda\in\Lambda^n$ is \emph{$(k,\varepsilon)$-approximately uniform}, if
\begin{align}
    \label{eq:approx-uniform}
    \left|\Count(\bm\lambda, \bm\eta) - (n-k+1)L^{-2k} \right|
    \leq
    \epsilon (n-k+1)L^{-2k}\,
    \qquad\text{for all $\bm\eta \in \Lambda^k$.}
\end{align}
\end{definition}

The $\bm\lambda \in \Lambda^n$ which are not $(k,\varepsilon)$-approximately uniform will form our combinatorial exceptional set. We can bound the cardinality of this set using the following.

\begin{lemma}[Bound using Chernoff and partitioning into $k$-blocks]\label{lem:chern_k_blocks}
    Let $n \geq 4k$, $\bm\eta \in \Lambda^k$, and $0<\varepsilon<\tfrac14$. Define
    \begin{align*}
        \cB &= \left\{ \bm\lambda\in\Lambda^{n}: \left|\Count(\bm\lambda, \bm\eta) - (n-k+1) L^{-2k}\right| \geq \epsilon (n-k+1) L^{-2k}\right\} \, .
    \end{align*}
    Then $\# \cB =O_k(1)\exp(-\Omega_{k,\varepsilon}(n))\cdot\#\Lambda^n$.
\end{lemma}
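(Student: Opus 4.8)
The plan is to realize $\bm\lambda=(\lambda_1,\dots,\lambda_n)$ as a sequence of IID uniform random variables on $\Lambda$, so that $\#\cB/\#\Lambda^n$ equals the probability that $\Count(\bm\lambda,\bm\eta)$ deviates from its mean $NL^{-2k}$, where $N:=n-k+1$, by more than a multiplicative factor $\varepsilon$. The only obstruction to applying a Chernoff bound directly is that the indicators $\bbone[(\lambda_{i+1},\dots,\lambda_{i+k})=\bm\eta]$, $i=0,\dots,n-k$, are not independent, since consecutive windows overlap; and unlike in the proof of \Cref{prop:expected-line-survival deg}, for a general $\bm\eta$ there is no negative-correlation structure to exploit. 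I would circumvent this with the standard device of splitting the windows into $k$ arithmetic progressions.

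Concretely, for $j\in\{0,\dots,k-1\}$ put $I_j=\{i\in\{0,\dots,n-k\}:i\equiv j\pmod k\}$ and $\Count_j=\#\{i\in I_j:(\lambda_{i+1},\dots,\lambda_{i+k})=\bm\eta\}$, so that $\Count(\bm\lambda,\bm\eta)=\sum_{j=0}^{k-1}\Count_j$. For distinct $i,i'\in I_j$ the index windows $\{i+1,\dots,i+k\}$ and $\{i'+1,\dots,i'+k\}$ are disjoint, because any two elements of $I_j$ differ by at least $k$; hence for each fixed $j$ the relevant indicators are IID $\textup{Bernoulli}(L^{-2k})$, and $\Count_j$ is a sum of $\#I_j$ such variables, with mean $\mu_j:=\#I_j\,L^{-2k}$. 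Since $\sum_j\#I_j=N$ and $\sum_j\mu_j=NL^{-2k}$, the event $\bigl|\Count(\bm\lambda,\bm\eta)-NL^{-2k}\bigr|\ge\varepsilon NL^{-2k}$ forces $|\Count_j-\mu_j|\ge\varepsilon\mu_j$ for at least one $j$, so a union bound gives
\[
\P\bigl[\,|\Count(\bm\lambda,\bm\eta)-NL^{-2k}|\ge\varepsilon NL^{-2k}\,\bigr]
\le
\sum_{j=0}^{k-1}\P\bigl[\,|\Count_j-\mu_j|\ge\varepsilon\mu_j\,\bigr].
\]

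To each summand I would apply the multiplicative Chernoff bound (\Cref{lemma:chernoff}), which for $0<\varepsilon<1$ yields $\P[|\Count_j-\mu_j|\ge\varepsilon\mu_j]\le 2\exp(-\varepsilon^2\mu_j/3)$. The hypothesis $n\ge 4k$ gives $N\ge 3k+1>k$, hence $\#I_j\ge\lfloor N/k\rfloor\ge N/(2k)$ and therefore $\mu_j\ge\tfrac{N}{2k}L^{-2k}\gtrsim_k nL^{-2k}$; plugging this into the Chernoff estimate gives $\P[|\Count_j-\mu_j|\ge\varepsilon\mu_j]\le 2\exp(-\Omega_{k,\varepsilon}(n))$ (the factor $L^{-2k}$ is absorbed into the $k$-dependence of the implied constant). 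Summing over the $k$ progressions and multiplying by $\#\Lambda^n$ then gives $\#\cB\le 2k\exp(-\Omega_{k,\varepsilon}(n))\,\#\Lambda^n=O_k(1)\exp(-\Omega_{k,\varepsilon}(n))\,\#\Lambda^n$, as claimed. There is no deep obstacle here: the one point requiring care is the overlap between consecutive windows, which the $k$-block decomposition resolves cleanly, and the bookkeeping that ensures $\mu_j\gtrsim_k nL^{-2k}$ so that the exponent is genuinely linear in $n$.
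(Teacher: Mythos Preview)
Your proof is correct and follows essentially the same approach as the paper: both partition the index set $\{0,\dots,n-k\}$ into $k$ residue classes modulo $k$, apply Chernoff within each class (where the relevant indicators are genuinely IID), and take a union bound over the $k$ classes. The only cosmetic discrepancy is that you quote the Chernoff constant as $1/3$ while the paper's \Cref{lemma:chernoff} gives $1/4$, which is immaterial for the $\Omega_{k,\varepsilon}(n)$ conclusion.
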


\begin{proof}
    Fix $0\le j\le k-1$. Let $l_j=\lfloor \frac{n-j}{k} \rfloor$ and set
    \[
    \cB_j=\left \{\bm\lambda\in\Lambda^{n}: |\#\{m\in[l_j]\,:\,(\lambda_{(m-1)k+j+1},\ldots,\lambda_{mk+j}) = \bm\eta \} - l_j L^{-2k}| \geq \epsilon l_jL^{-2k}\right\}
    \]
    By Chernoff's inequality (\Cref{lemma:chernoff}),
    \[\frac{\#\cB_j}{\#\Lambda^n}\le 2 \exp\left(-\frac{\varepsilon^2 l_j}{4 L^{2k}}\right)\le 2 \exp\left(-\frac{\varepsilon^2 n}{8 k L^{2k}}\right)\,.\]
    The claim follows from this noting that $\cB\subset\bigcup_{j=0}^{k-1}\cB_j$.
\end{proof}

Next, we turn to the ``geometric'' exceptional set of squares, which will arise from \Cref{lemma:probability given first survivor}. To define it, we need to introduce additional notation. Recall that $h_Q: [0,1)^2\to Q$ is the natural homothety. For $0\le i\le n$ and for any $t\in\R$, $\bm\lambda\in\Lambda^n$, let $s(t,\bm\lambda,i)$ be defined by 
\begin{align}
    \label{eq:def s(t,lambda,i)}
    \ell(\theta,s(t,\bm\lambda,i)) = h^{-1}_{Q(\bm\lambda^{(i)})}(\ell(\theta,t)).
\end{align}
Define
\[
    \cH_{n,i}=\left\{\bm\lambda\in\Lambda^n\,:\,\text{there exists a }t \in \R \text{ such that } \ell(\theta,t)\cap Q(\bm\lambda)\neq\varnothing \text{ and } s(t,\bm\lambda,i)\in E_{n-i} \right\}\,,
\]
where $E_{n-i}=E_{n-i,\theta}$ is as in Lemma \ref{lemma:probability given first survivor}. 
\begin{lemma}\label{lem:H_n_size}
    $\#\cH_{n,i}\lesssim\exp(-\Omega(n-i))\# \Lambda^n$.
\end{lemma}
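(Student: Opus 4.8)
The plan is to show that membership of $\bm\lambda$ in $\cH_{n,i}$ depends only on its tail $\bm\tau:=(\lambda_{i+1},\dots,\lambda_n)\in\Lambda^{n-i}$, and then to count the admissible tails by an area estimate. First I would fix $\bm\sigma\in\Lambda^i$ and write $h=h_{Q(\bm\sigma)}$ for the natural homothety of $[0,1]^2$ onto $Q(\bm\sigma)$; it has positive ratio $L^{-i}$, hence preserves directions of lines, and by the addressing convention $h(Q(\bm\tau))=Q(\bm\sigma\bm\tau)$ for every $\bm\tau\in\Lambda^{n-i}$. For $\bm\lambda=\bm\sigma\bm\tau$, the defining relation $\ell(\theta,s(t,\bm\lambda,i))=h^{-1}(\ell(\theta,t))$ exhibits $t\mapsto s(t,\bm\lambda,i)$ as a bijection of $\R$, and since $h$ is an injective similarity,
\[
\ell(\theta,t)\cap Q(\bm\sigma\bm\tau)\neq\varnothing
\iff
\ell\bigl(\theta,s(t,\bm\lambda,i)\bigr)\cap Q(\bm\tau)\neq\varnothing .
\]
Hence $\bm\sigma\bm\tau\in\cH_{n,i}$ iff there is $s\in E_{n-i}$ with $\ell(\theta,s)\cap Q(\bm\tau)\neq\varnothing$, i.e.\ iff $\proj_\theta(Q(\bm\tau))\cap E_{n-i}\neq\varnothing$ — a condition not involving $\bm\sigma$. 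Therefore $\#\cH_{n,i}=\#\Lambda^{i}\cdot N$, where $N:=\#\{\bm\tau\in\Lambda^{n-i}:\proj_\theta(Q(\bm\tau))\cap E_{n-i}\neq\varnothing\}$.

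Next I would bound $N$. Since $Q(\bm\tau)\in\cD_{n-i}$ we have $|\proj_\theta(Q(\bm\tau))|\le\sqrt2\,L^{-(n-i)}$, so if $\proj_\theta(Q(\bm\tau))$ meets $E_{n-i}$ then $Q(\bm\tau)\subset\proj_\theta^{-1}(E^{*}_{n-i})$ with $E^{*}_{n-i}:=E_{n-i}(\sqrt2\,L^{-(n-i)})$. By Lemma~\ref{lemma:probability given first survivor}, $E_{n-i}$ is a union of intervals of length at least $L^{-(n-i)}$ with total length $\lesssim\exp(-\Omega(n-i))$; enlarging each such interval by $\sqrt2\,L^{-(n-i)}$ on either side multiplies its length by at most a bounded factor, so $|E^{*}_{n-i}|\lesssim|E_{n-i}|\lesssim\exp(-\Omega(n-i))$. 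Using $|\proj_\theta^{-1}(I)\cap[0,1]^2|\le\sqrt2\,|I|$ for every interval $I$, the region $R:=\proj_\theta^{-1}(E^{*}_{n-i})\cap[0,1]^2$ has $|R|\lesssim\exp(-\Omega(n-i))$. The squares counted by $N$ lie in $R$ and have pairwise disjoint interiors, each of area $L^{-2(n-i)}$, so $N\le|R|\,L^{2(n-i)}\lesssim L^{2(n-i)}\exp(-\Omega(n-i))=\#\Lambda^{n-i}\exp(-\Omega(n-i))$.

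Combining the two displays gives $\#\cH_{n,i}=\#\Lambda^{i}\cdot N\lesssim\#\Lambda^{i}\,\#\Lambda^{n-i}\exp(-\Omega(n-i))=\#\Lambda^{n}\exp(-\Omega(n-i))$, which is the claim (the case $n=i$ being trivial). The conceptual heart is the self-similar reduction in the first step; the only place needing a little care is the neighbourhood estimate $|E^{*}_{n-i}|\lesssim|E_{n-i}|$, which is exactly where the property from Lemma~\ref{lemma:probability given first survivor} that $E_{n-i}$ consists of intervals of length at least $L^{-(n-i)}$ is used, in the same spirit as the $E'$-versus-$E''$ device in the proof of Lemma~\ref{lemma:markov}.
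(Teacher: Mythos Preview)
Your proof is correct and follows essentially the same route as the paper: reduce membership in $\cH_{n,i}$ to a condition on the tail $\bm\tau=(\lambda_{i+1},\dots,\lambda_n)$ (the paper's $\cB_{n-i}$ is exactly your set of admissible tails), then bound the number of such tails by an area argument exploiting that $E_{n-i}$ is a union of intervals of length at least $L^{-(n-i)}$. Your write-up is more explicit than the paper's in justifying the reduction step via the homothety $h_{Q(\bm\sigma)}$, which the paper simply asserts; the area estimate via the enlarged set $E^{*}_{n-i}$ is a minor repackaging of the same idea.
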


\begin{proof}
Recall that the set $E_{n-i}$ from Lemma \ref{lemma:probability given first survivor} is a union of intervals of length $\ge L^{i-n}$, and satisfies $|E_{n-i}|\lesssim\exp(-\Omega(n-i))$. Let 
\[
    \cB_{n-i}=\{\bm\lambda\in\Lambda^{n-i}\mid E_{n-i}\cap \proj_\theta Q(\bm\lambda)\neq\varnothing\}\,.
\] 
Then,
\[
\left|\bigcup_{\bm\lambda\in\cB_{n-i}}Q(\bm\lambda)\right|\lesssim\exp(-\Omega(n-i))\,.
\]
It follows that $\#\cB_{n-i}=O(\exp(-\Omega(n-i)))\cdot \#\Lambda^{n-i}$. Noting that 
\[
    \cH_{n,i}=\bigcup_{\bm\eta\in\Lambda^{i}}\{\bm\eta\bm\lambda\mid \bm\lambda\in \cB_{n-i}\}\,,
\]
we obtain the claim. (Recall that $\bm\eta\bm\lambda$ denotes concatenation.)
\end{proof}

For the exceptional squares arising from the two lemmas above, their contribution will be negligible in the proof of \Cref{thm:limit}. To control them, we use the following lemma, which only takes into account the cardinality of $\cB$.

\begin{lemma}
\label{lemma:integral Qnt in B}
For any $\cB \subset \Lambda^n$,
\begin{align}
    \int_{\R} \P[Q_{n,t} \in \cB] \, dt
    \lesssim \frac{\#\cB}{\#\Lambda^n}
    \,.
\end{align}
(Note: In the statement ``$Q_{n,t} \in \cB$,'' we slightly abuse notation and identify $\cD_n$ with $\Lambda^n$.)
\end{lemma}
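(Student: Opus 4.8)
The plan is a straightforward union bound, essentially the same device used in the proof of \Cref{lemma:markov}. Fix $\theta$ and $t$. For each $\bm\lambda\in\Lambda^n$, the event $\{Q_{n,t}=Q(\bm\lambda)\}$ forces both $Q(\bm\lambda)\in\cS_n$ and $\ell(\theta,t)\cap Q(\bm\lambda)\neq\varnothing$. The first of these is a random event about the construction, while the second is purely deterministic given $\theta$ and $t$ (namely $t\in\proj_\theta Q(\bm\lambda)$). Hence, since the events $\{Q_{n,t}=Q(\bm\lambda)\}_{\bm\lambda}$ are disjoint,
\[
\P[Q_{n,t}\in\cB]
\;\le\;
\sum_{\bm\lambda:\,Q(\bm\lambda)\in\cB}\P\bigl[Q(\bm\lambda)\in\cS_n\bigr]\,\bbone_{\proj_\theta Q(\bm\lambda)}(t)\,.
\]

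Next I would invoke the uniform grid hypothesis. Since $\P[Q\in\cS_1]=L^{-1}$ for every $Q\in\cD_1$, independence across scales together with the self-similarity of the model gives $\P[Q(\bm\lambda)\in\cS_n]=L^{-n}$ for every $\bm\lambda\in\Lambda^n$. Integrating the previous display over $t\in\R$ and using that $|\proj_\theta Q(\bm\lambda)|\le\sqrt2\,L^{-n}$ (the diameter of any square in $\cD_n$),
\[
\int_\R\P[Q_{n,t}\in\cB]\,dt
\;\le\;
\sum_{\bm\lambda:\,Q(\bm\lambda)\in\cB}L^{-n}\,|\proj_\theta Q(\bm\lambda)|
\;\le\;
\sqrt2\,\#\cB\cdot L^{-2n}
\;=\;
\sqrt2\,\frac{\#\cB}{\#\Lambda^n}\,,
\]
which is exactly the asserted bound (with implicit constant $\sqrt2$, independent of $\theta$ and $n$).

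There is no genuine obstacle here; the only points to be careful about are (i) cleanly separating the deterministic geometric part of the event $\{Q_{n,t}=Q(\bm\lambda)\}$ from its probabilistic part, so that the union bound factorizes as above, and (ii) recording where the uniformity assumption enters — it is what makes $\P[Q(\bm\lambda)\in\cS_n]$ equal to $L^{-n}$; for a general (non-uniform) grid model one would only get $\P[Q(\bm\lambda)\in\cS_n]\lesssim L^{-n}$, which still suffices since the paper allows implicit constants to depend on the model.
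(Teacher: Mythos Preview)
Your proof is correct and essentially identical to the paper's: both bound $\P[Q_{n,t}=Q(\bm\lambda)]$ by $\P[Q(\bm\lambda)\in\cS_n]\,\bbone[t\in\proj_\theta Q(\bm\lambda)]$, invoke uniformity \eqref{eq:squarewise-uniform} to replace this probability by $L^{-n}$, and integrate in $t$. One small correction to your closing parenthetical: for a non-uniform grid model it is \emph{not} generally true that $\P[Q(\bm\lambda)\in\cS_n]\lesssim L^{-n}$ --- a deterministic model that always selects the same $L$ squares gives probability $1$ along its fixed path for all $n$ --- so the lemma genuinely relies on the uniformity hypothesis in force throughout \Cref{sec:sharp_asymptotic}.
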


\begin{proof}
We have
\begin{align*}
    \P[Q_{n,t} \in \cB]
    &=
    \sum_{\bm\lambda \in \cB} \P[Q_{n,t} = Q(\bm\lambda)] \bbone[\ell(\theta,t) \cap Q(\bm\lambda) \neq \emp]
    \\
    &\leq
    \sum_{\bm\lambda \in \cB} \P[Q(\bm\lambda) \in \cS_n] \bbone[\ell(\theta,t) \cap Q(\bm\lambda) \neq \emp]
    \\
    &\stackrel{\eqref{eq:squarewise-uniform}}{=}
    L^{-n} \sum_{\bm\lambda \in \cB}  \bbone[\ell(\theta,t) \cap Q(\bm\lambda) \neq \emp]
    \, .
\end{align*}
Integrating in $t$ and using Fubini gives the desired result.
\end{proof}

\subsection{Proof of \Cref{thm:limit}}

\begin{lemma}[Decomposition into scales]
\label{lemma:decomp X_ni}
For every $\bm\lambda \in \Lambda^n$,
\begin{align}\label{eq:reduct-to-average step1}
\begin{split}
&\E\bigl[L^n |\ell(\theta,t) \cap S_n| \bigm| Q_{n,t} = Q(\bm\lambda) \bigr]
\\
&=
L^n |\ell(\theta,t)\cap Q(\bm\lambda)| 
+
\sum_{i=0}^{n-1} \E \bigl[X_{n,i} \bigm| Q_{n,t} = Q(\bm\lambda)  \bigr]
\,.
\end{split}
\end{align}
where
\begin{align*}
    X_{n,i}
    = 
    \sum_{\kappa\in\Lambda, \kappa \succ \lambda_{i+1}}
    L^{i+1} |\ell(\theta,t) \cap S_{i+1} \cap Q(\bm\lambda^{(i)}\kappa)|\,.
\end{align*}
\end{lemma}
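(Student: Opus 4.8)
The plan is to decompose $S_n$ along the nested chain of $L$-adic ancestors of $Q(\bm\lambda)$, discard on the conditioning event the pieces that vanish, and evaluate the conditional expectation of each surviving piece using the branching structure of the model. Write $\ell=\ell(\theta,t)$ and $Q^{(i)}=Q(\bm\lambda^{(i)})$, so that $[0,1]^2=Q^{(0)}\supset\cdots\supset Q^{(n)}=Q(\bm\lambda)$. I would start from the (essentially disjoint) partition
\[
  [0,1]^2=Q(\bm\lambda)\sqcup\bigsqcup_{i=0}^{n-1}\bigl(Q^{(i)}\setminus Q^{(i+1)}\bigr),\qquad Q^{(i)}\setminus Q^{(i+1)}=\bigsqcup_{\kappa\in\Lambda,\,\kappa\neq\lambda_{i+1}}Q(\bm\lambda^{(i)}\kappa),
\]
which yields $L^n|\ell\cap S_n|=L^n|\ell\cap S_n\cap Q(\bm\lambda)|+\sum_{i=0}^{n-1}\sum_{\kappa\neq\lambda_{i+1}}L^n|\ell\cap S_n\cap Q(\bm\lambda^{(i)}\kappa)|$ (shared boundaries of the grid squares meet $\ell$ in a set of positive length only for finitely many $t$ at a given $\theta$, so these Lebesgue-null overlaps can be ignored).

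Next I would read off what happens on the event $\{Q_{n,t}=Q(\bm\lambda)\}$. There $Q(\bm\lambda)\in\cS_n$, hence $Q(\bm\lambda)\subset S_n$ and the first term is the deterministic quantity $L^n|\ell\cap Q(\bm\lambda)|$. For $\kappa\prec\lambda_{i+1}$, every square of $\cD_n$ inside $Q(\bm\lambda^{(i)}\kappa)$ is $\prec$-smaller than $Q(\bm\lambda)$ (it agrees with $\bm\lambda$ on the first $i$ coordinates and carries $\kappa\prec\lambda_{i+1}$ in the next one), so by the very definition of $Q_{n,t}$ as the $\prec$-minimal square of $\cS_n$ meeting $\ell$, none of these squares lies in $\cS_n$ and meets $\ell$; thus $\ell\cap S_n\cap Q(\bm\lambda^{(i)}\kappa)=\varnothing$ and these terms drop out. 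Hence on the event we are left with $L^n|\ell\cap Q(\bm\lambda)|+\sum_{i}\sum_{\kappa\succ\lambda_{i+1}}L^n|\ell\cap S_n\cap Q(\bm\lambda^{(i)}\kappa)|$, and it remains only to show, for fixed $i$ and $\kappa\succ\lambda_{i+1}$, that
\[
  \E\bigl[L^n|\ell\cap S_n\cap Q(\bm\lambda^{(i)}\kappa)| \bigm| Q_{n,t}=Q(\bm\lambda)\bigr] = \E\bigl[L^{i+1}|\ell\cap S_{i+1}\cap Q(\bm\lambda^{(i)}\kappa)| \bigm| Q_{n,t}=Q(\bm\lambda)\bigr],
\]
the right-hand summand being exactly that of $X_{n,i}$, since $L^{i+1}|\ell\cap S_{i+1}\cap Q(\bm\lambda^{(i)}\kappa)|=L^{i+1}|\ell\cap Q(\bm\lambda^{(i)}\kappa)|\,\bbone[Q(\bm\lambda^{(i)}\kappa)\in\cS_{i+1}]$.

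For this identity I would condition further, via the tower property, on the $\sigma$-algebra $\cG$ generated by everything in the construction except the Galton--Watson subtree rooted at $Q:=Q(\bm\lambda^{(i)}\kappa)$. The key structural point is that $\{Q_{n,t}=Q(\bm\lambda)\}$ is $\cG$-measurable: because $\kappa\succ\lambda_{i+1}$, every descendant of $Q$ in $\cD_n$ is $\prec$-larger than $Q(\bm\lambda)$, so the statement ``$Q(\bm\lambda)\in\cS_n$, $\ell\cap Q(\bm\lambda)\neq\varnothing$, and no $\prec$-smaller square of $\cS_n$ meets $\ell$'' never inspects the subtree below $Q$. Likewise $\bbone[Q\in\cS_{i+1}]$ is $\cG$-measurable, and by the independence built into the model, conditionally on $\cG$ and on $\{Q\in\cS_{i+1}\}$ the set $S_n\cap Q$ is a fresh rescaled copy $h_Q(\widetilde S_{n-i-1})$, independent of $\cG$. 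Since $h_Q$ preserves the direction $\theta$ and scales lengths by $L^{-(i+1)}$, applying \eqref{eq:length-w-square} at level $n-i-1$ to the line $h_Q^{-1}(\ell)$ gives
\[
  \E\bigl[L^n|\ell\cap S_n\cap Q| \bigm| \cG\bigr] = \bbone[Q\in\cS_{i+1}]\,\bigl|h_Q^{-1}(\ell)\cap[0,1]^2\bigr| = L^{i+1}|\ell\cap Q|\,\bbone[Q\in\cS_{i+1}] = L^{i+1}|\ell\cap S_{i+1}\cap Q|,
\]
a $\cG$-measurable quantity; taking $\E[\,\cdot\mid Q_{n,t}=Q(\bm\lambda)]$ and summing over $\kappa\succ\lambda_{i+1}$ and over $i$ gives \eqref{eq:reduct-to-average step1}. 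I expect the only genuinely delicate step to be this spatial-Markov observation --- pinning down precisely which coordinates of the construction $\{Q_{n,t}=Q(\bm\lambda)\}$ depends on, and concluding it is blind to the subtrees under the ``later'' siblings $Q(\bm\lambda^{(i)}\kappa)$ with $\kappa\succ\lambda_{i+1}$; the decomposition, the vanishing of the $\kappa\prec\lambda_{i+1}$ terms, and the evaluation via \eqref{eq:length-w-square} are then routine.
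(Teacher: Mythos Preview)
Your proof is correct and follows essentially the same route as the paper: decompose $[0,1]^2$ along the chain $Q^{(0)}\supset\cdots\supset Q^{(n)}$, discard the $\kappa\prec\lambda_{i+1}$ contributions via the definition of $Q_{n,t}$, and for $\kappa\succ\lambda_{i+1}$ reduce $L^n|\ell\cap S_n\cap Q(\bm\lambda^{(i)}\kappa)|$ to $L^{i+1}|\ell\cap S_{i+1}\cap Q(\bm\lambda^{(i)}\kappa)|$ using that the subtree below $Q(\bm\lambda^{(i)}\kappa)$ is independent of the conditioning event and distributed as a fresh rescaled copy. Your formulation via the $\sigma$-algebra $\cG$ and the tower property is a slightly cleaner packaging of the same spatial-Markov observation the paper makes when it ``further conditions on $Q(\bm\lambda^{(i)}\kappa)\in\cS_{i+1}$''.
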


\begin{proof}
Observe that
\begin{align}\label{eq:def_X'}
L^n |\ell(\theta,t) \cap S_n| 
=
L^n |\ell(\theta,t) \cap S_n \cap Q(\bm\lambda)|
+
\sum_{i=0}^{n-1} X_{n,i}'
\end{align}
where
\begin{align*}
    X_{n,i}' 
    &= 
    L^n |\ell(\theta,t) \cap S_n \cap Q(\bm\lambda^{(i)}) \setminus Q(\bm\lambda^{(i+1)})|
    \\
    &=  
    \sum_{\kappa\in\Lambda\setminus\{\lambda_{i+1}\}} L^n |\ell(\theta,t) \cap S_n \cap Q(\bm\lambda^{(i)}\kappa)|\,.
\end{align*}
We claim that, for each $n,i$,
\begin{align} \label{eq:equal-conditional-expectations}
    \E \bigl[X_{n,i}' \bigm| Q_{n,t} = Q(\bm\lambda) \bigr] 
    = 
    \E \bigl[X_{n,i} \bigm| Q_{n,t} = Q(\bm\lambda) \bigr]
    \,.
\end{align}
To see this, first note that by definition of $Q_{n,t}$ (see \eqref{eq:def Q_n,t}), 
\[
\E \bigl[ L^n|\ell(\theta,t)\cap S_n\cap Q(\bm\lambda^{(i)}\kappa) | \bigm| Q_{n,t} = Q(\bm\lambda) \bigr]
=
0
\qquad\text{if }\kappa \prec \lambda_{i+1} \,.
\]
Thus, to prove \eqref{eq:equal-conditional-expectations}, it is enough to show that if $\kappa \succ \lambda_{i+1}$, then
\begin{align*}
    &\E \bigl[ L^n|\ell(\theta,t)\cap S_n\cap Q(\bm\lambda^{(i)}\kappa) | \bigm| Q_{n,t} = Q(\bm\lambda) \bigr] \\
    &=  \E \bigl[ L^{i+1}|\ell(\theta,t)\cap S_{i+1}\cap Q(\bm\lambda^{(i)}\kappa)| \bigm| Q_{n,t} = Q(\bm\lambda) \bigr]\,.
\end{align*}
Splitting into the cases $Q(\bm\lambda^{(i)}\kappa)\in\mathcal{S}_{i+1}$ and $Q(\bm\lambda^{(i)}\kappa)\notin\mathcal{S}_{i+1}$, we can further condition (in both sides) on $Q(\bm\lambda^{(i)}\kappa)\in\mathcal{S}_{i+1}$. Let $\ell'=h_{Q(\bm\lambda^{(i)}\kappa)}^{-1}(\ell(\theta,t))$. After this conditioning,  $L^{i+1}|\ell(\theta,t)\cap S_{i+1}\cap Q(\bm\lambda^{(i)}\kappa)|$ becomes $|\ell'\cap S_0|$ (deterministically) while, by the scale invariance and the independence of the process, the random variable $L^n |\ell(\theta,t)\cap S_n\cap Q(\bm\lambda^{(i)}\kappa) |$ has the same distribution as $L^{n-i-1} |\ell' \cap S_{n-i-1}|$, which in turn has expectation $|\ell'\cap S_0|$. This yields the claim.

The lemma follows from \eqref{eq:def_X'} and \eqref{eq:equal-conditional-expectations}.
\end{proof}

The following is an immediate consequence of \Cref{lemma:decomp X_ni}.

\begin{lemma}\label{lem:unif_ub_exp}
    For each $\bm\lambda\in\Lambda^n$, 
    \begin{align}
    \label{eq:cond exp upper bound}
        \E\bigl[L^n |\ell(\theta,t)\cap S_n|\bigm| Q_{n,t}=Q(\bm\lambda)\bigr]=O(n)\,.
    \end{align}
    Moreover, $n\P\bigl[\ell(\theta,t)\cap S_n\neq\varnothing\bigr]\gtrsim|\ell(\theta,t)\cap[0,1)^2|$.
\end{lemma}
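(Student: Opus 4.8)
The plan is to read the bound off directly from the scale decomposition in \Cref{lemma:decomp X_ni}. The right-hand side of \eqref{eq:reduct-to-average step1} consists of the leading term $L^n|\ell(\theta,t)\cap Q(\bm\lambda)|$ together with a sum of $n$ conditional expectations $\E[X_{n,i}\mid Q_{n,t}=Q(\bm\lambda)]$, so it suffices to bound each of these $n+1$ pieces by $O(1)$. For the leading term I would use that $Q(\bm\lambda)$ is a square of side $L^{-n}$, so $L^n|\ell(\theta,t)\cap Q(\bm\lambda)|\le\sqrt2$. For each $X_{n,i}$ I would observe that every summand $L^{i+1}|\ell(\theta,t)\cap S_{i+1}\cap Q(\bm\lambda^{(i)}\kappa)|$ is at most $L^{i+1}|\ell(\theta,t)\cap Q(\bm\lambda^{(i)}\kappa)|\le\sqrt2$, since $Q(\bm\lambda^{(i)}\kappa)$ has side $L^{-(i+1)}$, and that a line meets at most $2L-1$ of the $L^2$ children $\{Q(\bm\lambda^{(i)}\kappa)\}_{\kappa\in\Lambda}$ of $Q(\bm\lambda^{(i)})$. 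Hence $X_{n,i}\lesssim 1$ deterministically, with a constant depending only on $L$ (hence on the model), and the same bound holds for its conditional expectation. Summing over $i=0,\dots,n-1$ gives $\E[L^n|\ell(\theta,t)\cap S_n|\mid Q_{n,t}=Q(\bm\lambda)]=O(n)$, i.e.\ \eqref{eq:cond exp upper bound}; all bounds are uniform in $\bm\lambda$, $t$, $\theta$.

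For the ``moreover'' assertion I would use that the event $\{\ell(\theta,t)\cap S_n\neq\varnothing\}$ is the disjoint union of the events $\{Q_{n,t}=Q(\bm\lambda)\}$ over $\bm\lambda\in\Lambda^n$. Averaging \eqref{eq:cond exp upper bound} against the conditional probabilities $\P[Q_{n,t}=Q(\bm\lambda)\mid\ell(\theta,t)\cap S_n\neq\varnothing]$, which sum to $1$, yields $\E[L^n|\ell(\theta,t)\cap S_n|\mid\ell(\theta,t)\cap S_n\neq\varnothing]=O(n)$. Plugging this into the identity \eqref{eq:length-w-square}, namely $|\ell(\theta,t)\cap[0,1]^2|=\E[L^n|\ell(\theta,t)\cap S_n|\mid\ell(\theta,t)\cap S_n\neq\varnothing]\cdot\P[\ell(\theta,t)\cap S_n\neq\varnothing]$, and rearranging, gives $n\,\P[\ell(\theta,t)\cap S_n\neq\varnothing]\gtrsim|\ell(\theta,t)\cap[0,1]^2|$.

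There is essentially no obstacle: the lemma genuinely is immediate once \Cref{lemma:decomp X_ni} and \eqref{eq:length-w-square} are available. The only points I would be slightly careful about are (i) that the uniform bound $X_{n,i}\lesssim 1$ holds for \emph{every} $t$, which is clear because it is pure deterministic geometry of line--square intersections, so it survives conditioning and taking expectations; and (ii) that $\E[X_{n,i}\mid Q_{n,t}=Q(\bm\lambda)]$ is only defined when $\P[Q_{n,t}=Q(\bm\lambda)]>0$, but in that case the deterministic bound transfers verbatim, and when $\P[Q_{n,t}=Q(\bm\lambda)]=0$ there is nothing to prove.
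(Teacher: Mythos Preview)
Your proposal is correct and follows essentially the same approach as the paper: the paper's proof simply says the first claim follows from \eqref{eq:reduct-to-average step1} together with the deterministic bound $X_{n,i}\le C$ uniformly in $Q,n,\theta,t$, and the second follows from the first and \eqref{eq:length-w-square}. You have merely filled in the elementary geometric justification for the uniform bound on $X_{n,i}$ and spelled out the averaging step explicitly.
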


\begin{proof}
    The first claim follows from  \eqref{eq:reduct-to-average step1} noting that $X_{n,i}\le C$, where $C$ is independent of $Q$, $n$, $\theta$, $t$. The second claim follows from the first and \eqref{eq:length-w-square}.
\end{proof}

\Cref{lem:unif_ub_exp} applies to all $\bm\lambda \in \Lambda^n$. By removing the two exceptional sets mentioned in \Cref{sec:exceptional sets}, we can obtain significantly better estimates. The following proposition is the main step towards the proof of \Cref{thm:limit}.

\begin{proposition}\label{prop:expectation_conditional_on_approx_distr}
If $1 \leq k \leq k' \leq n$ and
$\bm\lambda \in \Lambda^n\setminus \bigcup_{i=0}^{n-k'} \cH_{n,i}$ is $(k,\epsilon)$-approximately uniform, then 
    \begin{align}\label{eq:expectation_conditional_on_approx_distr}
        \left| \frac{1}{n} \E\bigl[ L^n |\ell(\theta,t) \cap S_n| \bigm| Q_{n,t} = Q(\bm\lambda) \bigr] -\frac{V(\theta)}{2} \right|
      =
      O\left(\frac{k'}{n} + \frac{\log n}{n}
        +
        \frac{1}{L^k}
        +
        \epsilon
        \right) \,.
    \end{align}
\end{proposition}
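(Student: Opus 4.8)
The plan is to start from the decomposition in \Cref{lemma:decomp X_ni}, which gives
\[
    \frac{1}{n}\E\bigl[L^n|\ell(\theta,t)\cap S_n|\bigm|Q_{n,t}=Q(\bm\lambda)\bigr]
    =
    \frac{L^n|\ell(\theta,t)\cap Q(\bm\lambda)|}{n}
    +
    \frac{1}{n}\sum_{i=0}^{n-1}\E\bigl[X_{n,i}\bigm|Q_{n,t}=Q(\bm\lambda)\bigr].
\]
The first term is $O(1/n)$ since $|\ell(\theta,t)\cap Q(\bm\lambda)|\lesssim L^{-n}$, so it is absorbed into the error. The main work is to show that the average of the conditional expectations $\E[X_{n,i}\mid Q_{n,t}=Q(\bm\lambda)]$ over $i$ is close to $V(\theta)/2$. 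First I would expand
\[
    X_{n,i}=\sum_{\kappa\succ\lambda_{i+1}}L^{i+1}|\ell(\theta,t)\cap S_{i+1}\cap Q(\bm\lambda^{(i)}\kappa)|,
\]
and, conditioning on $Q_{n,t}=Q(\bm\lambda)$, rescale by the homothety $h_{Q(\bm\lambda^{(i)})}$: writing $\ell_i=h_{Q(\bm\lambda^{(i)})}^{-1}(\ell(\theta,t))=\ell(\theta,s(t,\bm\lambda,i))$, the term $L^{i+1}|\ell(\theta,t)\cap S_{i+1}\cap Q(\bm\lambda^{(i)}\kappa)|$ becomes $L\,|\ell_i\cap S_1\cap Q(\kappa)|$ evaluated in the rescaled first-level process, with $Q(\lambda_{i+1})$ being the $\prec$-minimal selected square. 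Since $\bm\lambda\notin\cH_{n,i}$ for $i\le n-k'$, we have $s(t,\bm\lambda,i)\notin E_{n-i}$, so \Cref{lemma:probability given first survivor} applies: conditioning on $Q_{n,t}=Q(\bm\lambda)$ is the same, up to a multiplicative $1+O(1/(n-i))$ error on each relevant $\P[Q(\kappa)\in\cS_1\mid\cdot]$, as conditioning simply on $Q(\lambda_{i+1})\in\cS_1$.

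Carrying this out, for $i\le n-k'$ the $i$-th conditional expectation is
\[
    \E[X_{n,i}\mid Q_{n,t}=Q(\bm\lambda)]
    =
    \bigl(1+O(\tfrac1{n-i})\bigr)\sum_{\kappa\succ\lambda_{i+1}}|\ell_i\cap Q(\kappa)|\,\P[Q(\kappa)\in\cS_1\mid Q(\lambda_{i+1})\in\cS_1]+(\text{error}).
\]
Next I would replace the geometric factor $|\ell_i\cap Q(\kappa)|$ — which still depends on $t$ through $s(t,\bm\lambda,i)$ — by its average $V(\theta,\lambda_{i+1},\kappa)=V(\theta,\bm\lambda^{(i+1)},\kappa)$ from \Cref{lem:length_to_integral}, at the cost of $\operatorname{Err}(\theta,\bm\lambda^{(i+1)},\kappa)$ in each term (here I am abusing the lemma slightly: $|\ell_i\cap Q(\bm\lambda^{(i+1)})|\ne 0$, so $s(t,\bm\lambda,i)\in I_{\bm\lambda^{(i+1)}}$ and the Lipschitz/averaging bound applies, with the approximating integral over the single square $Q(\lambda_{i+1})$ rather than $Q(\bm\lambda^{(i+1)})$ — one checks these differ by $O(1)$ and the total contribution after averaging in $i$ is $O(1/L^k)$ using $(k,\epsilon)$-approximate uniformity to control how often each string $\bm\lambda^{(i+1)}$ of length $\le k$ occurs). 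After this replacement, summing over $\kappa$ and symmetrizing over the role of $\lambda_{i+1}$ versus $\kappa$, one recognizes via \Cref{lem:V(theta) alternative} that
\[
    \frac{1}{\#\Lambda}\sum_{\alpha\in\Lambda}\sum_{\kappa\ne\alpha}\P[Q(\kappa)\in\cS_1\mid Q(\alpha)\in\cS_1]\,V(\theta,\alpha,\kappa)
    =
    \frac{L^{-2}V(\theta)\cdot L}{L}\cdot L
    = ?
\]
— more precisely the point is that averaging $\sum_{\kappa\succ\alpha}|\ell\cap Q(\kappa)|\P[Q(\kappa)\in\cS_1\mid Q(\alpha)\in\cS_1]$ over $\alpha$ with the ordering and then integrating recovers exactly half of the full symmetric sum $\sum_{Q_1\ne Q_2}\P[Q_1,Q_2\in\cS_1]\int|\ell\cap Q_1||\ell\cap Q_2|$, which is $\tfrac12 L^{-2}V(\theta)\cdot L^{2}/(\#\Lambda)$-style bookkeeping giving $V(\theta)/2$. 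I would keep a careful accounting of the combinatorial weights: because $\bm\lambda$ is $(k,\epsilon)$-approximately uniform, for each fixed $\bm\eta\in\Lambda^{k}$ (hence in particular for each first coordinate $\alpha\in\Lambda$) the fraction of indices $i\in\{0,\dots,n-1\}$ with $\bm\lambda^{(i+1)}$ ending appropriately, or with $\lambda_{i+1}=\alpha$, is $\tfrac{1}{\#\Lambda}+O(\epsilon)$ up to the $O(k/n)$ boundary effects. Averaging the per-$i$ identity over $i\in\{0,\dots,n-k'-1\}$ thus yields $V(\theta)/2$ with total error $O(k'/n+\log n/n+1/L^{k}+\epsilon)$: the $k'/n$ accounts for discarding the $i>n-k'$ terms (each contributes $O(1)$, there are $\le k'$ of them, divided by $n$), the $\log n/n$ absorbs $\sum_{i\le n-k'}\tfrac1n\cdot O(\tfrac1{n-i})=O(\tfrac{\log n}{n})$ from the \Cref{lemma:probability given first survivor} errors, the $1/L^k$ is the geometric-approximation error from \Cref{lem:length_to_integral} summed against the uniform counts, and $\epsilon$ is the approximate-uniformity defect.

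I expect the main obstacle to be the bookkeeping in the symmetrization step: one must correctly match the one-sided sum $\sum_{\kappa\succ\lambda_{i+1}}$ appearing in $X_{n,i}$ — where the ordering $\prec$ is the \emph{arbitrary} fixed order, not the geometric one — against the full symmetric double sum in \Cref{lem:V(theta) alternative}, and see that averaging over $i$ (equivalently, by approximate uniformity, averaging the first coordinate $\lambda_{i+1}$ uniformly over $\Lambda$) turns the one-sided sum into exactly half of the symmetric one regardless of which order $\prec$ was chosen. A secondary technical point is making the homothety rescaling rigorous when $S_1$ inside $Q(\bm\lambda^{(i)})$ is conditioned to have $Q(\lambda_{i+1})$ as its $\prec$-minimal selected square: one must verify, as in the proof of \Cref{lemma:expect multi scale}, that this conditioning does not otherwise bias the second-level and deeper process, so that $\E[L\,|\ell_i\cap S_1\cap Q(\kappa)|\mid\cdots]=|\ell_i\cap Q(\kappa)|\,\P[Q(\kappa)\in\cS_1\mid Q(\lambda_{i+1})\in\cS_1]$ cleanly. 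Once these two points are pinned down, the remaining estimates are routine applications of the approximation lemmas and the triangle inequality, and the stated error term drops out.
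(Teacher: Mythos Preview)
Your overall strategy matches the paper's proof exactly: start from \Cref{lemma:decomp X_ni}, discard the $k'$ top scales, rescale each term via $h_{Q(\bm\lambda^{(i)})}^{-1}$, apply \Cref{lemma:probability given first survivor} (valid since $\bm\lambda\notin\cH_{n,i}$), replace the geometric factor using \Cref{lem:length_to_integral}, and finally use $(k,\epsilon)$-approximate uniformity together with \Cref{lem:V(theta) alternative} to identify the limit $V(\theta)/2$.

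However, the geometric-approximation step is not set up correctly, and this is what actually produces the $O(1/L^k)$ error. You replace $|\ell_i\cap Q(\kappa)|$ by ``$V(\theta,\lambda_{i+1},\kappa)=V(\theta,\bm\lambda^{(i+1)},\kappa)$''. Neither expression is right: $\bm\lambda^{(i+1)}$ is the length-$(i+1)$ prefix, and using only the single square $Q(\lambda_{i+1})$ amounts to invoking \Cref{lem:length_to_integral} with $k=1$, which yields only $O(1/L)$. What the paper does is use the length-$k$ \emph{substring} $\bm\eta=(\lambda_{i+1},\ldots,\lambda_{i+k})$: since $\ell_i$ hits $Q(\bm\eta)$, the error at scale $i$ is $\operatorname{Err}(\theta,(\lambda_{i+1},\ldots,\lambda_{i+k}),\kappa)$. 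After averaging in $i$, the $(k,\epsilon)$-approximate uniformity says each $\bm\eta\in\Lambda^k$ occurs with frequency $(1+O(\epsilon))L^{-2k}$, and then the averaged bound \eqref{eq:length-to-integral-sum} gives $O(1/L^k)$. Your phrase ``string $\bm\lambda^{(i+1)}$ of length $\le k$'' suggests you sensed this but did not pin down the correct object.

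The symmetrization step is also left genuinely incomplete (you wrote a literal ``?''). Here is how it closes: with the correct $\bm\eta\in\Lambda^k$, after approximate uniformity the main term is
\[
L^{2-2k}\sum_{\bm\eta\in\Lambda^k}\sum_{\kappa\succ\eta_1}V(\theta,\bm\eta,\kappa)\,\P[Q(\kappa),Q(\eta_1)\in\cS_1].
\]
Now sum the inner variables $(\eta_2,\ldots,\eta_k)$ first: since $\bigcup_{\eta_2,\ldots,\eta_k}Q(\bm\eta)=Q(\eta_1)$, one gets $\sum_{\eta_2,\ldots,\eta_k}V(\theta,\bm\eta,\kappa)=L^{2k}\int_\R|\ell(\theta,s)\cap Q(\eta_1)||\ell(\theta,s)\cap Q(\kappa)|\,ds$. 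This collapses the expression to
\[
L^{2}\sum_{\eta_1,\kappa\in\Lambda,\;\kappa\succ\eta_1}\P[Q(\kappa),Q(\eta_1)\in\cS_1]\int_\R|\ell(\theta,s)\cap Q(\eta_1)||\ell(\theta,s)\cap Q(\kappa)|\,ds,
\]
which by symmetry of the integrand in $(\eta_1,\kappa)$ and \eqref{eq:V(theta) alternative} equals $V(\theta)/2$ exactly, for any choice of the order $\prec$. (You also dropped a factor of $L$ in the displayed per-scale identity; the correct leading factor is $L\sum_{\kappa\succ\lambda_{i+1}}|\ell_i\cap Q(\kappa)|\P[\cdots]$.)
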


\begin{proof}
Throughout this proof, we fix $\bm\lambda, \theta, t$ such that $\ell(\theta,t)\cap Q(\bm\lambda)\neq\varnothing$.

First, by combining \eqref{eq:reduct-to-average step1} with the easy bounds $L^n |\ell(\theta,t) \cap Q(\bm\lambda)|=O(1)$ and $X_{n,i}=O(1)$, we get
\begin{align}\label{eq:reduct-to-average}
\frac{1}{n}
\E\bigl[L^n |\ell(\theta,t) \cap S_n| \bigm| Q_{n,t} = Q(\bm\lambda) \bigr]
=
\frac{1}{n}
\sum_{i=0}^{n-k'} \E \bigl[X_{n,i} \bigm| Q_{n,t} = Q(\bm\lambda)  \bigr]
+
O\left(\frac{k'}{n}\right)
\,.
\end{align}

Fix $0\le i\le n-k'$. Then 
\begin{align*}
    &\E\bigl[X_{n,i} \bigm| Q_{n,t} = Q(\bm\lambda)\bigr]
    \\
    &=
    L^{i+1} \sum_{\kappa\in\Lambda, \kappa \succ \lambda_{i+1}}
    |\ell(\theta,t) \cap Q(\bm\lambda^{(i)}\kappa)| \, \P[Q(\bm\lambda^{(i)}\kappa) \in \cS_{i+1} \mid Q_{n,t} = Q(\bm\lambda)]
    \\
    &=L\sum_{\kappa\in\Lambda, \kappa \succ \lambda_{i+1}}
    \left|\ell(\theta,t_i)\cap Q(\kappa)\right|
    \P[Q(\kappa)\in \cS_{1}\mid Q_{n-i,t_i} = Q(\lambda_{i+1}, \ldots, \lambda_n)]\,,
\end{align*}
where in the last line, we applied the homothety $h^{-1}_{Q(\bm\lambda^{(i)})} : Q(\bm\lambda^{(i)}) \to [0,1)^2$ and we let $t_i = s(t,\bm\lambda,i)$; recall from \eqref{eq:def s(t,lambda,i)} that this means $\ell(\theta,t_i) = h^{-1}_{Q(\bm\lambda^{(i)})}(\ell(\theta,t))$. 
By \Cref{lemma:probability given first survivor}, and recalling that $\bm\lambda\notin\cH_{n,i}$ by assumption, we get
\[
    \left| 
    \frac{\P[Q(\kappa)\in \cS_{1}\mid Q_{n-i,t_i} = Q(\lambda_{i+1}, \ldots, \lambda_n)]}
    {\P[Q(\kappa)\in \cS_{1}\mid Q(\lambda_{i+1}) \in \cS_1] }
    -1
    \right| \lesssim\frac{1}{n-i}\,.
\]
This implies
\begin{align*}
    &\E\bigl[X_{n,i} \bigm| Q_{n,t} = Q(\bm\lambda)\bigr]
    \\
    &=L\sum_{\kappa\in\Lambda, \kappa \succ \lambda_{i+1}}
    \left|\ell(\theta,t_i)\cap Q(\kappa)\right|
    \P[Q(\kappa)\in \cS_{1}\mid Q(\lambda_{i+1}) \in \cS_1] +O\left(\frac{1}{n-i}\right)
    \\
    &\stackrel{\eqref{eq:squarewise-uniform}}{=} L^2 \sum_{\kappa\in\Lambda, \kappa \succ \lambda_{i+1}}
    \left|\ell(\theta,t_i)\cap Q(\kappa)\right|
    \P[Q(\kappa) \in \cS_1 \land Q(\lambda_{i+1}) \in \cS_1] +O\left(\frac{1}{n-i}\right)\,
    \\
    &\stackrel{\eqref{eq:def V(theta,eta,kappa) and E}}{=}
    L^2 \sum_{\kappa\in\Lambda, \kappa \succ \lambda_{i+1}}
    V(\theta, (\lambda_{i+1}, \ldots, \lambda_{i+k}), \kappa)
    \P[Q(\kappa) \in \cS_1 \land Q(\lambda_{i+1}) \in \cS_1]
    \\
    &\qquad+
    O\left(\frac{1}{n-i}
    +
    \sum_{\kappa \in \Lambda} \Err(\theta, (\lambda_{i+1}, \ldots, \lambda_{i+k}), \kappa)
    \right)\,.
\end{align*}
where in the last line, we used the observation that $\ell(\theta,t_i) \cap Q(\lambda_{i+1}, \ldots, \lambda_{i+k}) =  h^{-1}_{Q(\bm\lambda^{(i)})}(\ell(\theta,t) \cap  Q(\bm\lambda)) \neq \varnothing$.

Summing over $0 \leq i \leq n-k'$ and dividing by $n$,
\begin{align}\label{eq:reduction_to_V sum}
\begin{split}
    &\frac{1}{n} \sum_{i=0}^{n-k'} \E\bigl[X_{n,i} \bigm| Q_{n,t} = Q(\bm\lambda)\bigr]\\
    &=
    \frac{L^{2}}{n}  \sum_{i=0}^{n-k'}\sum_{\kappa\in\Lambda, \kappa \succ \lambda_{i+1}}
    V(\theta, (\lambda_{i+1}, \ldots, \lambda_{i+k}), \kappa)
    \P[Q(\kappa) \in \cS_1 \land Q(\lambda_{i+1}) \in \cS_1]
    \\
    &\qquad+
    O\left(\frac{\log n}{n}
    +
    \frac{1}{n}\sum_{i=0}^{n-k} \sum_{\kappa \in \Lambda}  \Err(\theta, (\lambda_{i+1}, \ldots, \lambda_{i+k}), \kappa)
    \right)\,.
\end{split}
\end{align}
Since $\bm\lambda$ is $(k,\epsilon)$-approximately uniform, for all $\bm\eta \in \Lambda^k$,
\begin{align}
\label{eq:approx uniform k}
\begin{split}    
    \Count(\bm\lambda^{(n-k'+k)},\bm\eta)
    &=
    \#\{i\in \{0,\ldots, n-k'\} \,:\,(\lambda_{i+1},\ldots,\lambda_{i+k})= \bm\eta\}
    \\
    &\stackrel{\eqref{eq:approx-uniform}}{=}
    \frac{n}{L^{2k}}  + O\left(\frac{\epsilon n}{L^{2k}} + \frac{k'}{L^{2k}}\right)  \,.
\end{split}
\end{align}
Thus, we have 
\begin{align}\label{eq:V_loc_to_glob}
\begin{split}
    &
    \frac{L^{2}}{n} \sum_{i=0}^{n-k'}
    \sum_{\kappa\in\Lambda, \kappa \succ \lambda_{i+1}}
        V(\theta, (\lambda_{i+1}, \ldots, \lambda_{i+k}), \kappa)
        \P[Q(\kappa) \in \cS_1 \land Q(\lambda_{i+1}) \in \cS_1]
    \\
    &=
    \frac{L^{2}}{n}\sum_{\bm\eta \in \Lambda^k}
    \sum_{\kappa\in\Lambda, \kappa \succ \eta_1}
    V(\theta,\bm\eta,\kappa)
    \P[Q(\kappa) \in \cS_1 \land Q(\eta_1) \in \cS_1]
    \cdot  
    \Count(\bm\lambda^{(n-k'+k)},\bm\eta)
    \\
    &\stackrel{\eqref{eq:approx uniform k}}{=}
    L^{2-2k}\sum_{\eta_1, \kappa \in \Lambda, \kappa \succ \eta_1}
    \P[Q(\kappa) \in \cS_1 \land Q(\eta_1) \in \cS_1]
    \sum_{(\eta_2,\ldots,\eta_k) \in \Lambda^{k-1}}
    V(\theta,\bm\eta,\kappa)
    +
    O(\epsilon + \tfrac{k'}{n})\,.
\end{split}
\end{align}
where in the last line we used $V(\theta,\bm\eta,\kappa) = O(1)$.

Since for all $\eta_1\in\Lambda$,
\begin{align*}
    \sum_{(\eta_2,\ldots,\eta_k) \in \Lambda^{k-1}}
    V(\theta,\bm\eta,\kappa)
    &=
    L^{2k}\sum_{(\eta_2,\ldots,\eta_k) \in \Lambda^{k-1}}
    \int_{\R}  |\ell(\theta, s) \cap Q(\bm\eta)| |\ell(\theta, s) \cap Q(\kappa)| \, ds
    \\
    &=
    L^{2k}\int_{\R}  |\ell(\theta, s) \cap Q(\eta_1)| |\ell(\theta, s) \cap Q(\kappa)| \, ds\,,
\end{align*}
we arrive at
\begin{align}\label{eq:combine to V}
\begin{split}
    &L^{2-2k}\sum_{\eta_1, \kappa \in \Lambda, \kappa \succ \eta_1}
    \P[Q(\kappa) \in \cS_1 \land Q(\eta_1) \in \cS_1]
    \sum_{(\eta_2,\ldots,\eta_k) \in \Lambda^{k-1}}
    V(\theta,\bm\eta,\kappa)
    \\
    &=
    L^2\sum_{\eta_1, \kappa \in \Lambda, \kappa \succ \eta_1}
    \P[Q(\kappa) \in \cS_1 \land Q(\eta_1) \in \cS_1]
    \int_{\R}  |\ell(\theta, s) \cap Q(\eta_1)| |\ell(\theta, s) \cap Q(\kappa)| \, ds
    \\
    &\stackrel{\eqref{eq:V(theta) alternative}}{=}
    \frac{V(\theta)}{2}\,.
\end{split}
\end{align}
Finally, we use the fact that $\bm\lambda$ is $(k,\eps)$-approximately uniform, together with \Cref{lem:length_to_integral}, to bound the second error term in \eqref{eq:reduction_to_V sum}:
\begin{align}
\label{eq:bound error term sum i sum kappa}
    \frac{1}{n}\sum_{i=0}^{n-k'} \sum_{\kappa \in \Lambda} \Err(\theta, (\lambda_{i+1}, \ldots, \lambda_{i+k}), \kappa)
    \stackrel{\eqref{eq:approx uniform k}}{\lesssim}
    \frac{1}{L^{2k}}
    \sum_{\bm\eta \in \Lambda^k}
    \sum_{\kappa\in\Lambda} \Err(\theta, \bm\eta, \kappa)
    \stackrel{\eqref{eq:length-to-integral-sum}}{\lesssim}
    \frac{1}{L^k} \,.
\end{align}

  Combining \eqref{eq:reduct-to-average}--\eqref{eq:bound error term sum i sum kappa} yields \eqref{eq:expectation_conditional_on_approx_distr}.
\end{proof}

\begin{proof}[Proof of Theorem \ref{thm:limit}]
Let us fix $\theta\in [0,\pi]$ and $0<\varepsilon<\tfrac14$. Let $k = \lceil \log_L (1/\eps) \rceil$. For $n\ge \frac{k}{\varepsilon^2}$, let $k' = \lfloor \sqrt{n} \rfloor$ and  
\[
    \cB_n
    =
    \bigcup_{\bm\eta\in\Lambda^k}
    \left\{\bm\lambda \in \Lambda^n \,:\, \left|\Count(\bm\lambda, \bm\eta)-\frac{n-k+1}{L^{2k}}\right|>\frac{\varepsilon (n-k+1)}{L^{2k}}\right\}
    \cup
    \bigcup_{i=0}^{n-k'} \cH_{n,i}
    \,,
\]
where $\mathcal{H}_{n,i}$ is as in Lemma \ref{lem:H_n_size}. By Lemmas \ref{lem:chern_k_blocks} and \ref{lem:H_n_size}, 
\begin{align}
    \label{eq:B_n/Lambda^n}
    \frac{\#\cB_n}{\#\Lambda^n}
    =
    O_k(1)\exp(-\Omega_{k,\varepsilon}(n)) 
    +
    O(1)\exp(-\Omega(k'))
    =
    O_\eps(\exp(-\Omega_{\eps}(\sqrt{n})))
    \,.
\end{align}

Let $p_n(\theta,t) = \P[\ell(\theta,t)\cap S_n\neq\varnothing]$. The event $\ell(\theta,t)\cap S_n\neq\varnothing$ is the disjoint union of the events $Q_{n,t} \in \cB_n$ and $Q_{n,t} \in \cD_n \setminus \cB_n$. Thus, we have
\begin{align*}
    &|\ell(\theta,t) \cap [0,1)^2|
    \\&=
    \E\bigl[ L^n |\ell(\theta,t) \cap S_n|\bigr]
    \\
    &=
    \E\bigl[ L^n |\ell(\theta,t) \cap S_n| \bigm| Q_{n,t} \in \cD_n \setminus \cB_n \bigr] \cdot (p_n(\theta,t) - \P[Q_{n,t} \in \cB_n])
    \\
    &\qquad+
    \E\bigl[ L^n |\ell(\theta,t) \cap S_n| \bigm| Q_{n,t} \in \cB_n \bigr] \cdot \P[Q_{n,t} \in \cB_n]
    \\
    &\stackrel{\eqref{eq:cond exp upper bound}}{=}
    \E\bigl[ L^n |\ell(\theta,t) \cap S_n| \bigm| Q_{n,t} \in \cD_n \setminus \cB_n \bigr] \cdot p_n(\theta,t) + O(n) \cdot \P[Q_{n,t} \in \cB_n]
    \\
    &\stackrel{\eqref{eq:expectation_conditional_on_approx_distr}}{=}
    \left(
    \frac{V(\theta)}{2}
    +
    O(\epsilon)
    \right)
    \cdot n p_n(\theta,t) + O(n) \cdot \P[Q_{n,t} \in \cB_n]
    \,.
\end{align*}

Integrating in $t$ and using \Cref{lemma:integral Qnt in B} and \eqref{eq:B_n/Lambda^n} gives
\[
    1
    =
    \left(
    \frac{V(\theta)}{2}
    +
    O(\epsilon)
    \right)
    \cdot n \E|\proj_\theta(S_n)|
    +
    O_\eps(\exp(-\Omega_{\eps}(\sqrt{n})))
    \,.
\]
Recall \Cref{lem:V_lb}: $\inf_\theta V(\theta) > 0$. Thus, if $\eps$ is small enough, then
\[
    n \E|\proj_\theta(S_n)|
    =
    \frac{2}{V(\theta)}
    +
    O(\epsilon)
    +
    O_\eps(\exp(-\Omega_{\eps}(\sqrt{n})))
    \,.
\]
The theorem follows by letting $n\to\infty$ and $\varepsilon\to0$.
\end{proof}

\appendix

\section{The Bateman--Volberg lower bound}
\label{sec:bateman-volberg}

Fix $L \geq 2$, and let $\cD_n$ be the set of squares obtained by dividing the unit square into an $L^n \times L^n$ grid.

\begin{theorem}
    \label{thm:bateman-volberg analogue}
    Let $[0,1)^2 = S_0 \supset S_1 \supset S_2 \supset \cdots$ be any sequence of sets such that for each $n$, $S_n$ is the union of $L^n$ squares in $\cD_n$, one from each column. Then 
    \begin{align*}
        \Fav(S_n) \gtrsim_L \frac{\log n}{n} \,.
    \end{align*}
\end{theorem}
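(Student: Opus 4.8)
The plan is to follow the Bateman--Volberg argument \cite{BatemanVolberg10} essentially verbatim, adapting it from the four-corner Cantor set to an arbitrary ``one square per column'' selection. The starting point is the observation that, for such an $S_n$, projection onto the $x$-axis is all of $[0,1]$, so the difficulty is that $S_n$ can hide behind lines of near-vertical slope. The key quantity to control is, for each dyadic-type scale, how many of the $L^n$ selected squares can be ``packed'' onto a single line of a given direction: the point of the Bateman--Volberg method is that whenever $\Fav(S_n)$ is small, there must be, for \emph{many} directions $\theta$, a line $\ell(\theta,\cdot)$ meeting a large number of the $L^n$ chosen squares (comparable to $L^n |\proj_\theta S_n|^{-1}$ on average, by Cauchy--Schwarz / the pigeonhole principle on $\int |\proj_\theta^{-1}(t)\cap S_n|\,dt = $ (number of squares) $\cdot L^{-n}$). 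So the heart is a combinatorial-geometric statement: a line cannot meet too many of the selected squares unless its slope is constrained to lie in a small set of ``bad'' slopes, and these bad slope sets, summed over scales, cannot cover too much of $[0,\pi]$.

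Concretely, I would proceed as follows. First, reduce to directions $\theta$ with $|\tan\theta|$ large (say $\ge 1$), since the horizontal-type directions already give $|\proj_\theta S_n|\gtrsim 1$; symmetrically one only needs near-vertical $\theta$. Second, for such $\theta$ run the multi-scale analysis: decompose $[0,1]^2$ into the $L^m\times L^m$ grid for $1\le m\le n$ and, for a line $\ell$ of slope roughly $L^{j}$, count how the $L^n$ selected squares distribute among the columns of $\cD_m$. The ``one square per column'' hypothesis is exactly what forces a Galton--Watson-free, purely deterministic lower bound: at each scale the selected square in a given column of $\cD_m$ contains exactly one selected square per sub-column, so a line of slope $\sim L^j$ that passes through a selected $\cD_m$-square meets $\Omega(L)$ of its selected $\cD_{m+1}$-subsquares for a range of $\sim$ one scale of slopes — this is the analogue of the ``$\cL$-tree'' / Riesz-product estimate in \cite{BatemanVolberg10}. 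Summing the contributions $\sum_{j=1}^{\log_L n}$ and using that the relevant slope window at scale $j$ has measure $\sim L^{-j}$ while contributing $\gtrsim L^{j}/n$ to $\int |\proj_\theta S_n|^{-1}$-type estimates, one recovers, after inverting via Cauchy--Schwarz, the bound $\Fav(S_n)=\int_0^\pi |\proj_\theta S_n|\,d\theta \gtrsim_L (\log n)/n$.

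The main obstacle, and the part requiring genuine care rather than bookkeeping, is the combinatorial core of \cite{BatemanVolberg10}: showing that if a line meets an atypically large number of the selected squares at scale $m$, then its slope is pinned down to an interval of length $\lesssim L^{-m}$ (up to logarithmic factors), \emph{uniformly over all admissible column-selections}. In the four-corner case this uses the specific arithmetic of the digit set $\{0,3\}$ (base $4$); here one must replace it with a general pigeonhole/additive argument that works for \emph{any} choice of one row-index per column, exploiting only that within each selected square the next-scale selection again has one square per sub-column. I would isolate this as a lemma — ``a near-vertical line cannot $\epsilon$-efficiently pack selected squares across more than $O(\log(1/\epsilon))$ consecutive scales without its direction being determined to within $L^{-m}$'' — prove it by iterating the single-scale estimate, and then feed it into the Cauchy--Schwarz/Fubini scheme exactly as in \cite{BatemanVolberg10, Bongers19}. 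The remaining steps (the reduction to near-vertical directions, the passage from line-counts to $|\proj_\theta S_n|$, and the final integration over $\theta$) are routine adaptations that I would state but not belabor.
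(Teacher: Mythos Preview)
Your Cauchy--Schwarz framework is correct and is indeed the Bateman--Volberg scheme, but you have misidentified the combinatorial core. The input is \emph{not} a pointwise slope-pinning statement (``a line meeting many squares has its direction determined to within $L^{-m}$''); that lemma is vague as stated, not obviously true for arbitrary column-selections, and in any case unnecessary. What is actually needed---and what both \cite{BatemanVolberg10} and the present paper use---is an $L^2$ bound: with $f_{n,\theta}(t)=\#\{Q\in\cS_n:\ell(\theta,t)\cap Q\neq\varnothing\}$ and $\Theta_k=\{\theta:\tan\theta\in[L^k,L^{k+1}]\}$, one shows $\int_{\Theta_k}\|f_{n,\theta}\|_2^2\,d\theta\lesssim n L^{-2k}$. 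Then two applications of Cauchy--Schwarz give $\int_{\Theta_k}|\proj_\theta S_n|\,d\theta\gtrsim 1/n$, and summing over $k=1,\dots,\lfloor\log_L n\rfloor$ finishes.

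The $L^2$ bound itself reduces, via Fubini, to counting pairs $(Q,Q')\in\cS_n\times\cS_n$ whose projections in some direction in $\Theta_k$ overlap; this forces $|y(Q)-y(Q')|\sim L^k|x(Q)-x(Q')|$. Here is where the one-per-column hypothesis enters, and it enters very simply: since the selection is nested one-per-column at \emph{every} scale, two selected squares with $|x(Q)-x(Q')|<L^{-j}$ must lie in the same level-$j$ ancestor and hence satisfy $|y(Q)-y(Q')|<L^{-j}$ as well. This immediately bounds the number of relevant pairs at each dyadic separation scale and yields the $L^2$ estimate. No arithmetic of digit sets, no Riesz products, and no iterated slope-pinning are needed; your ``main obstacle'' dissolves once you work with the second moment of $f_{n,\theta}$ rather than its pointwise behavior.
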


To prove \Cref{thm:bateman-volberg analogue}, we adapt the argument by Bateman and Volberg \cite{BatemanVolberg10}. We begin with a counting lemma. Given $Q \in \cD_n$, let $(x(Q), y(Q))$ denote the lower-left corner of $Q$. Let $\cS_n = \{Q \in \cD_n : Q \subset S_n\}$.

\begin{lemma}[Counting pairs]
\label{lemma:one per col counting}
    For any $i,j \in [n]$,
    \begin{align}
    \label{eq:one per col counting}
    \begin{aligned}
    &\#\{(Q,Q') \in \cS_n \times \cS_n:
        |x(Q) - x(Q')| \leq L^{-i}
        ,
        |y(Q) - y(Q')| \geq L^{-j}
    \}
        \\
        &\lesssim_L L^{2n+j-2i}
    \end{aligned}
    \end{align}
\end{lemma}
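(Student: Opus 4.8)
The plan is to use the nesting $S_n \subset S_j$ for $j \le n$, which is the feature that distinguishes an admissible configuration from an arbitrary one-square-per-column set. I would begin by recording the key geometric fact: if $Q,Q' \in \cS_n$ lie in the same $\cD_j$-column, meaning $x(Q)$ and $x(Q')$ belong to a common interval $[kL^{-j},(k+1)L^{-j})$, then since $S_n \subset S_j$ and $\cS_j$ has exactly one square in that $\cD_j$-column, both $Q$ and $Q'$ are contained in that single square of $\cD_j$; hence $|y(Q)-y(Q')| < L^{-j}$. Taking the contrapositive, every pair $(Q,Q')$ counted on the left of \eqref{eq:one per col counting} satisfies the stronger conclusion that $x(Q)$ and $x(Q')$ lie in \emph{different} $\cD_j$-columns.

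Next I would combine this with the hypothesis $|x(Q)-x(Q')| \le L^{-i}$. Since $Q,Q'$ are in different $\cD_j$-columns, $x(Q) \ne x(Q')$; assuming $x(Q) < x(Q')$, there is an integer $k$ with $x(Q) < kL^{-j} \le x(Q')$, and then $kL^{-j} - x(Q)$ and $x(Q') - kL^{-j}$ are both at most $|x(Q)-x(Q')| \le L^{-i}$. Thus $x(Q)$ lies within $L^{-i}$ of one of the at most $L^j+1$ points $kL^{-j}$, $0 \le k \le L^j$. The number of columns---equivalently, the number of points of $L^{-n}\Z \cap [0,1]$---within distance $L^{-i}$ of a fixed point is at most $2L^{n-i}+1 \lesssim_L L^{n-i}$, so the number of columns that can serve as the first entry $Q$ of a counted ordered pair $(Q,Q')$ is $\lesssim_L L^j \cdot L^{n-i} = L^{n+j-i}$. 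For each such $Q$, the number of $Q'$ with $|x(Q)-x(Q')| \le L^{-i}$ is again $\lesssim_L L^{n-i}$, and multiplying the two counts gives $\lesssim_L L^{n+j-i}\cdot L^{n-i} = L^{2n+j-2i}$, as claimed in \eqref{eq:one per col counting}.

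I do not expect a genuine obstacle: the single substantive ingredient is the nesting observation in the first step, and everything afterward is elementary counting. The only points to be careful about are the endpoint boundaries $k=0$ and $k=L^j$ (harmless, since there are still only $\le L^j+1$ lattice points in play) and the uniformity over all $i,j\in[n]$; note that when $j \ge i$ the crude estimate $L^j L^{n-i}$ for the number of columns near $\cD_j$-boundaries actually exceeds the trivial bound $L^n$, but it remains a valid upper bound, so no case distinction is needed and the argument applies verbatim for all $i,j\in[n]$.
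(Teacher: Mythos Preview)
Your proof is correct and follows essentially the same approach as the paper's: both use the nesting $S_n \subset S_j$ (the paper phrases it as ``the one-per-column rule implies\ldots'') to deduce that any counted pair must straddle a $\cD_j$-column boundary, and then count $\lesssim_L L^j$ boundaries, $\lesssim_L L^{n-i}$ choices of $Q$ near each, and $\lesssim_L L^{n-i}$ choices of $Q'$ for each $Q$. Your write-up is slightly more explicit about the role of the nesting and the boundary-crossing argument, but the ideas are identical.
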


\begin{proof}
    The one-per-column rule implies the following: Let $Q, Q' \in \cS_n$. If $x(Q), x(Q') \in [\frac{a}{L^j}, \frac{a+1}{L^j})$ for some $j \in \{1, \ldots, n\}$ and some $a \in \{0, \ldots, L^j-1\}$, then $y(Q), y(Q') \in [\frac{b}{L^j}, \frac{b+1}{L^j})$ for some $b \in \{0, \ldots, L^j-1\}$.

    As a result, if $(Q,Q')$ is in the set defined in \eqref{eq:one per col counting}, then 
    \begin{align}
        \label{eq:x,x' nbhd}
        \text{there exists $a \in \{1, \ldots, L^j-1\}$ such that $x(Q), x(Q') \in B(a L^{-j}, L^{-i})$}.
    \end{align}
    By the one-per-column rule again, the number of pairs $(Q,Q') \in \cS_n \times \cS_n$ satisfying \eqref{eq:x,x' nbhd} is $\lesssim_L L^j (L^{n-i})^2$.
\end{proof}

Next, we prove a key $L^2$ estimate.

\begin{lemma}[$L^2$ estimate]
    \label{lemma:L^2 estimate Theta_k}
    Define 
    \begin{align*}
        f_{n,\theta}(t) &= \# \{ Q \in \cS_n : \ell(\theta,t) \cap Q \neq \varnothing\} = \sum_{Q \in \cS_n} \bbone_{\proj_\theta(Q)}(t)
        \\
        \Theta_k &= \{\theta \in [0,\pi/2] : \tan\theta \in [L^k, L^{k+1}]\} \,.
    \end{align*} 
    Then
    \begin{align}
        \label{eq:L^2 estimate Theta_k}
        \int_{\Theta_k} \| f_{n,\theta} \|_2^2 \, d\theta \lesssim \frac{n}{L^{2k}}, \qquad \text{for all } k = 1, \ldots, \lfloor \log_L n \rfloor \,.
    \end{align}
\end{lemma}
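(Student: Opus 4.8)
The plan is to expand the $L^2$ norm as a double sum over pairs of squares and then control it using the counting Lemma \ref{lemma:one per col counting}. Writing $f_{n,\theta}(t) = \sum_{Q \in \cS_n} \bbone_{\proj_\theta(Q)}(t)$, we get
\[
\int_{\Theta_k} \|f_{n,\theta}\|_2^2 \, d\theta
=
\sum_{Q,Q' \in \cS_n}
\int_{\Theta_k} |\proj_\theta(Q) \cap \proj_\theta(Q')| \, d\theta\,.
\]
For a fixed pair $(Q,Q')$ with centers separated by the vector $(\Delta x, \Delta y)$, the intersection $\proj_\theta(Q) \cap \proj_\theta(Q')$ is nonempty only when the projections overlap, and when nonempty it has length $\lesssim L^{-n}$ (since each $\proj_\theta(Q)$ is an interval of length $\lesssim L^{-n}$). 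For $\theta \in \Theta_k$, one has $\proj_\theta(Q) \cap \proj_\theta(Q') \neq \varnothing$ only if $|\!-\!\Delta x \sin\theta + \Delta y \cos\theta| \lesssim L^{-n}$, i.e. $|\Delta y - \Delta x \tan\theta| \lesssim L^{-n}/\cos\theta \sim L^{-n-k}$ (using $\tan\theta \sim L^k$ on $\Theta_k$). So the key point is to bound, for each dyadic-type scale, the number of pairs that can contribute, and to integrate the constraint in $\theta$.

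The main steps, in order: (1) For each pair $(Q,Q')$ that contributes for \emph{some} $\theta \in \Theta_k$, the above overlap condition forces $|\Delta x| \sim L^{-k}|\Delta y|$ up to an additive $O(L^{-n})$; moreover, as $\theta$ ranges over $\Theta_k$, the set of $\theta$ for which the projections actually overlap has measure $\lesssim L^{-n}/|\Delta x|$ (the derivative of $\theta \mapsto -\Delta x\sin\theta + \Delta y\cos\theta$ has size $\sim |\Delta x|$ on $\Theta_k$ when $|\Delta x|\gtrsim|\Delta y|L^{-k}$, which holds here; alternatively, this set of $\theta$'s has measure $\lesssim \min\{L^{-k}, L^{-n}/|\Delta x|\}$). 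Thus each contributing pair contributes $\lesssim L^{-n}\cdot\min\{L^{-k},L^{-n}/|\Delta x|\}$ to the integral. (2) Dyadically decompose according to $|\Delta x| \in [L^{-i-1}, L^{-i}]$ for $i \in \{0,\ldots,n\}$ (with the $i = n$ block, $|\Delta x| \lesssim L^{-n}$, handled separately — these pairs lie in the same column, and by the one-per-column rule $Q = Q'$, so there are $\lesssim L^n$ of them, contributing $\lesssim L^n \cdot L^{-n}\cdot L^{-k} = L^{-k}$, which is acceptable after summing but in fact better). For each $i < n$, the corresponding $|\Delta y|$ satisfies $|\Delta y| \sim L^{k}|\Delta x| \in [L^{k-i-1}, L^{k-i+1}]$, so we may apply Lemma \ref{lemma:one per col counting} (with the roles of the indices matched appropriately, possibly summing over the $O(1)$ relevant dyadic values of $|\Delta y|$) to get that the number of such pairs is $\lesssim_L L^{2n + (i-k) - 2i} = L^{2n - i - k}$. (3) Each such pair contributes $\lesssim L^{-n} \cdot L^{-n}/|\Delta x| \lesssim L^{-2n} \cdot L^{i}$, so the total contribution of the $i$-th block is $\lesssim L^{2n - i - k} \cdot L^{-2n+i} = L^{-k}$. (4) Summing over $i \in \{0, 1, \ldots, n\}$ gives $\lesssim n L^{-k}$; this is off by a factor of $L^{-k}$ from the claimed bound $n/L^{2k}$, so step (1) must be sharpened: one also uses the better estimate that the $\theta$-measure is at most $L^{-k}$ combined with the observation that $|\proj_\theta(Q)\cap\proj_\theta(Q')|$, being the overlap of two intervals each of length $\sim L^{-n}/\cos\theta$... — actually the cleaner fix is that $|\proj_\theta(Q)| \sim L^{-n}$ exactly (not $L^{-n}/\cos\theta$), while the window of overlapping $\theta$ within $\Theta_k$ has measure $\lesssim L^{-n}/(L^{-k}|\Delta y| + L^{-n}) \lesssim \min\{L^{-k}, L^{-n}/|\Delta x|\}$, and one should use the extra factor $L^{-k}$ coming from $|\Theta_k|\sim L^{-k}$ versus the naive $\int_0^{\pi/2}$. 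Carrying the bookkeeping carefully — tracking both the $\cos\theta \sim L^{-k}$ factors and applying Lemma \ref{lemma:one per col counting} with the indices that exploit $|\Delta x| \sim L^{-k}|\Delta y|$ — yields the stated $\lesssim n/L^{2k}$.

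The main obstacle I expect is precisely this constant-tracking in step (1)/(4): getting the \emph{two} powers of $L^{-k}$ rather than one requires using both that the angular window $\Theta_k$ has length $\sim L^{-k}$ \emph{and} that the geometry of the constraint $|\Delta y - \Delta x\tan\theta|\lesssim L^{-n}$ ties $|\Delta x|$ to $|\Delta y|$ with a factor $L^{-k}$, so that Lemma \ref{lemma:one per col counting} is applied with indices $i' = i$ (for the $x$-separation) and $j' $ chosen so that $L^{-j'} \lesssim |\Delta y| \sim L^{k-i}$, giving the count $L^{2n + j' - 2i'}$ with $j' = i-k$ and hence $L^{2n-i-k}$ rather than $L^{2n-i}$. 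Once the pairing of indices in Lemma \ref{lemma:one per col counting} is set up correctly, the dyadic sum over $i$ telescopes to $n L^{-2k}$ without further difficulty. The $i=n$ (same-column) block and the endpoint blocks $i=0$ are easily checked to be dominated by this.
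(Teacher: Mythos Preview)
Your approach is exactly the paper's: Fubini, then dyadic decomposition of the pairs $(Q,Q')$ according to their separation, then the counting Lemma~\ref{lemma:one per col counting}. The only issue is the one you yourself flag in step (4): you are losing one factor of $L^{-k}$, and the fix is not the vague ``carrying the bookkeeping carefully'' but a concrete correction to step (1).

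The error is in your derivative estimate. You claim that on $\Theta_k$ the derivative of $\theta\mapsto -\Delta x\sin\theta+\Delta y\cos\theta$ has size $\sim|\Delta x|$. But the derivative is $-\Delta x\cos\theta-\Delta y\sin\theta$, and on $\Theta_k$ we have $\sin\theta\sim 1$, $\cos\theta\sim L^{-k}$; since for contributing pairs $|\Delta y|\sim L^{k}|\Delta x|$, the $\Delta y\sin\theta$ term dominates and the derivative has size $\sim|\Delta y|$, not $|\Delta x|$. Equivalently (and this is how the paper phrases it), the set of directions for which two $L^{-n}$-squares at distance $d$ have overlapping projections has measure $\lesssim L^{-n}/d$, and here $d(Q,Q')\sim|\Delta y|$ because $|\Delta y|\gg|\Delta x|$. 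So the per-pair contribution is
\[
\int_{\Theta_k}|\proj_\theta(Q)\cap\proj_\theta(Q')|\,d\theta\;\lesssim\;L^{-n}\cdot\min\Bigl(L^{-k},\,\frac{L^{-n}}{|\Delta y|}\Bigr)\;\lesssim\;L^{-2n}\cdot L^{i-k},
\]
not $L^{-2n}\cdot L^{i}$. With this correction, block $i$ contributes $L^{2n-i-k}\cdot L^{-2n+i-k}=L^{-2k}$, and summing over the $O(n)$ values of $i$ gives $nL^{-2k}$ as stated. The diagonal/same-column block ($\#\cA\lesssim L^n$, contribution $\lesssim L^{-k}\le nL^{-2k}$ since $L^k\le n$) is handled exactly as you say.
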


\begin{proof}
    Fix $k \in \{1, \ldots, \lfloor \log_L n \rfloor\}$ and define
    \begin{align*}
        \cP_k = \{(Q,Q') \in \cS_n \times \cS_n : \exists \theta \in \Theta_k \text{ such that } \proj_\theta(Q) \cap \proj_\theta(Q') \neq \varnothing\} \, .
    \end{align*}
    By Fubini,
    \begin{align}
        \label{eq:fubini sum pairs}
        \int_{\Theta_k} \| f_{n,\theta} \|_2^2 \, d\theta 
        &= \sum_{(Q, Q') \in \cP_k} \int_{\Theta_k} |\proj_\theta(Q) \cap \proj_\theta(Q')| \, d\theta \,.
    \end{align}
    For any pair $(Q,Q') \in \cD_n \times \cD_n$, the integrand on the right-hand side of \eqref{eq:fubini sum pairs} is $O(L^{-n})$, so
    \begin{align}
    \label{eq:int Theta_k upper bound}
    \begin{aligned}    
        \int_{\Theta_k} |\proj_\theta(Q) \cap \proj_\theta(Q')| \, d\theta 
        &\lesssim 
        L^{-n} |\{\theta \in \Theta_k : \proj_\theta(Q) \cap \proj_\theta(Q') \neq \varnothing\}| 
        \\
        &\lesssim
        L^{-n} \min\left(L^{-k}, \frac{L^{-n}}{d(Q,Q')} \right)
        \,.
    \end{aligned}
    \end{align}

    Next, we claim that
    \begin{align}
        \label{eq:P_k decompose}
        \cP_k \subset \cA \cup \bigcup_{j=1}^{n-k} \cP_{k,j}
    \end{align}
    where
    \begin{align*}
        \cA &= \{(Q, Q') \in \cS_n \times \cS_n : |x(Q) - x(Q')| \leq L^{-n}\}
        \\
        \cP_{k,j} &= \{(Q,Q') \in \cS_n \times \cS_n : |x(Q) - x(Q')| \sim L^{-j-k}, |y(Q) - y(Q')| \sim L^{-j}\} \,
    \end{align*}
    and where the implied constants in the definition of $\cP_{k,j}$ are chosen appropriately, as follows: if $(Q,Q') \in \cP_k \setminus \cA$, then $|x(Q) - x(Q')| \geq 2L^{-n}$; consequently, one can check that there exist constants $c_1, c_2$ (depending only on $L$) such that
    \begin{align*}
        c_1 L^k \leq \frac{|y(Q) - y(Q')|}{|x(Q) - x(Q')|} \leq c_2 L^k \qquad\text{for all } (Q,Q') \in \cP_k \setminus \cA\,. 
    \end{align*}
    It follows that by choosing the implied constants appropriately in the definition of $\cP_{k,j}$, we have $\cP_k \setminus \cA \subset \bigcup_{j=0}^{n-k} \cP_{k,j}$. This proves \eqref{eq:P_k decompose}.

    The one-per-column rule implies that $\# \cA \lesssim L^n$, so
    \begin{align*}
        \sum_{(Q, Q') \in \cA} 
        \int_{\Theta_k} |\proj_\theta(Q) \cap \proj_\theta(Q')| \, d\theta
        \stackrel{\eqref{eq:int Theta_k upper bound}}{\lesssim}
        \# \cA \cdot L^{-n} L^{-k} 
        \lesssim
        L^{-k}
        \,.
    \end{align*}
    For each $j \in \{1, \ldots, n-k\}$,
    \begin{align*}
         \sum_{(Q, Q') \in \cP_{k,j}}
         \int_{\Theta_k} |\proj_\theta(Q) \cap \proj_\theta(Q')| \, d\theta
         \stackrel{\eqref{eq:int Theta_k upper bound}}{\lesssim}
         \# \cP_{k,j} \cdot \frac{(L^{-n})^2}{L^{-j}}
         \stackrel{\eqref{eq:one per col counting}}{\lesssim}
         L^{-2k} \,.
    \end{align*}
    By combining these two estimates with \eqref{eq:fubini sum pairs} and \eqref{eq:P_k decompose}, we obtain
    \begin{align*}
        \int_{\Theta_k} \| f_{n,\theta} \|_2^2 \, d\theta 
        &\lesssim
        L^{-k} + (n-k) L^{-2k} \,,
    \end{align*}
    which proves the lemma.
\end{proof}

\begin{proof}[Proof of \Cref{thm:bateman-volberg analogue}]
    Let $f_{n,\theta}$ and $\Theta_k$ be as in the statement of \Cref{lemma:L^2 estimate Theta_k}. Let $k \in \{1, \ldots, \lfloor \log_L n \rfloor\}$. By applying Cauchy--Schwarz twice and applying \Cref{lemma:L^2 estimate Theta_k},
    \begin{align*}
        \frac{1}{L^{2k}} \sim |\Theta_k|^2 \sim \left(\int_{\Theta_k} \|f_{n,\theta}\|_1 \, d\theta\right)^2
        &\leq
        \left(\int_{\Theta_k} |\proj_\theta S_n|^{1/2} \|f_{n,\theta}\|_2  \, d\theta\right)^2
        \\
        &\leq
        \int_{\Theta_k} |\proj_\theta S_n| \,d\theta \int_{\Theta_k} \|f_{n,\theta}\|_2^2  \, d\theta
        \\
        &\stackrel{\eqref{eq:L^2 estimate Theta_k}}{\lesssim_L}
        \int_{\Theta_k} |\proj_\theta S_n| \,d\theta \cdot \frac{n}{L^{2k}}
    \end{align*}
    Then
    \begin{align*}
        \Fav(S_n)
        \geq
        \sum_{k=1}^{\lfloor \log_L n\rfloor} 
        \int_{\Theta_k} |\proj_\theta S_n| \,d\theta
        \gtrsim_L
        \frac{\log n}{n} \,.
    \end{align*}
\end{proof}

\begin{remark}
    The proof of \Cref{lemma:L^2 estimate Theta_k} above can also be rephrased in terms of conical Riesz energies, as introduced in \cite{ChangTolsa20} (see (1.8) in that paper). Let $\mu_n$ be the Lebesgue measure on $S_n$, normalized to be a probability measure, and let $\proj_\theta \mu_n$ denote the pushforward of $\mu_n$ by $\proj_\theta$. Then by applying \cite[Proposition 3.2]{ChangTolsa20} to smooth approximations of $\mu_n$, we can obtain
    \[
        \int_{\Theta_k}
        \|\proj_\theta \mu_n\|_2^2
        \, d\theta
        =
        \iint_{x-y \in C_k} \, \frac{d \mu_n(x) \, d\mu_n(y)}{|x-y|}
        ,
    \]
    where $C_k$ is the cone $\bigcup_{\theta \in \Theta_k} \proj_\theta^{-1}(\{0\})$. The right-hand side can then be bounded using \Cref{lemma:one per col counting}.
\end{remark}

\section{Classical concentration inequalities}
\label{sec:elementaryprob}

We collect some classical concentration inequalities.

\begin{lemma}[Hoeffding's inequality]
    \label{lemma:hoeffding}
    Let $X_1, \ldots, X_n$ be independent bounded random variables such that $a_i \leq X_i \leq b_i$. Then, with $X = \sum_{i = 1}^{n} X_i $,
    \begin{align*}
        \P\bigl[|X-\E[X]| \geq \epsilon\E[X]\bigr]
        \leq
        2\exp\left(- \frac{2\epsilon^2}{\sum_{i=1}^n (b_i-a_i)^2}\right)
        \qquad\text{for all } \epsilon \in [0,1] \,.
    \end{align*}
\end{lemma}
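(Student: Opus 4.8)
The plan is to follow the classical Chernoff--Hoeffding argument; the content is entirely standard, so I will mainly indicate the structure. First I would reduce to the one-sided estimate
\[
\P[X - \E X \geq s] \leq \exp\Bigl(-\tfrac{2s^2}{\sum_{i=1}^n (b_i-a_i)^2}\Bigr), \qquad s \geq 0,
\]
since applying it to $-X_1, \ldots, -X_n$ (which satisfy $-b_i \leq -X_i \leq -a_i$, so the denominator is unchanged) gives the corresponding lower-tail bound, and adding the two produces the factor $2$. Specializing to $s = \epsilon\,\E[X]$ then yields the statement.

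For the one-sided bound I would use the exponential Markov inequality together with independence: for any $\lambda > 0$,
\[
\P[X - \E X \geq s] \leq e^{-\lambda s}\,\E\bigl[e^{\lambda (X - \E X)}\bigr] = e^{-\lambda s} \prod_{i=1}^n \E\bigl[e^{\lambda (X_i - \E X_i)}\bigr].
\]
The one essential ingredient is Hoeffding's lemma: a mean-zero random variable $Y$ with $a \leq Y \leq b$ a.s.\ satisfies $\E[e^{\lambda Y}] \leq \exp(\lambda^2 (b-a)^2/8)$. I would prove this by letting $\psi(\lambda) = \log\E[e^{\lambda Y}]$ and checking $\psi(0) = \psi'(0) = 0$ together with $\psi''(\lambda) \leq (b-a)^2/4$; the bound on $\psi''$ holds because $\psi''(\lambda)$ equals the variance of $Y$ under the tilted law $d\P_\lambda = e^{\lambda Y}\,d\P/\E[e^{\lambda Y}]$, and a random variable supported in $[a,b]$ has variance at most $(b-a)^2/4$. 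A second-order Taylor expansion of $\psi$ about $0$ then gives $\psi(\lambda) \leq \lambda^2 (b-a)^2/8$.

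Substituting into the product bound gives $\P[X - \E X \geq s] \leq \exp\bigl(-\lambda s + \tfrac{\lambda^2}{8}\sum_{i=1}^n (b_i-a_i)^2\bigr)$ for every $\lambda > 0$, and minimizing this quadratic in $\lambda$ (the optimum is $\lambda = 4s/\sum_i(b_i-a_i)^2$) produces the claimed exponent. There is no real obstacle here: the only step requiring an argument is Hoeffding's lemma, which is classical, so in the final write-up I would either include the three-line computation above or simply cite a standard reference.
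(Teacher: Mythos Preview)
Your argument is the standard Chernoff--Hoeffding proof and is correct. The paper itself does not give a proof at all: it simply cites \cite[Theorem 2.8]{BoucheronLugosiMassart13}. So your write-up is strictly more detailed than what the paper provides, and entirely in line with what the authors had in mind when they said ``classical concentration inequalities.''

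One small point worth flagging: your final step ``specializing to $s = \epsilon\,\E[X]$ then yields the statement'' does not literally reproduce the displayed inequality, since it gives the exponent $-2\epsilon^2(\E[X])^2/\sum_i(b_i-a_i)^2$ rather than $-2\epsilon^2/\sum_i(b_i-a_i)^2$. The statement as printed appears to contain a typo (the left-hand side should presumably read $|X-\E[X]|\geq\epsilon$ rather than $\epsilon\,\E[X]$, which is also how the lemma is actually invoked later in the paper). Your one-sided bound in $s$ is the correct and usable form; just be aware that the substitution you wrote does not match the display verbatim.
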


\begin{proof}
    See, e.g., \cite[Theorem 2.8]{BoucheronLugosiMassart13}.
\end{proof}

\begin{lemma}[Chernoff's inequality]
    \label{lemma:chernoff}
    Let $X_1, \ldots, X_n$ be independent $\{0,1\}$-valued random variables. Then, with $X = \sum_{i = 1}^{n} X_i $,
    \begin{align*}
        \P\bigl[|X-\E[X]| \geq \epsilon\E[X]\bigr]
        \leq
        2\exp(- \tfrac{1}{4} \epsilon^2 \E[X])
        \qquad\text{for all } \epsilon \in [0,1] \,.
    \end{align*}
\end{lemma}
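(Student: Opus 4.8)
The plan is the classical exponential-moment (Chernoff) method, handling the upper and lower tails separately and combining them by a union bound; alternatively one may simply invoke a standard reference such as \cite{BoucheronLugosiMassart13}. Write $p_i = \P[X_i = 1]$ and $\mu = \E[X] = \sum_{i=1}^n p_i$. The one computation that does all the work is the moment generating function estimate: since the $X_i$ are independent and $\{0,1\}$-valued, for every real $\lambda$,
\[
\E\bigl[e^{\lambda X}\bigr] = \prod_{i=1}^n \bigl(1 + p_i(e^{\lambda}-1)\bigr) \le \prod_{i=1}^n e^{p_i(e^{\lambda}-1)} = e^{\mu(e^{\lambda}-1)},
\]
using $1+x\le e^{x}$ (valid in both signs of $x$).

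For the upper tail, apply Markov's inequality to $e^{\lambda X}$ with $\lambda>0$: for $\epsilon\in[0,1]$,
\[
\P[X \ge (1+\epsilon)\mu] \le e^{-\lambda(1+\epsilon)\mu}\,\E\bigl[e^{\lambda X}\bigr] \le \exp\bigl(\mu(e^{\lambda} - 1 - \lambda(1+\epsilon))\bigr),
\]
and take $\lambda = \log(1+\epsilon)$ to get $\P[X\ge(1+\epsilon)\mu] \le \exp\bigl(-\mu\bigl((1+\epsilon)\log(1+\epsilon)-\epsilon\bigr)\bigr)$. Symmetrically, applying Markov to $e^{-\lambda X}$ with $\lambda>0$, using the MGF bound with $-\lambda$ in place of $\lambda$, and choosing $\lambda = -\log(1-\epsilon)$ gives $\P[X\le(1-\epsilon)\mu] \le \exp\bigl(-\mu\bigl(-(1-\epsilon)\log(1-\epsilon)-\epsilon\bigr)\bigr)$ for $\epsilon\in[0,1)$ (the boundary case $\epsilon=1$ is the direct estimate $\P[X=0] = \prod_i(1-p_i)\le e^{-\mu}$).

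It then remains to verify the elementary calculus estimates $(1+\epsilon)\log(1+\epsilon)-\epsilon \ge \epsilon^2/4$ and $-(1-\epsilon)\log(1-\epsilon)-\epsilon \ge \epsilon^2/4$ on $[0,1]$. For the first, set $\psi(\epsilon) = (1+\epsilon)\log(1+\epsilon)-\epsilon-\epsilon^2/4$; then $\psi(0)=0$ and $\psi'(\epsilon) = \log(1+\epsilon)-\epsilon/2 \ge (\epsilon - \epsilon^2/2) - \epsilon/2 = \tfrac{\epsilon}{2}(1-\epsilon) \ge 0$ on $[0,1]$, so $\psi\ge0$ there; the second (in fact the stronger bound with $\epsilon^2/2$) is handled the same way. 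Feeding these into the two displays yields $\P[X\ge(1+\epsilon)\mu] \le e^{-\epsilon^2\mu/4}$ and $\P[X\le(1-\epsilon)\mu]\le e^{-\epsilon^2\mu/4}$, and summing the two probabilities gives the claimed bound $2e^{-\epsilon^2\mu/4}$. There is no genuine obstacle here, this being a textbook statement; the only care needed is in getting the two optimizations over $\lambda$ right and in making the numerical constant $\tfrac14$ valid uniformly over the full range $\epsilon\in[0,1]$.
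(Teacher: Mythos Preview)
The paper's own proof is simply a citation to \cite[Corollary 1.9]{TaoVu06}, so in spirit your approach (invoke a standard reference, or spell out the Chernoff MGF argument) matches. Your write-up, however, has a sign slip in the lower tail: optimizing $\exp\bigl(\mu(\lambda(1-\epsilon)+e^{-\lambda}-1)\bigr)$ at $\lambda=-\log(1-\epsilon)$ gives
\[
\P[X\le(1-\epsilon)\mu]\le \exp\bigl(-\mu\bigl((1-\epsilon)\log(1-\epsilon)+\epsilon\bigr)\bigr),
\]
not $\exp\bigl(-\mu\bigl(-(1-\epsilon)\log(1-\epsilon)-\epsilon\bigr)\bigr)$; the quantity $-(1-\epsilon)\log(1-\epsilon)-\epsilon$ is negative on $(0,1]$ (e.g.\ at $\epsilon=1/2$ it equals about $-0.153$), so your displayed inequality for it fails. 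The correct function $h(\epsilon)=(1-\epsilon)\log(1-\epsilon)+\epsilon$ does satisfy $h(\epsilon)\ge \epsilon^2/2$ on $[0,1]$ (since $h'(\epsilon)=-\log(1-\epsilon)\ge\epsilon$), which is what you need. With that correction the argument goes through.
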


\begin{proof}
    See, e.g., \cite[Corollary 1.9]{TaoVu06}.
\end{proof}

\begin{lemma}[Chernoff lower tail bound for negatively correlated events]
    \label{lemma:chernoff neg dep}
    Let $X_1, \ldots, X_n$ be $\{0,1\}$-valued random variables that are ``jointly negatively correlated'' in the sense that 
    \begin{align}
        \label{eq:neg dep}
        \P[\forall i \in I, X_i = 0]
        \leq
        \prod_{i \in I}
        \P[X_i = 0]
        \qquad \text{for all } I \subset [n].
    \end{align}
    Then, with $X = \sum_{i = 1}^{n} X_i $,
    \begin{align*}
        \P\bigl[X\leq (1 - \epsilon)\E[X]\bigr]
        \leq
        \exp(- \tfrac{1}{2} \epsilon^2 \E[X])
        \qquad\text{for all } \epsilon \in [0,1] \,.
    \end{align*}
\end{lemma}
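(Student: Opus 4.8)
The plan is to run the textbook exponential-moment (Chernoff) computation for the lower tail, the only genuinely new point being that hypothesis \eqref{eq:neg dep} is precisely what lets us decouple the relevant product. Fix $\lambda>0$ and write $p_i=\P[X_i=1]$ and $\mu=\E[X]=\sum_i p_i$; we may assume $\mu>0$, since otherwise the statement is vacuous. Markov's inequality applied to $e^{-\lambda X}$ gives
\[
\P[X\le(1-\eps)\mu]=\P\bigl[e^{-\lambda X}\ge e^{-\lambda(1-\eps)\mu}\bigr]\le e^{\lambda(1-\eps)\mu}\,\E\bigl[e^{-\lambda X}\bigr],
\]
so everything reduces to showing $\E[e^{-\lambda X}]=\E\bigl[\prod_i e^{-\lambda X_i}\bigr]\le\prod_i\E[e^{-\lambda X_i}]$, i.e. that the variables behave no worse than if they were independent.

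For the decoupling step, set $Y_i=1-X_i\in\{0,1\}$, so that \eqref{eq:neg dep} becomes $\E\bigl[\prod_{i\in I}Y_i\bigr]\le\prod_{i\in I}\E[Y_i]$ for all $I\subseteq[n]$. Since $Y_i$ is $\{0,1\}$-valued, $e^{-\lambda X_i}=e^{-\lambda}e^{\lambda Y_i}=e^{-\lambda}\bigl(1+(e^{\lambda}-1)Y_i\bigr)$. Multiplying over $i$ and expanding,
\[
\prod_i e^{-\lambda X_i}=e^{-n\lambda}\sum_{I\subseteq[n]}(e^{\lambda}-1)^{|I|}\prod_{i\in I}Y_i.
\]
Because $\lambda>0$ forces $e^{\lambda}-1>0$, every coefficient is nonnegative, so taking expectations and applying \eqref{eq:neg dep} to each term gives
\[
\E\Bigl[\prod_i e^{-\lambda X_i}\Bigr]\le e^{-n\lambda}\sum_{I\subseteq[n]}(e^{\lambda}-1)^{|I|}\prod_{i\in I}\E[Y_i]=\prod_i e^{-\lambda}\bigl(1+(e^{\lambda}-1)\E[Y_i]\bigr)=\prod_i\E[e^{-\lambda X_i}],
\]
running the algebra backwards. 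This is the only place the hypothesis is invoked, and it is also the main (mild) obstacle: one must spot that it is the \emph{complementary} events $\{Y_i=1\}$ that factor, and that the sign of the expansion coefficients goes the right way for $\lambda>0$ (a lower-tail, not upper-tail, bound).

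The remainder is routine. From $1+x\le e^x$ we get $\E[e^{-\lambda X_i}]=1-p_i(1-e^{-\lambda})\le\exp\bigl(-p_i(1-e^{-\lambda})\bigr)$, hence $\E[e^{-\lambda X}]\le\exp\bigl(-\mu(1-e^{-\lambda})\bigr)$ and
\[
\P[X\le(1-\eps)\mu]\le\exp\Bigl(\mu\bigl[\lambda(1-\eps)-1+e^{-\lambda}\bigr]\Bigr).
\]
Optimizing over $\lambda\ge 0$ by taking $\lambda=-\ln(1-\eps)$ (nonnegative for $\eps\in[0,1)$) turns the bracket into $-\eps-(1-\eps)\ln(1-\eps)$, and the elementary inequality $(1-\eps)\ln(1-\eps)\ge-\eps+\eps^2/2$ on $[0,1]$ — which holds because the difference vanishes to second order at $\eps=0$ and has nonnegative second derivative $\eps/(1-\eps)$ — yields $\P[X\le(1-\eps)\mu]\le\exp(-\mu\eps^2/2)$. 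The boundary case $\eps=1$ follows by letting $\eps\uparrow 1$.
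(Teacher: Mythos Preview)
Your proof is correct and follows exactly the approach the paper has in mind: the paper's ``proof'' simply asserts that the standard exponential-moment Chernoff argument for independent variables goes through under \eqref{eq:neg dep} and cites \cite{Doerr20,PanconesiSrinivasan97}, whereas you have written out that argument in full, including the key decoupling step $\E\bigl[\prod_i e^{-\lambda X_i}\bigr]\le\prod_i\E[e^{-\lambda X_i}]$ via the expansion in the $Y_i=1-X_i$. There is nothing to add.
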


\begin{proof}
The proof of Chernoff's inequality for independent random variables is still valid when independence is weakened to \eqref{eq:neg dep}. See, e.g., \cite[Theorem 1.10.24]{Doerr20} or \cite[Theorem 3.4]{PanconesiSrinivasan97}. 
\end{proof}


\begin{thebibliography}{10}

\bibitem{BatemanVolberg10}
Michael Bateman and Alexander Volberg.
\newblock An estimate from below for the {B}uffon needle probability of the
  four-corner {C}antor set.
\newblock {\em Math. Res. Lett.}, 17(5):959--967, 2010.

\bibitem{BondLabaVolberg14}
Matthew Bond, Izabella \L{}aba, and Alexander Volberg.
\newblock Buffon's needle estimates for rational product {C}antor sets.
\newblock {\em Amer. J. Math.}, 136(2):357--391, 2014.

\bibitem{BondVolberg10}
Matthew Bond and Alexander Volberg.
\newblock Buffon needle lands in {$\epsilon$}-neighborhood of a 1-dimensional
  {S}ierpinski gasket with probability at most {$|\log\epsilon|^{-c}$}.
\newblock {\em C. R. Math. Acad. Sci. Paris}, 348(11-12):653--656, 2010.

\bibitem{BondVolberg12}
Matthew Bond and Alexander Volberg.
\newblock Buffon's needle landing near {B}esicovitch irregular self-similar
  sets.
\newblock {\em Indiana Univ. Math. J.}, 61(6):2085--2109, 2012.

\bibitem{Bongers19}
Rosemarie Bongers.
\newblock Geometric bounds for {F}avard length.
\newblock {\em Proc. Amer. Math. Soc.}, 147(4):1447--1452, 2019.

\bibitem{BoucheronLugosiMassart13}
St\'ephane Boucheron, G\'abor Lugosi, and Pascal Massart.
\newblock {\em Concentration inequalities}.
\newblock Oxford University Press, Oxford, 2013.
\newblock A nonasymptotic theory of independence, With a foreword by Michel
  Ledoux.

\bibitem{ChangTolsa20}
Alan Chang and Xavier Tolsa.
\newblock Analytic capacity and projections.
\newblock {\em J. Eur. Math. Soc. (JEMS)}, 22(12):4121--4159, 2020.

\bibitem{Doerr20}
Benjamin Doerr.
\newblock Probabilistic tools for the analysis of randomized optimization
  heuristics.
\newblock In {\em Theory of evolutionary computation---recent developments in
  discrete optimization}, Nat. Comput. Ser., pages 1--87. Springer, Cham,
  [2020] \copyright 2020.

\bibitem{Dabrowski24}
Damian Dąbrowski.
\newblock Favard length and quantitative rectifiability.
\newblock {\em arXiv preprint arXiv:2408.03919}, 2024.

\bibitem{Falconer86}
Kenneth~J. Falconer.
\newblock Random fractals.
\newblock {\em Math. Proc. Cambridge Philos. Soc.}, 100(3):559--582, 1986.

\bibitem{Laba15}
Izabella \L{}aba.
\newblock Recent progress on {F}avard length estimates for planar {C}antor
  sets.
\newblock In {\em Operator-related function theory and time-frequency
  analysis}, volume~9 of {\em Abel Symp.}, pages 117--145. Springer, Cham,
  2015.

\bibitem{LabaMarshall22}
Izabella \L{}aba and Caleb Marshall.
\newblock Vanishing sums of roots of unity and the {F}avard length of
  self-similar product sets.
\newblock {\em Discrete Anal.}, pages Paper No. 19, 31, 2022.

\bibitem{LyonsPeres16}
Russell Lyons and Yuval Peres.
\newblock {\em Probability on trees and networks}, volume~42 of {\em Cambridge
  Series in Statistical and Probabilistic Mathematics}.
\newblock Cambridge University Press, New York, 2016.

\bibitem{Marshall25}
Caleb Marshall.
\newblock Power laws for the {F}avard length problem in $\mathbb{R}^d$.
\newblock {\em arXiv preprint arXiv:2509.02882}, 2025.

\bibitem{Mat90}
Pertti Mattila.
\newblock Orthogonal projections, {R}iesz capacities, and {M}inkowski content.
\newblock {\em Indiana Univ. Math. J.}, 39(1):185--198, 1990.

\bibitem{Mattila95}
Pertti Mattila.
\newblock {\em Geometry of sets and measures in {E}uclidean spaces}, volume~44
  of {\em Cambridge Studies in Advanced Mathematics}.
\newblock Cambridge University Press, Cambridge, 1995.
\newblock Fractals and rectifiability.

\bibitem{NPS2010}
Fedor Nazarov, Yuval Peres, and Alexander Volberg.
\newblock The power law for the {B}uffon needle probability of the four-corner
  {C}antor set.
\newblock {\em Algebra i Analiz}, 22(1):82--97, 2010.

\bibitem{PanconesiSrinivasan97}
Alessandro Panconesi and Aravind Srinivasan.
\newblock Randomized distributed edge coloring via an extension of the
  {C}hernoff-{H}oeffding bounds.
\newblock {\em SIAM J. Comput.}, 26(2):350--368, 1997.

\bibitem{PeresSolomyak02}
Yuval Peres and Boris Solomyak.
\newblock How likely is {B}uffon's needle to fall near a planar {C}antor set?
\newblock {\em Pacific J. Math.}, 204(2):473--496, 2002.

\bibitem{Tao09}
Terence Tao.
\newblock A quantitative version of the {B}esicovitch projection theorem via
  multiscale analysis.
\newblock {\em Proc. Lond. Math. Soc. (3)}, 98(3):559--584, 2009.

\bibitem{TaoVu06}
Terence Tao and Van Vu.
\newblock {\em Additive combinatorics}, volume 105 of {\em Cambridge Studies in
  Advanced Mathematics}.
\newblock Cambridge University Press, Cambridge, 2006.

\bibitem{Taylor24}
Krystal Taylor.
\newblock Connections between fractal geometry and projections.
\newblock {\em Notices Amer. Math. Soc.}, 71(11):1458--1469, 2024.

\bibitem{VardakisVolberg24}
Dimitris Vardakis and Alexander Volberg.
\newblock The {B}uffon's needle problem for random planar disk-like {C}antor
  sets.
\newblock {\em J. Math. Anal. Appl.}, 529(2):Paper No. 127622, 15, 2024.

\bibitem{Zhang20}
Shiwen Zhang.
\newblock The exact power law for {B}uffon's needle landing near some random
  {C}antor sets.
\newblock {\em Rev. Mat. Iberoam.}, 36(2):537--548, 2020.

\end{thebibliography}
\end{document}